\newtheorem{lemma}{Lemma}
\newtheorem{theorem}[lemma]{Theorem}
\newtheorem{cor}[lemma]{Corollary}
\newtheorem{example}{Example}
\newtheorem{algorithm}[lemma]{Algorithm}
\renewcommand{\pth}{\fth}
\newcommand{\pE}{\mathsf{E}}
\newcommand{\pF}{\mathsf{F}}
\newcommand{\ldeg}{\operatorname{ldeg}}
\newcommand{\sig}{\operatorname{sig}}
\newcommand{\sign}{\operatorname{sign}}
\numberwithin{equation}{section}
\begin{document}

\title[Generalized Matrix Spectral Factorization and Quasi-tight Framelets]{Generalized Matrix Spectral Factorization and Quasi-tight Framelets with Minimum Number of Generators}

\author{Chenzhe Diao and Bin Han}

\address{Department of Mathematical and Statistical Sciences,
University of Alberta, Edmonton,\quad Alberta, Canada T6G 2G1.
\quad {\tt diao@ualberta.ca},\quad {\tt bhan@ualberta.ca}
}

\thanks{
Research was supported in part by NSERC Canada under Grant RGP 228051.
}

\makeatletter \@addtoreset{equation}{section} \makeatother
\begin{abstract}
As a generalization of orthonormal wavelets in $\Lp{2}$, tight framelets (also called tight wavelet frames) are of importance in wavelet analysis and applied sciences due to their many desirable properties in applications such as image processing and numerical algorithms.
Tight framelets are often derived from particular refinable functions satisfying certain stringent conditions. Consequently, a large family of refinable functions cannot be used to construct tight framelets. This motivates us to introduce the notion of a quasi-tight framelet, which is a dual framelet but behaves almost like a tight framelet.
It turns out that the study of quasi-tight framelets is intrinsically linked to
the problem of the generalized matrix spectral factorization for matrices of Laurent polynomials.
In this paper, we provide a systematic investigation on the generalized matrix spectral factorization problem
and compactly supported quasi-tight framelets. As an application of our results on generalized matrix spectral factorization for matrices of Laurent polynomials,
we prove in this paper that from any arbitrary compactly supported refinable function in $\Lp{2}$, we can always construct a compactly supported one-dimensional quasi-tight framelet having the minimum number of generators and the highest possible order of vanishing moments. Our proofs are constructive and supplemented by step-by-step algorithms.
Several examples of quasi-tight framelets will be provided to illustrate the theoretical results and algorithms developed in this paper.
\end{abstract}

\keywords{quasi-tight framelets,
generalized matrix spectral factorization, tight framelets, quasi-tight framelet filter banks, orthogonal wavelets, vanishing moments, sum rules}

\subjclass[2010]{42C40, 42C15, 47A68, 41A15, 65D07} \maketitle

\pagenumbering{arabic}


\section{Introduction and Motivations}

Due to their many desirable properties such as sparse multiscale representations and fast transforms, orthogonal wavelets have been employed in many applications such as signal/image processing and numerical algorithms (\cite{daubook}). As a generalization of an orthogonal wavelet, a tight framelet (also called a tight wavelet frame) preserves almost all the desirable properties of an orthogonal wavelet and offer many extra new features such as directionality and redundant representations in applications (e.g., \cite{chs02,dhrs03,ds10,hz14,shen10} and many references therein).
Before explaining our motivations of this paper, let us recall the definition of tight framelets.
For a function $f$ defined on the real line $\R$, we shall adopt the following notation:
\[
f_{\gl;k}:=|\gl|^{1/2}f(\gl\cdot-k),\qquad \gl \in \R\bs\{0\}, k\in \R.
\]
For square integrable functions $\eta,\psi^1,\ldots,\psi^s\in \Lp{2}$,
we say that $\{\eta;\psi^1,\ldots,\psi^s\}$ is \emph{a tight framelet} in $\Lp{2}$ if
every function $f\in \Lp{2}$ has the following multiscale representation:
\be \label{tf}
f=\sum_{k\in \Z} \la f, \eta(\cdot-k)\ra \eta(\cdot-k)+
\sum_{j=0}^\infty \sum_{\ell=1}^s \sum_{k\in \Z} \la f, \psi^\ell_{2^j;k}\ra \psi^\ell_{2^j;k}
\ee
with the series converging unconditionally in $\Lp{2}$. Moreover, if $\{\eta;\psi^1,\ldots,\psi^s\}$ is a tight framelet in $\Lp{2}$, then $\{\psi^1,\ldots,\psi^s\}$ is \emph{a homogeneous tight framelet} in $\Lp{2}$ (e.g. see \cite[Proposition~4]{han12} and \cite{han97,rs97}), i.e.,
\be \label{htf}
f=
\sum_{j\in \Z} \sum_{\ell=1}^s \sum_{k\in \Z} \la f, \psi^\ell_{2^j;k}\ra \psi^\ell_{2^j;k},\qquad \forall\, f\in \Lp{2}
\ee
with the series converging unconditionally in $\Lp{2}$. By $\lp{0}$ we denote the space of all finitely supported sequences on $\Z$.
In this paper we are interested in compactly supported generating framelet functions $\psi^1,\ldots,\psi^s$, which are derived from a compactly supported \emph{refinable function} $\phi$ satisfying
\be \label{reffunc}
\phi=2\sum_{k\in \Z} a(k) \phi(2\cdot-k)
\ee
for some finitely supported sequence/filter $a\in \lp{0}$.
For a filter $a=\{a(k)\}_{k\in \Z} \in \lp{0}$, we define its associated Laurent polynomial to be $\pa(z):=
\sum_{k\in \Z} a(k)z^k$ for $z\in \C\bs\{0\}$. Suppose that a filter $a\in \lp{0}$ satisfies $\sum_{k\in \Z} a(k)=1$, i.e., $\pa(1)=1$.
Using the Fourier transform, we obtain a refinable function/distribution $\phi$ through
\be \label{phi}
\wh{\phi}(\xi):=\prod_{j=1}^\infty \pa(e^{-i 2^{-j}\xi}),\qquad \xi\in \R,
\ee
where the Fourier transform used in this paper is defined to be $\wh{f}(\xi):=\int_{\R} f(x) e^{-ix\xi} dx$ for $f\in \Lp{1}$
and can be naturally extended to square integrable functions and tempered distributions.
It is trivial to check that $\wh{\phi}(2\xi)=\pa(e^{-i\xi})\wh{\phi}(\xi)$, which is equivalent to \eqref{reffunc}.

Suppose that the refinable function $\phi$ associated with low-pass filter $a$ belongs to $\Lp{2}$.
A general procedure called oblique extension principle (OEP) has been introduced in \cite{dhrs03} and independently in \cite{chs02} for constructing compactly supported tight framelets from the refinable function $\phi$. For $\theta,b_1,\ldots,b_s\in \lp{0}$, we define
\be \label{eta:psi}
\eta:=\sum_{k\in \Z} \theta(k) \phi(\cdot-k)
\quad \mbox{and}\quad
\psi^\ell:=2\sum_{k\in \Z} b_\ell(k)\phi(2\cdot-k),\qquad \ell=1,\ldots,s.
\ee
Since $\phi\in \Lp{2}$ and all filters are finitely supported, we have $\eta,\psi^1,\ldots,\psi^s\in \Lp{2}$.
Then $\{\eta;\psi^1,\ldots,\psi^s\}$ is a tight framelet in $\Lp{2}$ (e.g., see \cite[Theorem~6.4.2]{hanbook} and \cite{chs02,dhrs03,dh04,han03,han12}) if and only if
$\pTh(1)=1$ and $\{a;b_1,\ldots,b_s\}_\Theta$ is \emph{an (OEP-based) tight framelet filter bank} satisfying
\begin{align}
&\pTh(z^2) \pa(z) \pa^\star(z)+\pb_1(z) \pb_1^\star(z)+\cdots+\pb_s(z)\pb_s^\star(z)=\pTh(z),
\qquad z\in \C\bs\{0\}, \label{tffb:1}\\
&\pTh(z^2) \pa(z) \pa^\star(-z)+\pb_1(z) \pb_1^\star(-z)+\cdots+\pb_s(z)\pb_s^\star(-z)=0, \qquad z\in \C\bs\{0\},\label{tffb:0}
\end{align}
where $\pTh(z):=\pth(z)\pth^\star(z)$. Here we define $\pu^\star(z):=\sum_{k\in \Z} \ol{u(k)}^\tp z^{-k}$ for a finitely supported (matrix-valued) sequence $u:=\{u(k)\}_{k\in \Z}: \Z\rightarrow \C^{m\times n}$. Notice that $ \pu^\star(z) = [\pu(z)]^\star := \overline{\pu(z)}^\tp $ for all $ z\in \T $. Therefore, the task of constructing a tight framelet is reduced to constructing a tight framelet filter bank.
In fact, it is known in \cite[Theorem~4.5.4]{hanbook} that every tight framelet $\{\eta;\psi^1,\ldots,\psi^s\}$ in $\Lp{2}$ must come from a refinable function $\phi$ through the refinable structure in \eqref{eta:psi}.
One-dimensional tight framelets and tight framelet filter banks have been extensively investigated and constructed in the literature, to only mentioned a few, see \cite{ch00,chs02,daubook,dhrs03,DonShe:2007Pseudo-splines,han03,han15,hanbook,HanMo:2005Symmetric,jiang03,js15,sel01} and references therein.

One of the most important features of wavelets is the sparse multiscale representations in \eqref{tf} and \eqref{htf}. The sparsity of the representations in \eqref{tf} and \eqref{htf} come from the vanishing moments of the framelet/wavelet generators $\psi^1,\ldots,\psi^s$ in \eqref{eta:psi}, e.g., see \cite{daubook}. For a compactly supported function $\psi\in \Lp{2}$, we say that $\psi$ has \emph{$m$ vanishing moments} if $\int_{\R} x^j \psi(x) dx=0$ for all $j=0,\ldots, m-1$.
If in addition $\psi=2\sum_{k\in \Z} b(k)\phi(2\cdot-k)$ with $b\in \lp{0}$ and $\wh{\phi}(0)\ne 0$, then one can easily deduce that $\psi$ has $m$ vanishing moments if and only if the filter $b$ has $m$ vanishing moments, i.e., $\sum_{k\in \Z} k^j b(k)=0$ for all $j=0,\ldots,m-1$. We define $\vmo(b):=m$ with $m$ being the largest such integer. For convenience, we also define $\vmo(\pb(z)):=\vmo(b)$. The notion of vanishing moments is closely related to sum rules. For a filter $a\in \lp{0}$, we say that $a$ has \emph{$n$ sum rules} (\cite{daubook}) if
\be \label{sr}
\sum_{k\in \Z} a(2k)(2k)^j=\sum_{k\in \Z} a(1+2k) (1+2k)^j, \qquad \forall\, j=0,\ldots,n-1.
\ee
Note that $a$ has $n$ sum rules if and only if $\pa(z)=(1+z)^n \pu(z)$ for some Laurent polynomial $\pu$.
We define $\sr(\pa(z)):=\sr(a):=n$ with $n$ being the largest such integer.
If $\{a;b_1,\ldots,b_s\}_\Theta$ is a tight framelet filter bank with $ \pTh(1)\pa(1)\neq 0 $, then one can easily deduce from \eqref{tffb:1} and \eqref{tffb:0}
(e.g., see \cite[Proposition 3.3.1]{hanbook} and \cite{chs02,dhrs03,han15}) that
\be \label{tf:vm:sr}
\min(\vmo(b_1),\ldots,\vmo(b_s))=\min(\sr(a), \tfrac{1}{2}\vmo(\pTh(z)-\pTh(z^2)\pa(z)\pa^\star(z))).
\ee
For a given low-pass filter $a$, the role of the filter $\Theta$ is to increase the vanishing moments of $\pTh(z)-\pTh(z^2)\pa(z)\pa^\star(z)$ so that all the high-pass filters $b_1,\ldots,b_s$ have high orders of vanishing moments.

Note that the equations in \eqref{tffb:1} and \eqref{tffb:0} for a tight framelet filter bank $\{a;b_1,\ldots,b_s\}_\Theta$ can be equivalently expressed in the matrix form:
\be \label{tffb}
\left[\begin{matrix}
\pb_1(z) &\cdots &\pb_s(z)\\
\pb_1(-z) &\cdots &\pb_s(-z)\end{matrix}\right]
\left[\begin{matrix}
\pb_1(z) &\cdots &\pb_s(z)\\
\pb_1(-z) &\cdots &\pb_s(-z)\end{matrix}\right]^\star
=\cM_{\pa,\pTh}(z)
\ee
with
\be \label{cM}
\cM_{\pa,\pTh}(z):=
\left[ \begin{matrix} \pTh(z)-\pTh(z^2) \pa(z)\pa^\star(z) &-\pTh(z^2)\pa(z)\pa^\star(-z)\\
-\pTh(z^2) \pa(-z)\pa^\star(z) &\pTh(-z)-\pTh(z^2)\pa(-z)\pa^\star(-z)\end{matrix}
\right].
\ee
Recall that an $ n\times n $ Hermite matrix $ A $ is called \emph{positive semidefinite}, denoted by $A\ge 0$, if and only if $ x^\star A x \geqslant 0 $ for all $ x\in \C^n $.
Obviously, \eqref{tffb} implies $\cM_{\pa,\pTh}(z)\ge 0$ for all $z\in \T:=\{\zeta \in \C \setsp |\zeta|=1\}$, which is known (see \cite{chs02,dhrs03}, \cite[Lemma~6]{han15} and \cite[Lemma~1.4.5]{hanbook}) to be equivalent to that $\Theta(z)\ge 0$ for all $ z\in \T $ and
\be \label{det:cM}
\det(\cM_{\pa,\pTh}(z))=
\pTh(z)\pTh(-z)-\pTh(z^2)[\pTh(-z)\pa(z)\pa^\star(z)+
\pTh(z)\pa(-z)\pa^\star(-z)]\ge 0,\qquad \forall\, z\in \T.
\ee
Consequently, by the Fej\'er-Riesz lemma, there exists a Laurent polynomial $\pth(z)$ such that $\pth(z)\pth^\star(z)=\pTh(z)$ so that we can define the function $\eta$ in \eqref{eta:psi}.
One often can construct a filter $\Theta\in \lp{0}$ so that $\pTh(z)\ge 0$ for all $z\in \T$ and $\vmo(\pTh(z)-\pTh(z^2)\pa(z)\pa^\star(z))$ is reasonably high. However, many pairs of filters $a, \Theta\in \lp{0}$ do not satisfy the condition in \eqref{det:cM}.
Let us provide an example here for the most popular choice of $\pTh(z)=1$.
Let $u\in \lp{0}$ be a filter such that $|\pu(z_0)|^2+|\pu(z_0)|^2<1$ for some $z_0\in \T$. Let $a\in \lp{0}$ be an arbitrary dual filter of $u$, that is, $\pa(z)\pu^\star(z)+\pa(-z)\pu^\star(-z)=1$ for all $z\in \T$.
Consequently, by the Cauchy-Schwarz inequality, we have $1\le (|\pa(z_0)|^2+|\pa(-z_0)|^2)
(|\pu(z_0)|^2+|\pu(-z_0)|^2)$, from which we have
$|\pa(z_0)|^2+|\pa(-z_0)|^2 \ge
(|\pu(z_0)|^2+|\pu(-z_0)|^2)^{-1}>1$.
Therefore, by
$\pTh(z)=1$, we have $\det(\cM_{a,\Theta}(z))=1-\pa(z)\pa^\star(z)-\pa(-z)\pa^\star(-z)$
but $\det(\cM_{a,\Theta}(z_0))<0$.
This shows that the condition in \eqref{det:cM} fails for many filters $a$ even with the most popular and simplest choice of $\pTh(z)=1$.
Hence, a tight framelet cannot be derived from the refinable function associated with the filter $a$ and $\pTh(z)=1$.
Also, some papers try to design general $ \pTh(z) $ to guarantee $ \cM_{\pa, \pTh}(z) \geqslant 0 $ for all $ z \in \T $. However, in order to prove the existence of such $ \pTh(z) $, they have to put additional assumptions on the spectral radius of the transition operator associated with the low-pass filter $ a $, or the stability of the integer shifts of the refinable function $ \phi $, e.g., see \cite{chs02,dhrs03,HanMo:2005Symmetric}.

This motivates us to introduce the notion of quasi-tight framelet filter banks. Let $\Theta,a,b_1,\ldots,b_s\in \lp{0}$ and $\eps_1,\ldots,\eps_s\in \{-1,1\}$. We say that $\{a;b_1,\ldots,b_s\}_{\Theta, (\epsilon_1,\ldots,\epsilon_s)}$ is \emph{a quasi-tight framelet filter bank} if
\be \label{qtffb}
\left[\begin{matrix}
\pb_1(z) &\cdots &\pb_s(z)\\
\pb_1(-z) &\cdots &\pb_s(-z)\end{matrix}\right]
\left[\begin{matrix} \eps_1 & &\\
&\ddots &\\ & &\eps_s\end{matrix}\right]
\left[\begin{matrix}
\pb_1(z) &\cdots &\pb_s(z)\\
\pb_1(-z) &\cdots &\pb_s(-z)\end{matrix}\right]^\star
=\cM_{\pa,\pTh}(z),
\ee
where $\cM_{\pa,\pTh}$ is defined in \eqref{cM}. Hence, a tight framelet filter bank is a special case of
a quasi-tight framelet filter bank with $\eps_1=\cdots=\eps_s = 1$.
We call $ \epsilon_\ell \in \{-1, 1\} $ the \emph{signature} of the filter $ b_\ell $, $ \ell = 1,\ldots,s $.
Moreover, it is straightforward to observe that
$\{a;b_1,\ldots,b_s\}_{\Theta, (\epsilon_1,\ldots,\epsilon_s)}$ is a quasi-tight framelet filter bank if and only if $\{a;b_1,\ldots,b_s\}_{-\Theta, (-\epsilon_1, \ldots, -\epsilon_s)}$
is a quasi-tight framelet filter bank.
Assume that $\pa(1)=1$ and $\phi\in \Lp{2}$ with $\phi$ being defined in \eqref{phi}.
Write $\pTh(z)=\tilde{\pth}(z)\pth^\star(z)$ for some $\tilde{\theta}, \theta\in \lp{0}$. Define $\eta,\psi^1,\ldots,\psi^s$ as in \eqref{eta:psi} and $\tilde{\eta}:=\sum_{k\in\Z} \tilde{\theta}(k) \phi(\cdot-k)$. If in addition $\pTh(z)\ge 0$ for all $z\in \T$, then by Fej\'er-Riesz lemma we can always choose $\tilde{\pth}(z)=\pth(z)$ so that $\tilde{\eta}=\eta$.
If $\pTh(1)=1$ and $\pb_1(1)=\cdots=\pb_s(1)=0$,
by \cite[Theorems~4.1.9 and 6.4.1]{hanbook} and \cite[Theorem~2.3]{han03},
then
$\{\eta,\tilde{\eta};\psi^1,\ldots,\psi^s\}_{ (\eps_1,\ldots,\eps_s)}$ is \emph{a quasi-tight framelet} in $\Lp{2}$, that is, for all $ f \in \Lp{2} $,
\be \label{qtf}
f=\sum_{k\in \Z} \la f, \tilde{\eta}(\cdot-k)\ra \eta(\cdot-k)+
\sum_{j=0}^\infty \sum_{\ell=1}^s \sum_{k\in \Z} \eps_\ell \la f, \psi^\ell_{2^j;k}\ra \psi^\ell_{2^j;k}
\ee
with the series converging unconditionally in $\Lp{2}$ and the underlying system being a Bessel sequence in $\Lp{2}$.
By  \cite[Proposition~4]{han12}, it follows directly from \eqref{qtf} that $\{\psi^1,\ldots,\psi^s\}_{(\eps_1,\ldots,\eps_s)}$ is
\emph{a homogeneous quasi-tight framelet} in $\Lp{2}$, that is,
\be \label{hqtf}
f=\sum_{j\in \Z} \sum_{\ell=1}^s \sum_{k\in \Z} \eps_\ell \la f, \psi^\ell_{2^j;k}\ra \psi^\ell_{2^j;k},\qquad \forall\, f\in \Lp{2}
\ee
with the series converging unconditionally in $\Lp{2}$ and the underlying system being a Bessel sequence in $\Lp{2}$.
The multiscale representations in \eqref{qtf} and \eqref{hqtf} using a quasi-tight framelet are very similar to those in \eqref{tf} and \eqref{htf} under a tight framelet. Therefore, a quasi-tight framelet is a special class of dual framelets in $\Lp{2}$ but behaves almost identically to a tight framelet with the exception of possible sign changes of framelet coefficients. An example of quasi-tight framelets and quasi-tight framelet filter banks was probably first observed in \cite[Example~3.2.2]{hanbook} and was obtained by applying the general algorithm in \cite{han15} for constructing dual framelet filter banks.
The equations in \eqref{qtffb} for a quasi-tight framelet filter bank are intrinsically linked to the problem of matrix spectral factorization for which we shall extensively study in this paper. Moreover, similar to the identity in \eqref{tf:vm:sr} for a tight framelet filter bank, if
$\{a;b_1,\ldots,b_s\}_{\Theta,(\epsilon_1,\ldots, \epsilon_s)}$ is a quasi-tight framelet filter bank, then we have
\be \label{qtf:vm:sr}
\min(\vmo(b_1),\ldots,\vmo(b_s))\le \min(\sr(a), \tfrac{1}{2}\vmo(\pTh(z)-\pTh(z^2)\pa(z)\pa^\star(z))).
\ee
That is, the highest possible order of vanishing moments achieved by
a quasi-tight framelet filter bank derived from given filters $a,\Theta\in \lp{0}$ is $\min(\sr(a), \tfrac{1}{2}\vmo(\pTh(z)-\pTh(z^2)\pa(z)\pa^\star(z)))$.

As demonstrated in \cite[Theorem~7]{han15} and \cite[Theorem~1.4.7]{hanbook}, for general filters $a,\Theta\in \lp{0}$, $\det(\cM_{a,\Theta}(z))$ is often not identically zero and
the minimum number $s$ of high-pass filters in a quasi-tight framelet filter bank is at least $2$. Given a Laurent polynomial $ \pp(z) $, for simplicity, we use $ \pp(z)\equiv 0 $ ($ \pp(z)\not\equiv 0 $) to indicate that $ \pp(z) $ is (is not) identically zero.

For an $n\times n$ square matrix $\pA(z)$ of Laurent polynomials, its spectrum $\sigma(\pA)$ is defined to be
\be \label{sigmaA}
\sigma(\pA):=\{z\in\C\setminus \{0\} \setsp \det(\pA(z))=0\}.
\ee
If $ \pA^\star(z) = \pA(z) $, then $ \pA(z) $ is a Hermite matrix for all $ z\in \T $ and we call such $ \pA(z) $ \emph{a Hermite matrix of Laurent polynomials}.
In this case, for all $ z\in \T $, all the eigenvalues of $\pA(z)$ are real numbers and hence,
we define $\nu_+(\pA(z))$ to be the number of positive eigenvalues of the matrix $\pA(z)$, and define $\nu_-(\pA(z))$ to be the number of negative eigenvalues of the matrix $\pA(z)$. In particular, for filters $a,\Theta\in \lp{0}$, we define
\be \label{saTheta}
s_{a,\Theta}^+ := \max_{z\in \T}\nu_+(\cM_{\pa,\pTh}(z)), \qquad
s_{a,\Theta}^- := \max_{z\in \T}\nu_-(\cM_{\pa,\pTh}(z)), \qquad
\mbox{and}\quad
s_{a,\Theta} := s_{a,\Theta}^+ + s_{a,\Theta}^-,
\ee
where the matrix $\cM_{\pa,\pTh}(z)$ is defined in \eqref{cM}.

Through the study of the generalized matrix spectral factorization in \eqref{qtffb}, we now state the main result obtained in this paper on quasi-tight framelets with the minimum number of generators and the highest possible order of vanishing moments derived from any arbitrarily given filters $a,\Theta\in \lp{0}$.

\begin{theorem}\label{thm:qtf}
Let $a,\Theta\in \lp{0}\bs\{0\}$ be two finitely supported
not-identically-zero filters such that $\pTh^\star=\pTh$.
Let $n_b$ be any positive integer satisfying
\be \label{nb}
1\le n_b\le \min(\sr(a), \tfrac{1}{2} \vmo(\pTh(z)-\pTh(z^2) \pa(z)\pa^\star(z))).
\ee
Let $\cM_{\pa,\pTh}(z)$ be defined in \eqref{cM} and
the quantities $s_{a,\Theta}^+, s_{a,\Theta}^-, s_{a,\Theta}$ be defined in \eqref{saTheta}.
Define $s:=s_{a,\Theta}$.
Then there exist $b_1,\ldots,b_s\in \lp{0}$
and $ \eps_1=\ldots= \eps_{s_{a,\Theta}^+} = 1 $,  $\eps_{s_{a,\Theta}^+ +1}=\ldots=\eps_s=-1$ such that $\{a;b_1,\ldots,b_s\}_{\Theta, (\epsilon_1,\ldots, \epsilon_s)}$ is a quasi-tight framelet filter bank with
$ \min\{\vmo(b_1),\ldots, \vmo(b_s)\}\geqslant n_b $.
Moreover, for $1\le s<s_{a,\Theta}$, there does not exist a quasi-tight framelet filter bank
$\{a;b_1,\ldots,b_s\}_{\Theta, (\epsilon_1, \ldots, \epsilon_s)}$ with $b_1,\ldots,b_s\in \lp{0}$ and $\eps_1,\ldots,\eps_s\in\{-1,1\}$.
Furthermore, if $\pa(1)=\pTh(1)=1$ and $\phi\in \Lp{2}$ with $\phi$ being defined in \eqref{phi}, then
$\{\eta,\tilde{\eta};\psi^1,\ldots,\psi^s\}_
{(\eps_1,\ldots,\eps_s)}$ is a quasi-tight framelet in $\Lp{2}$, where $\eta,\psi^1,\ldots,\psi^s\in \Lp{2}$ are defined in \eqref{eta:psi} and $\tilde{\eta}:=\sum_{k\in \Z} \tilde{\theta}(k)\phi(\cdot-k)$ with $\tilde{\pth}(z)\pth^\star(z)=\pTh(z)$.
\end{theorem}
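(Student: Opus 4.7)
The plan is to translate the quasi-tight framelet filter bank identity \eqref{qtffb} into a generalized matrix spectral factorization problem for the $2\times 2$ Hermite matrix of Laurent polynomials $\cM_{\pa,\pTh}(z)$, and then apply the generalized matrix spectral factorization theorem developed earlier in this paper. Organizing the unknowns as $B(z):=\left[\begin{smallmatrix}\pb_1(z)&\cdots&\pb_s(z)\\ \pb_1(-z)&\cdots &\pb_s(-z)\end{smallmatrix}\right]$ and $D_\eps:=\operatorname{diag}(\eps_1,\ldots,\eps_s)$, identity \eqref{qtffb} reads $B(z) D_\eps B^\star(z) = \cM_{\pa,\pTh}(z)$. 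The hypothesis $\pTh^\star=\pTh$ makes $\cM_{\pa,\pTh}(z)$ Hermite, and a direct check gives the polyphase symmetry $J\,\cM_{\pa,\pTh}(z)\,J = \cM_{\pa,\pTh}(-z)$ with $J=\left[\begin{smallmatrix}0&1\\1&0\end{smallmatrix}\right]$, which is precisely the constraint a solution $B(z)$ arising from filters $b_\ell\in\lp{0}$ must satisfy.

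The minimality claim is a pointwise inertia count. If such a factorization exists with $s^\pm:=\#\{\ell:\eps_\ell=\pm 1\}$, then applying Sylvester's law of inertia to $B(z)D_\eps B^\star(z)=\cM_{\pa,\pTh}(z)$ at each $z\in\T$ gives $\nu_+(\cM_{\pa,\pTh}(z))\le s^+$ and $\nu_-(\cM_{\pa,\pTh}(z))\le s^-$. Taking the supremum over $z\in\T$ forces $s^\pm\ge s_{a,\Theta}^\pm$ and hence $s\ge s_{a,\Theta}$, ruling out any quasi-tight framelet filter bank with fewer generators.

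For the existence part, with $s=s_{a,\Theta}$, the prescribed signature ordering, and $\vmo(b_\ell)\ge n_b$, I would first extract the vanishing-moment factors from $\cM_{\pa,\pTh}(z)$. The sum-rule assumption $\sr(a)\ge n_b$ gives $\pa(z)=(1+z)^{n_b}\pu(z)$ and hence $\pa(-z)=(1-z)^{n_b}\pu(-z)$, while $\vmo(\pTh(z)-\pTh(z^2)\pa(z)\pa^\star(z))\ge 2n_b$ gives divisibility of the $(1,1)$ entry by $(1-z)^{2n_b}$ and symmetrically of the $(2,2)$ entry by $(1+z)^{2n_b}$. Together these yield a factorization $\cM_{\pa,\pTh}(z) = F(z)\,\mathcal{N}(z)\,F^\star(z)$ with $F(z)=\operatorname{diag}((1-z)^{n_b},(1+z)^{n_b})$ (up to harmless monomial rescaling) and $\mathcal{N}(z)$ a Hermite matrix of Laurent polynomials preserving the $J$-symmetry and having the same inertia on $\T$ as $\cM_{\pa,\pTh}(z)$ away from $z=\pm 1$. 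Applying the generalized matrix spectral factorization theorem to $\mathcal{N}(z)$ yields $\widetilde{B}(z)\,D_\eps\,\widetilde{B}^\star(z)=\mathcal{N}(z)$ with the required signature pattern, and then $B(z):=F(z)\widetilde{B}(z)$ delivers high-pass filters $b_1,\ldots,b_s$ with $\vmo(b_\ell)\ge n_b$ built in.

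Once the filter bank is in hand, the $\Lp{2}$-framelet conclusion is immediate: the standing assumptions $\pa(1)=\pTh(1)=1$ combined with $\vmo(b_\ell)\ge n_b\ge 1$ give $\pb_\ell(1)=0$, and the discussion in the paragraph containing \eqref{qtf} then yields that $\{\eta,\tilde\eta;\psi^1,\ldots,\psi^s\}_{(\eps_1,\ldots,\eps_s)}$ is a quasi-tight framelet in $\Lp{2}$. The main obstacle I expect is verifying that the inertia of $\mathcal{N}(z)$ still realizes $s_{a,\Theta}^+$ and $s_{a,\Theta}^-$ at the exceptional points $z=\pm 1$, where the factors $(1\pm z)^{n_b}$ in $F(z)$ force $\cM_{\pa,\pTh}(z)$ to degenerate; this requires carefully tracking the kernel structure of $F(z)$ at those points to confirm that the generalized spectral factorization of $\mathcal{N}(z)$ produces exactly the claimed number of $+1$'s and $-1$'s in the prescribed canonical order.
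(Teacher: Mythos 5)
Your outline gets the minimality direction right — Lemma~\ref{thm:SylvesterInertia} applied to $B(z)D_\eps B^\star(z)=\cM_{\pa,\pTh}(z)$ at each $z\in\T$, followed by a supremum, is exactly what is needed — but the existence direction has a genuine gap that you flag but do not resolve. The obstruction is that the matrix $B(z)=\left[\begin{smallmatrix}\pb_1(z)&\cdots&\pb_s(z)\\ \pb_1(-z)&\cdots &\pb_s(-z)\end{smallmatrix}\right]$ is not an arbitrary $2\times s$ matrix of Laurent polynomials: its second row is the first row evaluated at $-z$. Theorem~\ref{thm:nonconst-sig} produces a completely unconstrained $2\times s$ factor $\widetilde{B}(z)$, and setting $B(z):=F(z)\widetilde{B}(z)$ with $F(z)=\diag\bigl((1-z)^{n_b},(1+z)^{n_b}\bigr)$ will in general \emph{not} produce a matrix of that restricted form. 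Tracing through the constraint, $[F\widetilde{B}]_{2,\ell}(z)$ must equal $[F\widetilde{B}]_{1,\ell}(-z)$, which forces $[\widetilde{B}(z)]_{2,\ell}=[\widetilde{B}(-z)]_{1,\ell}$ — i.e.\ $\widetilde{B}$ inherits the same structural constraint, and nothing in Theorem~\ref{thm:nonconst-sig} guarantees this. You observe the ``$J$-symmetry'' $J\,\cM_{\pa,\pTh}(z)\,J=\cM_{\pa,\pTh}(-z)$ and say $\mathcal N(z)$ ``preserves'' it, but preserving the symmetry of the \emph{matrix to be factored} does not make the \emph{factor} respect the filter-bank structure; that is the missing step.

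The paper closes this gap by passing to coset (polyphase) variables before invoking the spectral factorization. Writing $\mathring{\pb}_\ell(z)=\mathring{\pb}_\ell^{[0]}(z^2)+z\,\mathring{\pb}_\ell^{[1]}(z^2)$ and conjugating by $\left[\begin{smallmatrix}1&z\\1&-z\end{smallmatrix}\right]$ turns \eqref{eq:qtfbnb} into the equivalent identity \eqref{eq:PRpolyphase} for the polyphase matrix $\bigl[\mathring{\pb}_\ell^{[\gamma]}(z)\bigr]_{\gamma=0,1;\ \ell=1,\ldots,s}$ against a Hermite matrix $\cN_{\pa,\pTh|n_b}(z)$ of Laurent polynomials in $z$ (not $z^2$). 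Crucially, the polyphase matrix is an \emph{unconstrained} $2\times s$ matrix of Laurent polynomials — any choice of $\mathring{\pb}_\ell^{[0]},\mathring{\pb}_\ell^{[1]}$ reconstructs a legitimate filter $\mathring{\pb}_\ell$ — so Theorem~\ref{thm:nonconst-sig} applied to $\cN_{\pa,\pTh|n_b}(z)$ produces exactly what is needed. One then verifies, via Sylvester's law of inertia through the change of variables $\cM_{\pa,\pTh}(z)=\pP(z)\cN_{\pa,\pTh|n_b}(z^2)\pP^\star(z)$ and Lemma~\ref{lem:maxminT} (to remove the finite exceptional set $\sigma(\pP)\subseteq\{\pm1\}$), that $\max_{z\in\T}\nu_\pm(\cN_{\pa,\pTh|n_b}(z))=s_{a,\Theta}^\pm$, which is precisely the computation you worried about at the end. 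So your overall strategy is on the right track, but without the polyphase reduction the factorization theorem does not apply to the object you need, and the ``exceptional points'' worry is handled by a deformation-to-$\cN$ argument rather than a direct kernel analysis of $F(z)$ at $z=\pm1$.
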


Since quasi-tight framelets preserve most desirable properties of tight framelets and enjoy great flexibility as demonstrated in Theorem~\ref{thm:qtf}, we expect that quasi-tight framelets will be as useful as tight framelets in applications.
We also mention that our investigation on quasi-tight framelets is much involved than the study of tight framelets in \cite{chs02,dhrs03,han15,sel01} and the approach taken in these papers for tight framelets does not carry over to general quasi-tight framelets.
Our proof of Theorem~\ref{thm:qtf} is constructive and we shall provide an algorithm to construct the filters in Theorem~\ref{thm:qtf}.

To prove Theorem~\ref{thm:qtf} on quasi-tight framelet filter banks,
we shall establish two main results on generalized matrix spectral factorizations.
If $ A $ is an $ n\times n $ Hermite matrix, its \emph{signature} $ \sig(A) $ is defined as
\[
\sig(A) := \nu_+(A) - \nu_-(A),
\]
where $\nu_+(A)$ and $\nu_-(A)$ are the numbers of its positive and negative eigenvalues, respectively.
For a Hermite matrix $ \pA(z) $ of Laurent polynomials, we say that it has constant signature if $ \sig(\pA(z)) $ is constant for all $ z\in \T\bs\sigma(\pA)$. In this situation, we can easily see that $ \nu_+(\pA(z)) $ and $ \nu_-(\pA(z)) $ remain constant for all $ z\in \T\bs\sigma(\pA) $.
For Hermite matrices of Laurent polynomials with constant signature, we have the following result on the generalized spectral factorization problem.

\begin{theorem} \label{thm:const-sig}
Let $\pA(z)$ be an $n \times n$ Hermite matrix of Laurent polynomials such that
$\det(\pA(z))$ is not identically zero. If $\nu_+(\pA(z)) = \nu_+$ and $\nu_-(\pA(z)) = \nu_-$ for all $ z\in \T\setminus\sigma(\pA)$ for some nonnegative integers $ \nu_+ $ and $ \nu_- $,
then there exists an $n \times n$ matrix $\pU(z)$ of Laurent polynomials such that
$\pA(z)=\pU(z)\pD\pU^\star(z)$,
where $\pD:=\diag(\mathbf{I}_{\nu_+}, -\mathbf{I}_{\nu_-})$ is an $n \times n$ constant diagonal matrix.
\end{theorem}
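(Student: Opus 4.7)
The plan is to prove the factorization by induction on $n$. For the base case $n=1$, $\pA(z)$ is a Hermitian scalar Laurent polynomial, and the constant-signature hypothesis forces either $\pA(z)\ge 0$ (if $\nu_+=1$) or $-\pA(z)\ge 0$ (if $\nu_-=1$) on all of $\T$, so the classical scalar Fej\'er--Riesz lemma produces a Laurent polynomial $p$ with $\varepsilon\pA(z)=p(z)p^\star(z)$ where $\varepsilon:=(-1)^{\nu_-}\in\{\pm 1\}$, giving the required factorization with $\pU(z)=p(z)$ and $\pD=\varepsilon$. For the inductive step ($n\ge 2$), after possibly replacing $\pA$ by $-\pA$ we may assume $\nu_+\ge 1$, and the strategy is to peel off a single $+1$ entry of $\pD$ and reduce to an $(n-1)\times(n-1)$ Hermite Laurent polynomial matrix $\pA'$ of constant signature $(\nu_+-1,\nu_-)$, after which the induction hypothesis applies.

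The key construction is a primitive Laurent polynomial column $\pu(z)\in\C[z,z^{-1}]^n$ (i.e., whose entries have gcd one in the principal ideal domain $R:=\C[z,z^{-1}]$) such that the Hermitian scalar Laurent polynomial $q(z):=\pu^\star(z)\pA(z)\pu(z)$ is not identically zero and satisfies $q(z)\ge 0$ on all of $\T$. The constant signature guarantees the existence of a smooth section $v:\T\setminus\sigma(\pA)\to\C^n$ of the positive spectral subspace of $\pA(z)$, because complex vector bundles over the open subset $\T\setminus\sigma(\pA)$ of $\T$ (a disjoint union of open arcs) are trivial. After multiplying $v$ by a scalar Laurent polynomial vanishing to appropriate order at the finitely many points of $\sigma(\pA)\cap\T$, the resulting vector can be uniformly approximated on $\T$ by a Laurent polynomial vector $\pu$ so as to preserve $\pu^\star\pA\pu\ge 0$ on $\T$; dividing out the gcd of the entries of $\pu$ makes it primitive. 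The scalar Fej\'er--Riesz lemma then factors $q=pp^\star$ with $p\in R$. Because $\pu$ is primitive and $R$ is a PID, we may extend $\pu$ to a unimodular $n\times n$ Laurent polynomial matrix $\pU_1$ (with first column $\pu$) whose inverse also lies in $R^{n\times n}$. The congruence $\pU_1^\star\pA\pU_1$ is a Hermite Laurent polynomial matrix whose $(1,1)$-entry is $q=pp^\star$, and after absorbing the factor $p$ into the first column and performing a Schur-complement block-diagonalization one should obtain $\pA=\pU_0\,\diag(1,\pA')\,\pU_0^\star$ with $\pU_0$ a Laurent polynomial matrix and $\pA'$ an $(n-1)\times(n-1)$ Hermite Laurent polynomial matrix of constant signature $(\nu_+-1,\nu_-)$ by Sylvester's law of inertia, closing the induction.

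The principal obstacle I anticipate is ensuring that this Schur-complement reduction remains within the Laurent polynomial ring $R$ rather than drifting into rational functions. A direct block-diagonalization of $\pU_1^\star\pA\pU_1$ from its $(1,1)$-entry $q=pp^\star$ involves division by $q$, which is a unit in $R$ only in the trivial case $p=cz^k$. Staying inside $R$ requires showing that the off-diagonal block of $\pU_1^\star\pA\pU_1$ is actually divisible by $p^\star$, a subtle algebraic statement I expect to attack using the primitivity of $\pu$ together with Bezout-type identities in the PID $R$, and potentially iterating the reduction if the divisibility is not immediate. A secondary technical difficulty is the construction of $\pu$ itself: at the finitely many points of $\T\cap\sigma(\pA)$ some positive eigenvalues of $\pA(z)$ vanish, so the smooth positive-spectral section can degenerate there, and the Laurent polynomial approximation must be executed carefully so that nonnegativity of $q$ survives through these singular points.
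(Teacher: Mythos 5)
Your strategy---induction on $n$ by peeling off one $+1$ direction via a primitive Laurent polynomial column $\pu$ with $\pu^\star\pA\pu\ge 0$---differs from the paper's, which inducts on $\len(\det(\pA(z)))$: Theorem~\ref{thm:unimodular} constructively diagonalizes the unimodular remainder, while Theorems~\ref{thm:extract1} and~\ref{thm:extract2} extract elementary divisors one at a time via the Smith normal form, with the delicate case of a simple root on $\T$ handled through an analytic eigenvalue decomposition and a sign characteristic count that uses the constant-signature hypothesis. Both obstacles you flag are genuine gaps, and the second one cannot be closed by the tools you name.

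The construction of $\pu$ does not go through as described: near $z_0\in\sigma(\pA)\cap\T$ the form $x\mapsto x^\star\pA(z)x$ is indefinite, the scaled vector $w=cv$ is close to $0$, and uniform approximation of $w$ by a Laurent polynomial vector need not keep $\pu^\star\pA\pu$ nonnegative there---any small perturbation along a direction on which $\pA(z_0)$ is negative destroys the sign. Preserving nonnegativity would require control of the jet of $w$ at $z_0$, but the section $v$ itself has an unavoidable one-sided jump across each $z_0$ (the sign characteristic phenomenon the paper exploits in Theorem~\ref{thm:extract2}), so this control is not available for free. More fatally, the Schur-complement reduction requires a Fej\'er--Riesz factor $p^\star$ of $q=\pu^\star\pA\pu=pp^\star$ to divide $\pA\pu$ in $\C[z,z^{-1}]^n$, and this fails for typical admissible $\pu$. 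For example, take the constant positive definite matrix $\pA=\begin{bmatrix} 2 & 1 \\ 1 & 1 \end{bmatrix}$ and the primitive column $\pu=\begin{bmatrix}1-z\\ 1\end{bmatrix}$. Then $q=7-3z-3z^{-1}>0$ on $\T$, so any Fej\'er--Riesz factor of $q$ vanishes at one of $(7\pm\sqrt{13})/6$, whereas the entries of $\pA\pu=\begin{bmatrix}3-2z\\ 2-z\end{bmatrix}$ vanish only at $3/2$ and $2$; hence neither $p$ nor $p^\star$ divides $\pA\pu$, and the Schur complement takes you out of $\C[z,z^{-1}]$. Primitivity of $\pu$ constrains the gcd of its entries, not the zeros of $p$, so Bezout identities cannot rescue the step; what is needed is a criterion selecting those $\pu$ for which $p^\star \mid \pA\pu$, and none is supplied. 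The paper sidesteps this issue entirely: every division it performs is by an exact factor identified from the Smith normal form, so it never has to solve a divisibility problem of this type.
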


If $\pA(z)\ge 0$ for all $z\in \T$, then it is trivial that $\nu_+(\pA(z))=n$ and $\nu_-(\pA(z))=0$ for all $z\in \T\bs \sigma(\pA)$. Therefore, for the special case $\pA(z)\ge 0$ for all $z\in \T$,
Theorem~\ref{thm:const-sig} reduces to the standard result on matrix spectral factorization (also known as Matrix-valued Fej\'er-Riesz Lemma) for nonnegative Hermite matrices of Laurent polnomials, which has been extensively studied in the literature, e.g., see \cite{RosRov:1985Hardy,HarHogSun:2004The-matrix-valued,EphJanLag:2009A-simple} and many references therein. This classical result on matrix spectral factorization plays a key role in the construction of tight framelets and tight framelet filter banks with two (non-symmetric) high-pass filters, e.g., see \cite{chs02,dhrs03,sel01} and references therein.

For a general Hermite matrix of Laurent polynomials, we have

\begin{theorem} \label{thm:nonconst-sig}
Let $\pA(z)$ be an $n \times n$ Hermite matrix of Laurent polynomials such that
$\det(\pA(z))$ is not identically zero.
Then there exists some $n \times m$ matrix $\pU(z)$ of Laurent polynomials
such that $\pA(z)=\pU(z)\pD\pU^\star(z)$ holds
with
$\pD =\diag( \mathbf{I}_{m_1}, -\mathbf{I}_{m_2})$ and $m:=m_1 + m_2$
if and only if
\begin{equation} \label{eq:LargeSig}
m_1 \geqslant \max_{z\in\T}\nu_+(\pA(z)), \qquad
m_2 \geqslant \max_{z\in\T}\nu_-(\pA(z)).
\end{equation}
\end{theorem}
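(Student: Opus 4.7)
The plan is to prove the two directions separately: necessity via a pointwise Sylvester-law-of-inertia argument, and sufficiency by reducing to the constant-signature case of Theorem~\ref{thm:const-sig} through a careful diagonal augmentation of $\pA(z)$. For necessity, suppose $\pA(z) = \pU(z)\pD\pU^\star(z)$ with $\pD = \diag(\mathbf{I}_{m_1}, -\mathbf{I}_{m_2})$, and fix $z \in \T$: any subspace of $\C^n$ on which $\pA(z)$ is positive definite, of dimension $\nu_+(\pA(z))$, must be mapped injectively by $\pU^\star(z)$ into the $m_1$-dimensional positive part of $\pD$ (since a nonzero vector in $\ker \pU^\star(z)$ would contribute $0$ to $v^\star \pA(z) v$). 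Hence $\nu_+(\pA(z)) \le m_1$ and symmetrically $\nu_-(\pA(z)) \le m_2$; maximizing over $z \in \T$ yields \eqref{eq:LargeSig}.

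For sufficiency, set $m_1^* := \max_{z\in\T}\nu_+(\pA(z))$ and $m_2^* := \max_{z\in\T}\nu_-(\pA(z))$, and first observe that once a factorization of signature $(m_1, m_2)$ is available, any $(m_1', m_2') \geq (m_1, m_2)$ is achieved by padding $\pU(z)$ with zero columns placed in the appropriate $\pm$-blocks; it thus suffices to construct the minimal factorization with $(m_1, m_2) = (m_1^*, m_2^*)$. Since $\det(\pA(z)) \not\equiv 0$, the set $\sigma(\pA) \cap \T$ is finite and $\T \bs \sigma(\pA)$ is a finite disjoint union of open arcs $I_1, \ldots, I_N$ on each of which $\nu_+(\pA(\cdot))$ is a constant $p_j$, with $\nu_-(\pA(\cdot)) = n - p_j$, $\max_j p_j = m_1^*$, and $\max_j(n - p_j) = m_2^*$. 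Setting $k := m_1^* + m_2^* - n \ge 0$, the arcwise signature deficit $(m_1^* - p_j,\, m_2^* - n + p_j)$ always sums to exactly $k$, so the task reduces to finding $k$ real Laurent polynomials $c_1(z), \ldots, c_k(z)$ (with $c_i^\star = c_i$) whose signs on each $I_j$ provide exactly $m_1^* - p_j$ positive and $m_2^* - n + p_j$ negative values. Such sign-patterned polynomials can be assembled from elementary real factors of the form $(z - z_0)(1 - \bar z_0 z^{-1})$ with $z_0 \in \sigma(\pA) \cap \T$, each raised to an odd power producing a sign change precisely at $z_0$. The augmented matrix
\[
\tilde\pA(z) := \pA(z) \oplus c_1(z) \oplus \cdots \oplus c_k(z)
\]
is then Hermitian of size $(m_1^* + m_2^*) \times (m_1^* + m_2^*)$ with constant signature $(m_1^*, m_2^*)$ on $\T \bs \sigma(\tilde\pA)$, so Theorem~\ref{thm:const-sig} supplies a square $\tilde\pU(z)$ with $\tilde\pA(z) = \tilde\pU(z)\diag(\mathbf{I}_{m_1^*}, -\mathbf{I}_{m_2^*})\tilde\pU^\star(z)$; the upper-left $n \times (m_1^* + m_2^*)$ block of $\tilde\pU(z)$ is the desired $\pU(z)$.

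The main obstacle is the controlled construction of the sign-patterned auxiliary polynomials $c_1, \ldots, c_k$. One must simultaneously realize the prescribed sign pattern on every arc $I_j$ \emph{and} keep $\det(\tilde\pA) \not\equiv 0$ so that the constant-signature hypothesis of Theorem~\ref{thm:const-sig} is truly met outside a finite subset of $\T$ (rather than only outside an entire arc). I anticipate resolving this by an inductive argument on $N$ or on $k$, toggling one sign change at a time and carefully tracking the multiplicities of the elementary factors at each $z_0 \in \sigma(\pA) \cap \T$, so that the augmented matrix ends up with constant signature wherever it is nonsingular.
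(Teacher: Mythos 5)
Your overall strategy matches the paper's: a pointwise inertia bound for necessity, and a block-diagonal augmentation to constant signature followed by Theorem~\ref{thm:const-sig} for sufficiency. Your necessity argument is correct and in fact slightly cleaner than the paper's Lemma~\ref{thm:SylvesterInertia}: mapping a positive-definite subspace for $\pA(z)$ injectively through $\pU^\star(z)$ into a positive-definite subspace for $\pD$ gives $\nu_+(\pA(z)) \le m_1$ directly, avoiding the Schur-complement bookkeeping. Your reduction of sufficiency (pad with zero columns, reduce to $(m_1^*,m_2^*)$, partition $\T$ into arcs $I_j$ and note the constant deficit sum $k$) also matches.

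There is, however, a genuine gap exactly where you acknowledge uncertainty, and the gadget you propose is wrong. For $z_0 \in \T$, the Laurent polynomial $(z - z_0)(1 - \bar z_0 z^{-1})$ equals $z + z^{-1} - 2\Re(z_0)$, whose zeros on $\T$ are \emph{both} $z_0$ and $\bar z_0 = z_0^{-1}$, not ``precisely $z_0$.'' Since $\pA$ may have complex coefficients, $\sigma(\pA)\cap\T$ need not be conjugation-symmetric, so $\bar z_0$ can land in the interior of an arc $I_j$; raising your factor to an odd power then introduces a spurious sign change inside $I_j$, destroying the constancy of the sign there. (More fundamentally, any Laurent polynomial $c$ with $c^\star = c$ is real-valued on $\T$ and therefore has an even number of sign changes around the circle, so no such factor can change sign at a single prescribed point.) The paper sidesteps this by building, for each arc $\Gamma_j$ with endpoints $z_{j,1},z_{j,2}$, the Hermitian scalar $\eta_j(z) := (z_{j,1}z_{j,2})^{-1/2}z^{-1}(z-z_{j,1})(z-z_{j,2})$, which vanishes simply exactly at the two arc endpoints and (with the right branch of the square root) is positive precisely on $\Gamma_j$ and negative on the complementary arcs. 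The auxiliary scalars are then produced iteratively as $\mu_k(z) = (-1)^{|J|+1}\prod_{j\in J}\eta_j(z)$ for suitable index sets $J$ of arcs, the even-transition parity being automatic because the sign of $\mu_k$ changes exactly on the boundary of $\bigcup_{j\in J}\Gamma_j$; one then verifies at each step that $\max_{\T}\nu_\pm$ of the augmented matrix stays equal to $n_\pm$. You would need to replace your elementary factor with something like $\eta_j$ and carry out this iteration to close the argument.
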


The above Theorems~\ref{thm:const-sig} and~\ref{thm:nonconst-sig} play a key role in our proof of Theorem~\ref{thm:qtf} and our study on quasi-tight framelets and quasi-tight framelet filter banks. Moreover, our proofs to Theorems~\ref{thm:const-sig} and~\ref{thm:nonconst-sig} are constructive and supplemented by step-by-step algorithms.
We also mention that the generalized matrix spectral factorization problem for matrices of polynomials has been extensively investigated in the literature of engineering, for example, see \cite{GohLanRod:1980Spectral,GohLanRod:1982Factorization,RanRod:1994Factorization,RanZiz:1997On-self-adjoint} and many references therein. However, there are barely any references on the generalized matrix spectral factorization problem for matrices of Laurent polynomials. 
Although the proofs of our construction share some similarities to the polynomial results \cite{GohLanRod:1982Factorization,RanRod:1994Factorization},
indeed, many new ideas and techniques are needed in order to handle the generalized matrix spectral factorization problem for matrices of Laurent polynomials. 

The structure of the paper is as follows. In Section~2 we shall
prove Theorem~\ref{thm:qtf} using Theorems~\ref{thm:const-sig} and~\ref{thm:nonconst-sig} on generalized matrix spectral factorization.
In Section~3 we shall provide a few examples of quasi-tight framelet filter banks and quasi-tight framelets in $\Lp{2}$ to illustrate our main results on quasi-tight framelets. In Section~4 we shall prove Theorem~\ref{thm:const-sig}
on generalized matrix spectral factorization with constant signature.
For improved readability, a few technical results for proving Theorem~\ref{thm:const-sig} are presented in the Appendix.
In Section~5, we shall prove Theorem~\ref{thm:nonconst-sig}.
Finally, in Section~6 we shall briefly discuss some extension of our results to one-dimensional quasi-tight framelets with a general dilation factor.

\section{Proof of Theorem~\ref{thm:qtf} on Quasi-tight Framelets}

In this section, we shall prove Theorem~\ref{thm:qtf} using Theorems~\ref{thm:const-sig} and \ref{thm:nonconst-sig} on generalized matrix spectral factorization. The proofs of Theorems~\ref{thm:const-sig} and \ref{thm:nonconst-sig} will be presented in Sections~4 and 5.

Before proving Theorem~\ref{thm:qtf}, we need the following lemma.

\begin{lemma} \label{lem:maxminT}
Let $ \pA(z) $ be an $ n\times n $ Hermite matrix of Laurent polynomials. Then
\[
\max_{z\in \T}\nu_+(\pA(z)) = \max_{z\in \T \bs B}\nu_+(\pA(z)), \qquad
\max_{z\in \T}\nu_-(\pA(z)) = \max_{z\in \T \bs B}\nu_-(\pA(z))
\]
for any finite subset $B$ of $\T$.
\end{lemma}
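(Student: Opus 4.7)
The plan is to prove the statement via the lower semi-continuity of $\nu_+$ and $\nu_-$ on $\T$, which in turn follows from the continuous dependence of the eigenvalues of a Hermitian matrix on its entries. The entries of $\pA(z)$, being Laurent polynomials, are continuous in $z\in\T$, so standard perturbation results (e.g.\ Weyl's inequalities, or the Bauer--Fike theorem) imply that the eigenvalues of $\pA(z)$, viewed as an unordered multiset, vary continuously with $z$.

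First I would record the following semi-continuity fact: for any fixed $z_0\in\T$, if $\pA(z_0)$ has $k$ strictly positive eigenvalues, then by continuity of eigenvalues there is an open neighbourhood $U$ of $z_0$ in $\T$ such that $\pA(z)$ has at least $k$ strictly positive eigenvalues for every $z\in U$. In symbols,
\[
\liminf_{z\to z_0}\nu_+(\pA(z))\ \geq\ \nu_+(\pA(z_0)),
\]
and the same inequality holds with $\nu_-$ in place of $\nu_+$ (either by the analogous argument or by applying the $\nu_+$ statement to $-\pA$).

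Second I would combine this with finiteness of $B$. Let $z_0\in B$ be arbitrary. Since $B$ is finite and $\T$ has no isolated points, every neighbourhood $U$ of $z_0$ in $\T$ intersects $\T\bs B$; by the semi-continuity above, for $z\in U\cap(\T\bs B)$ close enough to $z_0$ we have $\nu_+(\pA(z))\ge\nu_+(\pA(z_0))$. Therefore
\[
\max_{z\in\T\bs B}\nu_+(\pA(z))\ \geq\ \nu_+(\pA(z_0)),\qquad \forall\, z_0\in B,
\]
and maximizing over $z_0\in B$ gives $\max_{z\in\T\bs B}\nu_+(\pA(z))\ \geq\ \max_{z_0\in B}\nu_+(\pA(z_0))$. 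Since $\T=(\T\bs B)\cup B$, the maximum over $\T$ is the larger of the two, which is already attained on $\T\bs B$. The reverse inequality $\max_{z\in\T\bs B}\nu_+\le \max_{z\in\T}\nu_+$ is immediate, giving the claim for $\nu_+$. The statement for $\nu_-$ is identical.

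I do not foresee a serious obstacle; the only point requiring care is citing or quickly justifying the continuity of Hermitian eigenvalues in the entries, which is entirely standard. The combinatorial/topological step (that removing finitely many points from $\T$ cannot drop the maximum of a lower semi-continuous integer-valued function) is then a one-line consequence.
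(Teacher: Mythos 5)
Your proof is correct and uses essentially the same idea as the paper: lower semi-continuity of $\nu_+$ and $\nu_-$ (via continuity of the eigenvalues of $\pA(z)$ on $\T$) combined with the fact that a finite set $B$ cannot contain a whole neighbourhood of any point of $\T$. The paper phrases it by picking a $z_0$ where the maximum $n_+$ is attained and observing $\nu_+\equiv n_+$ on a neighbourhood of $z_0$, but this is the same semi-continuity argument you use.
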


\begin{proof}
Define $ n_+ := \max_{z\in \T} \nu_+(\pA(z)) $. Then there exists some $ z_0 \in \T $, such that $ \nu_+(\pA(z_0)) = n_+ $.
Since $ \pA(z) $ is an $ n\times n $ Hermite matrix of Laurent polynomials, its $ n $ eigenvalues $ \lambda_1(z), \ldots, \lambda_n(z) $, which are all the roots of the polynomial $ \det(\lambda\mathbf{I_n} - \pA(z)) $, can be chosen as real-valued continuous functions on $ \T $. (They are actually algebraic functions which are globally analytic.)
Therefore, there exists a neighborhood $ U(z_0) $ of $ z_0 $ on $ \T $, such that $ \nu_+(\pA(z)) = n_+ $ for all $ z\in U(z_0) $. As $ U(z_0) $ contains infinitely many points, the set $ U(z_0)\setminus B $ must be nonempty.
This implies that
\[
\max_{z\in \T \bs B} \nu_+(\pA(z)) \geqslant n_+ = \max_{z\in \T} \nu_+(\pA(z)).
\]
Since $ \T\bs B $  is a subset of $ \T $,
we trivially have $\max_{z\in \T \bs B} \nu_+(\pA(z))\le n_+$.
This proves
$\max_{z\in \T}\nu_+(\pA(z)) = \max_{z\in \T \bs B}\nu_+(\pA(z))$.
The identity $ \max_{z\in \T}\nu_-(\pA(z)) = \max_{z\in \T \bs B}\nu_-(\pA(z)) $ can be proved similarly.
\end{proof}

For a Laurent polynomial $ \pp(z)\not\equiv 0 $ and $ z_0\in \C\setminus\{0\} $, we define
$\mz(\pp(z), z_0) $ to be the multiplicity of the root of $ \pp(z) $ at $ z_0 $. That is, $ \mz(\pp(z), z_0) $ is the nonnegative integer such that $ (z-z_0)^{\mz(\pp(z), z_0)} \mid \pp(z) $ but $ (z-z_0)^{\mz(\pp(z), z_0)+1} \nmid \pp(z) $.
Hence, the orders of vanishing moments and sum rules of a Laurent polynomial $ \pp(z) $ can be equivalently expressed by
\[ \vmo(\pp(z)) = \mz(\pp(z), 1) \qquad
\mbox{and} \quad \sr(\pp(z)) = \mz(\pp(z), -1). \]

Also, recall that for a finitely supported sequence $ u \in \lp{0} $ and $ \gamma \in \Z $, \emph{its $ \gamma$-coset sequence $ u^{[\gamma]} $} is defined to be $ u^{[\gamma]}:=\{u(\gamma + 2k)\}_{k\in \Z} $. In terms of Laurent polynomials, we have $ \pu(z) = \pu^{[0]}(z^2) + z \pu^{[1]}(z^2) $.
Moreover,
\[ \begin{bmatrix}
\pb_1(z) & \cdots & \pb_s(z) \\
\pb_1(-z) & \cdots & \pb_s(-z)
\end{bmatrix}
=
\begin{bmatrix}
1 & z \\
1 & -z
\end{bmatrix}
\begin{bmatrix}
\pb_1^{[0]}(z^2) & \cdots & \pb_s^{[0]}(z^2) \\
\pb_1^{[1]}(z^2) & \cdots & \pb_s^{[1]}(z^2)
\end{bmatrix}, \]
where the last $ 2\times s $ matrix is called
the \emph{polyphase matrix} of the filter bank $ \{\pb_1, \ldots, \pb_s\} $.

We now prove Theorem~\ref{thm:qtf} using Theorems~\ref{thm:const-sig} and~\ref{thm:nonconst-sig}.

\begin{proof}[Proof of Theorem~\ref{thm:qtf}]
Since all high-pass filters must have at least $n_b$ vanishing moments, we can write
\begin{equation} \label{eq:bCoset}
\pb_\ell(z) = (1-z^{-1})^{n_b}\mathring{\pb}_\ell(z), \qquad \ell = 1,\ldots, s
\end{equation}
for some Laurent polynomials $\mathring{\pb}_1(z), \ldots, \mathring{\pb}_s(z)$.
Then
$\{a;b_1,\ldots,b_s\}_{\Theta,(\eps_1,\ldots,\eps_s)}$
is a quasi-tight framelet filter bank
satisfying \eqref{qtffb} and \eqref{eq:bCoset} if and only if
\begin{equation} \label{eq:qtfbnb}
\begin{bmatrix}
\mathring{\pb}_1(z) & \cdots & \mathring{\pb}_s(z) \\
\mathring{\pb}_1(-z) & \cdots & \mathring{\pb}_s(-z)
\end{bmatrix}
\begin{bmatrix}
\epsilon_1 & & \\
 & \ddots & \\
 & & \epsilon_s
\end{bmatrix}
\begin{bmatrix}
\mathring{\pb}_1(z) & \cdots & \mathring{\pb}_s(z) \\
\mathring{\pb}_1(-z) & \cdots & \mathring{\pb}_s(-z)
\end{bmatrix}^\star
= \cM_{\pa, \pTh|n_b}(z),
\end{equation}
where
\begin{align} \label{eq:VMfactor}
\cM_{\pa, \pTh|n_b}(z)
:=&
\begin{bmatrix}
(1-z^{-1})^{-n_b} & \\
 & (1+z^{-1})^{-n_b}
\end{bmatrix}
\cM_{\pa, \pTh}(z)
\begin{bmatrix}
(1-z)^{-n_b} & \\
 & (1+z)^{-n_b}
\end{bmatrix}      \\
=&
\begin{bmatrix}
\pA(z) & \pB(z) \\
\pB(-z) & \pA(-z)
\end{bmatrix}, \notag
\end{align}
with
\begin{equation} \label{eq:DefAB}
\pA(z):= \frac{\pTh(z) - \pTh(z^2)\pa(z)\pa^\star(z)}{(1-z)^{n_b}(1-z^{-1})^{n_b}}, \qquad
\pB(z):= \frac{-\pTh(z^2)\pa(z)\pa^\star(-z)}{(1+z)^{n_b}(1-z^{-1})^{n_b}}.
\end{equation}
Note that according to \eqref{nb}, we have
$ 2n_b \leqslant \mz(\pTh(z) - \pTh(z^2)\pa(z)\pa^\star(z), ~1)  $ and
$ n_b \leqslant \mz(\pa(z), ~ -1)  =  \mz(\pa^\star(-z), ~ 1)  $. Hence
$ \pA(z) $ and $ \pB(z) $ are well-defined Laurent polynomials.
Using the coset sequences, we know that \eqref{eq:qtfbnb} is equivalent to
\begin{equation} \label{eq:PRpolyphase}
\begin{bmatrix}
\mathring{\pb}_1^{[0]}(z) & \cdots & \mathring{\pb}_s^{[0]}(z) \\
\mathring{\pb}_1^{[1]}(z) & \cdots & \mathring{\pb}_s^{[1]}(z)
\end{bmatrix}
\begin{bmatrix}
\epsilon_1 & & \\
 & \ddots & \\
 & & \epsilon_s
\end{bmatrix}
\begin{bmatrix}
\mathring{\pb}_1^{[0]}(z) & \cdots & \mathring{\pb}_s^{[0]}(z) \\
\mathring{\pb}_1^{[1]}(z) & \cdots & \mathring{\pb}_s^{[1]}(z)
\end{bmatrix}^\star
=
\cN_{\pa, \pTh|n_b}(z),
\end{equation}
where $\cN_{\pa, \pTh|n_b}(z)$ is calculated from:
\begin{equation} \label{eq:DefN1}
\cM_{\pa, \pTh|n_b}(z) =
\begin{bmatrix}
1 & z \\
1 & -z
\end{bmatrix}
\cN_{\pa, \pTh|n_b}(z^2)
\begin{bmatrix}
1 & z \\
1 & -z
\end{bmatrix}^\star.
\end{equation}
That is,
\begin{equation} \label{eq:DefN2}
\cN_{\pa, \pTh|n_b}(z):=\frac{1}{2}
\begin{bmatrix}
\pA^{[0]}(z) + \pB^{[0]}(z) & z(\pA^{[1]}(z)-\pB^{[1]}(z)) \\
\pA^{[1]}(z) + \pB^{[1]}(z) & \pA^{[0]}(z) - \pB^{[0]}(z)
\end{bmatrix},
\end{equation}
where $\pA(z)$ and $\pB(z)$ are defined in \eqref{eq:DefAB}.
Hence, the existence of a quasi-tight framelet filter bank
$ \{a; b_1, \ldots, b_s\}_{\Theta, (\epsilon_1, \ldots, \epsilon_s)} $ with $ n_b $ vanishing moments necessarily implies a generalized spectral factorization in \eqref{eq:PRpolyphase} for the matrix $ \cN_{\pa, \pTh|n_b}(z) $ of Laurent polynomials.

According to Theorem~\ref{thm:nonconst-sig}, the existence of the generalized spectral factorization in \eqref{eq:PRpolyphase} implies that the number $ s_+ $ of times that $ ``+1"$ appears in $ \{\epsilon_1,\ldots,\epsilon_s\} $ and the number $ s_- $ of times that $ ``-1"$ appears in $ \{\epsilon_1,\ldots,\epsilon_s\} $ must satisfy
\begin{equation} \label{eq:s+s-}
s_+ \geqslant \max_{z\in \T}\nu_+(\cN_{\pa, \pTh|n_b}(z)) , \qquad \mbox{and} \quad
s_- \geqslant \max_{z\in \T}\nu_-(\cN_{\pa, \pTh|n_b}(z)).
\end{equation}
By \eqref{eq:VMfactor} and \eqref{eq:DefN1}, we know that
\[
\cM_{\pa, \pTh}(z)  =
\pP(z)  \cN_{\pa, \pTh|n_b}(z^2) \pP^\star(z)\quad
\mbox{with} \quad
\pP(z) := \begin{bmatrix}
(1-z^{-1})^{n_b} & \\
& (1+z^{-1})^{n_b}
\end{bmatrix}
\begin{bmatrix}
1 & z \\
1 & -z
\end{bmatrix}.
\]
Since $ \det(\pP(z)) = -2 z (1-z^{-1})^{n_b} (1+z^{-1})^{n_b} $, we observe
$ \sigma(\pP) \subseteq \{-1, 1\} $. Hence, $ \sigma(\pP) $ is a finite set.
For $ z\in \T \bs \sigma(\pP) $, the matrix $ \pP(z) $ is a nonsingular matrix.
By Sylvester's law of inertia, we get from $ \cM_{\pa, \pTh}(z) = \pP(z)\cN_{\pa, \pTh|n_b}(z^2)\pP^\star(z) $ that
$$ \nu_+(\cM_{\pa, \pTh}(z)) = \nu_+(\cN_{\pa, \pTh|n_b}(z^2)), \qquad
\nu_-(\cM_{\pa, \pTh}(z)) = \nu_-(\cN_{\pa, \pTh|n_b}(z^2)), \qquad
\forall z \in \T\bs \sigma(\pP). $$
According to Lemma~\ref{lem:maxminT}, we have
\begin{align*}
s_{a, \Theta}^+
=&
\max_{z\in \T} \nu_+(\cM_{\pa, \pTh}(z))
= \max_{z\in \T \bs\sigma(\pP)} \nu_+(\cM_{\pa, \pTh}(z)) \\
=& \max_{z\in \T \bs\sigma(\pP)} \nu_+(\cN_{\pa, \pTh|n_b}(z^2))
= \max_{z\in \T} \nu_+(\cN_{\pa, \pTh|n_b}(z^2))
= \max_{z\in \T} \nu_+(\cN_{\pa, \pTh|n_b}(z)).
\end{align*}
Similarly, $ s_{a, \Theta}^- = \max_{z\in \T} \nu_-(\cN_{\pa, \pTh|n_b}(z)) $.
Therefore, from \eqref{eq:s+s-} we know that the generalized spectral factorization in \eqref{eq:PRpolyphase} implies
\begin{equation} \label{eq:s+s-2}
s_+ \geqslant s_{a, \Theta}^+ , \qquad
s_- \geqslant s_{a, \Theta}^-, \qquad
\mbox{and} \quad
s = s_+ + s_-
\geqslant s_{a, \Theta}^+ + s_{a, \Theta}^- = s_{a, \Theta} .
\end{equation}
Hence, by Theorem~\ref{thm:nonconst-sig},
for $1\le s<s_{a,\Theta}$, there does not exist a quasi-tight framelet filter bank
$\{a;b_1,\ldots,b_s\}_{\Theta, (\epsilon_1, \ldots, \epsilon_s)}$ with $b_1,\ldots,b_s\in \lp{0}$ and $\eps_1,\ldots,\eps_s\in\{-1,1\}$.

On the other hand, given filters $ a, \Theta \in \lp{0}\bs\{0\} $, $ \Theta^\star = \Theta $, and a positive integer $ n_b $ satisfying \eqref{nb}, we can calculate the matrix $ \cN_{\pa, \pTh|n_b}(z) $ of Laurent polynomials from \eqref{eq:DefAB} and \eqref{eq:DefN2}.
By $ \Theta^\star(z) = \Theta(z) $, we deduce from \eqref{eq:DefAB} that
$ \pA^\star(z) = \pA(z) $ and $ \pB^\star(z) = \pB(-z) $.
Plugging these identities into
$ \pA^{[0]}(z^2) = \tfrac{1}{2}\left(\pA(z) + \pA(-z)\right) $,
$ \pA^{[1]}(z^2) = \tfrac{1}{2z}\left(\pA(z) - \pA(-z)\right)$,
$ \pB^{[0]}(z^2) = \tfrac{1}{2}\left(\pB(z) + \pB(-z)\right) $, and $ \pB^{[1]}(z^2) = \tfrac{1}{2z}\left(\pB(z)-\pB(-z)\right)$, we can easily verify that
\[
\pA^{[0]^\star}(z) = \pA^{[0]}(z),\qquad
\pA^{[1]^\star}(z) = z\pA^{[1]}(z), \qquad
\pB^{[0]^\star}(z) = \pB^{[0]}(z),\qquad
\mbox{and}\quad
\pB^{[1]^\star}(z) = -z\pB^{[1]}(z).
\]
Using the above four equations, we deduce from \eqref{eq:DefN2} that $ \cN_{\pa, \pTh|n_b}^\star(z) = \cN_{\pa, \pTh|n_b}(z) $. That is, $ \cN_{\pa, \pTh|n_b}(z) $ is a Hermite matrix of Laurent polynomials.
As we calculated, $ s_{a, \Theta}^+ = \max_{z\in \T} \nu_+(\cN_{\pa, \pTh|n_b}(z)) $
and
$ s_{a, \Theta}^- = \max_{z\in \T} \nu_-(\cN_{\pa, \pTh|n_b}(z)) $.
Take $ s := s_{a, \Theta} = s_{a, \Theta}^+ + s_{a, \Theta}^- $.
According to Theorem~\ref{thm:nonconst-sig},
we can choose
$ \eps_1=\cdots= \eps_{s_{a,\Theta}^+} = 1 $, $\eps_{s_{a,\Theta}^+ +1}=\cdots=\eps_{s}=-1$,
and find a generalized spectral factorization of $ \cN_{\pa, \pTh|n_b}(z) $ as
$ \cN_{\pa, \pTh|n_b}(z) = \pU(z)
\mbox{diag}(\eps_1,\ldots,\eps_s) \pU^\star(z)$,
where $ \pU(z) $ is a $ 2\times s $ matrix of Laurent polynomials.
Define Laurent polynomials $ \mathring{\pb}_1(z), \ldots, \mathring{\pb}_{s}(z) $ by
$
\begin{bmatrix}
\mathring{\pb}_1^{[0]}(z) & \cdots & \mathring{\pb}_{s}^{[0]}(z) \\
\mathring{\pb}_1^{[1]}(z) & \cdots & \mathring{\pb}_{s}^{[1]}(z)
\end{bmatrix} := \pU(z)
$.
Thus, \eqref{eq:PRpolyphase} holds.
Multiplying
$ \begin{bmatrix}
1 & z \\
1 & -z
\end{bmatrix} $ and
$ \begin{bmatrix}
1 & z \\
1 & -z
\end{bmatrix}^\star $ on the left and right side of $ \cN_{\pa, \pTh|n_b}(z^2) $ respectively, we see that \eqref{eq:PRpolyphase} is equivalent to \eqref{eq:qtfbnb} with $ \cM_{\pa, \pTh|n_b}(z) $ being defined in \eqref{eq:VMfactor}.
Define Laurent polynomials $ \pb_1(z)\ldots, \pb_s(z) $ as \eqref{eq:bCoset}, we conclude from \eqref{eq:qtfbnb} that $ \{a; b_1, \ldots, b_s\}_{\Theta, (\epsilon_1, \ldots, \epsilon_s)} $ is a quasi-tight framelet filter bank with
$ \min\{\vmo(b_1),\ldots, \vmo(b_s)\}\geqslant n_b $. This proves the existence of quasi-tight framelet filter bank with minimum number of high-pass filters and high vanishing moments.
\end{proof}

By Theorem~\ref{thm:qtf}, we see that the minimum numbers of high-pass filters with positive and negative signatures in a quasi-tight framelet filter bank are just $s_{a, \Theta}^+ $ and $ s_{a, \Theta}^-$, which are defined in \eqref{saTheta}.
We now explicitly present such quantities in the following for any given filters $a,\Theta \in \lp{0}\bs\{0\}$. Note that the matrix $\cM_{a,\Theta}$ cannot be identically zero.

If $\det(\cM_{a,\Theta}(z))$ is identically zero, then
one of the following two cases must happen:
\begin{enumerate}
\item[(1)] $ \pTh(z)\ge 0 $ for all $z\in \T$ if and only if $ s_{a, \Theta}^+ = 1 $ and $ s_{a, \Theta}^- = 0 $;
\item[(2)] $\pTh(z)\le 0 $ for all $z\in \T$ if and only if $ s_{a, \Theta}^+ = 0 $ and $ s_{a, \Theta}^- = 1$.
\end{enumerate}

Since $ \det(\cM_{\pa, \pTh}(z))\equiv 0$ and $ \cM_{\pa, \pTh}(z) = -\cM_{\pa, -\pTh}(z) $, by \cite[Lemma~1.4.5]{hanbook} or \cite[Lemma~6]{han15}, we conclude that
$\pTh(z)\ge 0$ (or $\pTh(z)\le 0$) for all $z\in \T$ if and only if
$\cM_{\pa, \pTh}(z)\ge 0$ (or $\cM_{\pa, \pTh}(z)\le 0$) for all $z\in \T$.
Note that $0$ must be an eigenvalue of $\cM_{\pa, \pTh}(z)$ by $\det(\cM_{\pa, \pTh}(z))=0$. Hence, if $\cM_{\pa, \pTh}(z)\ge 0$ (or $\cM_{\pa, \pTh}(z)\le 0$) for all $z\in \T$, then the other eigenvalue of $\cM_{\pa, \pTh}(z)$ must be nonnegative (or non-positive) and cannot be identically zero, since $\cM_{a,\pTh}$ cannot be identically zero. This proves items (1) and (2).
We now prove that $\pTh$ cannot change signs on $\T$.
By our assumptions $\pTh^\star=\pTh$ and
\begin{equation}\label{detM}
\det(\cM_{a,\Theta}(z))= \pTh(z)\pTh(-z)-\pTh(z^2)[\pTh(-z)\pa(z)\pa^\star(z)+
	\pTh(z)\pa(-z)\pa^\star(-z)]=0,
\end{equation}
we conclude (see \cite[Theorem~1.4.7]{hanbook} and \cite[Theorem~7]{han15}) that $\pTh(z)\in \R$ for $z\in \T$ and
$\pTh(z)\pTh(-z) = \lambda \pTh(z^2)$
for some nonzero real number $\gl$. Consequently, we have $\fth(z)\fth(-z)=\fth(z^2)$ with $\fth(z):=\pTh(z)/\gl$ and the above identity in \eqref{detM} is equivalent to
\[
\fth(-z)\pu(z)+\fth(z)\pu(-z)=1\qquad \mbox{with}\quad \pu(z):=\pa(z)\pa^\star(z).
\]
Since $\pu(z)\ge 0$ for all $z\in \T$, by the above identity, if $\fth(z_0)<0$ for some $z_0\in \T$, then we must have $\fth(-z_0)>0$ and consequently $\fth(z_0^2)=\fth(z_0)\fth(-z_0)<0$.
By induction, for any $z_0\in \T$, if $\fth(z_0)<0$, then we must have $\fth(z_0^{2^j})<0$ for all $j\in \N$.
If $\pTh$ changes signs on $\T$, then
$\fth(e^{-i\xi})<0$ for some $\xi \in (c,d)$ with $c<d$. Then the above argument shows that $\fth(e^{-i\xi})<0$ for all $\xi\in (2^jc,2^jd)$.
Therefore, we must have $\gl^{-1}\pTh(z) = \fth(z) <0$ for all $z\in \T$, a contradiction to our assumption. This proves that $\pTh$ cannot change signs on $\T$.

If $\det(\cM_{a,\Theta}(z))$ is not identically zero, then
one of the following four cases must happen:
\begin{enumerate}
\item[(3)] $\pTh(z)\ge 0$ and $\det(\cM_{a,\Theta}(z))\ge 0$ for all $z\in \T$ if and only if $ s_{a, \Theta}^+ = 2 $ and $ s_{a, \Theta}^- = 0 $;

\item[(4)] $\pTh(z)\le 0$ and $\det(\cM_{a,\Theta}(z))\ge 0$ for all $z\in \T$ if and only if $ s_{a, \Theta}^+ = 0 $ and $ s_{a, \Theta}^- = 2 $;

\item[(5)] $\det(\cM_{a,\Theta}(z))\le 0$ for all $z\in \T$ if and only if $ s_{a, \Theta}^+ = 1 $ and $ s_{a, \Theta}^- = 1 $;

\item[(6)] Otherwise (i.e., beyond the above three cases in items (3)--(5)), $s_{a,\Theta} = s_{a,\Theta}^+ + s_{a,\Theta}^->2$.

\end{enumerate}
Since $ \cM_{\pa, \pTh}(z) = -\cM_{\pa, -\pTh}(z)$,
items (3) and (4) are direct consequence of \cite[Theorem~1.4.5]{hanbook} and \cite[Theorem~7]{han15}.
Since $ \det(\cM_{\pa, \pTh}(z)) $ is the product of its two eigenvalues, we know that $\det(\cM_{a,\Theta}(z))\le 0$ for all $z\in \T$ if and only if for all $ z\in \T\bs\sigma(\cM_{\pa, \pTh}) $, $ \nu_+(\cM_{\pa, \pTh}(z)) = \nu_-(\cM_{\pa, \pTh}(z)) = 1 $. Because $\sigma(\cM_{\pa, \pTh}) $ is a finite set, we conclude from Lemma~\ref{lem:maxminT} that this is equivalent to  $ s_{a, \Theta}^+ = s_{a, \Theta}^- = 1 $. This proves item (5).
Hence, items (1) and (2) characterize all the cases for $s_{a,\Theta}=1$, while items (3)--(5) characterizes all the cases for $s_{a,\Theta}=2$.
Note that items (1) and (3)
lead to tight framelet filter bank $\{a; b_1, \ldots, b_{s_{a,\Theta}}\}_\Theta$, and items (2) and (4) lead to tight framelet filter banks $ \{a; b_1, \ldots, b_{s_{a,\Theta}}\}_{-\Theta}$ with $s\in \{1,2\}$.
Items (5) and (6) lead to quasi-tight framelet filter banks which cannot be changed into tight framelet filter banks.

For the special most popular choice of $\pTh(z) = 1$,  according to the above discussion, one of the following four cases must happen:
\begin{enumerate}
\item[(i)] $ \det(\cM_{\pa, 1}(z)) \equiv 0 $ for all $ z \in \T $ if and only if $ s_{a, \Theta}^+ = 1 $ and $ s_{a, \Theta}^- = 0 $;
	
\item[(ii)] $ \det(\cM_{\pa, 1}(z)) \geqslant 0$ for all $ z \in \T $ with $ \det(\cM_{\pa, 1}(z)) \not\equiv 0 $ if and only if $ s_{a, \Theta}^+ = 2 $ and $ s_{a, \Theta}^- = 0 $;
	
\item[(iii)] $ \det(\cM_{\pa, 1}(z)) \leqslant 0$ for all $ z \in \T $ with $ \det(\cM_{\pa, 1}(z)) \not\equiv 0 $ if and only if $ s_{a, \Theta}^+ = 1 $ and $ s_{a, \Theta}^- = 1 $;
	
\item[(iv)] $\det(\cM_{\pa, 1}(z)) $ changes signs on $\T$ if and only if $ s_{a, \Theta}^+ = 2 $ and $ s_{a, \Theta}^- = 1 $.
\end{enumerate}

\section{Examples of Quasi-tight Framelets and Quasi-tight Framelet Filter Banks}

In this section, we provide some examples for quasi-tight framelet filter banks and quasi-tight framelets.
Since tight framelet filter banks have been extensively studied and constructed in the literature, according to our discussion at the end of Section~2,
we only provide examples for cases (5) and (6) in Section~2 (i.e., either $\det(\cM_{a,\Theta}(z))\le 0$ or it changes signs on $\T$) which lead to truly quasi-tight framelet filter banks.

In order to obtain a quasi-tight framelet in $ \Lp{2} $, we have to check the technical condition that the refinable function $ \phi $ (defined in \eqref{phi}) associated with the low-pass filter $ a $ is in $ \Lp{2} $.
Let $ a \in \lp{0} $ with $ \pa(1) = 1 $ and $ m:= \sr(\pa) $, the order of the sum rules of the low-pass filter $a$.
Then we can write $ \pa(z) = (1+z)^m \mathring{\pa}(z) $, where $ \mathring{\pa}(-1)\neq 0 $.
Let $ w\in \lp{0} $ be the sequence determined by $ \pw(z):= \mathring{\pa}(z)\mathring{\pa}^\star(z) $, whose highest and lowest degrees are $ K $ and $ -K $ respectively.
We now recall a technical quantity (e.g., see \cite[(2.0.7)]{hanbook}):
\begin{equation}\label{smaM}
\sm(a):=-\tfrac{1}{2}-\log_{2} \sqrt{\rho(a)},
\end{equation}
where $ \rho(a) $ denotes the spectral radius of the square matrix
$ (w(2j - k))_{-K \leqslant j,k \leqslant K} $.
Let $\phi$ be defined in \eqref{phi}. If $\sm(a)>0$, then $\phi\in \Lp{2}$ and moreover, $\int_{\R} |\wh{\phi}(\xi)|^2 (1+|\xi|^2)^\tau d\xi<\infty$ for all $0\le \tau<\sm(a)$.

The following example shows that for some low-pass filters $a$, one can never obtain a finitely supported tight framelet filter bank, but one can easily construct a quasi-tight framelet filter bank.

\begin{example} \label{ex:ThetaVM1}
{\rm
Consider a low-pass filter $ a $ given by
\[
\pa(z) = \tfrac{1}{26}z^{-2}(z+1)(z^2 - z + 1)(9z^2-5z+9).
\]
Note that $ |\pa(e^{-i2\pi/3})| = \frac{14}{13} > 1 $ and $ |\pa(e^{-i2\pi/3})| \not\in\{ 2^j\setsp j \in \N \} $. By \cite[Proposition 4.4]{HanMo:2005Symmetric}, there does not exist a (rational) Laurent polynomial $ \Theta $ with real coefficients such that $ \cM_{a,\Theta}(z)\ge 0$ for all $ z\in \T $.
Therefore, using Oblique Extension Principle, one cannot construct a real-valued tight framelet filter bank from such low-pass filter $a$.
Note that $\sr(\pa) =1$ and $\vmo(1-\pa\pa^\star)=2$.
Taking $ \Theta(z) = 1 $ and $ n_b = 1 $, we see from Figure~\ref{fig:ThetaVM1} that $ \det(\cM_{\pa, 1}(z))$ changes signs on $\T$. Hence, $ s_{a, \Theta}^+ = 2 $ and $ s_{a, \Theta}^- = 1 $.
We have a quasi-tight framelet filter bank $ \{a; b_1, b_2, b_3\}_{\Theta, (1, 1, -1)} $ as follows:
\begin{align*}
\pb_1(z) &= \tfrac{1}{52}z^{-2}(z-1)(63z^4+28z^3+100z^2+28z+63),\\
\pb_2(z) &= \tfrac{\sqrt{2}}{97344}z^{-2}(z-1)(9z^2+4z+9)(3645z^4 - 1034z^2+3645), \\
\pb_3(z) &= \tfrac{\sqrt{2}}{97344}z^{-2}(z-1)(9z^2+4z+9)(3645z^4 + 9782z^2+3645),
\end{align*}
with $ \vmo(b_1) = \vmo(b_2) = \vmo(b_3) = 1 $. Since $ \sm(a) \approx 0.7693 $, the refinable function $ \phi $ defined in \eqref{phi} belongs to $ \Lp{2} $.
Therefore, $\{\phi, \phi; \psi^1,\psi^2,\psi^3\}_{(1,1,-1)}$ is a quasi-tight framelet in $\Lp{2}$ and
$\{\psi^1, \psi^2, \psi^3\}_{(1, 1, -1)} $ is a homogeneous quasi-tight framelet in $\Lp{2}$, where $ \psi^1, \psi^2, \psi^3 $ are defined in \eqref{eta:psi} and have at least one vanishing moment.
}\end{example}

\begin{figure}[hbt]
\centering
\begin{subfigure}[]{0.18\textwidth}			 \includegraphics[width=\textwidth, height=0.8\textwidth]{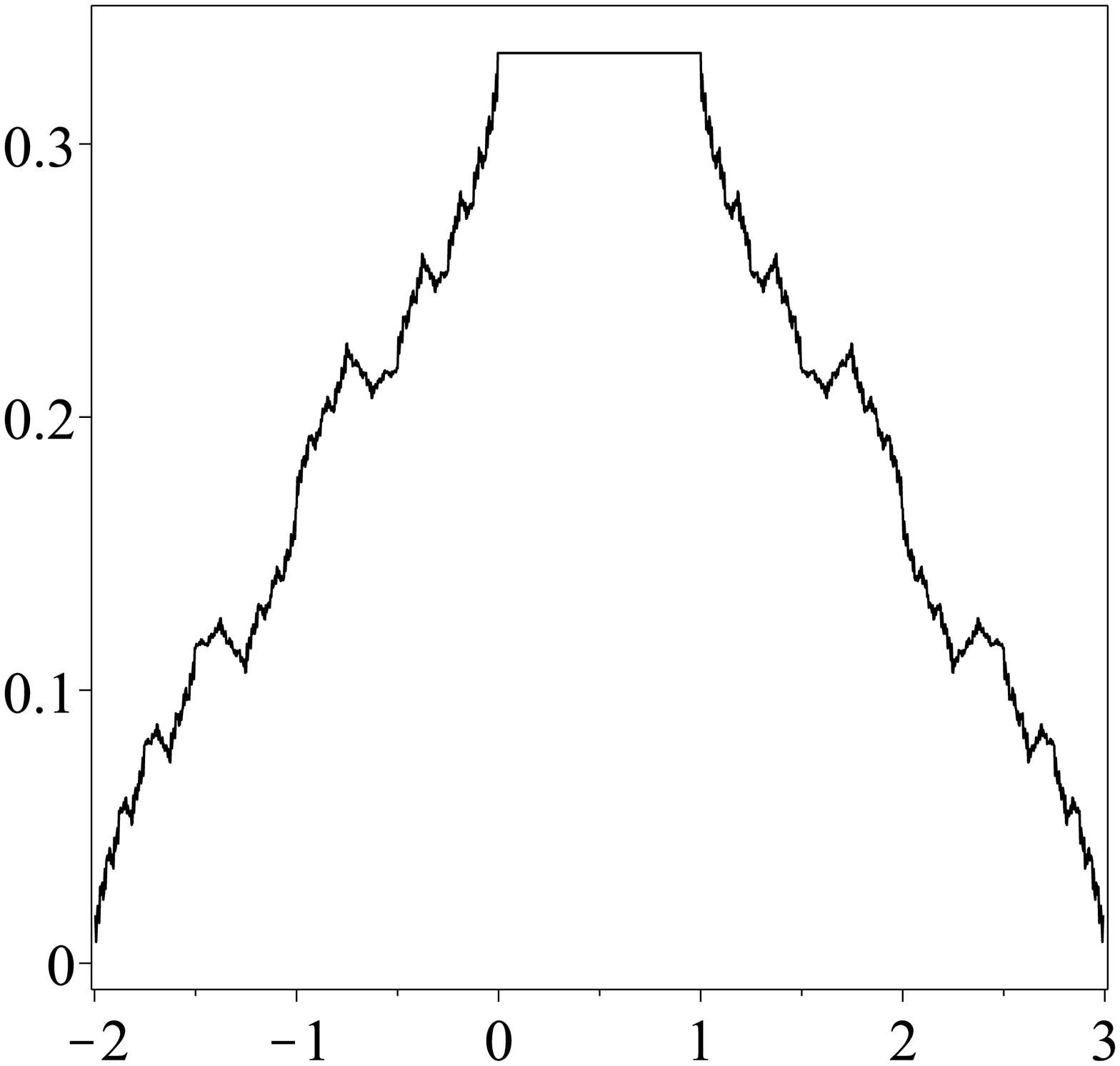}
\caption{$\phi$}\end{subfigure}
\begin{subfigure}[]{0.18\textwidth}			 \includegraphics[width=\textwidth, height=0.8\textwidth]{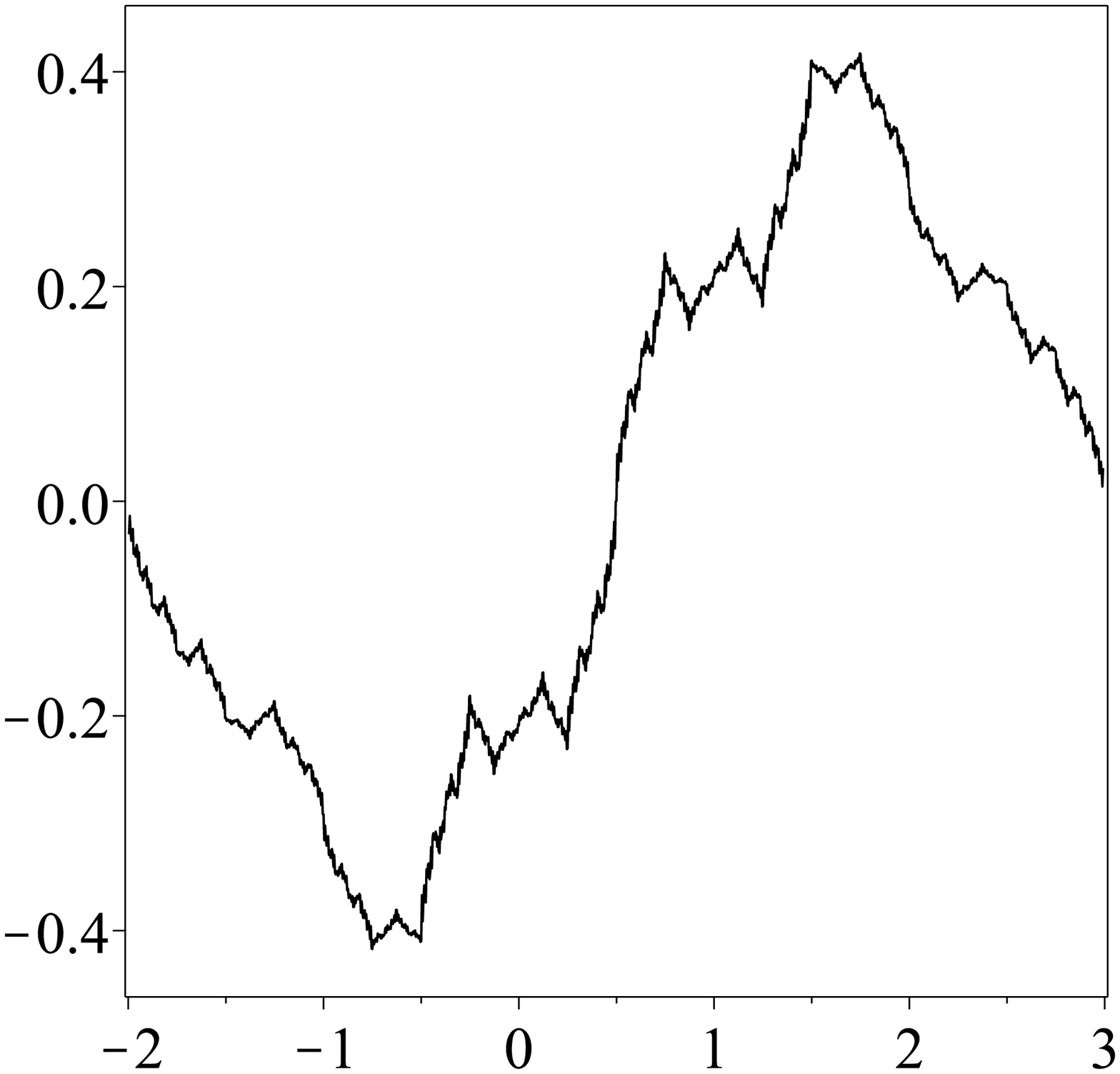}	
\caption{$\psi^1$}\end{subfigure}
\begin{subfigure}[]{0.18\textwidth}			 \includegraphics[width=\textwidth, height=0.8\textwidth]{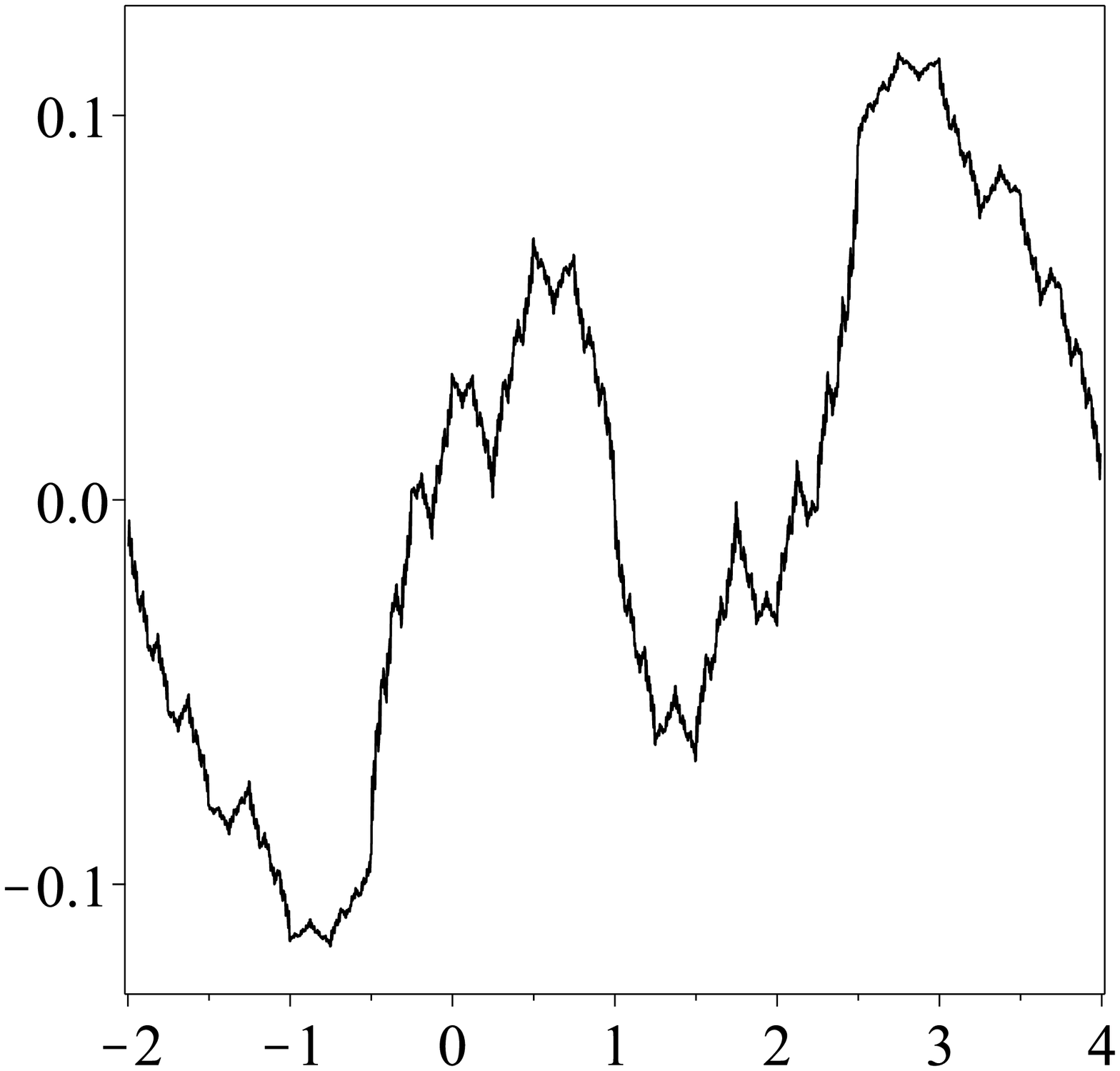}
\caption{$\psi^2$}
\end{subfigure} \begin{subfigure}[]{0.18\textwidth}
\includegraphics[width=\textwidth, height=0.8\textwidth]{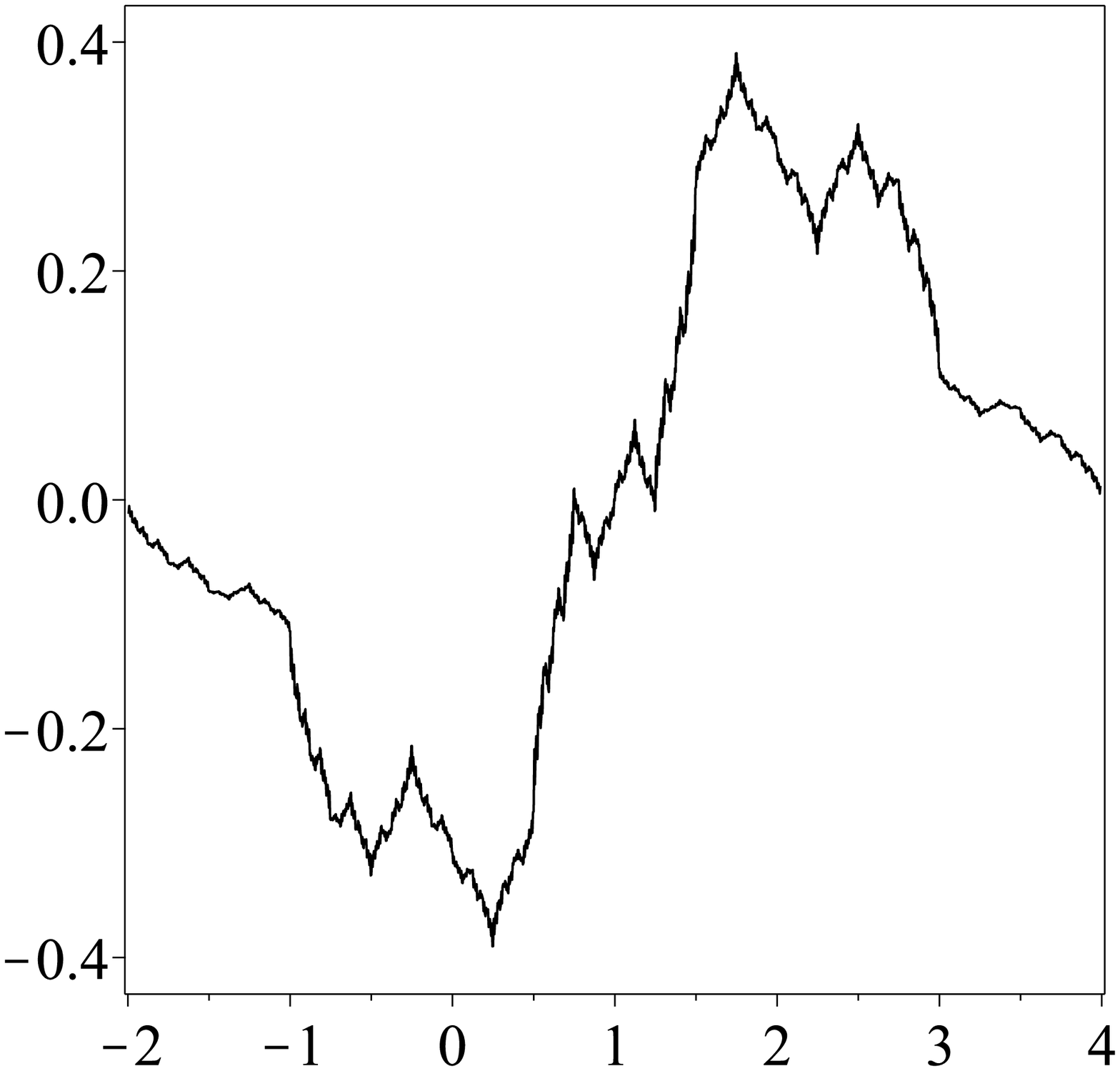}
\caption{$ \psi^3 $ }
\end{subfigure}
\begin{subfigure}[]{0.18\textwidth}
\includegraphics[width=\textwidth, height=0.8\textwidth]{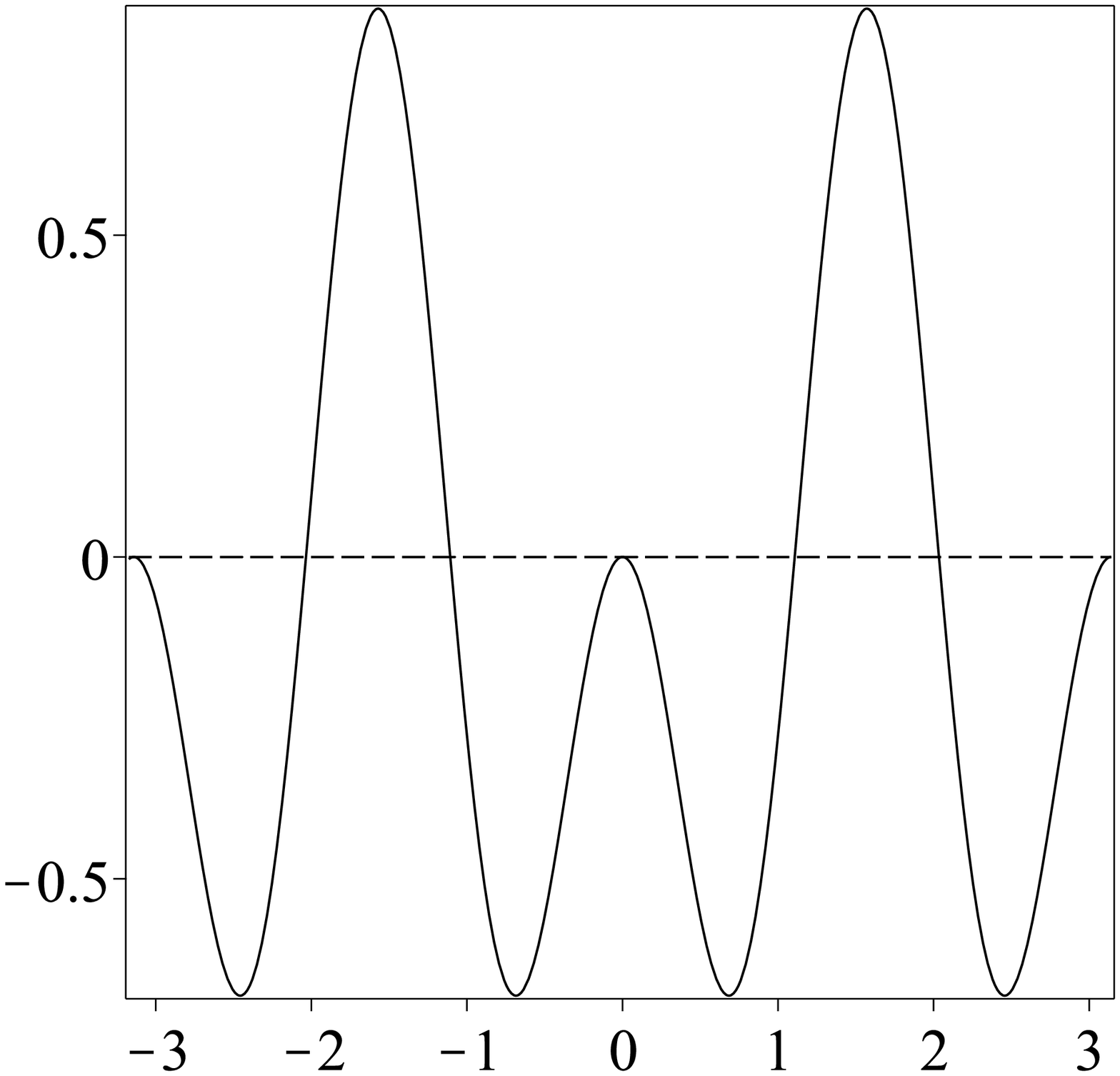}
\caption{$\det(\cM_{\pa,1})$}\end{subfigure}
\caption{
The quasi-tight framelet $\{\phi,\phi; \psi^1,\psi^2,\psi^3\}_{(1,1,-1)}$ and the homogeneous quasi-tight framelet $\{\psi^1,\psi^2,\psi^3\}_{(1,1,-1)}$ in $\Lp{2}$ obtained in Example~\ref{ex:ThetaVM1}.
(A) is the refinable function $\phi\in \Lp{2}$. (B) --(D) are the framelet functions $\psi^1$, $ \psi^2 $ and $ \psi^3$.
(E) is $\det(\cM_{\pa, 1}(e^{-i\xi}))$ for $ \xi\in  [-\pi, \pi] $, where the dashed line is the horizontal axis.
}\label{fig:ThetaVM1}
\end{figure}

\begin{example} \label{ex:ThetaInterp}
{\rm Consider $\Theta(z)=\tfrac{1}{2}(z+\tfrac{1}{z}) $ and the interpolatory low-pass filter
\[
\pa(z) = \tfrac{1}{2} + \tfrac{3}{8} (z + z^{-1}) - \tfrac{1}{8}(z^3 + z^{-3}).
\]
We see from Figure~\ref{fig:ThetaInterp} that
$\det(\cM_{\pa, \pTh}(z)) \leqslant 0$ for all $ z\in \T $. Therefore, $ s_{a, \Theta}^+ = s_{a, \Theta}^- = 1 $.
	Note that $ \sr(\pa) = 2 $ and
	$ \vmo(\pTh(z) - \pTh(z^2)\pa(z)\pa^\star(z)) = 4 $.
	Hence, the maximum order of vanishing moments is two. Taking $ n_b = 2 $, we obtain a quasi-tight framelet filter bank $ \{a; b_1, b_2\}_{\Theta, (1, -1)} $ as follows:
\begin{align*}
	\pb_1(z) &= \tfrac{1}{32}z^{-3}(z-1)^2 (z^6+2z^5-4z^4-14z^3-23z^2-16z-8), \\
	\pb_2(z) &= -\tfrac{1}{32}z^{-3}(z-1)^2 (z^4+2z^3+4z^2+2z+9),
	\end{align*}
	with $ \vmo(b_1) = \vmo(b_2) = 2 $. Since $ \sm(a) = 1 $,
the refinable function $ \phi $ defined in \eqref{phi} belongs to $ \Lp{2} $.
Define $\tilde{\eta}:=(\phi(\cdot+1)+\phi(\cdot-1))/2$.
Therefore, $\{\phi,\tilde{\eta};\psi^1,\psi^2\}_{(1,-1)}$ a quasi-tight framelet in $\Lp{2}$ and
$ \{\psi^1, \psi^2\}_{(1, -1)} $ is a homogeneous quasi-tight framelet in $ \Lp{2} $, where $ \psi^1, \psi^2 $ are defined in \eqref{eta:psi} and have at least two vanishing moments.
}\end{example}

\begin{figure}[htb!]
\centering
\begin{subfigure}[]{0.18\textwidth}			 \includegraphics[width=\textwidth, height=0.8\textwidth]{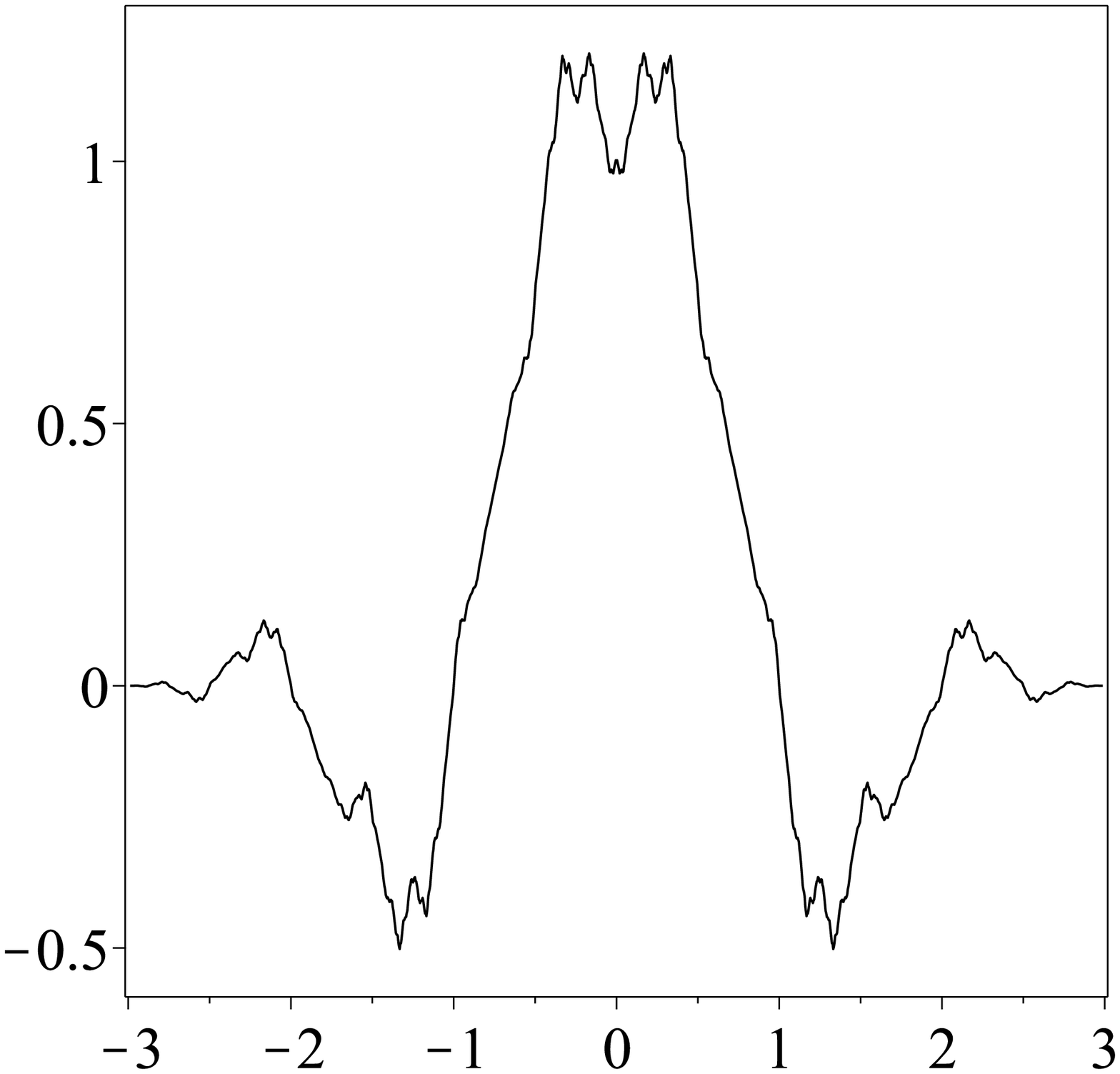}
\caption{$\phi$}
\end{subfigure}		
\begin{subfigure}[]{0.18\textwidth}			 \includegraphics[width=\textwidth, height=0.8\textwidth]{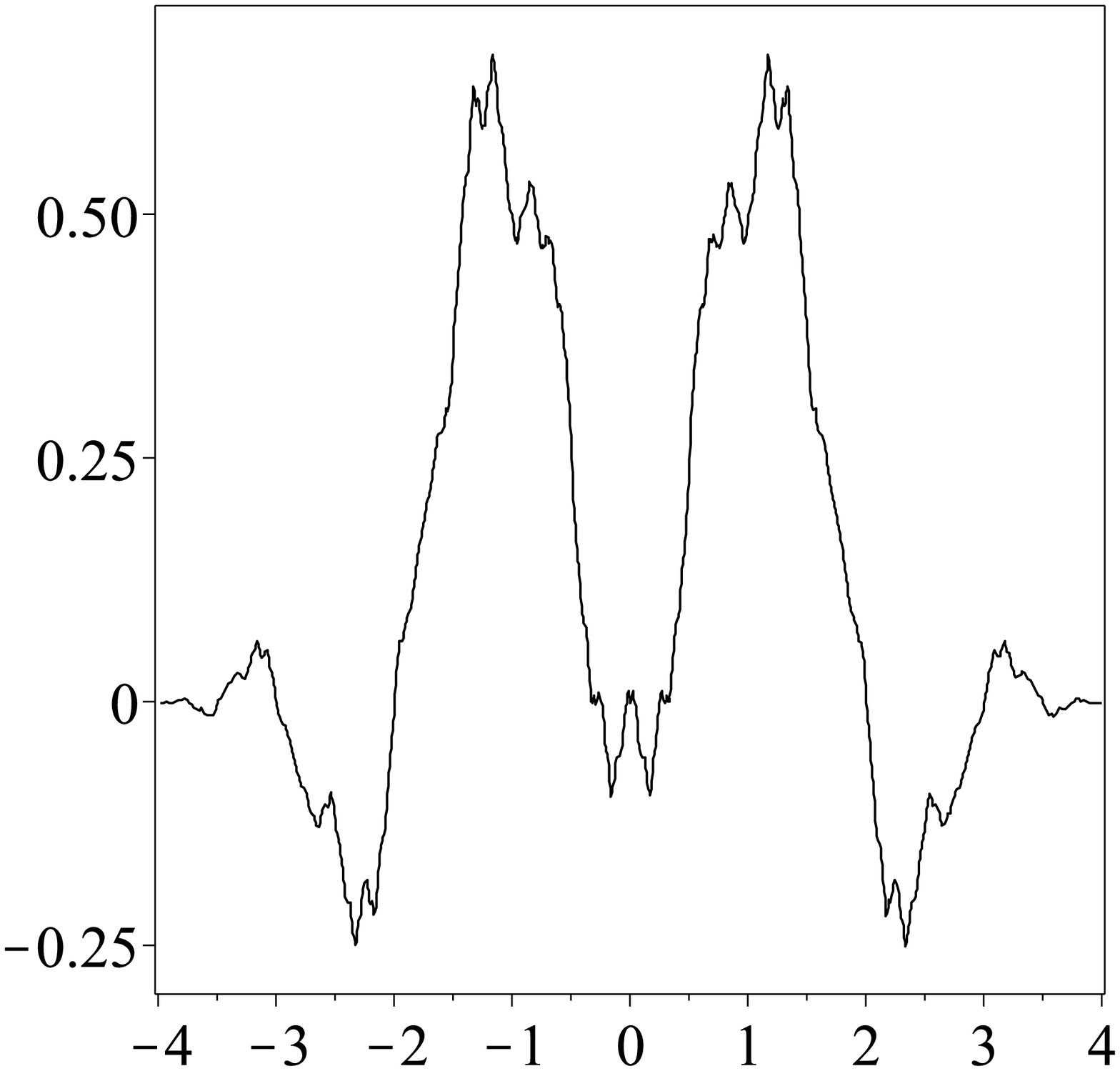}
\caption{$\tilde{\eta}$}
\end{subfigure}	
 \begin{subfigure}[]{0.18\textwidth}
\includegraphics[width=\textwidth, height=0.8\textwidth]{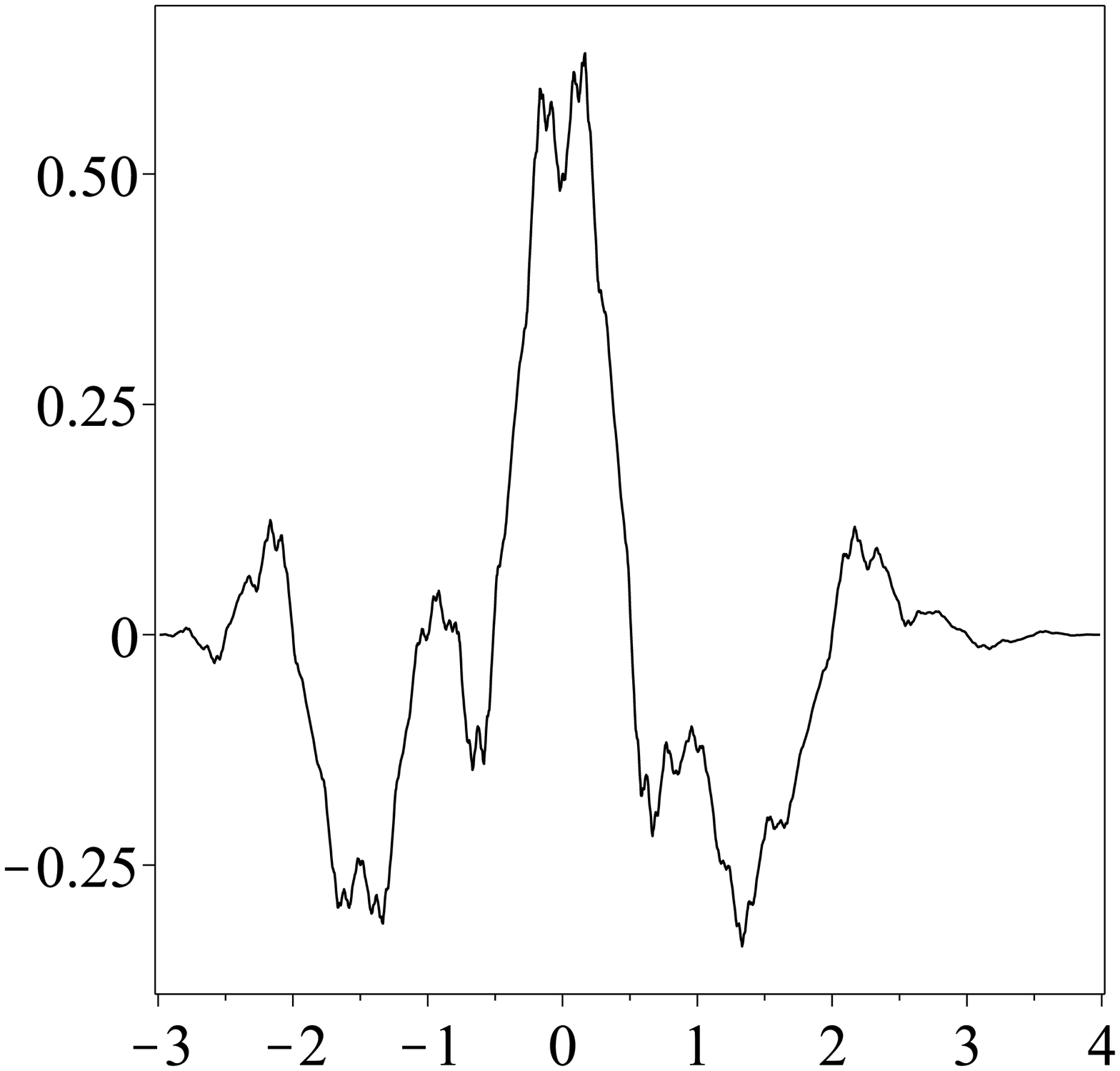}
			\caption{$\psi^1$}
		\end{subfigure}	 \begin{subfigure}[]{0.18\textwidth}
\includegraphics[width=\textwidth, height=0.8\textwidth]{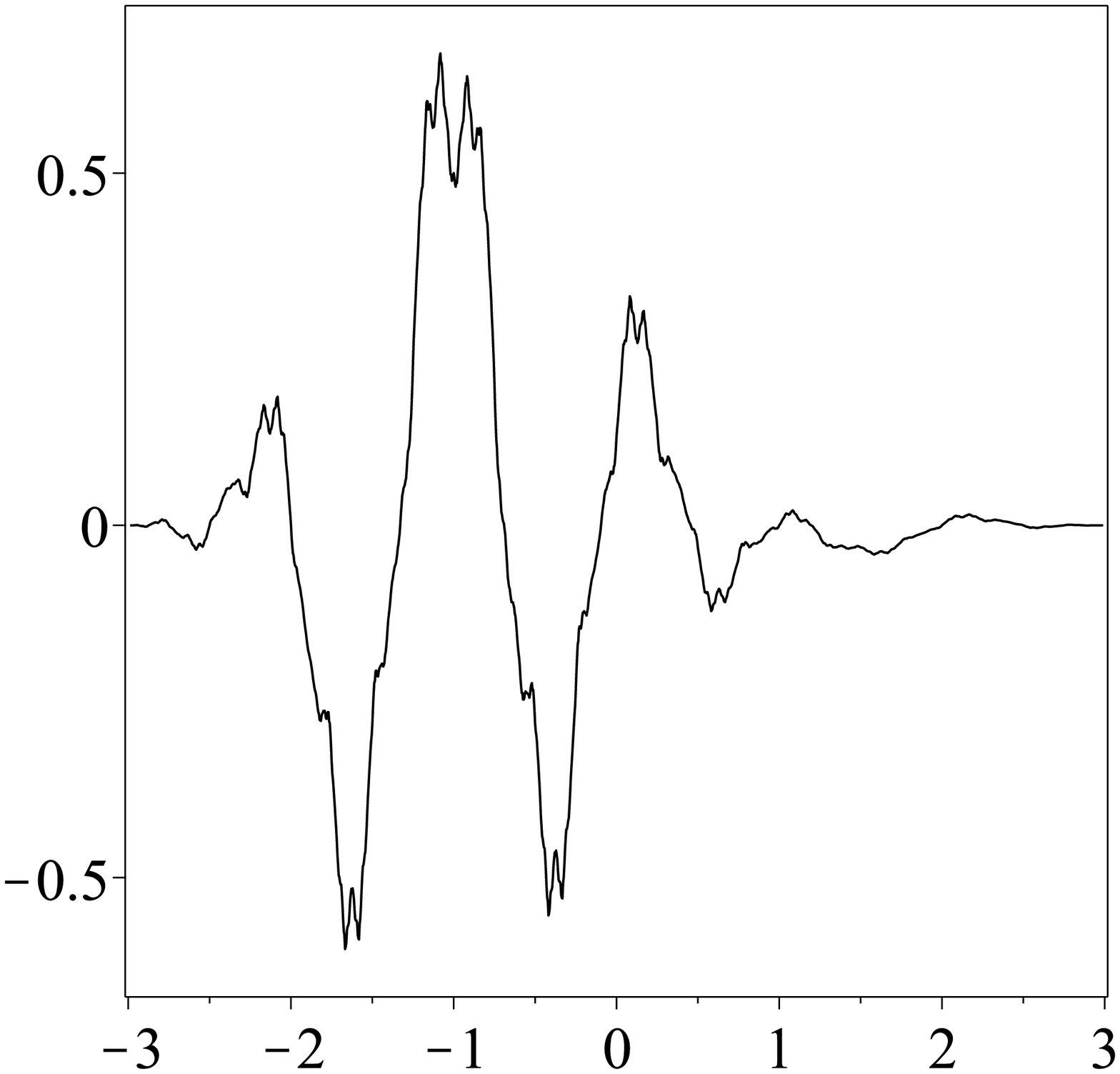}
			\caption{$\psi^2$}
		\end{subfigure}		 \begin{subfigure}[]{0.18\textwidth}
\includegraphics[width=\textwidth, height=0.8\textwidth]{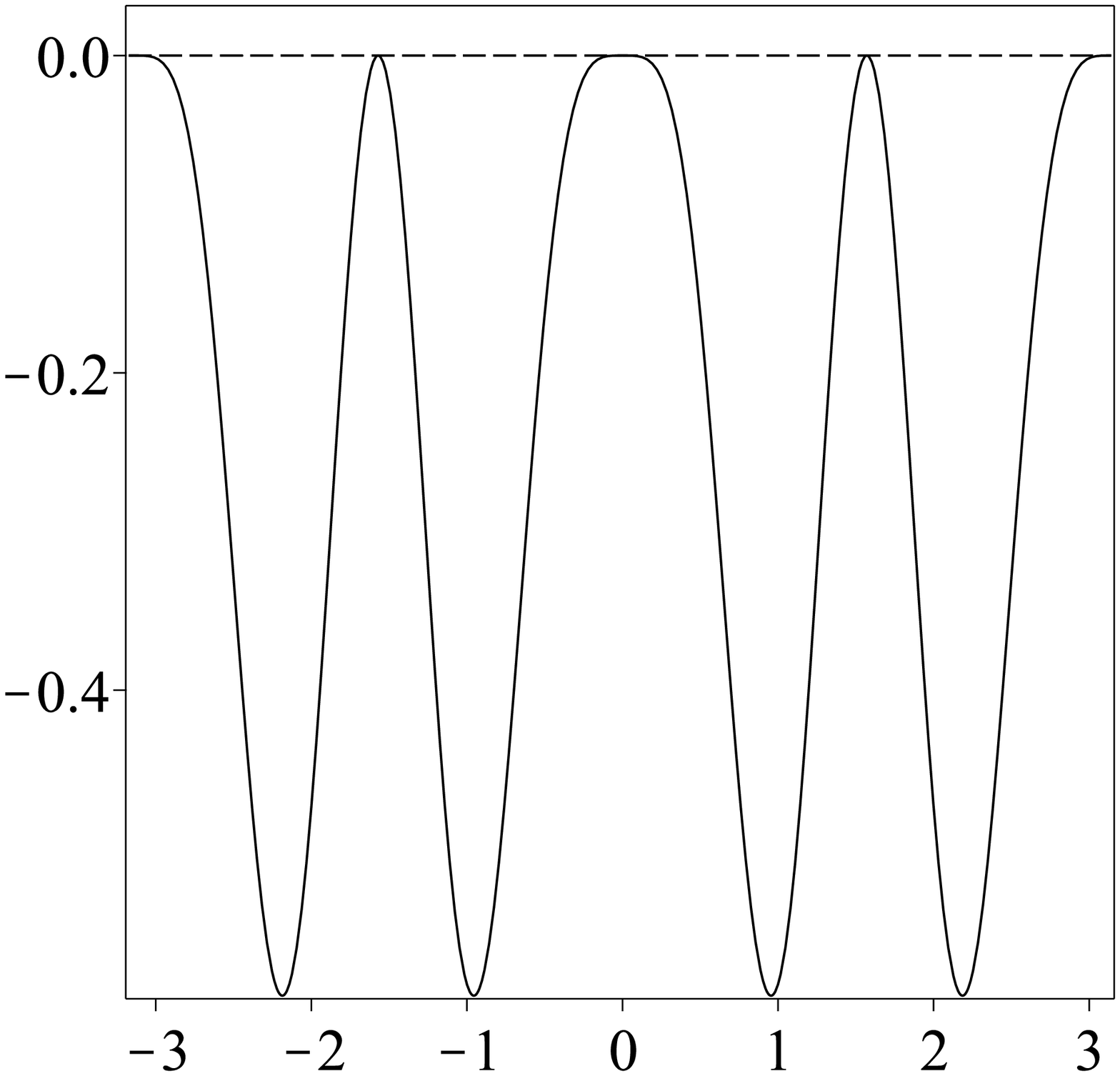}
			\caption{$ \det(\cM_{\pa,\pTh}) $ }
		\end{subfigure}
		\caption{
The quasi-tight framelet $\{\phi,\tilde{\eta}; \psi^1,\psi^2\}_{(1,-1)}$ and the homogeneous quasi-tight framelet $\{\psi^1,\psi^2\}_{(1,-1)}$ in $\Lp{2}$ obtained in Example~\ref{ex:ThetaInterp}.
(A) is the refinable function $\phi\in \Lp{2}$.
(B) is the function $\tilde{\eta}:=(\phi(\cdot+1)+\phi(\cdot-1))/2$.
(C) and (D) are the framelet functions $\psi^1$ and $ \psi^2 $.
(E) is $\det(\cM_{\pa, 1}(e^{-i\xi}))$ for $ \xi\in  [-\pi, \pi] $.
}
\label{fig:ThetaInterp}
\end{figure}

\begin{example} \label{ex:Two32}
{\rm Consider $\pTh(z) = 1$ and the low-pass filter
\[
\pa(z) = -\tfrac{1}{16}z^{-2} (z+1)^3
	( z^2 - 4z + 1 ).
\]
We see from Figure~\ref{fig:Two32} that $\det(\cM_{\pa, 1}(z))\leqslant 0 $ for all $ z\in \T $. Hence, $ s_{a, \Theta}^+ = s_{a, \Theta}^- = 1 $.
Note that $ \sr(\pa)=3 $ and $ \vmo(1-\pa\pa^\star)=2 $.
	Taking $n_b=1$, we obtain a quasi-tight framelet filter bank
	$\{\ta; \tb_1, \tb_2\}_{\Theta, \{1, -1\}}$ as follows:
\begin{align*}
\pb_1(z) =& \tfrac{-\sqrt{2}}{512z^2}
(z-1)(16z^5-271z^4+16z^3+16z^2-1), \\
\pb_2(z) =& \tfrac{\sqrt{2}}{512z^2}
(z-1)(16z^5+241z^4+16z^3+16z^2-1),
\end{align*}
with $ \vmo(\tb_1) =  \vmo(\tb_2) = 1 $.
Since $\sm(a) \approx 1.4408$, the refinable function $ \phi $ defined in \eqref{phi} belongs to $ \Lp{2} $. Therefore, $\{\phi,\phi; \psi^1,\psi^2\}_{(1,-1)}$ is a quasi-tight framelet in $\Lp{2}$ and
$\{\psi^1, \psi^2\}_{(1, -1)} $ is a homogeneous quasi-tight framelet in $\Lp{2} $, where $ \psi^1, \psi^2 $ are defined in \eqref{eta:psi} and have one vanishing moment.
} \end{example}

\begin{figure}[h!]
\centering		 \begin{subfigure}[]{0.24\textwidth}		 \includegraphics[width=\textwidth, height=0.8\textwidth]{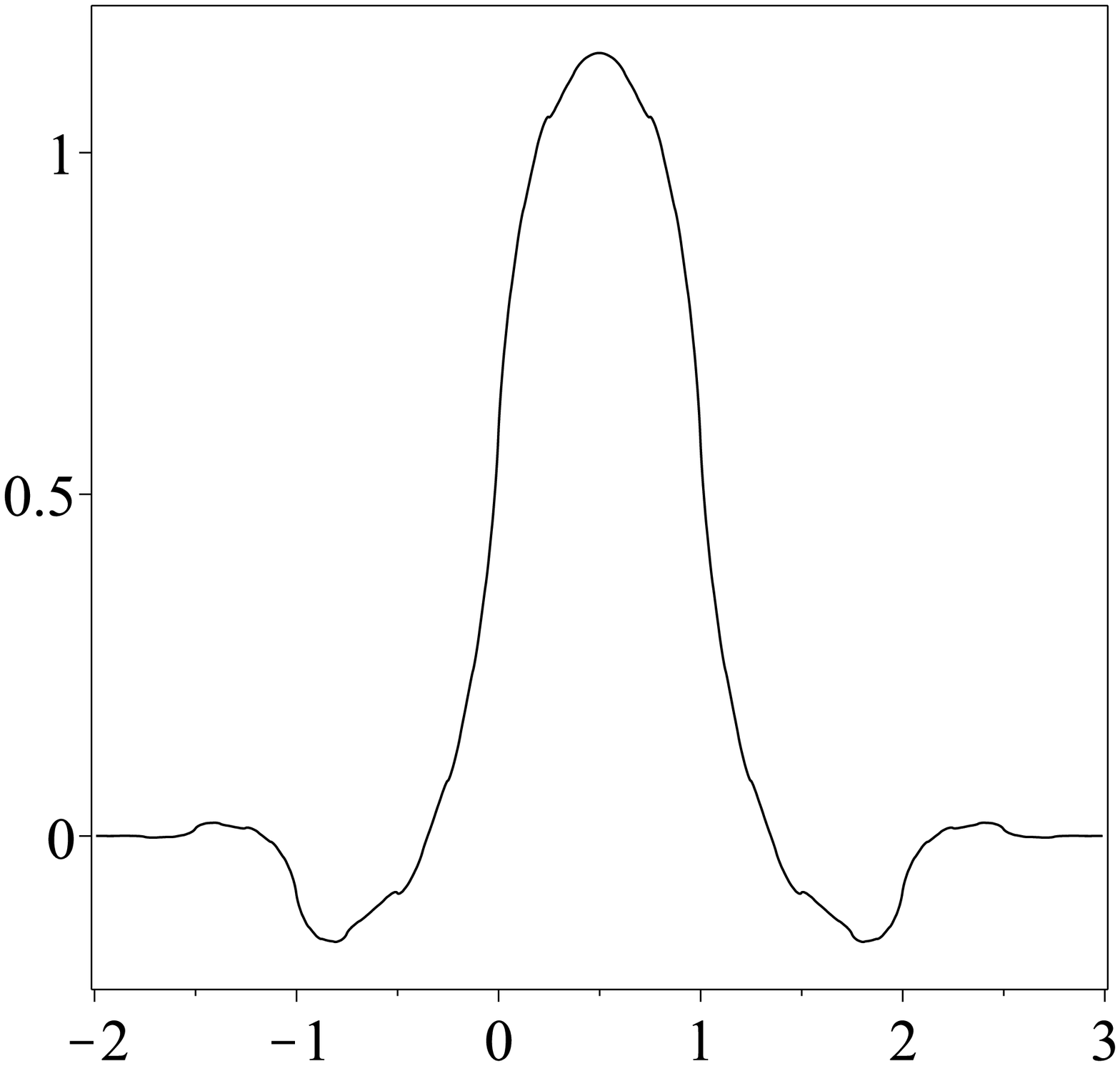}
\caption{$\phi$}
\end{subfigure}
\begin{subfigure}[]{0.24\textwidth}			 \includegraphics[width=\textwidth, height=0.8\textwidth]{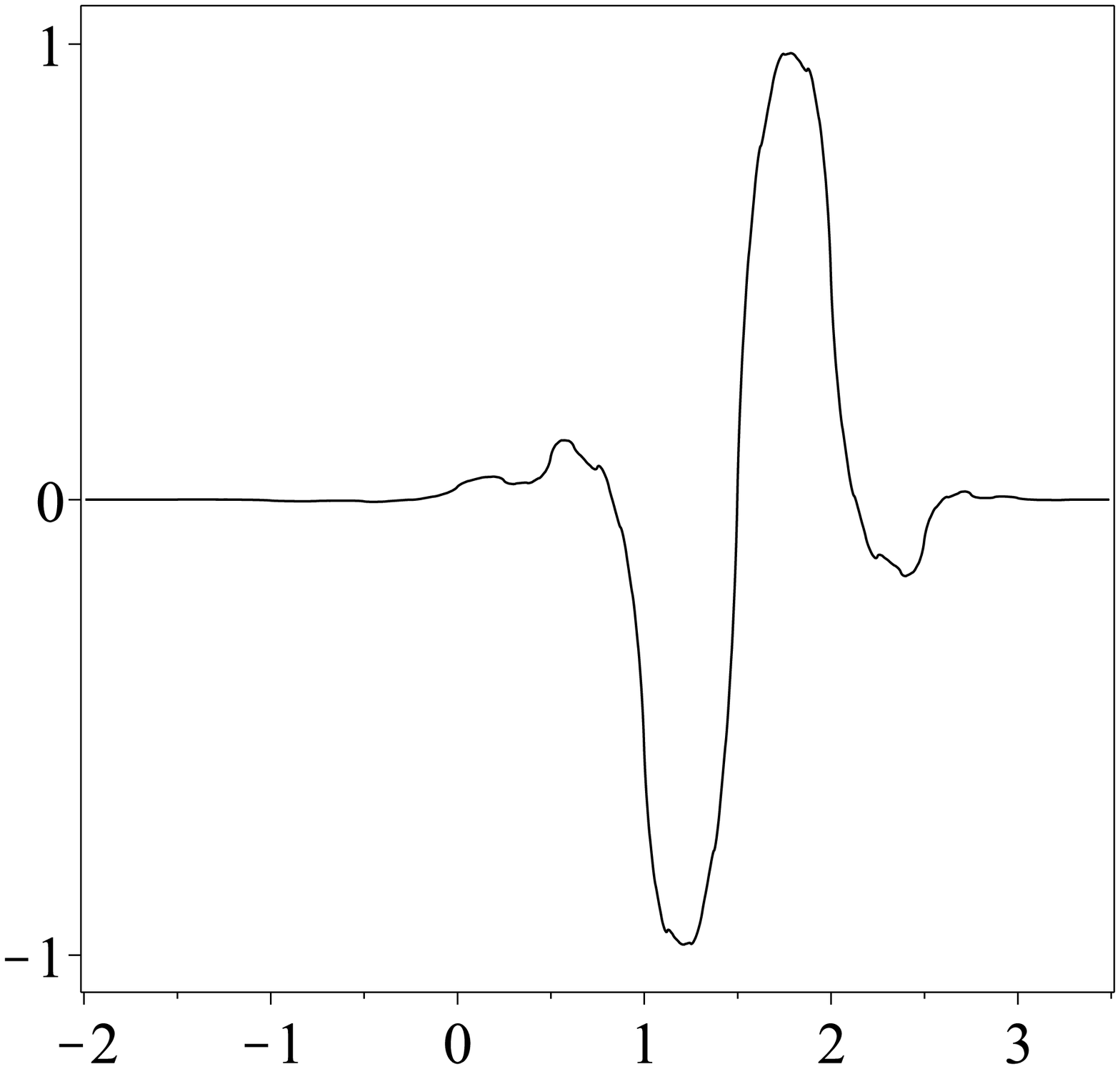}
\caption{$\psi^1$}
\end{subfigure}		 \begin{subfigure}[]{0.24\textwidth}
\includegraphics[width=\textwidth, height=0.8\textwidth]{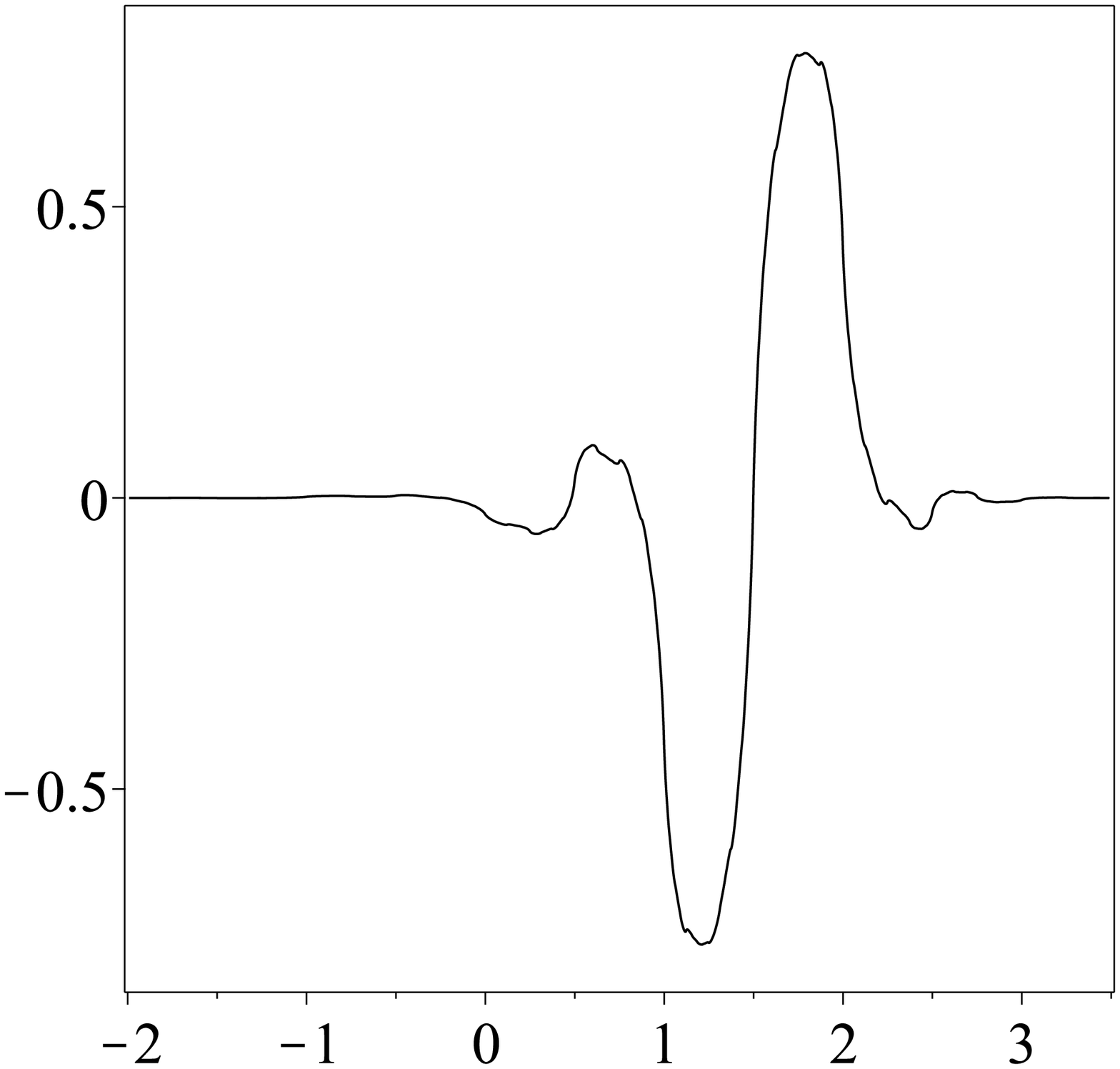}
\caption{$\psi^2$}
\end{subfigure}		 \begin{subfigure}[]{0.24\textwidth}
\includegraphics[width=\textwidth, height=0.8\textwidth]{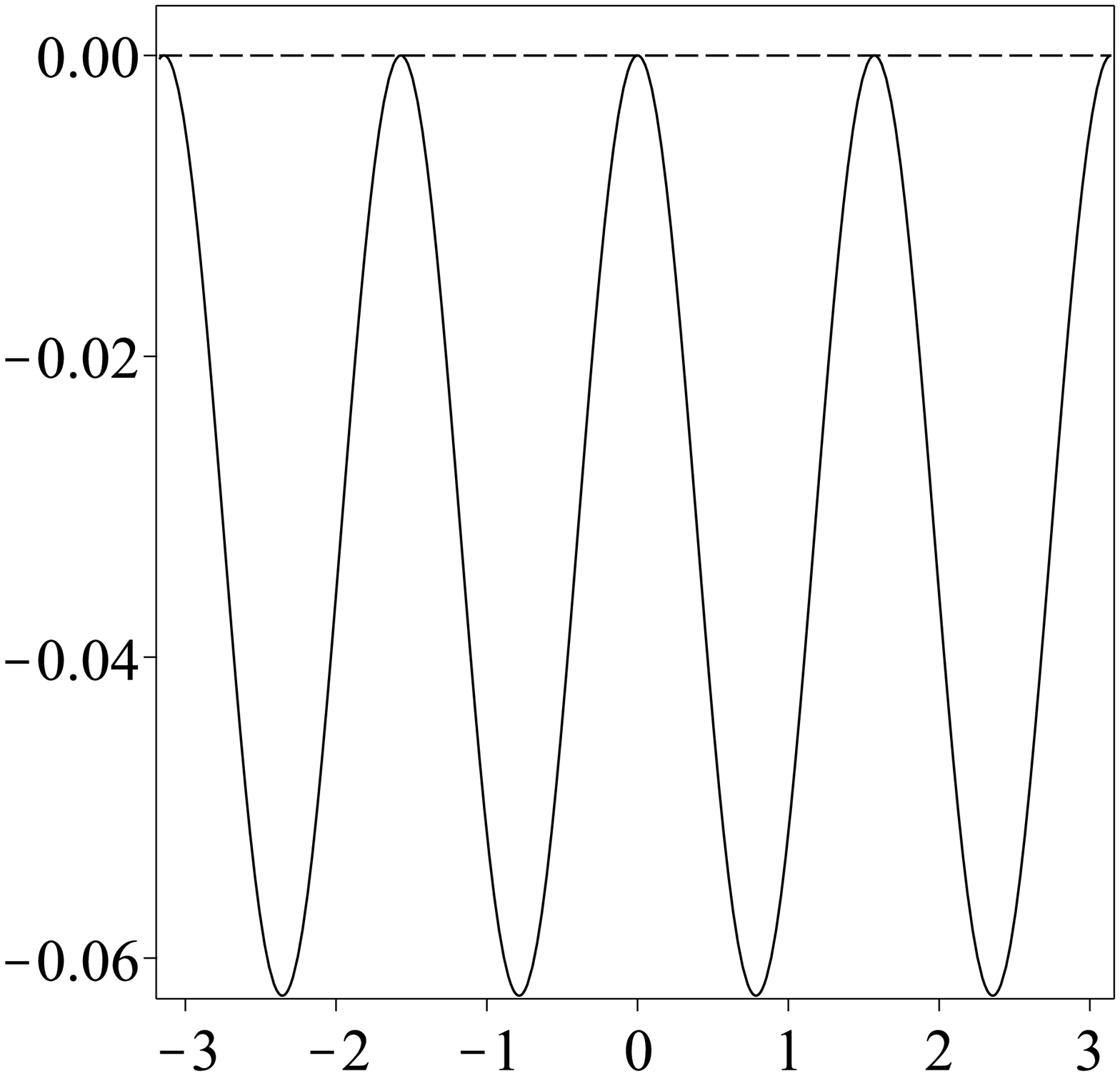}
\caption{$ \det(\cM_{\pa,1}(e^{-i\xi})) $ }
\end{subfigure}
\caption{
The quasi-tight framelet $\{\phi,\phi; \psi^1,\psi^2\}_{(1,-1)}$ and the homogeneous quasi-tight framelet $\{\psi^1,\psi^2\}_{(1,-1)}$ in $\Lp{2}$ obtained in Example~\ref{ex:Two32}.
(A) is the refinable function $\phi\in \Lp{2}$.
(B) and (C) are the framelet functions $\psi^1$ and $ \psi^2 $.
(D) is $\det(\cM_{\pa, 1}(e^{-i\xi}))$ for $ \xi\in  [-\pi, \pi] $.
}\label{fig:Two32}
\end{figure}

\begin{example} \label{ex:Two44}
{\rm	Consider $\pTh(z) = 1$ and the low-pass filter
	$$ \pa(z) = \tfrac{1}{64}z^{-4}(z+1)^4 (z^4 - 6z^3 + 14 z^2 - 6z + 1). $$
We see from Figure~\ref{fig:Two44} that $ \det(\cM_{\pa, 1}(z))\leqslant 0 $ for all $ z\in \T $. Hence, $ s_{a, \Theta}^+ = s_{a, \Theta}^- = 1 $.
Note that $ \sr(\pa)=4 $ and $ \vmo(1-\pa\pa^\star)=4 $.
	Hence, the maximum order of vanishing moments is two.
Taking $n_b=2$, we obtain a quasi-tight framelet filter bank
	$\{\ta; \tb_1, \tb_2\}_{\Theta, \{1, -1\}}$ as follows:
\begin{align*}
	\pb_1(z) =& \tfrac{\sqrt{2}}{16z^2}(z-1)^2
	\left( (2+\sqrt{3})z^4 +2-\sqrt{3} \right), \\
	\pb_2(z) =& \tfrac{1}{64z^4} (z-1)^2
		(z^6+11z^4+8z^3+11z^2+1),
\end{align*}
with $ \vmo(\tb_1) =  \vmo(\tb_2) = 2 $.
Since $\sm(a) \approx 1.6297$,  the refinable function $ \phi $ defined in \eqref{phi} belongs to $ \Lp{2} $. Therefore,
$\{\phi,\phi;\psi^1,\psi^2\}_{(1,-1)}$ is a quasi-tight framelet in $\Lp{2}$ and
$ \{\psi^1, \psi^2\}_{(1, -1)} $ is a homogeneous quasi-tight framelet in $ \Lp{2} $, where $ \psi^1, \psi^2 $ are defined in \eqref{eta:psi} and have two vanishing moments.
} \end{example}

\begin{figure}[htb!]
		\centering
\begin{subfigure}[]{0.24\textwidth}		 \includegraphics[width=\textwidth, height=0.8\textwidth]{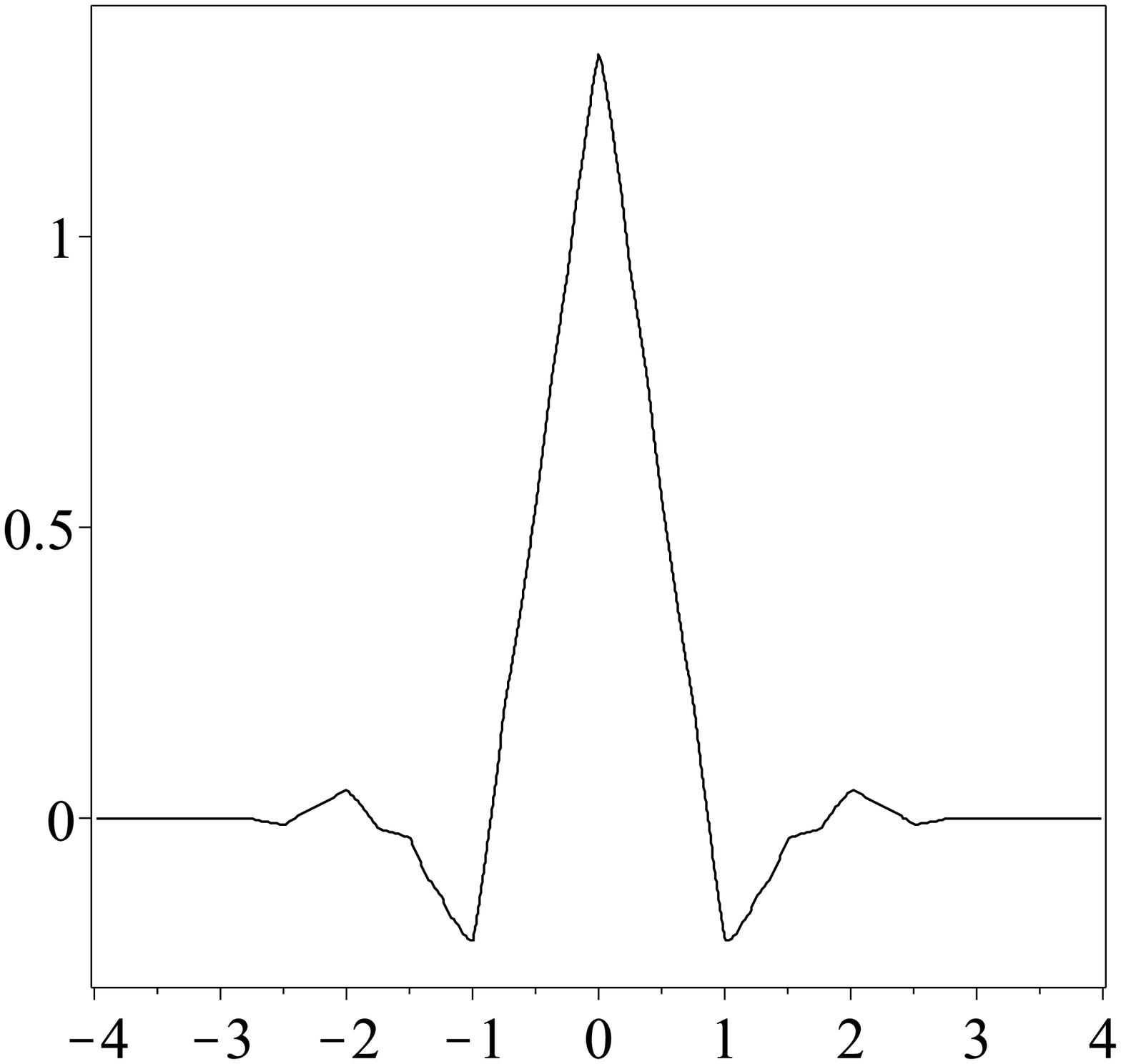}
			\caption{$\phi$}
		\end{subfigure}		 \begin{subfigure}[]{0.24\textwidth}
\includegraphics[width=\textwidth, height=0.8\textwidth]{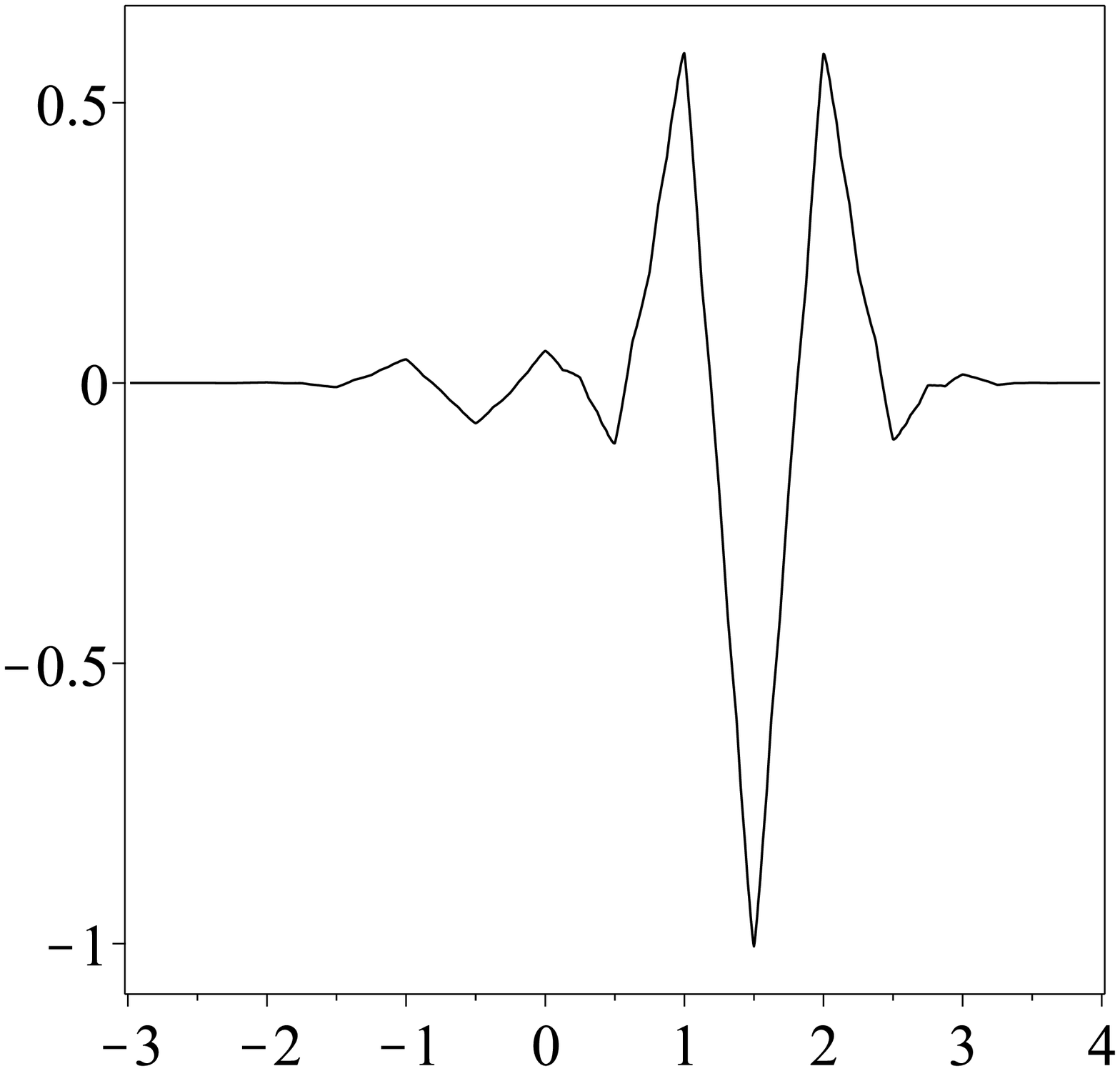}
			\caption{$\psi^1$}
		\end{subfigure}		 \begin{subfigure}[]{0.24\textwidth}
\includegraphics[width=\textwidth, height=0.8\textwidth]{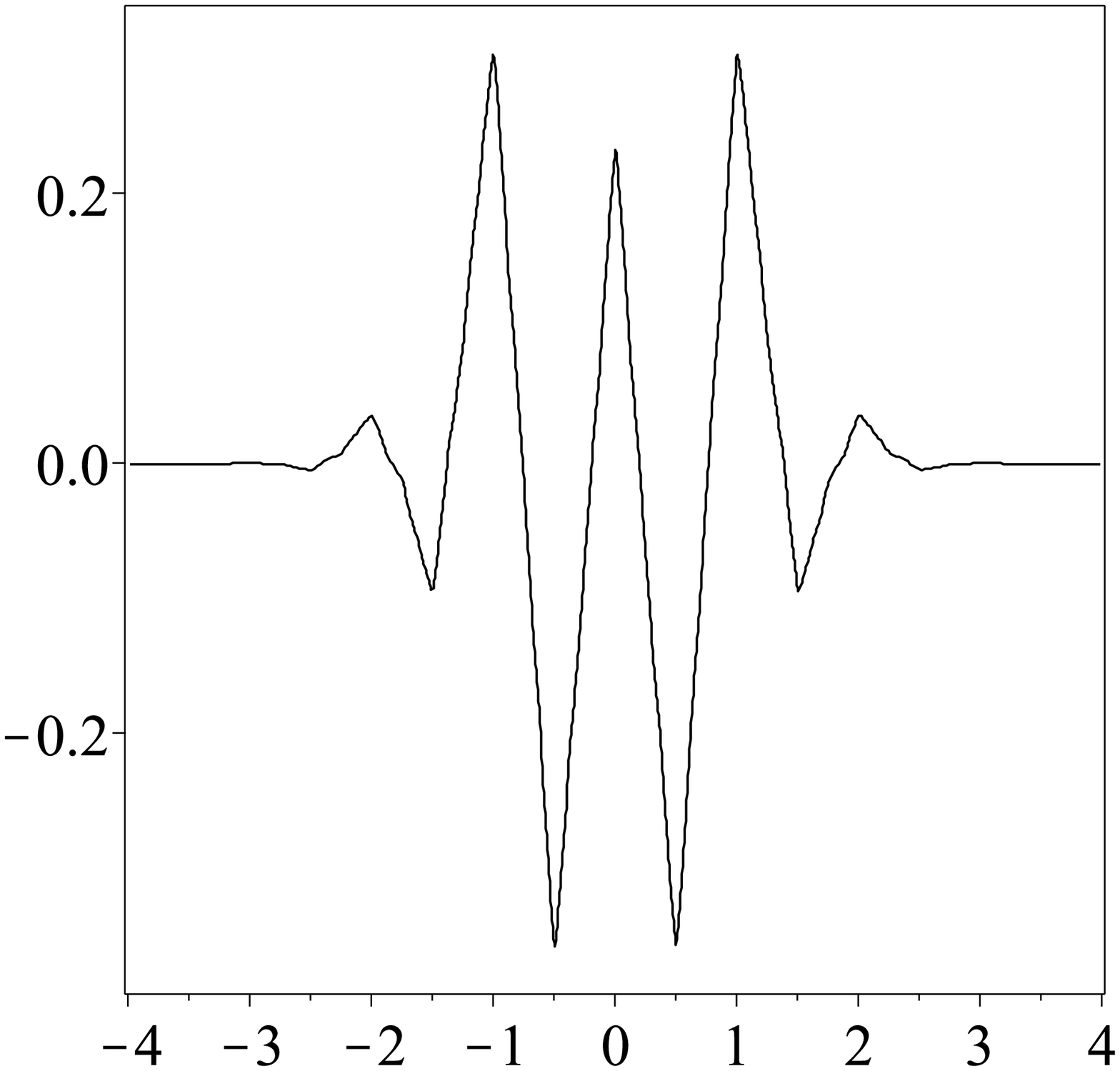}
			\caption{$\psi^2$}
		\end{subfigure}		 \begin{subfigure}[]{0.24\textwidth}
\includegraphics[width=\textwidth, height=0.8\textwidth]{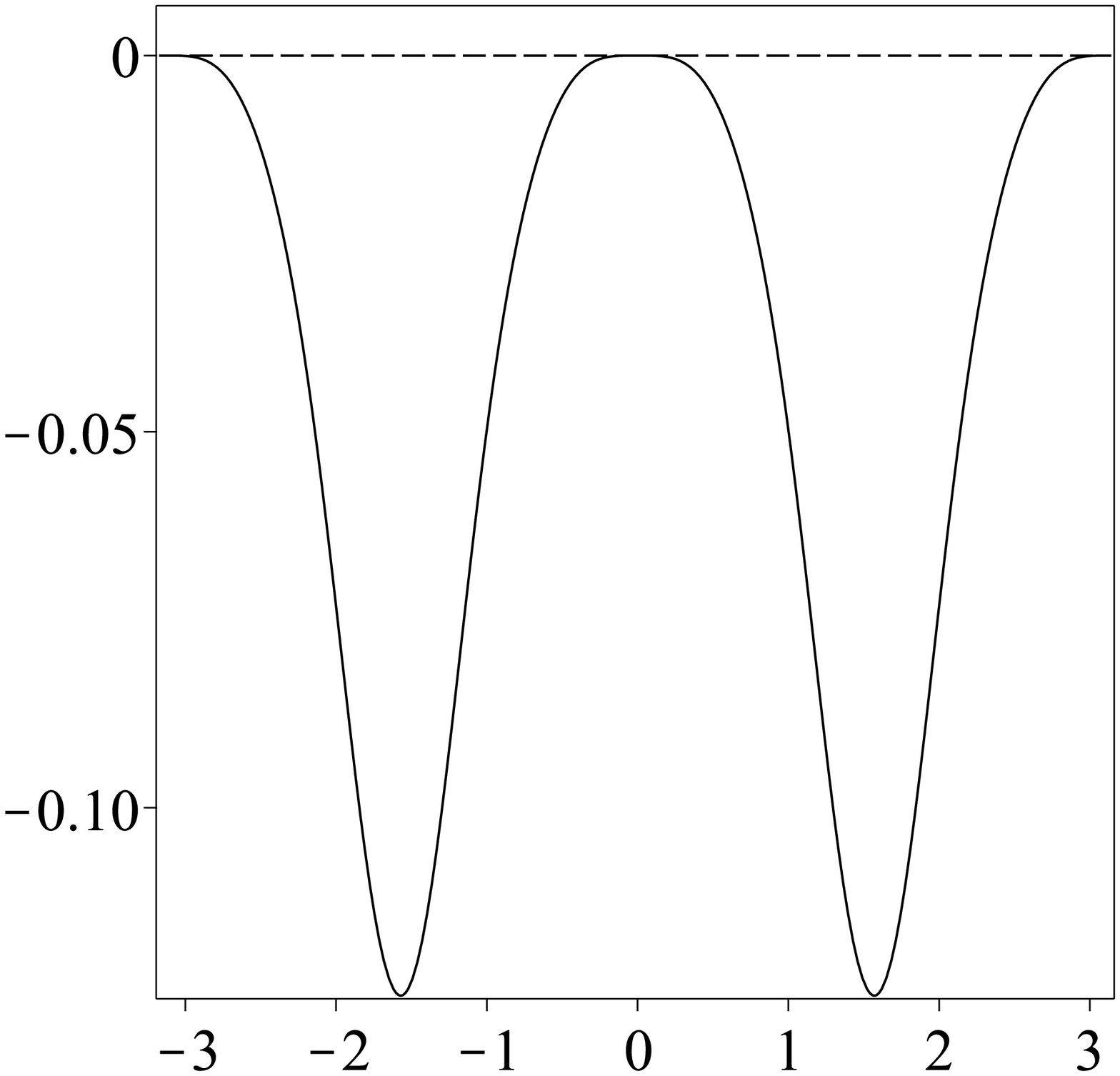}
			\caption{$ \det(\cM_{\pa,1}(e^{-i\xi})) $}
		\end{subfigure}
\caption{
The quasi-tight framelet $\{\phi,\phi; \psi^1,\psi^2\}_{(1,-1)}$ in $\Lp{2}$ and the homogeneous quasi-tight framelet $\{\psi^1,\psi^2\}_{(1,-1)}$ in $\Lp{2}$ obtained in Example~\ref{ex:Two44}.
(A) is the refinable function $\phi\in \Lp{2}$.
(B) and (C) are the framelet functions $\psi^1$ and $ \psi^2 $.
(D) is $\det(\cM_{\pa, 1}(e^{-i\xi}))$ for $ \xi\in  [-\pi, \pi] $.
}\label{fig:Two44}
\end{figure}

\begin{example} \label{ex:ThreeHigh22}
{\rm Consider $\pTh(z) = 1$ and the low-pass filter
\[
\pa(z) = \tfrac{2-\sqrt{6}}{8}z^{-2}(z+1)^2(z^2 - (4+\sqrt{6})z + 1).
\]
We see from Figure~\ref{fig:Three22} that $ \det(\cM_{\pa, 1}(z)) $ changes sign on $\T $. Hence $ s_{a, \Theta}^+ = 2 $ and $ s_{a, \Theta}^- = 1 $.
Note that $ \sr(\pa)=2 $ and $ \vmo(1-\pa\pa^\star)=2 $. Therefore, the maximum order of vanishing moments is one.
Taking $n_b=1$, we obtain a quasi-tight framelet filter bank
	$\{\ta; \tb_1, \tb_2, \tb_3\}_{\Theta, \{1, 1, -1\}}$ as follows:
\begin{align*}
	\pb_1(z) =& \tfrac{\sqrt{10}}{40}(z-1) z^{-2} \left(
	(\sqrt{6}-3)z^3 + (2\sqrt{6}-3)z^2 + (2\sqrt{6}-1)z - \sqrt{6}-1
	\right),\\
	\pb_2(z) =& \tfrac{\sqrt{10}}{40}(z-1) z^{-2} \left(
	(\sqrt{6}-2)z^3 + (\sqrt{6}-4)z^2 + (\sqrt{6}-2)z + \sqrt{6}-4
	\right),\\
	\pb_3(z) =& \tfrac{\sqrt{10}}{40}(z-1)^2 z^{-2} \left(
	(2-\sqrt{6})z^2 + (6-2\sqrt{6}) z + 4 - \sqrt{6}
	\right), \\
	\end{align*}
with $ \vmo(\tb_1) = \vmo(\tb_2) = 1 $ and $ \vmo(\tb_3) = 2 $.
Since $\sm(a) \approx 0.9382$, the refinable function $ \phi $ defined in \eqref{phi} belongs to $ \Lp{2} $. Therefore,
$\{\phi,\phi;\psi^1,\psi^2,\psi^3\}_{(1,1,-1)}$ is a quasi-tight framelet in $\Lp{2}$ and
$ \{\psi^1, \psi^2, \psi^3\}_{(1, 1, -1)} $ is a homogeneous quasi-tight framelet in $ \Lp{2} $, where $ \psi^1, \psi^2, \psi^3 $ are defined in \eqref{eta:psi} and have one vanishing moment.
} \end{example}
	
\begin{figure}[h!]
\centering	
\begin{subfigure}[]{0.18\textwidth}			 \includegraphics[width=\textwidth, height=0.8\textwidth]{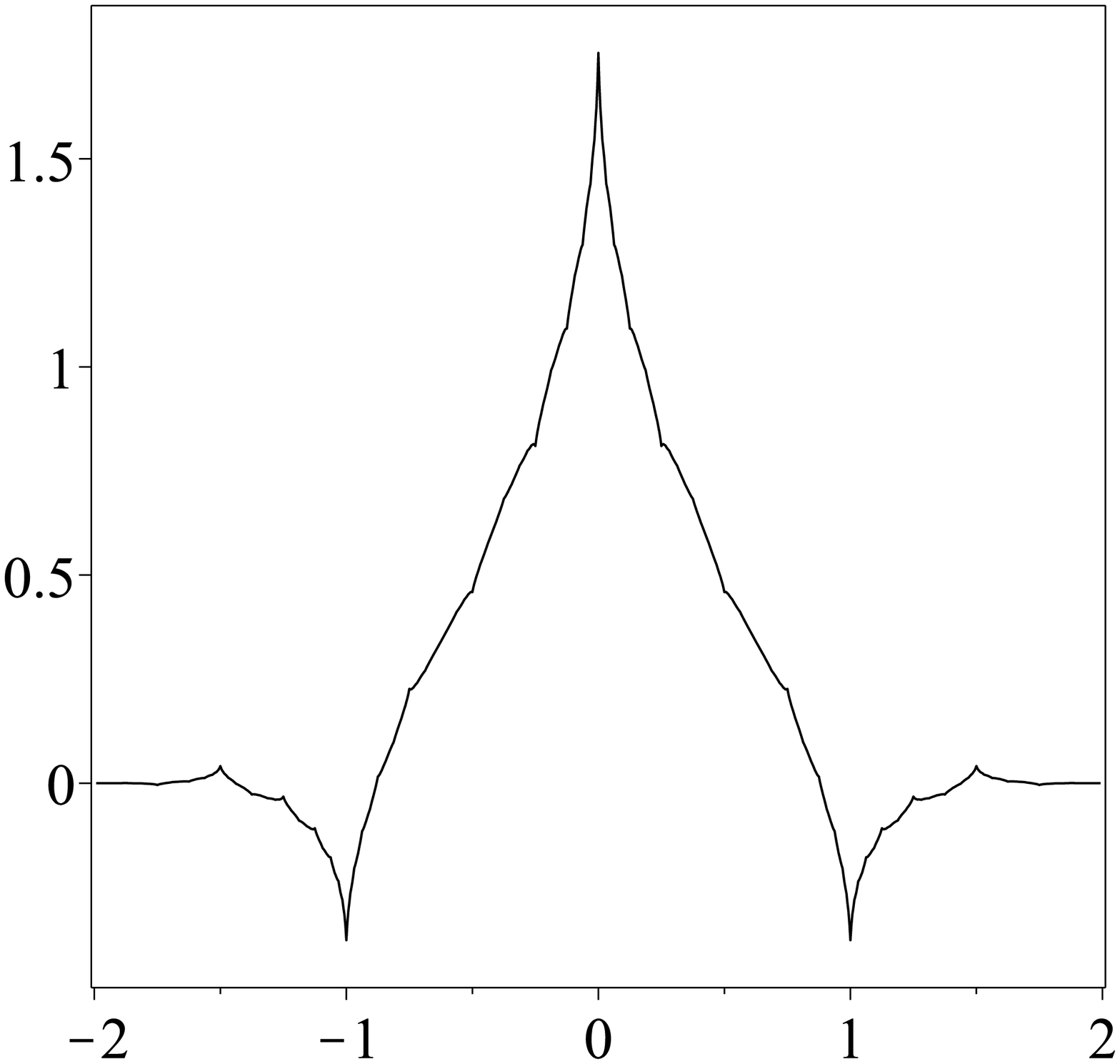}
			\caption{$\phi$}
		\end{subfigure}
\begin{subfigure}[]{0.18\textwidth}		 \includegraphics[width=\textwidth, height=0.8\textwidth]{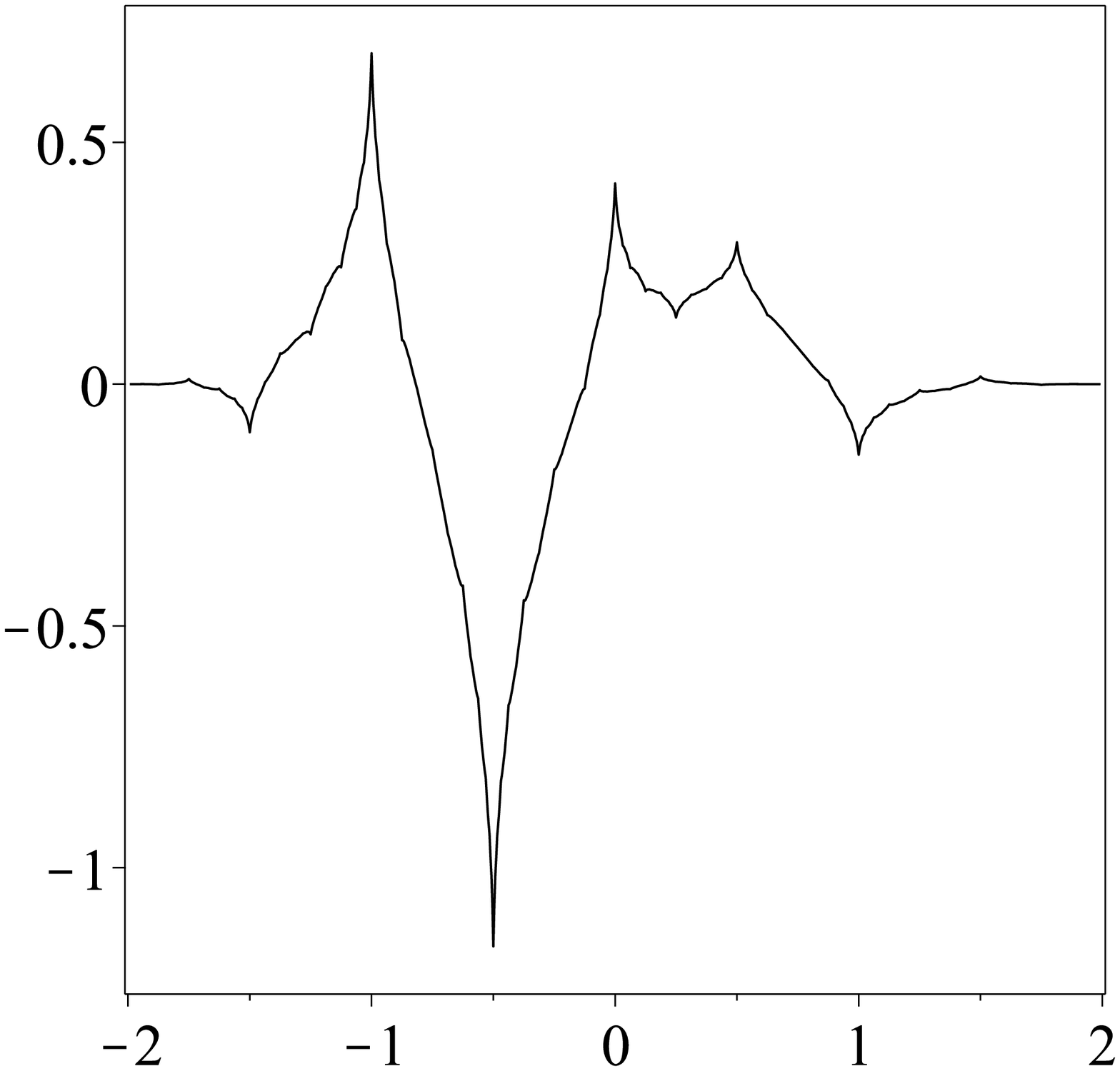}
			\caption{$\psi^1$}
		\end{subfigure}
\begin{subfigure}[]{0.18\textwidth}		 \includegraphics[width=\textwidth, height=0.8\textwidth]{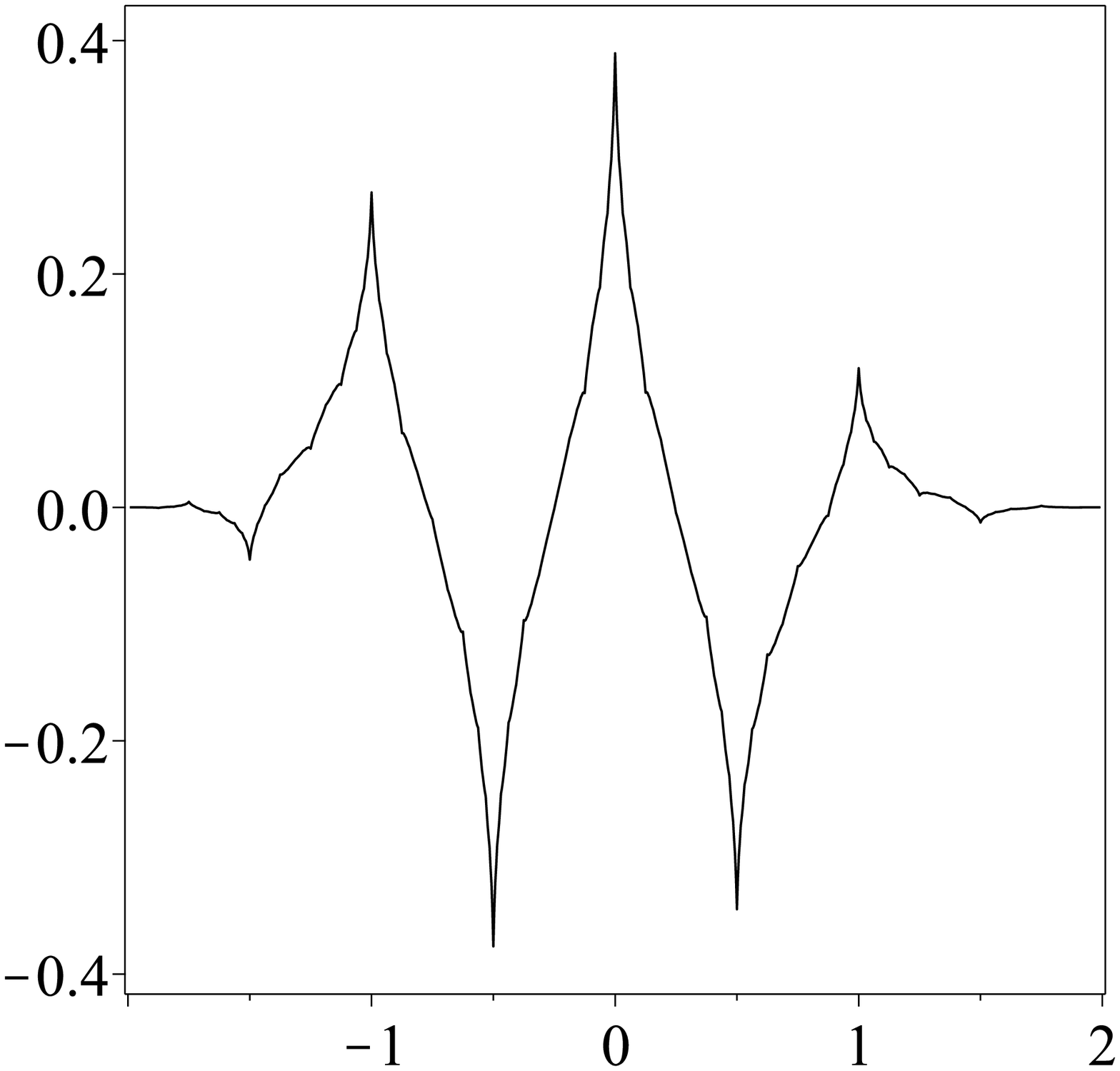}
			\caption{$\psi^2$}
		\end{subfigure}	
\begin{subfigure}[]{0.18\textwidth}		 
\includegraphics[width=\textwidth, height=0.8\textwidth]{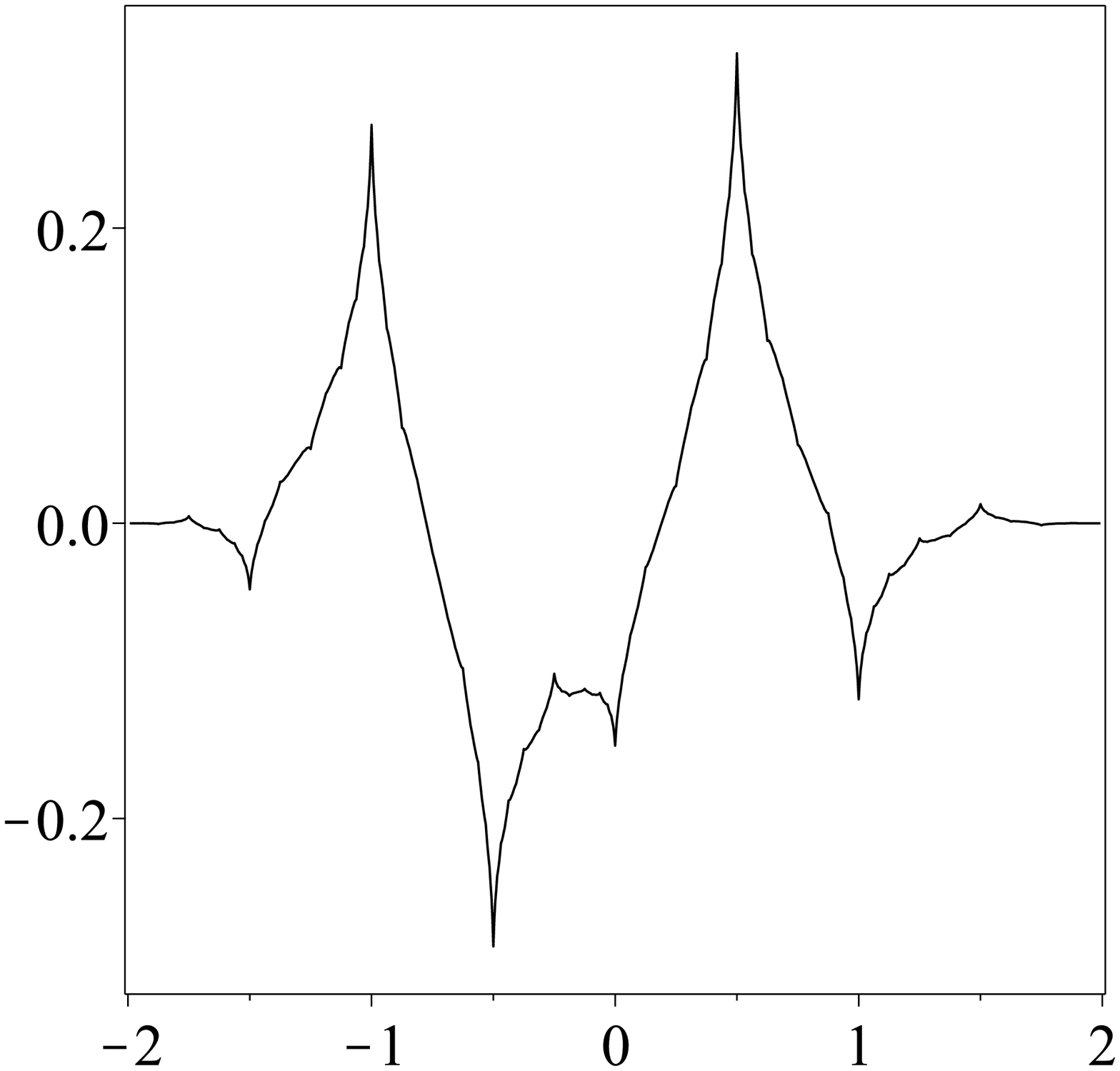}
			\caption{$\psi^3 $ }
		\end{subfigure}
\begin{subfigure}[]{0.18\textwidth}
\includegraphics[width=\textwidth, height=0.8\textwidth]{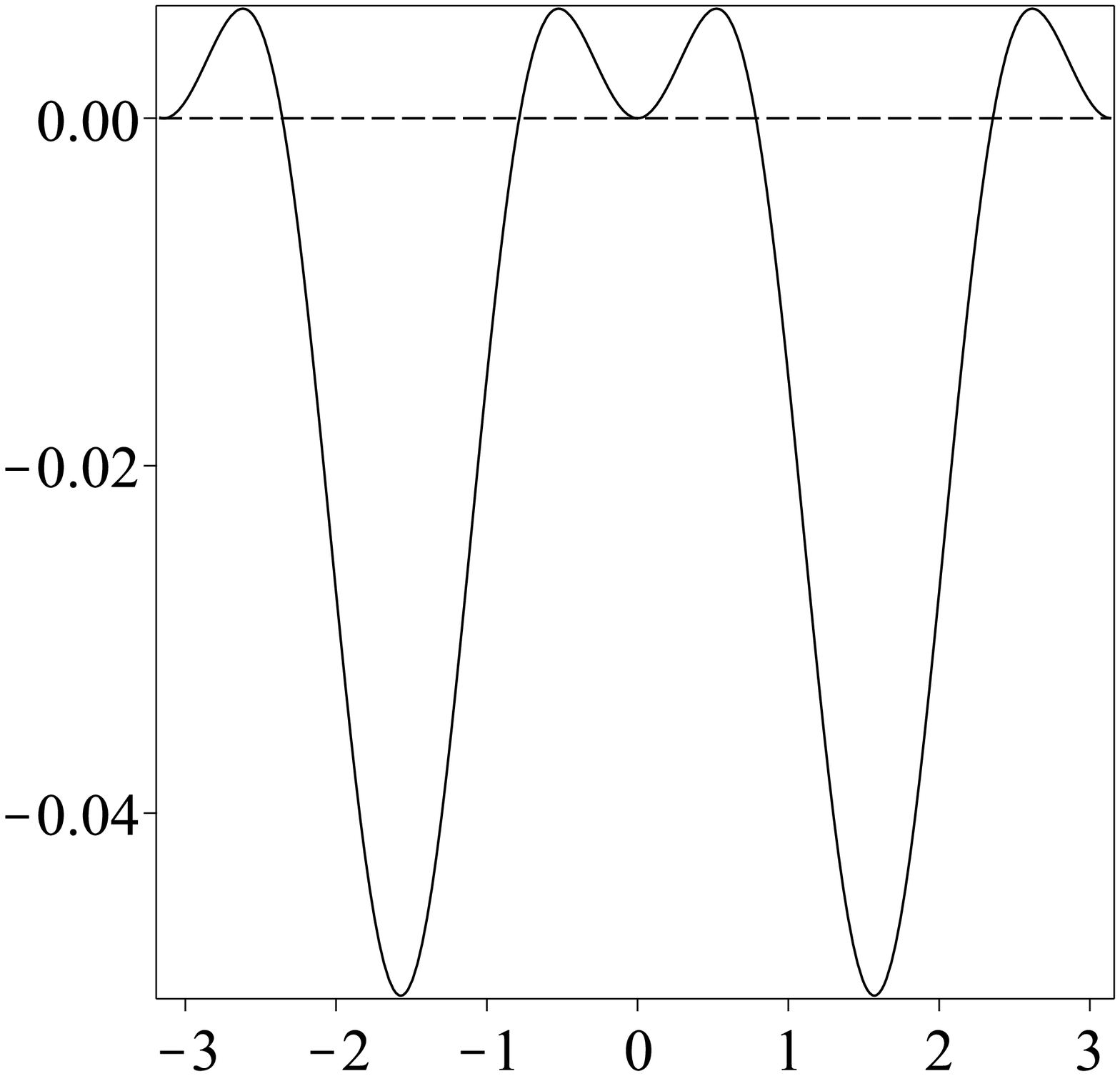}
\caption{$ \det(\cM_{\pa, 1})$}
\end{subfigure}
\caption{
The quasi-tight framelet $\{\phi,\phi; \psi^1,\psi^2,\psi^3\}_{(1,1,-1)}$ in $\Lp{2}$ and the homogeneous quasi-tight framelet $\{\psi^1,\psi^2,\psi^3\}_{(1,1,-1)}$ in $\Lp{2}$ obtained in Example~\ref{ex:ThreeHigh22}.
(A) is the refinable function $\phi\in \Lp{2}$. (B), (C), and (D) are the framelet functions $\psi^1$, $ \psi^2 $ and $ \psi^3$.
(E) is $\det(\cM_{\pa, 1}(e^{-i\xi}))$ for $ \xi\in  [-\pi, \pi] $.
} \label{fig:Three22}
\end{figure}

\section{Proof of Theorem~\ref{thm:const-sig} on Generalized Spectral Factorization for Matrices with Constant Signature}

In this section, we shall prove Theorem~\ref{thm:const-sig} on generalized matrix spectral factorization for Hermite matrices of Laurent polynomials with constant signature. To improve presentation and readability,
the proofs of several auxiliary results for proving Theorem~\ref{thm:const-sig} shall be given in the Appendix.

For an $ n\times n $ square matrix $ \pU(z) $ of Laurent polynomials, if $ \det(\pU(z))\not\equiv 0 $ is a monomial (Laurent polynomial with only one term), we call it \emph{unimodular}.
$ \pU(z) $ is unimodular if and only if there exists a unique $ n \times n $ matrix $ \pU^{-1}(z) = \tfrac{1}{\det(\pU(z))}\operatorname{adj}(\pU(z)) $ of Laurent polynomials such that $ \pU(z)\pU^{-1}(z) = \pU^{-1}(z)\pU(z) = \mathbf{I}_n $.
To prove Theorem~\ref{thm:const-sig},
we first show that Theorem~\ref{thm:const-sig} holds under the additional condition that $\det(\pA(z))$ is a nonzero monomial.
The general case of Theorem~\ref{thm:const-sig} will be then proved by extracting out the nontrivial factors of $\det(\pA(z))$ one by one.

\begin{theorem}[Unimodular Case for Theorem~\ref{thm:const-sig}] \label{thm:unimodular}
For an $n \times n$ Hermite matrix $\pA(z)$ of Laurent polynomials with a nonzero monomial $\det(\pA(z))$,
there exists an $n \times n$ matrix $\pU(z)$ of Laurent polynomials
such that $\pA(z)=\pU(z)\pD\pU^\star(z)$, where $\pD:=\diag(\mathbf{I_{\nu_+}}, -\mathbf{I_{\nu_-}})$ and $n=\nu_++\nu_-$ for some nonnegative integers $ \nu_+$ and $ \nu_- $.
\end{theorem}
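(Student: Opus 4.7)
First I would unpack the hypothesis. Since $\pA^\star = \pA$ implies $[\det\pA]^\star = \det\pA$, writing the nonzero monomial $\det\pA(z) = cz^k$ and applying the involution gives $\overline{c}z^{-k} = cz^k$, which forces $k = 0$ and $c \in \R\bs\{0\}$. Consequently $\det\pA(z)$ is actually a nonzero real constant, $\pA^{-1}(z)$ is again a Laurent polynomial matrix (and Hermite), $\pA(z)$ is nonsingular for every $z \in \T$, and by continuity the signature $(\nu_+, \nu_-)$ of $\pA(z)$ is constant on $\T$. I would then induct on $n$. The base case $n=1$ is immediate: $\pA(z) \equiv c \in \R\bs\{0\}$, so take $\pU := \sqrt{|c|}$ and $\pD := [\sign(c)]$.

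For the inductive step I would reduce from dimension $n$ to $n-1$ via a Schur-complement block-diagonalization. The central sub-lemma I aim for is the existence of a Laurent polynomial vector $\mathbf{v}(z) \in (\C[z,z^{-1}])^n$ with $\mathbf{v}^\star\pA\mathbf{v} \equiv \epsilon \in \{+1,-1\}$ whose entries generate the unit ideal of $\C[z,z^{-1}]$, so that (by a Bezout-style argument in the PID $\C[z,z^{-1}]$) $\mathbf{v}$ extends to a unimodular matrix $\pU_1(z)$ with $\mathbf{v}$ as its first column. Given such a $\mathbf{v}$, one has
\[
\pU_1^\star \pA \pU_1 = \begin{pmatrix} \epsilon & \mathbf{w}^\star \\ \mathbf{w} & \pB \end{pmatrix}
\]
for some Laurent polynomial vector $\mathbf{w}$ and Hermite $(n-1)\times(n-1)$ block $\pB$; conjugating by the unimodular
$\pT := \begin{pmatrix} 1 & 0 \\ -\epsilon\mathbf{w} & \mathbf{I}_{n-1} \end{pmatrix}$
cancels the off-diagonal block (a direct Schur-complement calculation using $\epsilon^2 = 1$) and yields $\diag(\epsilon, \pA')$ with $\pA' := \pB - \epsilon\mathbf{w}\mathbf{w}^\star$ again Hermite of Laurent polynomials. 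Comparing determinants shows $\det\pA'$ is a nonzero real constant, so the inductive hypothesis applies to $\pA'$. Undoing the unimodular transformations and, if necessary, permuting by a constant signed permutation to arrange the $\pm 1$s in the required order, then assembles the desired factorization $\pA = \pU \pD \pU^\star$.

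The principal obstacle is the existence of $\mathbf{v}$. A naive constant choice---say, a $\pA(z_0)$-eigenvector for some fixed $z_0 \in \T$ of the correct sign---only makes $\mathbf{v}^\star\pA\mathbf{v}$ sign-definite near $z_0$, and certainly not identically a real constant. My plan is to construct $\mathbf{v}$ iteratively by a degree-reduction scheme: starting from an initial $\mathbf{v}_0$ for which $\pq(z) := \mathbf{v}_0^\star\pA\mathbf{v}_0$ is a nonzero real-on-$\T$ Laurent polynomial, modify $\mathbf{v}_0$ by Laurent-polynomial updates (essentially, adding Laurent combinations of auxiliary vectors) so as to strictly decrease the width (highest minus lowest degree) of $\pq$ at each step, terminating when $\pq$ becomes a nonzero real constant which we then rescale to $\pm 1$. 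Two ingredients should keep this descent alive: the constant-signature hypothesis prevents $\pq$ from collapsing to zero at any intermediate step, and the unimodularity of $\pA$ (so that $\pA^{-1}$ is itself a Laurent polynomial matrix) supplies the inverse-type moves needed for a Euclidean-style descent. I expect this is precisely where the authors invoke the technical appendix, likely citing an existence input from the Gohberg--Lancaster--Rodman framework to certify that the descent is never obstructed by a signature mismatch between intermediate blocks.
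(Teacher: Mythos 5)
Your opening observation is correct and a nice simplification: since $\pA^\star = \pA$ forces $(\det\pA)^\star = \det\pA$, the nonzero monomial $\det\pA$ must be a nonzero real constant, so $\pA^{-1}(z)$ is again a Hermite Laurent polynomial matrix and the signature is constant on all of $\T$. The inductive Schur-complement framework is also sound: once you have $\mathbf{v}(z)$ with $\mathbf{v}^\star\pA\mathbf{v}\equiv\epsilon\in\{\pm1\}$, the entries of $\mathbf{v}$ automatically generate the unit ideal (any common factor $p$ would give $pp^\star\mid\epsilon$), so $\mathbf{v}$ extends over the PID $\C[z,z^{-1}]$ to a unimodular $\pU_1$, and your block-elimination calculation with $\pT$ is correct. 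The paper's proof (Algorithm~\ref{algo:unimodular} together with Lemmas~\ref{lemma:EmptySpect2} and \ref{lemma:EmptySpect3}) takes a genuinely different route: rather than hunting for a single "unit-norm" vector, it performs a global degree-reduction on the whole matrix. It sorts the diagonal entries by length, uses a Laurent-polynomial division modulo a diagonally-dominant principal block (Lemma~\ref{lemma:EmptySpect1} / \ref{lemma:EmptySpect2}) to push the matrix toward diagonal dominance, and handles the one genuinely troublesome case --- when the $(1,1)$-entry is identically zero --- by an explicit closed-form unimodular congruence (Lemma~\ref{lemma:EmptySpect3}) that splits off a $1\times1$ block equal to $1$. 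Termination is proved via a well-ordering on the lexicographic sequence of lengths of the diagonal entries. In particular, your guess that the paper invokes a Gohberg--Lancaster--Rodman existence input here is not right; for Theorem~\ref{thm:unimodular} the paper's argument is self-contained and fully constructive (the GLR citation only enters later, in Theorem~\ref{thm:extract2}).

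The genuine gap in your proposal is the central sub-lemma: you never show that a vector $\mathbf{v}(z)$ with $\mathbf{v}^\star\pA\mathbf{v}\equiv\pm1$ actually exists, and the "Euclidean-style descent" on the scalar $\pq(z):=\mathbf{v}^\star\pA\mathbf{v}$ is not a valid argument as stated. Under an update $\mathbf{v}\mapsto\mathbf{v}+\mathbf{u}$ the form changes by $2\operatorname{Re}(\mathbf{v}^\star\pA\mathbf{u})+\mathbf{u}^\star\pA\mathbf{u}$, and there is no a priori control on the quadratic term $\mathbf{u}^\star\pA\mathbf{u}$: it is exactly this that prevents a naive degree-by-degree reduction of the scalar $\pq$, and nothing in your sketch explains why a width-strictly-decreasing update must exist at every step. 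Your stated safeguard --- "the constant-signature hypothesis prevents $\pq$ from collapsing to zero at any intermediate step" --- is also not justified (and is false as stated): constant signature of $\pA$ is a property of the matrix, not of the scalar $\pq$, and for an indefinite unimodular form (say $\pA=\diag(1,-1)$, $\mathbf{v}=(1,1)^\tp$) isotropic vectors with $\mathbf{v}^\star\pA\mathbf{v}\equiv 0$ are plentiful, so the descent can in principle land on zero. The paper sidesteps all of this precisely by never operating on a single scalar quadratic form: the descent is run on the whole matrix (with the sorted-by-length, diagonally-dominant bookkeeping), where termination can be certified by a well-ordering argument, and the "no nonzero diagonal entry to start from" case is disposed of once and for all by the explicit construction in Lemma~\ref{lemma:EmptySpect3}. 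If you want to salvage the vector-first route, you would essentially have to reprove something equivalent to those two lemmas; the scalar descent alone cannot carry the argument.
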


Theorem~\ref{thm:unimodular} is known for rings with involution (e.g., see \cite{Lyu:1973Factorization,Lyu:1973Factorizationa,Cop:1972Linear,Djo:1976Hermitian}), including rings of (Laurent) polynomials as special cases. To provide a self-contained proof to Theorem~\ref{thm:unimodular} for completeness,
we present Algorithm~\ref{algo:unimodular} to construct desired matrices $ \pU(z) $ and $ \pD $ in Theorem~\ref{thm:unimodular} by showing that Algorithm~\ref{algo:unimodular} is feasible and will terminate in finitely many steps.

For a Laurent polynomial $ \pu(z) \not\equiv 0 $, we use $ \deg(\pu(z)) $ to denote its highest degree, and use $ \ldeg(\pu(z)) $ to denote its lowest degree. We define the length of $ \pu(z) $ as $ \len(\pu) := \deg(\pu) - \ldeg(\pu) $, and the interval: $ \fs(\pu(z)):=[\ldeg(\pu(z)), ~\deg(\pu(z))] $. If $ \pu(z)\equiv 0 $, then we just define $ \len(\pu) := -\infty $ and $ \fs(\pu) $ to be the empty set.
For a $ k\times k $ matrix $ \pQ(z) $ of Laurent polynomials, we call it \emph{diagonally dominant at the diagonal entry $s$} if
\begin{enumerate}
	\item for all  $ i\neq s $:
\begin{equation} \label{def:DiagDominant1}
	\fs(\pQ_{i,s}(z))\subsetneq \fs(\pQ_{s,s}(z))\quad \mbox{and}~
	\fs(\pQ_{s,i}(z))\subsetneq \fs(\pQ_{s,s}(z));
\end{equation}
	\item for all $ i > s$:
\begin{equation} \label{def:DiagDominant2}
	 \deg(\pQ_{s, i}(z)) < \deg(\pQ_{s,s}(z)).
\end{equation}
\end{enumerate}
$ \pQ(z) $ is called \emph{diagonally dominant} if it is diagonally dominant at all its diagonal entries $ s = 1,\ldots,k $.

The idea adopted in Algorithm~\ref{algo:unimodular} is similar to \cite{GohLanRod:1982Factorization} for the polynomial matrices.
To improve readability for Algorithm~\ref{algo:unimodular}, we provide some auxiliary lemmas with algorithmic proofs given in the Appendix serving as sub-steps in Algorithm~\ref{algo:unimodular}.

\begin{lemma} \label{lemma:EmptySpect2}
Let  $ \pQ(z) $ be a $ k\times k $ Hermite matrix of Laurent polynomials such that its $(1,1)$-entry $ \pQ_{1,1}(z)\not\equiv 0 $
and
\begin{equation} \label{eq:lemmaDiagIncreasing}
\len(\pQ_{1,1}(z))\leqslant \len(\pQ_{2,2}(z))\leqslant\cdots\leqslant \len(\pQ_{k,k}(z)).
\end{equation}
Suppose that $ \pQ(z) $ is diagonally dominant at its first $ s $ diagonal entries for some $ s<k $. (If $ \pQ(z) $ is not diagonally dominant at its first diagonal entry, then just take $ s = 0 $.)
Then there exists a $ k\times k $ unimodular matrix $ \pU(z) $ of Laurent polynomials such that $ \widetilde{\pQ}(z):= \pU(z)\pQ(z)\pU^\star(z) $ is diagonally dominant at its first $ (s+1) $ diagonal entries, while the top left $ (s+1)\times (s+1) $ submatrix of $ \widetilde{\pQ}(z) $ is the same as that of $ \pQ(z) $.
\end{lemma}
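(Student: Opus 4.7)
The plan is to produce $\pU(z)$ of the block form $\pU(z) = \begin{bmatrix} \mathbf{I}_{s+1} & 0 \\ A(z) & \mathbf{I}_{k-s-1}\end{bmatrix}$ for some $(k-s-1)\times(s+1)$ matrix $A(z)$ of Laurent polynomials. Such a $\pU$ is automatically lower-unitriangular and hence unimodular with $\det(\pU)=1$; moreover, since the first $s+1$ rows of $\pU$ are those of $\mathbf{I}_k$ and the first $s+1$ columns of $\pU^\star$ are those of $\mathbf{I}_k$, the top-left $(s+1)\times(s+1)$ block of $\widetilde{\pQ}(z) := \pU(z)\pQ(z)\pU^\star(z)$ automatically coincides with that of $\pQ(z)$. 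The design task reduces to choosing $A(z)$ so that $\widetilde{\pQ}$ is diagonally dominant at entry $s+1$ while the pre-existing dominance at entries $1,\ldots,s$ is preserved.

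For the dominance at entry $s+1$, for each $i\in\{s+2,\ldots,k\}$ I would perform Euclidean division in the PID $\mathbb{C}[z,z^{-1}]$ (the localization of $\mathbb{C}[z]$ at $z$) to write $\pQ_{i,s+1}(z) = q_i(z)\pQ_{s+1,s+1}(z) + r_i(z)$ with $\fs(r_i) \subsetneq \fs(\pQ_{s+1,s+1})$; such a representative exists because the residue ring $\mathbb{C}[z,z^{-1}]/\langle \pQ_{s+1,s+1}\rangle$ is finite-dimensional and admits a transversal of Laurent-length at most $\len(\pQ_{s+1,s+1})-1$. Setting $A_{i-s-1,s+1}(z) := -q_i(z)$ then gives $\widetilde{\pQ}_{i,s+1} = r_i$ (when the remaining entries of $A$ are zero), so condition \eqref{def:DiagDominant1} at entry $s+1$ holds for $i > s+1$ by construction. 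For $i \leq s$ the entry $\widetilde{\pQ}_{i,s+1} = \pQ_{i,s+1}$ is unchanged, and the pre-existing dominance of $\pQ$ at entry $i$, combined with the length-monotonicity \eqref{eq:lemmaDiagIncreasing} and the fact that Hermitian diagonal entries have symmetric supports $[-L_i,L_i]$, yields $\fs(\pQ_{i,s+1}) \subsetneq \fs(\pQ_{i,i}) \subseteq \fs(\pQ_{s+1,s+1})$; condition \eqref{def:DiagDominant2} then follows from Hermitian symmetry.

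The main obstacle will be preserving the pre-existing dominance at entries $1,\ldots,s$. With the naive choice $A_{i-s-1,j}=0$ for $j\leq s$, the new entries $\widetilde{\pQ}_{i,s'} = \pQ_{i,s'} - q_i(z)\pQ_{s+1,s'}(z)$ for $i>s+1$ and $s'\leq s$ may fail the condition $\fs(\widetilde{\pQ}_{i,s'}) \subsetneq \fs(\pQ_{s',s'})$, because the Laurent support of the multiplier $q_i$ can cause $q_i\pQ_{s+1,s'}$ to leak outside $\fs(\pQ_{s',s'})$ even though $\fs(\pQ_{s+1,s'}) \subsetneq \fs(\pQ_{s',s'})$. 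My strategy is to allow nonzero corrective entries $A_{i-s-1,s'}(z)$ obtained by a further Euclidean reduction modulo $\pQ_{s',s'}$ that absorbs the excess support; each such correction re-perturbs the column-$(s+1)$ reduction by a term $A_{i-s-1,s'}\pQ_{s',s+1}$, and the key structural inclusion $\fs(\pQ_{s',s+1}) \subsetneq \fs(\pQ_{s',s'}) \subseteq \fs(\pQ_{s+1,s+1})$ (which follows from the dominance at $s'$ together with the length-ordering) ensures that the re-perturbation stays within $\fs(\pQ_{s+1,s+1})$. Showing that this back-and-forth adjustment terminates in finitely many steps and yields a genuine Laurent-polynomial $A(z)$ (rather than a formal Laurent series) is the delicate support-bookkeeping step where the Laurent setting departs materially from the classical polynomial treatment of \cite{GohLanRod:1982Factorization}, and is the content I expect to be carried out in the Appendix.
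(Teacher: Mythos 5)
Your block form $\pU(z) = \begin{bmatrix} \mathbf{I}_{s+1} & 0 \\ A(z) & \mathbf{I}_{k-s-1}\end{bmatrix}$ and the observation that it automatically preserves the top-left $(s+1)\times(s+1)$ block are correct and match the paper's overall strategy (the paper uses $\pU(z)=\begin{bmatrix}\mathbf{I}_{s+1}&0\\-\widetilde{X}^\star(z)&\Lambda(z)\end{bmatrix}$ with $\Lambda$ a monomial diagonal, which is still block lower-triangular with determinant a monomial). You also correctly identify the obstruction: the entries $\widetilde{\pQ}_{i,s'}$ for $i>s+1$, $s'\leqslant s$ change, so the pre-existing dominance can be destroyed. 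However, your termination argument for the back-and-forth is where the proof fails. You claim the inclusion $\fs(\pQ_{s',s+1})\subsetneq\fs(\pQ_{s',s'})\subseteq\fs(\pQ_{s+1,s+1})$ ensures the re-perturbation $A_{i-s-1,s'}\pQ_{s',s+1}$ stays within $\fs(\pQ_{s+1,s+1})$, but $\fs\big(A_{i-s-1,s'}\pQ_{s',s+1}\big)$ is the Minkowski sum $\fs(A_{i-s-1,s'})+\fs(\pQ_{s',s+1})$; since $A_{i-s-1,s'}$ is the quotient of a Euclidean division and can have arbitrarily long Laurent support, this product does not stay inside $\fs(\pQ_{s+1,s+1})$, and there is no evident contraction that would make the column-by-column reduction stabilize after finitely many passes.

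The paper does not iterate. After a monomial pre-alignment of $\pB$ (the $\Lambda(z)$ factor, which ensures $\ldeg(\widetilde{\pB}_{\ell,i})\geqslant\ldeg(\pA_{\ell,\ell})$), Lemma~\ref{lemma:EmptySpect1} reduces each column vector of the off-block against \emph{all} $s+1$ diagonal entries of $\pA$ simultaneously. The trick there is to precondition by $\pD(z)=\diag\big([(z+1)(z^{-1}+1)]^{n_{s+1}-n_\ell}\big)_{\ell}$, which equalizes all diagonal supports to $[-n_{s+1},n_{s+1}]$, and then to observe that the highest-degree coefficient of the preconditioned $\pA$ is lower triangular with nonzero diagonal --- a direct consequence of the conditions \eqref{def:DiagDominant1}--\eqref{def:DiagDominant2} and the symmetric supports of Hermitian diagonal entries. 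This makes the resulting Toeplitz system solvable by a single backward substitution, achieving in one pass the bound $\fs(Y_\ell)\subset[-n_\ell,n_\ell-1]$ for every $\ell$, hence both dominance conditions. So the genuine content you deferred to "delicate support-bookkeeping" is not an iterative scheme at all but a direct triangular-Toeplitz solve; you should incorporate the preconditioner and the triangular leading-coefficient argument rather than the entry-by-entry Euclidean division.
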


\begin{lemma} \label{lemma:EmptySpect3}
Let  $ \pQ(z) $ be a $ k\times k $ unimodular Hermite matrix of Laurent polynomials. If its first diagonal entry $ \pQ_{1,1}(z) \equiv 0 $, then there exist a $ k\times k $ unimodular matrix $ \pU(z) $ of Laurent polynomials and a $ (k-1)\times (k-1) $ matrix $ \widetilde{\pQ}(z) $ of Laurent polynomials such that
\[ \pQ(z) = \pU(z)\begin{bmatrix}
1 & \\ & \widetilde{\pQ}(z)
\end{bmatrix}\pU^\star(z). \]
\end{lemma}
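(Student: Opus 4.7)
The plan is to reduce the diagonalization to constructing a single ``pivot'' column vector $\mathbf{v}_1$ of Laurent polynomials with $\mathbf{v}_1^\star \pQ \mathbf{v}_1 = 1$ and then $\pQ$-orthogonalizing the rest. Since $\pQ$ is unimodular, $\det(\pQ)$ is a nonzero monomial. Expanding along the first row with $\pQ_{1,1}\equiv 0$ gives
\[
\det(\pQ) \;=\; \sum_{j=2}^{k}(-1)^{1+j}\,\pQ_{1,j}\,M_{1,j},
\]
so $g:=\gcd(\pQ_{1,2},\ldots,\pQ_{1,k})$ divides the unit $\det(\pQ)$ and is therefore itself a unit (nonzero monomial) in the PID $\C[z,z^{-1}]$. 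A standard Bezout construction in this PID then yields a $(k-1)\times(k-1)$ unimodular matrix $\pW_1$ with $(\pQ_{1,2},\ldots,\pQ_{1,k})\pW_1 = (g,0,\ldots,0)$. Applying the congruence with $\pU_1:=\diag(1,\pW_1)$ replaces $\pQ$ by the Hermitian unimodular matrix $\pQ_1:=\pU_1^\star \pQ \pU_1$ whose first row is $(0,g,0,\ldots,0)$.

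Next I would search for $\mathbf{v}_1$ of the form $\alpha \mathbf{e}_1 + \mathbf{e}_2$. A direct computation using the shape of $\pQ_1$ gives
\[
\mathbf{v}_1^\star \pQ_1 \mathbf{v}_1 \;=\; h + h^\star + (\pQ_1)_{2,2},\qquad h:=\alpha^\star g,
\]
so the task reduces to decomposing the Hermitian Laurent polynomial $1-(\pQ_1)_{2,2}$ as $h+h^\star$. Writing $1-(\pQ_1)_{2,2}=\sum_{n}p_n z^n$ with $p_{-n}=\overline{p_n}$, one takes $h:=\tfrac{p_0}{2}+\sum_{n>0}p_n z^n$; then $h+h^\star = 1-(\pQ_1)_{2,2}$ identically, and because $g$ is a unit, $\alpha:=(h/g)^\star$ is a genuine Laurent polynomial.

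Finally, since $\mathbf{v}_1$ contains the constant entry $1$ in its second component, it can be completed to a unimodular matrix, for example $\pV_0:=[\mathbf{v}_1\,|\,\mathbf{e}_1\,|\,\mathbf{e}_3\,|\cdots|\,\mathbf{e}_k]$ (whose determinant is $\pm 1$). Let $\mathbf{y}:=\pQ_1\mathbf{v}_1$, so $\mathbf{y}^\star \mathbf{v}_1 = 1$. Replacing each column $\mathbf{c}_j$ of $\pV_0$ for $j\geq 2$ by $\mathbf{c}_j':=\mathbf{c}_j-(\mathbf{y}^\star \mathbf{c}_j)\mathbf{v}_1$ amounts to right-multiplying $\pV_0$ by a unit upper-triangular matrix, so the resulting $\pV:=[\mathbf{v}_1\,|\,\mathbf{c}_2'\,|\cdots|\,\mathbf{c}_k']$ remains unimodular, while by construction $\mathbf{y}^\star \mathbf{c}_j'=0$ for all $j\geq 2$. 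Together with Hermitian-ness this forces
\[
\pV^\star \pQ_1 \pV \;=\; \begin{bmatrix}1 & 0\\ 0 & \widetilde{\pQ}(z)\end{bmatrix}
\]
for some Hermitian $(k-1)\times(k-1)$ matrix $\widetilde{\pQ}$ of Laurent polynomials. Setting $\pU:=(\pU_1\pV)^{-\star}$ (again unimodular) delivers the asserted factorization $\pQ = \pU \diag(1, \widetilde{\pQ})\pU^\star$.

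The main delicate point throughout is keeping every intermediate object inside $\C[z,z^{-1}]$ rather than letting rational functions creep in; this hinges critically on the observation that $g$ is a \emph{unit}, which is what makes the division $h/g$ in the definition of $\alpha$ well-defined, and which also guarantees the existence of the unimodular $\pW_1$ in the first step. The remaining verifications (unimodularity of $\pV$, Hermitian-ness of $\widetilde{\pQ}$) are then routine consequences of the construction.
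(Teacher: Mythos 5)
Your proof is correct, but it follows a genuinely different route from the paper's. The paper takes a very compact algebraic approach: it writes both $\pQ$ and $\pQ^{-1}$ in $1\times(k-1)$ block form, sets $y:=\tfrac12(\pb+1)$, $x:=\tfrac12(\pb-1)\pa$, assembles $\pV:=\begin{bmatrix}y & \pc^\star\\ x & \mathbf{I}_{k-1}\end{bmatrix}$ from these and the first column $\pc$ of $\pQ^{-1}$, and verifies directly (using the identities implied by $\pQ\pQ^{-1}=\mathbf{I}$, in particular $\pa^\star\pc=1$ and $\pa\pb+\pE\pc=0$) that $\pV\pQ\pV^\star=\diag\big(1,\ (\pb-1)\pa\pa^\star+\pE\big)$ with $\det\pV=1$; then $\pU:=\pV^{-1}$. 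Your proof is longer and more ``constructive from first principles'': you first observe that the gcd $g$ of the off-diagonal entries in the zero row must be a unit because $\pQ_{1,1}\equiv 0$ forces $g\mid\det\pQ$, normalize the first row to $(0,g,0,\dots,0)$, then realize a vector of $\pQ$-norm $1$ by solving $h+h^\star = 1-(\pQ_1)_{2,2}$ (possible because the right side is Hermitian), and finally run a $\pQ$-orthogonalization to split off the pivot. What the paper's method buys is brevity and the avoidance of any Smith/Bezout machinery; what yours buys is that it isolates a genuinely useful structural fact (unimodularity plus a zero diagonal entry forces the off-diagonal entries in that row to be coprime up to a unit) and reuses entirely standard PID and Gram--Schmidt reasoning, so it is arguably more transparent even if less slick. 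Both are valid.
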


We are now ready to present Algorithm~\ref{algo:unimodular} below to prove Theorem~\ref{thm:unimodular}.
The structure and idea of the following Algorithm~\ref{algo:unimodular} consist of three main steps.
\begin{enumerate}
	\item If the first diagonal entry of the $ n\times n$ Hermite matrix $ \pA(z) $ is identically zero, then we apply Lemma~\ref{lemma:EmptySpect3} to find a unimodular matrix $ \pU(z) $ such that $ \pA(z) =
	\pU(z)\begin{bmatrix}
	1 & \\ & \widetilde{\pA}(z)
	\end{bmatrix}\pU^\star(z) $ holds. Hence, the problem is reduced to solving the generalized matrix spectral factorization of the $ (n-1)\times (n-1) $ matrix $ \widetilde{\pA}(z) $.
	
	\item If the first diagonal entry of $ \pA(z) $ is not identically zero, then we can repeatedly apply Lemma~\ref{lemma:EmptySpect2}, to reduce $ \pA(z) $ to a diagonally dominant matrix.
	
	\item If $ \pA(z) $ is a unimodular diagonally dominant matrix of Laurent polynomials, then it must be a diagonal constant matrix. So we can solve its spectral factorization directly.
\end{enumerate}

\begin{algorithm} \label{algo:unimodular}
Let $\pA(z)$ be an $n \times n$ Hermite matrix of Laurent polynomials such that $\det(\pA(z))$ is a nonzero monomial.

\begin{enumerate}
\item[(S0)] Initialization. Set $\pU(z) := \mathbf{I}_n$ to be the $n \times n$ identity matrix.
Let $\pQ(z) := \pA(z)$ and $k:=n$.

\item[(S1)] Find a permutation matrix  $\widetilde{\pU}$ such that $\widetilde{\pQ}(z):=\widetilde{\pU}\pQ(z)\widetilde{\pU}^\star$ satisfies
$$ \len(\widetilde{\pQ}_{1,1}(z))\leqslant \len(\widetilde{\pQ}_{2,2}(z))\leqslant\cdots\leqslant \len(\widetilde{\pQ}_{k,k}(z)).$$
Update/replace $\pU(z)$ by $\pU(z)
\diag(\mathbf{I}_{n-k}, \widetilde{\pU}^{-1})$ and
$\pQ(z):=\widetilde{\pQ}(z) $.

\item[(S2)] If the first diagonal entry $\pQ_{1,1}(z)\not\equiv 0$, then go to step  (S3). Otherwise,
apply Lemma~\ref{lemma:EmptySpect3} to find a $ k\times k $ unimodular matrix $ \widetilde{\pU}(z) $ of Laurent polynomials such that
$ \widetilde{\pU}(z)\pQ(z)\widetilde{\pU}^\star(z) =\diag(1, \widetilde{\pQ}(z))
$ for some $ (k-1)\times (k-1) $ matrix $ \widetilde{\pQ}(z) $ of Laurent polynomials.
Update/replace $\pU(z)$ by $\pU(z)
\diag(\mathbf{I}_{n-k}, \widetilde{\pU}^{-1}(z))$ and $\pQ(z):=\widetilde{\pQ}(z)$. Set $k:=k-1$ and restart from  (S1).

\item[(S3)] For $\pQ_{1,1}(z) \not\equiv 0$, if $ \pQ(z) $ is a diagonally dominant matrix, then go to step (S5). Otherwise, find the largest number $ s $ such that $ \pQ(z) $ is diagonally dominant at its first $ s $ diagonal entries. If it is not diagonally dominant at the first diagonal entry, then just take $ s = 0 $.
Apply Lemma~\ref{lemma:EmptySpect2} to find a $ k\times k $ unimodular
matrix $ \widetilde{\pU}(z) $ of Laurent polynomials such that
$ \widetilde{\pQ}(z):=
\widetilde{\pU}(z)\pQ(z)\widetilde{\pU}^\star(z) $
is diagonally dominant at its first $ (s+1) $ diagonal entries.
Update/replace $\pU(z)$ by $\pU(z)
\diag(\mathbf{I}_{n-k}, \widetilde{\pU}^{-1}(z))$ and $\pQ(z):=\widetilde{\pQ}(z)$.

\item[(S4)] If the lengths of diagonal entries in $\pQ(z)$ are not non-decreasing any more, that is,
$$ \len(\pQ_{1,1}(z))\leqslant \len(\pQ_{2,2}(z))\leqslant\cdots\leqslant \len(\pQ_{k,k}(z))$$
is not satisfied, then restart from  (S1) to sort them again. Otherwise, repeat from  (S3).

\item[(S5)] If $\pQ(z)$ is diagonally dominant, then $\pQ(z)$ must be a constant diagonal matrix, that is,
$\pQ = \diag(\lambda_1, \ldots, \lambda_k) $, where $ \lambda_j \neq 0 $ for all $ j = 1,\ldots, k $.
Without loss of generality, we assume that the first $ (k-\nu_-) $ of the $ \lambda_j $ are positive and the last $ \nu_- $ of them are negative.
Define $ \widetilde{\pU}:=\diag(\sqrt{|\lambda_1|}, \ldots, \sqrt{|\lambda_k|}) $. We have $ \pQ= \widetilde{\pU}\diag(\mathbf{I}_{k-\nu_-}, -\mathbf{I}_{\nu_-}) \widetilde{\pU}^\star $.
Update/replace $\pU(z)$ by $\pU(z)
\diag(\mathbf{I}_{n-k},\widetilde{\pU})
$, and define
$\pD:=
\diag(\mathbf{I}_{n-k}, \mathbf{I}_{k-\nu_-},
-\mathbf{I}_{\nu_-})$. Such output $\pU(z)$ and $\pD$ must satisfy
$ \pA(z) = \pU(z)\pD\pU^\star(z) $ and all the requirements in Theorem~\ref{thm:unimodular}.
\end{enumerate}

\end{algorithm}

\begin{proof}
It is easy to see that after the initialization step (S0), we have
\begin{equation} \label{eq:UQU}
\pA(z) = \pU(z)\begin{bmatrix}
\mathbf{I}_{n-k} & \\ & \pQ(z)
\end{bmatrix}\pU^\star(z).
\end{equation}
Each time we update $\pU(z)$ and $\pQ(z)$ in steps  (S1),(S2),(S3) and (S5), we are actually factoring out some matrices from the original $\pQ(z)$.
Hence, by induction, \eqref{eq:UQU} will always hold during the whole process of the algorithm.
So if the algorithm can finalize in  (S5), the decomposition $\pA(z)=\pU(z)\pD\pU^\star(z)$ must hold.
We prove that all the steps in the algorithm are feasible and they will terminate after finitely many steps.
The feasibility of steps (S2) and (S3) are proved by Lemmas~\ref{lemma:EmptySpect2} and \ref{lemma:EmptySpect3}.

In (S6), we know that if $\pQ(z)$ is diagonally dominant, then $\len(\det(\pQ(z)))=\sum_{l=1}^{k}\len(\pQ_{l,l}(z))$. By \eqref{eq:UQU}, we deduce
$\det(\pA(z)) = \det(\pU(z))\det(\pQ(z))\det(\pU^\star(z))$, which implies  $\det(\pQ(z))\mid\det(\pA(z))$. Since $\det(\pA(z))$ is a nonzero monomial,  $\det(\pQ(z))$ is a nonzero monomial.
Hence $\sum_{l=1}^{k}\len(\pQ_{l,l}(z))=0$, which forces all the diagonal entries of $ \pQ(z) $ to be monomials. Since $\pQ(z)$ is a Hermite matrix, all the diagonal entries of $ \pQ(z) $ must be nonzero constants. Because $\pQ(z)$ is diagonally dominant, so $\pQ(z)$ must be a diagonal constant matrix.

Finally, we prove that the algorithm will stop after finitely many iterations. The algorithm might restart from (S1) in (S2) and (S4) or restart from (S3) in (S4).

When the restart from (S1) in (S2) occurs, the size $ k $ of $\pQ(z)$ will decrease by $1$. By \eqref{eq:UQU}, it can happen only finite number of times.

In order to show that the algorithm can only restart from (S1) in (S4) for finitely many times, let us use the lexicographic order of sequences of length $k$. For any two sequences of nonnegative integers with length $k$:
$\{\alpha_j\}_{j=1}^k,\{\beta_j\}_{j=1}^k \in \NN^k$, we say that $\{\alpha_j\}_{j=1}^k $ is less than $\{\beta_j\}_{j=1}^k$ if there exists some index $j_0\in \{1,\ldots, k\}$, such that $\alpha_j=\beta_j$ for all $j<j_0$, and $\alpha_{j_0} < \beta_{j_0}$.
$\{\alpha_j\}_{j=1}^k $ is equal to $\{\beta_j\}_{j=1}^k$ if $\alpha_j=\beta_j$ for all $j=1,\ldots,k$.
It is easy to see that $\NN^k$ is a well-ordered set under this lexicographic order. Every time the algorithm restarts from (S1) in (S4), the lexicographic order of $\{\len(\pQ_{i,i}(z))\}_{i=1}^k\in \NN^k$ will decrease. Since the sequence is lower bounded by the sequence $ \{0, \ldots,0\} $, the restarts can occur only finitely many times.

Every time the algorithm restarts from (S3) in (S4), $s$ will increase by at least 1, until the matrix $ \pQ(z) $ becomes diagonally dominant. So these iterations can only happen for finite number of times.
This completes the proof of Algorithm~\ref{algo:unimodular} and Theorem~\ref{thm:unimodular}.
\end{proof}

To complete the proof of Theorem~\ref{thm:const-sig}, we have to
extract out nontrivial factors of $\det(\pA(z))$. To do so,
let us recall some necessary notations first.
An $n \times n$ matrix $\pA(z)$ of Laurent polynomials can be factorized into
$$ \pA(z) = \pE(z)\pD(z)\pF(z), $$
where $\pE(z)$ and $\pF(z)$ are unimodular matrices of Laurent polynomials, and $\pD(z)=\diag(d_1(z), \ldots, d_n(z))$ is a diagonal matrix of Laurent polynomials with $d_j(z)\mid d_{j+1}(z)$ for all $j=1,\ldots,n-1$. $\pD(z)$ is called the \emph{Smith Normal Form} of $\pA(z)$.
Moreover, we can normalize $d_j$ by requiring that its leading coefficient should be $1$ and its constant term be nonzero.
Such polynomials $d_j(z)$ in the Smith normal form are called the \emph{invariant polynomials} of $\pA(z)$.
For all $ k = 1,\ldots, n $, the product $ \prod_{j = 1}^{k} d_j(z)$ is essentially the greatest common divisor (gcd) of all the determinants of $k \times k$ submatrices in $\pA(z)$. Let us write the invariant polynomials in $\C$ as follows:
$$ d_j(z) = \prod_{k=1}^{n_j}(z-z_{j,k})^{\alpha_{j,k}}, \qquad j=1,\ldots, n. $$
The factors $(z-z_{j,k})^{\alpha_{j,k}}$, $k=1,2,\ldots, n_j$, $j=1,2,\ldots, n$, where each factor could repeat as many times as it occurs, are called the \emph{elementary divisors} of $\pA(z)$.
For each $ j=1,2,\ldots, n $, since we require $d_j(z)$ to have a nonzero constant term, $d_j(z)$ has no root at $0$. Thus there won't be any $(z-0)^{\alpha_{j,k}}$ terms in the elementary divisors.
Also, by $d_j(z)\mid d_{j+1}(z)$ for all $j=1,\ldots,n-1$, we see that the Smith Normal Form $\pD(z)$ of $\pA(z)$ is uniquely determined by its elementary divisors.

Observe that $\det(\pA(z)) = \det(\pE(z))\det(\pD(z))\det(\pF(z))$. Since both $\det(\pE(z))$ and $\det(\pF(z))$ are nonzero monomials, we see that the determinant of $\pA(z)$ is essentially the product of all its invariant polynomials or the product of all its elementary divisors, up to some multiplicative nonzero monomials:
\begin{equation} \label{eq:DetEleDiv}
\det(\pA(z)) = c_A z^{k_A} \prod_{j=1}^n d_j(z)
= c_A z^{k_A} \prod_{j=1}^n \prod_{k=1}^{n_j}(z-z_{j,k})^{\alpha_{j,k}},
\end{equation}
for some nonzero constant $c_A\in\C$ and some integer $k_A\in \Z$.

To prove the general case in Theorem~\ref{thm:const-sig}, we need some auxiliary results to show that if $ \pA(z) $ is not unimodular, then its elementary divisors can be factored out. For this purpose, we need the following auxiliary results Theorems~\ref{thm:extract1} and \ref{thm:extract2}. Theorem~\ref{thm:extract1} deals with the elementary divisor $ (z-z_0)^\alpha $
in the case $ z_0 \not\in \T $ or the case $ z_0 \in\T $ but $ \alpha \geqslant 2 $. Theorem~\ref{thm:extract2} handles the elementary divisors with $ z_0\in \T $ and $ \alpha = 1 $.

\begin{theorem} \label{thm:extract1}
Let $\pA(z)$ be an  $n \times n$ Hermite matrix of Laurent polynomials with $\len(\det(A(z)))>0$.
If  $\pA(z)$ has some elementary divisor $(z-z_0)^\alpha$ satisfying either one of the two conditions:
\begin{enumerate}
	\item $z_0\in \left(\C\setminus\{0\}\right)\setminus\T$,
	\item $z_0\in \T$  and $\alpha\geqslant 2$,
\end{enumerate}
then there exist two $n \times n$ matrices $\pU(z)$ and $\widetilde{\pA}(z)$ of Laurent polynomials such that
$\pA(z)=\pU(z)\widetilde{\pA}(z)\pU^\star(z)$,
where $\widetilde{\pA}^\star(z) = \widetilde{\pA}(z)$ and $\len(\det(\widetilde{\pA}(z)))\leqslant \len(\det(\pA(z)))-2$.
\end{theorem}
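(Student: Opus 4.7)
The plan is to leverage the elementary divisor $(z-z_0)^\alpha$ together with the Hermitian symmetry $\pA^\star(z) = \pA(z)$---which automatically pairs elementary divisors at $z_0$ with ones at $1/\bar{z_0}$---to construct $\pU(z)$ and $\widetilde{\pA}(z)$ satisfying $\pA = \pU \widetilde{\pA} \pU^\star$ with the determinant length reduced by at least $2$. The strategy parallels the Gohberg--Lancaster--Rodman approach for Hermitian polynomial matrices, adapted to the Laurent polynomial ring with the involution $\pu^\star(z) = \overline{\pu(1/\bar{z})}$.

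First, I would extract from the Smith normal form of $\pA(z)$ a Jordan chain at $z_0$: a Laurent polynomial vector $v(z) = v_0 + (z-z_0) v_1 + \cdots + (z-z_0)^{\alpha-1} v_{\alpha-1}$ with $v_0 \neq 0$ satisfying $\pA(z) v(z) \equiv 0 \pmod{(z-z_0)^\alpha}$; its existence is equivalent to $(z-z_0)^\alpha$ being an elementary divisor. Second, I would construct a unimodular matrix $\pV(z)$ such that the first column of $(\pV^{-1}(z))^\star$ equals $v(z)$---equivalently, the first row of $\pV^{-1}(z)$ is $v^\star(z)$. Third, I would set $\pU(z) := \pV(z) \cdot \operatorname{diag}(g(z), 1, \ldots, 1)$, where $g(z) = (z-z_0)^\alpha$ in Case (1) and $g(z) = z - z_0$ in Case (2), and define $\widetilde{\pA}(z) := \pU^{-1}(z) \pA(z) (\pU^\star(z))^{-1}$. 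The core claim is that $\widetilde{\pA}(z)$ is in fact a Hermite Laurent polynomial matrix with $\len(\det(\widetilde{\pA})) \leqslant \len(\det(\pA)) - 2$.

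The verification splits by case. Let $\pM := \pV^{-1} \pA (\pV^{-1})^\star$; then the first column of $\pM$ equals $\pV^{-1} \pA v$, which is divisible by $(z-z_0)^\alpha$, and by the Hermitian symmetry $\pA^\star = \pA$ the first row of $\pM$ is divisible by $[(z-z_0)^\alpha]^\star$. The $(1,1)$ entry $v^\star \pA v$ is a self-star Laurent polynomial divisible by $(z-z_0)^\alpha$ and hence also by $[(z-z_0)^\alpha]^\star$. In Case (1), where $z_0 \notin \T$, the factors $(z-z_0)^\alpha$ and $[(z-z_0)^\alpha]^\star$ (a monomial multiple of $(z-1/\bar{z_0})^\alpha$) are coprime, so dividing the first column by $(z-z_0)^\alpha$ and the first row by $[(z-z_0)^\alpha]^\star$ produces a Laurent polynomial matrix, and the determinant length drops by $2\alpha \geqslant 2$. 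In Case (2), where $z_0 \in \T$ and $\alpha \geqslant 2$, the two factors agree up to a monomial; the extraction is only linear on each side, giving length reduction exactly $2$, and the hypothesis $\alpha \geqslant 2$ is used precisely to guarantee that $v^\star \pA v$ is divisible by $(z-z_0)^2$---i.e., by the product of $(z-z_0)$ and its star---which is what allows dividing both from the left and from the right simultaneously. The Hermitian property $\widetilde{\pA}^\star = \widetilde{\pA}$ follows from $\pA^\star = \pA$ by taking stars of $\pA = \pU \widetilde{\pA} \pU^\star$ and invoking invertibility of $\pU(z)$ over rational Laurent polynomials.

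I anticipate the main obstacle to be the explicit construction of the unimodular matrix $\pV(z)$ with prescribed first row $v^\star(z)$: this requires the entries of $v^\star(z)$ to generate the unit ideal of $\C[z, z^{-1}]$, which in general calls for a small perturbation of the Jordan chain---a minor but nontrivial technical adjustment. The second point of care is Case (2), where the factors $(z-z_0)$ and $[(z-z_0)]^\star$ coincide as ideals; the required double divisibility at the $(1,1)$ entry then has to be extracted from the depth of the Jordan chain rather than from independent first-row and first-column arguments. This mirrors a known subtlety in the Gohberg--Lancaster--Rodman framework for real symmetric polynomial matrices, but the additional monomial bookkeeping forced by the Laurent-polynomial involution requires care that the classical polynomial-matrix results do not directly supply.
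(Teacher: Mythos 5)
Your strategy is fundamentally the same as the paper's: bring $\pA(z)$ to Smith Normal Form $\pA = \pE\,\diag(d_1,\ldots,d_n)\,\pF$, conjugate by a unimodular matrix so that one row and one column acquire divisibility by $(z-z_0)^\alpha$ and $\bigl((z-z_0)^\alpha\bigr)^\star$ respectively, and then peel off a diagonal factor. The difference is that the paper conjugates by $\pE^{-1}(z)$ directly: since $\pE^{-1}(z)\pA(z)\pE^{-\star}(z) = \diag(d_1,\ldots,d_n)\,\pF(z)\pE^{-\star}(z)$, its $k$-th row is $d_k(z)$ times a row of Laurent polynomials and so is automatically divisible by $(z-z_0)^\alpha$ once $(z-z_0)^\alpha\mid d_k(z)$, and the Hermite property hands you the $k$-th column for free. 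You instead extract a Jordan chain $v(z)$ and extend it to a fresh unimodular $\pV(z)$. That extension is precisely the extra lemma the paper avoids by reusing $\pE(z)$; it does hold (in $\C[z,z^{-1}]$, a PID, any vector whose entries generate the unit ideal extends to a unimodular matrix), and the ``perturbation'' worry you raise is not actually an obstacle: if $g(z)$ is the gcd of the entries of $v(z)$, then $g(z_0)\neq 0$ because $v(z_0)=v_0\neq 0$, so $g$ is coprime to $(z-z_0)$, hence a unit modulo $(z-z_0)^\alpha$, and $\pA(z)\,(v(z)/g(z))\equiv 0\pmod{(z-z_0)^\alpha}$ still holds.

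There is, however, a genuine conjugation error in your Case (1) formula. With $\pV^{-1}(z)$ having first row $v^\star(z)$, set $\pQ(z):=\pV^{-1}(z)\pA(z)\pV^{-\star}(z)$; then the first \emph{column} of $\pQ$ equals $\pV^{-1}\pA v$ and is divisible by $(z-z_0)^\alpha$, while the first \emph{row}, being its star, is divisible by $\bigl((z-z_0)^\alpha\bigr)^\star$. With $\pU:=\pV\cdot\diag(g,1,\ldots,1)$ and $\widetilde{\pA}:=\pU^{-1}\pA\,\pU^{-\star}$, one computes $\widetilde{\pA}=\diag(g^{-1},1,\ldots,1)\,\pQ\,\diag((g^\star)^{-1},1,\ldots,1)$, which divides the first \emph{row} of $\pQ$ by $g$ and the first \emph{column} by $g^\star$. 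Your choice $g(z)=(z-z_0)^\alpha$ therefore fails in Case (1): when $z_0\notin\T$, the first row of $\pQ$ is divisible by $\bigl((z-z_0)^\alpha\bigr)^\star$ but \emph{not} by $(z-z_0)^\alpha$ (the two are coprime), so $\widetilde{\pA}$ is not a matrix of Laurent polynomials. Your prose (``dividing the first column by $(z-z_0)^\alpha$ and the first row by $\bigl((z-z_0)^\alpha\bigr)^\star$'') describes the correct extraction, but the displayed $g$ contradicts it; the fix is to take $g(z)=\bigl((z-z_0)^\alpha\bigr)^\star$, i.e., up to a unit $(z-1/\overline{z_0})^\alpha$, which is also an elementary divisor of $\pA$ by Hermitian symmetry. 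The slip is invisible in Case (2) because $z_0\in\T$ makes $g$ and $g^\star$ agree up to a monomial unit, which is likely why it went unnoticed.
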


\begin{proof}
Denote the invariant polynomials of $\pA(z)$ by $d_1(z), \ldots, d_n(z) $. Then there exist unimodular matrices $\pE(z)$ and $\pF(z)$ of Laurent polynomials such that
$$ \pA(z) = \pE(z)\diag(d_1(z), \ldots, d_n(z)) \pF(z). $$
Define $\mathring{\pA}(z)$ as
\begin{equation} \label{eq:smith}
\mathring{\pA}(z):=\pE^{-1}(z)\pA(z)\pE^{-\star}(z) =
\diag(d_1(z), \ldots, d_n(z))
\pF(z)\pE^{-\star}(z).
\end{equation}
Since $(z-z_0)^\alpha$ is an elementary divisor of $\pA(z)$, there exists some $d_k(z)$ such that $(z-z_0)^\alpha \mid d_k(z)$.
Hence $(z-z_0)^\alpha$ divides the $k$-th row of $\mathring{\pA}(z)$.
Also, $\mathring{\pA}(z)$ being a Hermite matrix implies that
$ \left((z-z_0)^\alpha\right)^\star=(z^{-1}-\overline{z_0})^\alpha = (-\overline{z_0})^{\alpha}z^{-\alpha}(z-\overline{z_0}^{-1})^\alpha $ divides the $k$-th column of $\mathring{\pA}(z)$.

In the following, we will show that in the items (1) and (2), we can factor out $ (z-z_0)^\beta $ from the $ k $-th row of $ \mathring{\pA}(z) $, and factor out $ ((z-z_0)^\beta)^\star $ from the $ k $-th column of $ \mathring{\pA}(z) $ simultaneously, where $ \beta = \alpha $ in item (1) and $ \beta =  \lfloor \alpha /2 \rfloor$ in item (2).

For item (1), we have $z_0\not\in \T$, and hence $\overline{z_0}^{-1}\neq z_0$.
So $(z-z_0)^\alpha$ and
$ (z-\overline{z_0}^{-1})^\alpha $ are different polynomials. Since they divide the $k$-th row and the $k$-th column of $\mathring{\pA}(z)$ respectively, we deduce that
$ (z-z_0)^\alpha (z-\overline{z_0}^{-1})^\alpha $
(or equivalently $\left((z-z_0)^\alpha\right)^\star (z-z_0)^\alpha$) divides the $(k, k)$-entry of
the matrix $\mathring{\pA}(z)$.
So we can factor out $(z-z_0)^\alpha$ from the $k$-th row and factor out $\left((z-z_0)^\alpha\right)^\star $ from the $k$-th column of $\mathring{\pA}(z)$ simultaneously.
Use $ \pD_{k, \alpha}(z) $ to denote the $ n\times n $ diagonal matrix with the $k$-th diagonal entry equal to $(z-z_0)^\alpha$, and all other diagonal entries equal to 1, i.e.,
\begin{equation} \label{eq:Diagk}
\pD_{k, \alpha}(z) := \diag(1,\ldots, 1, (z-z_0)^\alpha, 1, \ldots, 1).
\end{equation}
We get
$\mathring{\pA}(z)=
\pD_{k, \alpha}(z)\widetilde{\pA}(z)\pD_{k, \alpha}^\star(z)$,
where $\widetilde{\pA}(z)$ is an $n \times n$ Hermite matrix of Laurent polynomials.
So $\pA(z)$ can be written as
$$ \pA(z)
= \pE(z) \mathring{\pA}(z) \pE^\star(z)
= \pE(z)\pD_{k, \alpha}(z)\widetilde{\pA}(z)\pD_{k, \alpha}^\star(z)\pE^\star(z). $$
Let $\pU(z):=\pE(z)\pD_{k, \alpha}(z)$. Then we get
$ \pA(z)=\pU(z)\widetilde{\pA}(z)\pU^\star(z)$.

Since $\det(\pE(z))$ is a nonzero monomial and
$\det(\pU(z)) = \det(\pE(z))\det(\pD_{k, \alpha}(z))$,
we conclude that $\len(\det(\pU(z)))= \len(\det( \pD_{k, \alpha}(z) )) =\alpha$.
So
\begin{align*}
\len(\det(\widetilde{\pA}(z)))
&= \len(\det(\pA(z))) - \len(\det(\pU(z))) - \len(\det(\pU^\star(z))) \\
&= \len(\det(\pA(z))) -\alpha -\alpha
\leqslant  \len(\det(\pA(z))) -2.
\end{align*}
	
For item (2), we have $ \alpha \geqslant 2 $. Let $\beta:=\lfloor \alpha /2 \rfloor$ be the largest integer that is no larger than $ \alpha/2 $. Then $\beta\geqslant 1$ and $2\beta \leqslant \alpha$.
From $\beta \leqslant \alpha$, we see that $(z-z_0)^\beta$ divides the $k$-th row and $((z-z_0)^\beta)^\star $
divides the $k$-th column of $\mathring{\pA}(z)$.
For $z_0\in \T$, $|z_0|^2=1$ implies $\overline{z_0}^{-1} = z_0 $. So $ (z-\overline{z_0}^{-1})^\beta $ and $(z-z_0)^\beta$ are the same polynomial.
Since $2\beta\leqslant \alpha$ and $ (z-z_0)^\alpha $ divides the
$(k,k)$-entry of $\mathring{\pA}(z)$,
we get
$(z-z_0)^{\beta}((z-z_0)^\beta)^\star =(-\overline{z_0})^{\beta}z^{-\beta}(z-z_0)^{2\beta} $ which divides the $(k,k)$-entry of $\mathring{\pA}(z)$.
So we can factor out $(z-z_0)^\beta$ from the $k$-th row and $((z-z_0)^\beta)^\star$ from the $k$-th column at the same time to get
$\mathring{\pA}(z) = \pD_{k, \beta}(z)\widetilde{\pA}(z)\pD_{k, \beta}^\star(z)$,
where $\widetilde{\pA}(z)$ is an $n \times n$ Hermite matrix of Laurent polynomials
and $\pD_{k, \beta}(z)$ is defined as \eqref{eq:Diagk}.

Using similar arguments as for item (1), we get
$$ \pA(z)
= \pE(z) \mathring{\pA}(z) \pE^\star(z)
= \pE(z)\pD_{k, \beta}(z)\widetilde{\pA}(z)\pD_{k, \beta}^\star(z)\pE^\star(z) =
\pU(z)\widetilde{\pA}(z)\pU^\star(z), $$
where $\pU(z):=\pE(z)\pD_{k, \beta}(z)$.
Because
$\len(\det(\pU(z)))= \len(\det(\pD_{k, \beta}(z))) = \beta$, we have
\begin{align*}
	\len(\det(\widetilde{\pA}(z))) & = \len(\det(\pA(z))) - \len(\det(\pU(z))) - \len(\det(\pU^\star(z))) \\
	& = \len(\det(\pA(z))) -\beta -\beta
	\leqslant  \len(\det(\pA(z))) -2.
	\end{align*}
This completes the proof.
\end{proof}

If the Hermite matrix $ \pA(z) \geqslant 0 $ for all $ z\in \T $, then we can prove that all the elementary divisors of $ \pA(z) $ must be either item (1) or item (2) in Theorem~\ref{thm:extract1}.
Actually, if $ \pA(z) $ is positive semidefinite for all $ z\in\T $, all its elementary divisors $(z-z_0)^\alpha $ with $z_0\in \T$ will have even multiplicity $\alpha$.
See Corollary \ref{cor:SPDEvenEleDiv} later in this paper.
However, $ z_0\in \T $ and $\alpha=1$ can indeed happen if the matrix $ \pA(z) $ is not positive semidefinite.
This is the main difference/difficulty in the proof of the generalized spectral factorization of matrices with constant signature, in comparison to the proof of the standard matrix-valued Fej\'er-Riesz lemma,
as demonstrated by the following example.

\begin{example} \label{ex:SingleRoot}
{\rm
Consider the matrix
\[
\pA(z)=
\left[ \begin {array}{cc}
{ z^{-1}\left( z-1 \right)^{2}}&
\left( z-1 \right)  \left( z+1 \right) \\
\noalign{\medskip}
\left( {z}^{-1}-1 \right)  \left( {z}^{-1}+1 \right) &
-{z^{-1}\left( z-1 \right)^{2}}
\end {array} \right].
\]
By direct calculation we have $\pA^\star(z) =\pA(z)$
and $\det(\pA(z)) = \frac{4(z-1)^2}{z}=-\pd(z)\pd^\star(z) \leqslant 0$ for all $ z\in \T $, where $\pd(z) = 2(z-1)$.
Since the determinant is equal to the product of all the eigenvalues of $ \pA(z) $,
we know that $ \nu_+( \pA(z)) = \nu_-( \pA(z)) = 1$ for all $z\in\T\setminus\sigma(\pA)$. Hence, the signature of $ \pA(z) $ is constant for all
$ z\in\T\setminus\sigma(\pA)$.

As to the Smith Normal Form of $ \pA(z) $, let
$$
\pE(z):=
\left[
\begin {array}{cc}
\frac{-2z^3+4z^2+z-1}{z}&
2z\left(2-z \right) \\
\noalign{\medskip}{\frac {2\,{z}^{3}-z-1}{{
z}^{2}}}&2\,z\end {array} \right]
,\quad
\pF(z):=
\left[
\begin {array}{cc} 1&2\,{z}^{2}-z\\
\noalign{\medskip}-1&
z+1-2z^2\end {array} \right],\quad
\pD(z):=
\left[
\begin{array}{cc}
z-1 & 0\\
\noalign{\medskip}0&{ z-1 }
\end {array}
\right].
$$
We can directly verify that $\pA(z) = \pE(z)\pD(z)\pF(z)$ and $\pE(z), \pF(z)$ are both unimodular matrices. So $\pD(z)$ is the Smith Normal Form of $\pA(z)$. Hence $\pA(z)$ has two elementary divisors being $(z-1)$.
} \end{example}

The following theorem handles the elementary divisors $ (z-z_0)^\alpha $ with $ z_0 \in \T $ and $ \alpha = 1 $.

\begin{theorem} \label{thm:extract2}
Let $\pA(z)$ be an $n \times n$ Hermite matrix of Laurent polynomials.
If $\pA(z)$ satisfies
\begin{enumerate}
\item $\sig(A(z))$ is constant for all $z\in \T \setminus \sigma(\pA)$;
\item there exists some $z_0\in \sigma(\pA) \cap \T$ and all the elementary divisors of $\pA(z)$ with root $ z_0 $ have degree equal to $1$,
\end{enumerate}
then there exist two $n \times n$ matrices $\pU(z)$ and $\widetilde{\pA}(z)$ of Laurent polynomials such that
$\pA(z)=\pU(z)\widetilde{\pA}(z)\pU^\star(z)$,
where $\widetilde{\pA}^\star(z) = \widetilde{\pA}(z)$ and $\len(\det(\widetilde{\pA}(z)))\leqslant \len(\det(\pA(z)))-2$.
\end{theorem}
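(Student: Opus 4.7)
The plan is to follow the strategy of Theorem~\ref{thm:extract1}: apply the Smith Normal Form $\pA(z) = \pE(z)\pD(z)\pF(z)$ and form $\mathring{\pA}(z) := \pE^{-1}(z)\pA(z)\pE^{-\star}(z) = \pD(z)\pF(z)\pE^{-\star}(z)$, whose $j$-th row is divisible by $d_j(z)$. Since by hypothesis every elementary divisor of $\pA(z)$ with root $z_0$ has degree $1$, after a constant permutation the last $k$ rows of $\mathring{\pA}(z)$ are divisible by $(z-z_0)$ and, by Hermitian symmetry, the last $k$ columns by $(z-z_0)^\star$, where $k$ denotes the number of such elementary divisors. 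The obstruction absent from Theorem~\ref{thm:extract1} is that for $z_0 \in \T$ one has $(z-z_0)^\star = -\overline{z_0}z^{-1}(z-z_0)$, an associate of $(z-z_0)$; dividing a single row by $(z-z_0)$ and the corresponding column by $(z-z_0)^\star$ would require the diagonal entry to absorb two factors of $(z-z_0)$ that it need not possess. The extraction must therefore couple two kernel rows and columns together.

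Partition $\mathring{\pA}(z) = \begin{pmatrix}\Lambda(z) & P(z)\\ P^\star(z) & \pB(z)\end{pmatrix}$, in which $\Lambda(z_0)$ is invertible and $P(z_0)=0$, and write $\pB(z) = (z-z_0)\pC(z)$. A Schur-complement computation---comparing the multiplicity $k$ of $z_0$ in $\det\mathring{\pA}(z)$ with the factorization $\det\mathring{\pA}(z) = \det\Lambda(z)\det((z-z_0)\pC(z) - P^\star(z)\Lambda^{-1}(z)P(z))$---shows that $M := \pC(z_0)$ is non-singular. Hermitian symmetry of $\pB$ translates into $M = -\overline{z_0}^{\,2}M^\star$; equivalently, $H := iz_0 M$ is an invertible Hermitian $k\times k$ matrix. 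Parametrising $z = z_0 e^{i\delta}$, the $k$ small eigenvalues of $\pA(z)$ behave to first order as $\delta\mu_j$ where $\mu_j$ are the eigenvalues of $H$, so the assumed constancy of $\sig(\pA(z))$ as $z$ crosses $z_0$ along $\T$ forces $\sig(H)=0$. In particular $k = 2m$ is even and $H$ has exactly $m$ positive and $m$ negative eigenvalues.

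Choose a constant invertible $W$ with $H = W\begin{pmatrix}0 & \mathbf{I}_m\\ \mathbf{I}_m & 0\end{pmatrix}W^\star$. Conjugating the bottom-right block of $\mathring{\pA}(z)$ by $W$ (a unimodular operation) replaces $M$ by $-i\overline{z_0}\begin{pmatrix}0 & \mathbf{I}_m\\ \mathbf{I}_m & 0\end{pmatrix}$, so that the two $m\times m$ main-diagonal sub-blocks of $\pB(z)$ within the kernel block vanish at $z_0$ to order at least $2$. Now apply $\pU_0(z) := \diag(\mathbf{I}_{n-m},(z-z_0)\mathbf{I}_m)$, which divides the last $m$ rows by $(z-z_0)$ and the last $m$ columns by $(z-z_0)^\star$: a direct entry-by-entry check shows that each entry in the kernel block either loses exactly the one factor it had (the off-diagonal sub-blocks) or loses the product $(z-z_0)(z-z_0)^\star$ that it already accommodates (the bottom-right sub-block, by the order-$2$ vanishing), and each non-kernel/kernel entry loses just the row or column factor it carried. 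Setting $\pU(z) := \pE(z)\cdot\diag(\mathbf{I}_{n-k}, W)\cdot\pU_0(z)$ yields $\pA(z) = \pU(z)\widetilde{\pA}(z)\pU^\star(z)$ with $\widetilde{\pA}(z)$ a Hermitian matrix of Laurent polynomials and $\len(\det\pU(z)) = m \geqslant 1$, whence $\len(\det\widetilde{\pA}(z)) \leqslant \len(\det\pA(z)) - 2$. The main obstacle is the signature analysis establishing $\sig(H)=0$: this is the essential point where the constant-signature hypothesis enters, replacing the positive semi-definiteness used in the classical matrix-valued Fej\'er--Riesz lemma and in Theorem~\ref{thm:extract1}.
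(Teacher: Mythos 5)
Your proposal is correct, and it takes a genuinely different route from the paper's proof, so a comparison is worth recording.

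The paper's proof leans on Rellich's analytic eigendecomposition (\cite[Theorem~S6.3]{GohLanRod:1982Matrix}) for the whole family $\pA(e^{-i\xi})$, together with Theorem~\ref{thm:partialMultiplicity} relating the analytic eigenvalue orders $\beta_j$ to the invariant-polynomial multiplicities $\alpha_j$. It then picks two eigenvalues with opposite sign characteristics ($\gamma_1^2$ and $-\gamma_2^2$), uses a constant elementary matrix $V$ to cancel a single first-order term on the diagonal, and extracts one factor $(z-z_0)$ from one row and its conjugate from one column, dropping $\len(\det\cdot)$ by exactly $2$. You instead isolate the ``kernel block'' at $z_0$ directly from the Smith normal form, use a Schur complement to prove $M=\pC(z_0)$ nonsingular, package the Hermitian symmetry into the matrix $H=iz_0M$, and then congruence-transform $H$ into the hyperbolic form $\left[\begin{smallmatrix}0 & \mathbf{I}_m\\ \mathbf{I}_m & 0\end{smallmatrix}\right]$. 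That lets you strip out all $K=2m$ elementary divisors at once, dropping $\len(\det\cdot)$ by $2m\ge 2$; the paper only removes a pair per pass and loops. Your $H$ is, up to a sign, the paper's derivative matrix $\pA_K = -iz_0\widetilde{\pA}'(z_0)|_{\text{kernel}}$ that appears in Algorithm~\ref{algo:Const-Sig}(S4), so your argument can be seen as the explanation for why that algorithmic step works, made into a self-contained proof.

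Two small remarks. First, you phrase the central signature step in terms of first-order perturbation of the eigenvalues of $\pA(z)$ (really $\mathring{\pA}(z)$, a congruence, not a similarity---the signature transfers by Sylvester's inertia, but the eigenvalues themselves do not). You could avoid any appeal to analytic perturbation theory altogether by using the Schur complement you already built: for $z\in\T$ near $z_0$ with $\Lambda(z)$ invertible, Sylvester's law gives $\nu_\pm(\mathring{\pA}(z)) = \nu_\pm(\Lambda(z)) + \nu_\pm(S(z))$ with $S(z)=\pB(z)-P^\star(z)\Lambda^{-1}(z)P(z)$; since $S(z_0e^{i\delta}) = \delta\bigl(H+\bo(\delta)\bigr)$ with $H$ Hermitian and invertible, one has $\nu_+(S)=\nu_+(H)$ for $\delta>0$ and $\nu_+(S)=\nu_-(H)$ for $\delta<0$ just by continuity of eigenvalues of Hermitian matrices, and constancy of $\nu_\pm(\mathring{\pA})$ then forces $\sig(H)=0$ with no Rellich theorem needed. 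Second, ``conjugating the bottom-right block by $W$'' should read ``by $\diag(\mathbf{I}_{n-k},W^{-1})$'' to turn $M$ into $-i\overline{z_0}J$; this is what your final formula $\pU(z)=\pE(z)\diag(\mathbf{I}_{n-k},W)\pU_0(z)$ correctly encodes.
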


We need the following result to prove Theorem~\ref{thm:extract2}, which connects the study of the eigenvalues of $ \pA(z) $ and its invariant polynomials.
Let us recall the big $ \bo $ notation to study real analytic functions. For an analytic function $ f(\xi) $,  we say that $ f(\xi) = \bo((\xi - \xi_0)^n) $ as $ \xi \rightarrow \xi_0 $ if the $ k $-th derivative $ f^{(k)}(\xi_0) = 0$ for all $ 0 \leqslant k < n $.
We also abuse the notation for the multiplicity of the root of Laurent polynomials. For an analytic function $ f(\xi) $, we use $ \mz(f, \xi_0) $ to denote the largest integer $ n $ such that $ f(\xi) = \bo((\xi-\xi_0)^n) $ as $\xi\to \xi_0$.

\begin{theorem} \label{thm:partialMultiplicity}
Suppose that $ \pA(z) $ is an $ n\times n $ Hermite matrix of Laurent polynomials and $ z_0 = e^{-i\xi_0} \in \T $ with $ \xi_0\in \R $.
Let $ \pd_1(z), \ldots, \pd_n(z) $ be the invariant polynomials of $ \pA(z) $ and define the sequence $ \{\alpha_j\}_{j=1}^n $ by
\[ \alpha_j := \mz(\pd_j(z), z_0), \qquad j = 1,\ldots, n. \]	
Also, we can find $ n $ analytic functions
$ \lambda_1(\xi), \ldots, \lambda_n(\xi) $ for $ \xi \in \R $, which are the eigenvalues of the analytic matrix $ \pA(e^{-i\xi}) $.
Define the sequence $ \{\beta_j\}_{j=1}^n $ by
\[ \beta_j := \mz(\lambda_j(\xi), \xi_0), \qquad j = 1,\ldots, n. \]
Without loss of generality, we can assume $ \beta_1 \leqslant \cdots \leqslant \beta_n $.	
Then the sequence $ \{\alpha_j\}_{j=1}^n $ and the sequence $ \{\beta_j\}_{j=1}^n $ must be the same.
\end{theorem}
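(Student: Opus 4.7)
The plan is to identify both sequences $\{\alpha_j\}_{j=1}^{n}$ and $\{\beta_j\}_{j=1}^{n}$ as the sequence of invariant-factor orders of one and the same analytic matrix germ at $\xi_0$, and then conclude by uniqueness of the Smith Normal Form over the discrete valuation ring $\mathcal{O}=\mathcal{O}_{\xi_0}$ of $\C$-valued real-analytic germs at $\xi_0$. Since $\xi\mapsto e^{-i\xi}$ is a local biholomorphism near $\xi_0$, the function $e^{-i\xi}-z_0$ is a uniformizer of $\mathcal{O}$, and so for any Laurent polynomial $\pp(z)$ the germ of $\pp(e^{-i\xi})$ at $\xi_0$ has valuation equal to $\mz(\pp(z),z_0)$.

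First I would use the polynomial Smith Normal Form $\pA(z)=\pE(z)\diag(\pd_1(z),\ldots,\pd_n(z))\pF(z)$ with $\pE(z),\pF(z)$ unimodular. Because $\det(\pE(z))$ and $\det(\pF(z))$ are nonzero monomials in $z$, their evaluations at $z=e^{-i\xi}$ are units of $\mathcal{O}$, and thus $\pE(e^{-i\xi}),\pF(e^{-i\xi})\in GL_n(\mathcal{O})$. Hence $\pA(e^{-i\xi})$ is $\mathcal{O}$-equivalent to $\diag(\pd_1(e^{-i\xi}),\ldots,\pd_n(e^{-i\xi}))$, whose $j$-th diagonal entry has valuation $\alpha_j$ at $\xi_0$.

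Next I would invoke the Rellich theorem for real-analytic one-parameter Hermitian families to produce an analytic unitary matrix $V(\xi)$ on a neighborhood of $\xi_0$ diagonalizing
\[
\pA(e^{-i\xi}) = V(\xi)\diag(\lambda_1(\xi),\ldots,\lambda_n(\xi))V(\xi)^\star.
\]
Since $V(\xi_0)$ is unitary and hence invertible, both $V(\xi)$ and $V(\xi)^\star$ lie in $GL_n(\mathcal{O})$, so $\pA(e^{-i\xi})$ is also $\mathcal{O}$-equivalent to $\diag(\lambda_1(\xi),\ldots,\lambda_n(\xi))$, whose $j$-th diagonal entry has valuation $\beta_j$ at $\xi_0$. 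By uniqueness of the Smith Normal Form over the discrete valuation ring $\mathcal{O}$, the multisets $\{\alpha_j\}$ and $\{\beta_j\}$ must coincide; combined with the monotonicity $\alpha_1\leqslant\cdots\leqslant\alpha_n$ (forced by $\pd_j\mid\pd_{j+1}$) and the hypothesis $\beta_1\leqslant\cdots\leqslant\beta_n$, the two sequences agree term by term.

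The main obstacle is the Rellich-type analytic diagonalization in the second step: producing an analytic \emph{unitary} $V(\xi)$, not merely the analytic eigenvalues that are already given. This relies crucially on both Hermiticity of $\pA(e^{-i\xi})$ and the fact that $\xi$ is a real parameter; the claim can fail for complex parameters or for non-Hermitian matrices. Once this input is available, the remainder is routine module-theoretic bookkeeping in a principal ideal domain.
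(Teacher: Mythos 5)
Your proof is correct and follows essentially the same route as the paper: both start from the polynomial Smith Normal Form of $\pA(z)$ to capture the $\alpha_j$, invoke Rellich's analytic unitary diagonalization for Hermitian families (which the paper cites as \cite[Theorem~S6.3]{GohLanRod:1982Matrix}) to capture the $\beta_j$, and then appeal to a local uniqueness statement at $\xi_0$ to equate the two sequences. The only difference is one of packaging: the paper proves the local uniqueness from scratch as Lemma~\ref{lemma:PartialMultiplicity} via the Cauchy--Binet formula, whereas you invoke it directly as the uniqueness of the Smith Normal Form over the discrete valuation ring of $\C$-valued real-analytic germs at $\xi_0$ (with $e^{-i\xi}-z_0$ as a uniformizer), which is the same fact stated more abstractly.
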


\begin{proof}
The invariant polynomials $\pd_j(z)\mid\pd_{j+1}(z)$ hold for all $j=1,2,\ldots,n-1$.
Hence, $\alpha_1\leqslant\cdots\leqslant\alpha_n $.
There exist $ n\times n $ invertible matrices of Laurent polynomials $\pE(z)$ and $\pF(z)$ such that
\begin{equation} \label{eq:smith2}
\pA(z) = \pE(z)\diag(\pd_1(z), \pd_2(z), \ldots, \pd_n(z)) \pF(z)
\end{equation}
holds.
Take $z=e^{-i\xi}$, $ \xi\in\R $. We see that the invariant polynomials $\pd_j(e^{-i\xi})$ are analytic functions of $\xi\in \R$
and $\mz(\pd_j(e^{-i\xi}), \xi_0) = \mz(\pd_j(z), z_0) = \alpha_j$ for all $j=1,2,\ldots,n$.

Write $\pd_j(e^{-i\xi}) = (\xi-\xi_0)^{\alpha_j}\widetilde{d_j}(\xi)$ with $\widetilde{d_j}(\xi_0)\neq 0$. We can rewrite equation \eqref{eq:smith2} as follows,
\begin{align*}
\pA(e^{-i\xi}) =& \pE(e^{-i\xi})\diag(\pd_1(e^{-i\xi}), \ldots, \pd_n(e^{-i\xi})) \pF(e^{-i\xi}) \\
= & \pE(e^{-i\xi})
\diag((\xi-\xi_0)^{\alpha_1},\ldots,(\xi-\xi_0)^{\alpha_n})
\diag(\widetilde{d_1}(\xi),\ldots,\widetilde{d_n}(\xi))
\pF(e^{-i\xi})\\
= &
E_{\xi_0}(\xi)
\diag((\xi-\xi_0)^{\alpha_1},\ldots,(\xi-\xi_0)^{\alpha_n})
F_{\xi_0}(\xi),
\end{align*}
where $E_{\xi_0}(\xi):=\pE(e^{-i\xi})$ and $ F_{\xi_0}(\xi):= \diag(\widetilde{d_1}(\xi),\ldots, \widetilde{d_n}(\xi))\pF(e^{-i\xi})$.
From the definition, $E_{\xi_0}(\xi)$ and $F_{\xi_0}(\xi)$ are both analytic matrices, and $\det(E_{\xi_0}(\xi_0))\neq 0$,
$\det(F_{\xi_0}(\xi_0))\neq 0 $.
Hence, the matrices $E_{\xi_0}(\xi), F_{\xi_0}(\xi)$ and the sequence $\{\alpha_j\}_{j=1}^n$ satisfy all the conditions of Lemma \ref{lemma:PartialMultiplicity} in the Appendix. So the partial multiplicities of $\pA(e^{-i\xi})$ at $\xi_0$ are
	$\{\alpha_j\}_{j=1}^n$.

Since $\pA(e^{-i\xi})$ is an analytic Hermite matrix for $ \xi \in \R $,
by \cite[Theorem~S6.3]{GohLanRod:1982Matrix}, it can also be factorized as
\begin{equation} \label{eq:AnalyticEVD}
\pA(e^{-i\xi}) = W(\xi)
\diag(\lambda_1(\xi), \ldots, \lambda_n(\xi))
(W(\xi))^\star,
\end{equation}
where $W(\xi)$ is a unitary analytic matrix and
the eigenvalues $\lambda_1(\xi), \ldots, \lambda_n(\xi)$ are analytic functions of $\xi\in\R$.
Without loss of generality, we can assume that $\beta_1\leqslant \cdots \leqslant \beta_n$.
From $ \beta_j = \mz(\lambda_j(\xi), \xi_0) $, we can write $\lambda_j(\xi) = (\xi - \xi_0)^{\beta_j} f_j(\xi) $
with $f_j(\xi_0)\neq 0 $ for all $j=1,\ldots, n$. The factorization \eqref{eq:AnalyticEVD} becomes
\begin{align*}
\pA(e^{-i\xi}) =& W(\xi)
\diag((\xi-\xi_0)^{\beta_1},\ldots,(\xi-\xi_0)^{\beta_n})
\diag(f_1(\xi),\ldots,f_n(\xi))
	(W(\xi))^\star\\
	=&\widetilde{E}_{\xi_0}(\xi)
\diag((\xi-\xi_0)^{\beta_1},\ldots,(\xi-\xi_0)^{\beta_n})
	\widetilde{F}_{\xi_0}(\xi),
	\end{align*}
where $\widetilde{E}_{\xi_0}(\xi):= W(\xi) $ and $ \widetilde{F}_{\xi_0}(\xi):= \diag(f_1(\xi),\ldots, f_n(\xi))(W(\xi))^\star$.
From the definition, $\widetilde{E}_{\xi_0}(\xi)$ and $\widetilde{F}_{\xi_0}(\xi)$ are both analytic matrices, and $\det(\widetilde{E}_{\xi_0}(\xi_0))\neq 0$, $\det(\widetilde{F}_{\xi_0}(\xi_0))\neq 0 $.
Hence, the matrices $\widetilde{E}_{\xi_0}(\xi), \widetilde{F}_{\xi_0}(\xi)$ and the sequence $\{\beta_j\}_{j=1}^n$ satisfy all the conditions in Lemma~\ref{lemma:PartialMultiplicity} in the Appendix. By Lemma~\ref{lemma:PartialMultiplicity}, we must have
$
\{\beta_j\}_{j=1}^n = \{\alpha_j\}_{j=1}^n
$.
This completes the proof.
\end{proof}

We now prove Theorem~\ref{thm:extract2} using Theorem~\ref{thm:partialMultiplicity}.

\begin{proof}[Proof of Theorem \ref{thm:extract2}]
Denote the invariant polynomials of the matrix $ \pA(z) $ by $ \pd_1(z),\ldots, \pd_n(z) $.
Define the sequence $ \{\alpha_j\}_{j=1}^n $ by
$ \alpha_j := \mz(\pd_j(z), z_0)$, $ j=1,\ldots, n $.
From the condition in item (2), all $\alpha_j\leqslant 1$. Also, by $\pd_j(z)\mid\pd_{j+1}(z)$ for all $j=1,\ldots,n-1$, we have $\alpha_1\leqslant\cdots\leqslant\alpha_n $. Thus
$$\{\alpha_j\}_{j=1}^n = \{0,\ldots,0,1,\ldots,1\}.$$
	
Taking $z=e^{-i\xi}$, we get a matrix $ \pA(e^{-i\xi}) $ that is analytic of $ \xi \in\R $.
By \cite[Theorem~S6.3]{GohLanRod:1982Matrix}, the analytic Hermite matrix $\pA(e^{-i\xi})$ can also be factorized as
\begin{equation} \label{eq:AnalyticEVDnew}
\pA(e^{-i\xi}) = W(\xi)
\diag(\lambda_1(\xi),\ldots,\lambda_n(\xi))
(W(\xi))^\star,
\end{equation}
	where $W(\xi)$ is a unitary analytic matrix and $\lambda_1(\xi), \ldots, \lambda_n(\xi)$ are analytic functions of $\xi\in\R$.
	Since $z_0\in \T$, we can find some $\xi_0\in \left[ -\pi,\pi \right)$ such that $z_0 = e^{-i\xi_0}$.
	Define the sequence $ \{\beta_j\}_{j=1}^n $ by
	$\beta_j:= \mz(\lambda_j(\xi), \xi_0)$ for all $j=1,\ldots, n$.
	Without loss of generality, we can choose the factorization such that $\beta_1\leqslant \cdots \leqslant \beta_n$.
	According to Theorem~\ref{thm:partialMultiplicity}, we must have
	\[ \{\beta_j\}_{j=1}^n = \{\alpha_j\}_{j=1}^n = \{0,\ldots,0,1,\ldots,1\}. \]

	Let $K$ be the number of times $``1"$ appearing in $ \{\beta_j\}_{j=1}^n$ or $\{\alpha_j\}_{j=1}^n$.
	Recall from the definition of $ \{\alpha_j\}_{j=1}^n$, each $``1"$ corresponds to an elementary divisor $(z-z_0)$. So $K>0$ is the number of times that the elementary divisor $(z-z_0)$ appears. Let us see how the signs of the eigenvalues $\lambda_j(\xi)$ change from the left to the right side of $\xi_0$.
	
	For $j=1,\ldots, n-K$, we have $\beta_j=0$. So $\lambda_j(\xi_0) \neq 0$. Since the eigenvalue $\lambda_j(\xi)$ is a continuous function of $\xi\in\R$, it will not change its sign between the two sides of $\xi_0$, i.e., $\sign(\lambda_j(\xi_0-)) = \sign(\lambda_j(\xi_0+))$.
	
	For $j=n-K+1, \ldots, n$, we have $\beta_j=1$. In this case, $ \lambda_j(\xi_0) = 0 $ and $ \lambda_j'(\xi_0)\neq 0 $. We know that the eigenvalues of a Hermite matrix are all real, so $ \lambda_j(\xi), \lambda_j'(\xi) $ are both real functions of $ \xi\in\R $.
	Hence, $ \lambda_j'(\xi_0) $ is a nonzero real number.
	We have the following two possible situations.
	\begin{enumerate}
		\item If $\lambda_j'(\xi_0) > 0$, then $\lambda_j(\xi)$ is increasing near $\xi_0$. So
		$\lambda_j(\xi_0-) <0$ and $  \lambda_j(\xi_0+)> 0$.
		\item If $\lambda_j'(\xi_0) < 0$, then $\lambda_j(\xi)$ is decreasing near $\xi_0$. So
		$\lambda_j(\xi_0-) >0 $ and $ \lambda_j(\xi_0+)< 0$.
	\end{enumerate}
	
	Since the signature of $\pA(z)$ is constant for all $ z\in\T\setminus\sigma(\pA) $,
	we know that the number of positive eigenvalues
	and the number of negative eigenvalues of $ \pA(e^{-i\xi}) $ will remain unchanged between the two sides of $ \xi_0 $.
	So the above two cases must happen exactly the same number of times.
	That is, $K$ has to be an even integer. And there are exactly $K/2$ number of $\lambda_j(\xi)$ such that
	$ \lambda_j(\xi_0) = 0 $ and $\lambda_j'(\xi_0)>0$.
	Meanwhile, there are exactly $K/2$ number of $\lambda_j(\xi)$ such that $ \lambda_j(\xi_0)=0 $ and $\lambda_j'(\xi_0)<0$.
	The sign of $\lambda_j'(\xi_0)$ here are called the \emph{sign characteristic},
	which was firstly studied in \cite{GohLanRod:1980Spectral} for matrices of polynomials.
	
	Since $ K > 0 $, there exist some $j_1, j_2 \geqslant n-K+1$ such that
	$$\lambda_{j_1}(\xi) = \gamma_1^2(\xi - \xi_0) + \bo((\xi-\xi_0)^2),\qquad
	\lambda_{j_2}(\xi) = -\gamma_2^2(\xi - \xi_0) + \bo((\xi-\xi_0)^2),\qquad
	\mbox{as}~ \xi \rightarrow \xi_0.$$
	for some real $ \gamma_1, \gamma_2 \neq 0 $.
	
	In the eigenvalue decomposition \eqref{eq:AnalyticEVDnew}, $W(\xi)$ being a unitary and analytic matrix on $\xi\in\R$ implies that $W^{-1}(\xi_0)W(\xi) = \mathbf{I}_n + \bo((\xi-\xi_0))$
	as $ \xi \rightarrow \xi_0 $. So, there exists an $n \times n$ analytic matrix $G(\xi)$ such that
	$$ W^{-1}(\xi_0)W(\xi) = \mathbf{I}_n + (\xi-\xi_0)G(\xi), \qquad
	\left( W^{-1}(\xi_0)W(\xi) \right)^\star = \mathbf{I}_n + (\xi-\xi_0)(G(\xi))^\star. $$
	Multiplying constant matrices  $W^{-1}(\xi_0) $ and $W^{-\star}(\xi_0)$ on the left and the right side of \eqref{eq:AnalyticEVDnew} respectively,
	we define $\mathring{\pA}(e^{-i\xi}) $ as
	\begin{align} \label{eq:diagfirstder}
	\mathring{\pA}(e^{-i\xi}):= &
	W^{-1}(\xi_0) \pA(e^{-i\xi})(W(\xi_0))^{-\star}
	= W^{-1}(\xi_0)W(\xi)
\diag(\lambda_1(\xi),\ldots,\lambda_n(\xi))
	(W^{-1}(\xi_0)W(\xi))^\star  \notag \\
	=& (\mathbf{I}_n + (\xi-\xi_0)G(\xi))
\diag(\lambda_1(\xi),\ldots,\lambda_n(\xi))
	(\mathbf{I}_n + (\xi-\xi_0)(G(\xi))^\star) \notag \\
	=& \Lambda(\xi)
	+ (\xi - \xi_0) G(\xi) \Lambda(\xi)
	+ (\xi - \xi_0)\Lambda(\xi) (G(\xi))^\star
	+ (\xi - \xi_0)^2 G(\xi) \Lambda(\xi) (G(\xi))^\star,
	\end{align}
	where $\Lambda(\xi)  := \diag(\lambda_1(\xi), \ldots, \lambda_n(\xi))$.
	Plugging in $ \xi = \xi_0 $, we can directly get
	\begin{equation} \label{eq:EVDFirstOrder}
	\mathring{\pA}(e^{-i\xi_0}) = \Lambda(\xi_0) =
\diag(\lambda_1(\xi_0),\ldots,\lambda_{n-K}(\xi_0), \mathbf{0}_{n\times n})
	\end{equation}
	As we picked $j_1, j_2 \geqslant n-K+1$, the $j_1$-th and the $j_2$-th rows, as well as the $j_1$-th and the $j_2$-th columns of
	$\mathring{\pA}(e^{-i\xi})$ are equal to $\bo((\xi-\xi_0))$ as $\xi \rightarrow \xi_0$.
	
	Now, we will check the lower right $K \times K$ submatrix of $ \mathring{\pA}(e^{-i\xi})$ from \eqref{eq:diagfirstder}.
	Since $ \lambda_{n-K+1}(\xi),\ldots,\lambda_n(\xi) $ are equal to $ \bo((\xi-\xi_0)) $ as $ \xi \rightarrow \xi_0 $,
	the lower right $K \times K$ submatrices of the second and the third term on the right hand side of \eqref{eq:diagfirstder}
	are both $ \bo((\xi-\xi_0)^2) $ as $ \xi \rightarrow \xi_0 $.
	Hence, the summation of the four terms on the right hand side of \eqref{eq:diagfirstder} yields:
	\begin{align} \label{eq:EVDSecondOrder}
	\mathring{\pA}_{(n-K+1):n, (n-K+1):n}(e^{-i\xi})
	= &
\diag(\lambda_{n-K+1}(\xi),\ldots,\lambda_n(\xi))
	+ \bo((\xi-\xi_0)^2) + \bo((\xi-\xi_0)^2) + \bo((\xi-\xi_0)^2) \notag \\
	= &
\diag(\lambda_{n-K+1}(\xi),\ldots,\gamma_1^2(\xi - \xi_0),\ldots,-\gamma_2^2(\xi - \xi_0),\ldots,\lambda_n(\xi))
	+ \bo((\xi-\xi_0)^2),
	\end{align}
	as $\xi \rightarrow \xi_0$. The $ \gamma_1^2(\xi - \xi_0) $ and $-\gamma_2^2(\xi - \xi_0) $ terms appear at the $j_1$-th and the $j_2$-th diagonal positions respectively.
	Now, we can use the following matrix $ V $ to cancel the first order term at the $(j_1, j_1)$ position of $ \mathring{\pA}(e^{-i\xi}) $. Define the $n\times n$  matrix $V$ as
	\begin{equation} \label{eq:V}
	V:=
	\begin{bmatrix}
	1 &        &               &        &               &        &  \\
	& \ddots &               &        &               &        &  \\
	&        & \gamma_1^{-1} &        & \gamma_2^{-1} &        &  \\
	&        &               & \ddots &               &        &  \\
	&        &               &        & 1             &        &  \\
	&        &               &        &               & \ddots &  \\
	&        &               &        &               &        & 1
	\end{bmatrix},
	\end{equation}
	where $ V\mathring{\pA} $ corresponds to dividing the $j_1$-th row of $ \mathring{\pA} $ by $\gamma_1$, then adding $\gamma_2^{-1}$ times the $j_2$-th row to the $j_1$-th row of $ \mathring{\pA} $.
	Taking symmetric operations on both rows and columns of $\mathring{\pA}(e^{-i\xi})$, we define
	$ \breve{\pA}(e^{-i\xi}) := V \mathring{\pA}(e^{-i\xi}) V^\star$.
	Then the lower-right $K \times K$ submatrix of $ \breve{\pA}(e^{-i\xi})$ becomes:
	\begin{align*}
	&\breve{\pA}_{(n-K+1):n, (n-K+1):n}(e^{-i\xi})
	=
	\begin{bmatrix}
	\lambda_{n-K+1}(\xi) &        &                        &        &                          &        &  \\
	& \ddots &                        &        &                          &        &  \\
	&        & 0                      &        & -\gamma_2(\xi - \xi_0)   &        &  \\
	&        &                        & \ddots &                          &        &  \\
	&        & -\gamma_2(\xi - \xi_0) &        & -\gamma_2^2(\xi - \xi_0) &        &  \\
	&        &                        &        &                          & \ddots &  \\
	&        &                        &        &                          &        & \lambda_n(\xi)
	\end{bmatrix}
	+ \bo((\xi-\xi_0)^2).
	\end{align*}
	Thus, the $(j_1, j_1)$-diagonal entry of $\breve{\pA}(e^{-i\xi}) $ is $\bo((\xi-\xi_0)^2)$ as $\xi\rightarrow \xi_0$.
	From the definition of $ \breve{\pA} $, we see that similar to $\mathring{\pA}(e^{-i\xi})$, the $j_1$-th and the $j_2$-th rows, as well as the $j_1$-th and the $j_2$-th columns of $\breve{\pA}(e^{-i\xi})$ are still $\bo((\xi-\xi_0))$ as $\xi \rightarrow \xi_0$.

	Finally, we can change back to Laurent polynomials. The matrix $ \breve{\pA}(z) $ of Laurent polynomials is written as
	$$\breve{\pA}(z) = V \mathring{\pA}(z) V^\star = V W^{-1}(\xi_0) \pA(z)W^{-\star}(\xi_0) V^\star. $$
	Since the $j_1$-th row and the $j_1$-th column of $ \breve{\pA}(e^{-i\xi}) $ are $ \bo((\xi-\xi_0)) $, we know that $(z-z_0)$ divides both the $j_1$-th row and the $j_1$-th column of $ \breve{\pA}(z) $.
	Also, the fact that the $(j_1, j_1)$ entry of $\breve{\pA}(e^{-i\xi}) $ is $\bo((\xi-\xi_0)^2)$ implies that
	$(z-z_0)^2$ divides the $(j_1, j_1)$ entry of $\breve{\pA}(z)$. So we can factor out $(z-z_0)$ from the $j_1$-th row and $(z-z_0)^\star$ from the $j_1$-th column simultaneously to get
	$$ \breve{\pA}(z) = \pD_{j_1, 1}(z)\widetilde{\pA}(z)\pD_{j_1, 1}^\star(z), $$
	for some $n \times n$ Hermite matrix $\widetilde{\pA}(z)$ of Laurent polynomials and $\pD_{j_1, 1}(z)$ is defined as \eqref{eq:Diagk}. Thus, we have
	\begin{align*}
	\pA(z) =& W(\xi_0)V^{-1}\breve{\pA}(z)V^{-\star}W^\star(\xi_0)
	= W(\xi_0)V^{-1} \pD_{j_1, 1}(z)\widetilde{\pA}(z)\pD_{j_1, 1}^\star(z)  V^{-\star}W^\star(\xi_0)
	= \pU(z) \widetilde{\pA}(z) \pU^\star(z),
	\end{align*}
	where $\pU(z):= W(\xi_0)V^{-1}\pD_{j_1, 1}(z) $. Observe that
	\begin{align*}
	\len(\det(\widetilde{\pA}(z)))
	= \len(\det(\pA(z))) - \len(\det(\pU(z))) - \len(\det(\pU^*(z)))
	= \len(\det(\pA(z))) - 2.
	\end{align*}
	So $ \pU(z) $ and $ \widetilde{\pA}(z) $ satisfy all the requirements. This proves Theorem~\ref{thm:extract2}.
\end{proof}

Now we are ready to prove Theorem~\ref{thm:const-sig}.

\begin{proof}[Proof of Theorem \ref{thm:const-sig}]
If  $\len(\det(\pA(z)))>0$, then $\pA(z)$ has some elementary divisor $(z-z_0)^\alpha$ with $z_0\neq 0$ and $ \alpha \in \N $. Let $\pA_0(z):=\pA(z)$. For $j\geqslant 0$, if $\pA_j(z)$ has some elementary divisor $(z-z_0)^\alpha$ with $z_0\in \C\setminus\T\setminus\{0\}$ or $\alpha>1$, apply Theorem~\ref{thm:extract1} to get a factorization of $ \pA_j(z) $ as
$\pA_j(z)=\pU_{j+1}(z)\pA_{j+1}(z)\pU^\star_{j+1}(z) $, for some $n\times n$ matrices $\pU_{j+1}(z)$ and $\pA_{j+1}(z)$ of Laurent polynomials satisfying $\pA_{j+1}^\star(z) = \pA_{j+1}(z)$ and $ \len(\pA_{j+1}(z)) < \len(\pA_{j}(z)) $.
If all the elementary divisors $(z-z_0)^\alpha$ of $\pA_j(z)$ has degree $\alpha =1$ and $z_0\in\T$, we can apply Theorem~\ref{thm:extract2} to still get the factorization $\pA_j(z)=\pU_{j+1}(z)\pA_{j+1}(z)\pU^\star_{j+1}(z) $ with  $\pA_{j+1}^\star(z) = \pA_{j+1}(z)$ and $ \len(\pA_{j+1}(z)) < \len(\pA_{j}(z)) $.
Reset $ j$ by $j+1 $ and repeat the steps, until $ \len(\det(\pA_j(z)))=0 $.
This iteration will stop after finite number of steps, since $\len(\det(\pA(z)))$ is finite and $\len(\det(\pA_j(z)))$ is strictly decreasing after each step. Hence, we can get a factorization as
$$ \pA(z) =  \pU_{1}(z)\cdots\pU_{k}(z)
\pA_{k}(z)\pU^\star_{k}(z)\cdots\pU^\star_{1}(z),$$
where $ \pA_{k}(z)$ has no elementary divisors, i.e., $\len(\det(\pA_{k}(z)))=0$.

In this case, it is proved by Theorem \ref{thm:unimodular} that $\pA_k(z)$ can be factorized as $\pA_k(z)=\pU_{k+1}(z)\pD\pU^\star_{k+1}(z) $
for some $n\times n$ matrix $\pU_{k+1}(z)$ of Laurent polynomials
and $\pD=\diag(\mathbf{I}_{\nu_1}, -\mathbf{I}_{\nu_2})$ is an $n \times n$ constant diagonal matrix
for some nonnegative integers $ \nu_1$ and $ \nu_2 $ satisfying $ \nu_1 + \nu_2 = n $.
Define
$\pU(z):= \prod_{j=1}^{k+1}\pU_j(z)$. Then
$ \pA(z)=\pU(z)\pD\pU^\star(z) $
holds.

Also, notice that for all $ z_0 \in \T\setminus\sigma(\pA(z)) $, $ \pU(z_0) $ is a nonsingular matrix. By Sylvester's law of inertia,
$ \nu_1 = \nu_+(\pA(z_0)) = \nu_+ $ and $ \nu_2 = \nu_-(\pA(z_0)) = \nu_- $.
This completes the proof.
\end{proof}

All the steps, except finding $W(\xi)$ in \eqref{eq:AnalyticEVDnew}, in the above proof of Theorem~\ref{thm:const-sig} are constructive.
The existence of $W(\xi)$ in \eqref{eq:AnalyticEVDnew} is guaranteed by \cite[Theorem~S6.3]{GohLanRod:1982Matrix},
which is not constructive and is very complicated.
We now provide the following simple constructive algorithm to
realize the generalized spectral factorization in Theorem~\ref{thm:const-sig}.
Steps (S3) and (S5) simply follow the proof of Theorem~\ref{thm:extract1} and Algorithm~\ref{algo:unimodular}, respectively.
We use step (S4) to find the factorization in Theorem~\ref{thm:extract2}. The idea of step (S4) is that for $ z_0 = e^{-i\xi_0}\in\T $, where all the elementary divisors have single root, we can easily calculate the first two coefficient matrices of the Taylor expansion $ \pA(e^{-i\xi}) = C_0 + C_1 (\xi - \xi_0) + \bo((\xi-\xi_0)^2) $ as $ C_0 = \pA(z_0) $ and $ C_1 = -i z_0 \pA'(z_0) $. Then if restricted to the null space of $ C_0 $, the matrix $ C_1 $ must have half of the eigenvalues being positive and the other half of the eigenvalues being negative. Thus, we can find a nonsingular matrix $ V $ such that $ VC_1 V^\star $ has one zero on some diagonal position, and hence we can factor out $ (z - z_0) $ from the row and $ (z-z_0)^\star $ from the column of $ V\pA(z)V^\star $ simultaneously.
See the Appendix for the proof of the following algorithm.

\begin{algorithm} \label{algo:Const-Sig}
	Input an $n\times n$ Hermite matrix $\pA(z)$ of Laurent polynomials with constant signature on $z\in\T\bs\sigma(\pA)$ such that
	$\det(\pA(z))\not\equiv 0$.

\begin{enumerate}	
\item[(S0)] Initialization. Set $\widetilde{\pA}(z):=\pA(z)$ and $\pU(z):= \mathbf{I}_n$.

\item[(S1)] Compute the Smith Normal Form $\pD(z) = \diag(\pd_1(z), \ldots, \pd_n(z))$ of $\widetilde{\pA}(z)$ and get a decomposition $\widetilde{\pA}(z) = \pE(z)\pD(z)\pF(z)$,
where $\pE(z)$ and $\pF(z)$ are unimodular matrices of Laurent polynomials.

\item[(S2)] If $\pD(z)$ is a constant matrix, then go to  (S5).
Otherwise, redefine
\begin{equation} \label{eq:SNFchange}
\widetilde{\pA}(z):=\pE^{-1}(z)\widetilde{\pA}(z)\pE^{-\star}(z) =
\diag(\pd_1(z), \ldots, \pd_n(z))\pF(z)\pE^{-\star}(z),
\end{equation}
and update/replace $\pU(z)$ by $\pU(z)\pE(z)$.

\item[(S3)] {\bfseries For $j$ from $1$ to $n$:}\\
\hspace*{0.5 cm} Factorize $\pd_j(z)= \prod_{k=1}^{n_j}(z-z_{j,k})^{\alpha_{j,k}}. $\\
\hspace*{0.5 cm} {\bfseries If} there exists some factor $ (z-z_{j,k})^{\alpha_{j,k}} $ with $z_{j,k} \in (\C\setminus\{0\})\setminus \T$:
\begin{enumerate} [leftmargin=0.7 in]
	\item redefine $\pU(z) $ by multiplying its $j$-th column by $(z-z_{j,k})^{\alpha_{j,k}}$;
	\item redefine $\widetilde{\pA}(z)$ by dividing its $j$-th row by $(z-z_{j,k})^{\alpha_{j,k}} $,  and dividing its $j$-th column by $(z^{-1}-\overline{z_{j,k}})^{\alpha_{j,k}}$;
	\item {\bfseries break} the {\bfseries for} loop, and go back to  (S1);
\end{enumerate}

{\bfseries else if} there exists some factor $ (z-z_{j,k})^{\alpha_{j,k}} $ with $z_{j,k} \in \T$ and $ \alpha_{j,k} \geqslant 2$:
	\begin{enumerate}[leftmargin=0.7 in]
		\item redefine $\pU(z) $ by multiplying its $j$-th column by $(z-z_{j,k})^{\lfloor\alpha_{j,k}/2\rfloor}$;
		\item redefine $\widetilde{\pA}(z)$ by dividing its $j$-th row by $(z-z_{j,k})^{\lfloor\alpha_{j,k}/2\rfloor} $,  and dividing its $j$-th column by $(z^{-1}-\overline{z_{j,k}})^{\lfloor\alpha_{j,k}/2\rfloor}$;
		\item {\bfseries break} the {\bfseries for} loop, and go back to  (S1);
	\end{enumerate}
\hspace*{0.5 cm} {\bfseries end if;}\\
{\bfseries end for;}
		
\item[(S4)] If the {\bfseries for} loop doesn't break from any conditions in  (S3), then all the elementary divisors will have roots on $\T$ with degree equal to $1$.
Pick one of the elementary divisors $(z-z_0)$.
Suppose that it is contained in the last $K$ invariant polynomials $d_{n-K+1}(z), \ldots, d_n(z)$:

\begin{enumerate}
	\item From \eqref{eq:SNFchange}, we see the last $K$ columns and the last $K$ rows of $\widetilde{\pA}(z_0)$ have to be $0$. Consider the constant Hermite matrix $-iz_0\widetilde{\pA}'(z_0)$. Take its lower right $K \times K$ submatrix, denoted as $\pA_K$, and find its eigenvalue decomposition as $\pA_K:=\pU_1\Gamma\pU^\star_1$, for some unitary matrix $\pU_1$ and $\Gamma=\diag(\gamma_1^2, -\gamma_2^2,\ldots, \gamma_K)$. Then the eigenvalues in $\Gamma$ must be all nonzero, while $K/2$ of them are positive and $K/2$ of them are negative. Arrange them such that the first one is positive and the second one is negative.
			Redefine $\widetilde{\pA}(z):=\diag(\mathbf{I}_{n-K}, \pU_1^{-1})\widetilde{\pA}(z)\diag(\mathbf{I}_{n-K}, \pU_1^{-\star})$
			and $ \pU(z):=\pU(z)\diag(\mathbf{I}_{n-K}, \pU_1)$.
			
			\item Take $\pU_2:=\diag\left(\mathbf{I}_{n-K},
			\begin{bmatrix}
			\gamma_1^{-1} & \gamma_2^{-1} \\
			0 & 1
			\end{bmatrix}, \mathbf{I}_{K-2} \right)$.
			Redefine $\widetilde{\pA}(z):=\pU_2\widetilde{\pA}(z)\pU^\star_2$ and $\pU(z):=\pU(z)\pU_2^{-1}$.
			
			\item Redefine $\widetilde{\pA}(z)$ by dividing its $(n-K+1)$-th row by $(z-z_0)$  and dividing its $(n-K+1)$-th column by $(z^{-1}-\overline{z_0})$. Redefine $\pU(z) $ by multiplying its $(n-K+1)$-th column by $(z-z_0)$.
		\end{enumerate}
		Go back to  (S1).

		\item[(S5)] Finalize: Since $\widetilde{\pA}(z)$ has no elementary divisor, apply Algorithm \ref{algo:unimodular} to get the factorization
		$\widetilde{\pA}(z)= \widetilde{\pU}(z)\pD\widetilde{\pU}^\star(z)$.
		Redefine $\pU(z):=\pU(z)\widetilde{\pU}(z)$.
		Output $\pU(z)$ and $\pD$. Then $\pA(z)=\pU(z)\pD\pU^\star(z)$ must hold.
	\end{enumerate}
\end{algorithm}

Let us make some interesting remarks and consequences about Theorem~\ref{thm:partialMultiplicity}.
For a Hermite matrix  $ \pA(z) $ of Laurent polynomials, although we know from Theorem~\ref{thm:partialMultiplicity} that the analytic eigenvalues $ \lambda_1(\xi),\ldots, \lambda_n(\xi) $ of $ \pA(e^{-i\xi}) $ have some relationship to the invariant polynomials of $ \pA(z) $,
we cannot expect $ \lambda_1(\xi),\ldots, \lambda_n(\xi) $ to be Laurent polynomials in general.
Actually, the following example shows that the analytic functions $ \lambda_1(\xi),\ldots, \lambda_n(\xi) $ might not be even $ 2\pi$-periodic functions of $ \xi\in\R $.

\begin{example} \label{ex:AnalyticalEig}
{\rm
	Consider the same matrix $ \pA(z) $ as in Example~\ref{ex:SingleRoot}.
	Solving $ \det(\pA(e^{-i\xi}) - \lambda \mathbf{I}_{2})=0 $, we can find two  analytic functions that are eigenvalues of $ \pA(e^{-i\xi}) $ as
	$ \lambda_1(\xi) = -\lambda_2(\xi) = 4\sin(\xi/2) $. They are both $ 4\pi $-periodic functions of $ \xi\in \R $, and we cannot find two eigenvalues of $ \pA(e^{-i\xi}) $ that are both analytic and $ 2\pi $-periodic functions of $ \xi\in\R $.
	Also, as calculated in Example~\ref{ex:SingleRoot}, the two invariant polynomials of $ \pA(z) $ are $ \pd_1(z) = \pd_2(z) = z-1 $.
	Take $ \xi_0=0 $ and $ z_0 = e^{-i\xi_0}= 1 $, we can calculate $ \alpha_j := \mz(\pd_j(z), 1) = 1 $, $ \beta_j := \mz(\lambda_j(e^{-i\xi}), 0) = 1 $ for $ j = 1,2 $.
}  \end{example}

Since the sequence $ \{\beta_j\}_{j=1}^n $ in Theorem~\ref{thm:partialMultiplicity} is related to the sign change of the eigenvalues $ \lambda_j(\xi) $, we have the following corollary for the positive semidefinite matrix $ \pA(z) $ of Laurent polynomials.

\begin{cor} \label{cor:SPDEvenEleDiv}
	Suppose that $\pA(z)$ is a Hermite matrix of Laurent polynomials such that $ \pA(z)\geqslant 0 $ for all $z\in \T$. Then all its elementary divisors $(z-z_0)^{\alpha}$ with $z_0\in \T$ must have even degree, i.e., $\alpha\in 2\Z$.
\end{cor}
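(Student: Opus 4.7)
The plan is to apply Theorem~\ref{thm:partialMultiplicity} directly to translate the question on elementary divisors at $z_0\in\T$ into a question on the orders of vanishing of the analytic eigenvalues of $\pA(e^{-i\xi})$, and then use the sign constraint $\pA(z)\ge 0$ to force these orders to be even.

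Fix any $z_0\in\T$ and write $z_0=e^{-i\xi_0}$ with $\xi_0\in\R$. Let $\pd_1(z),\ldots,\pd_n(z)$ be the invariant polynomials of $\pA(z)$ and set $\alpha_j:=\mz(\pd_j(z),z_0)$. Since each elementary divisor of $\pA(z)$ with root $z_0$ is of the form $(z-z_0)^{\alpha_j}$ for some $j$ with $\alpha_j\ge 1$, it suffices to show that every $\alpha_j$ is even. By \cite[Theorem~S6.3]{GohLanRod:1982Matrix} invoked in the excerpt, we can choose real analytic eigenvalues $\lambda_1(\xi),\ldots,\lambda_n(\xi)$ of the analytic Hermite matrix $\pA(e^{-i\xi})$, and Theorem~\ref{thm:partialMultiplicity} then gives
\[
\{\alpha_j\}_{j=1}^n=\{\beta_j\}_{j=1}^n,\qquad \beta_j:=\mz(\lambda_j(\xi),\xi_0),
\]
as multisets. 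Therefore it is enough to prove that every $\beta_j$ is even.

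The hypothesis $\pA(z)\ge 0$ for all $z\in\T$ forces $\lambda_j(\xi)\ge 0$ for every $\xi\in\R$ and every $j$. Suppose $\beta_j\ge 1$ for some $j$, so that $\lambda_j(\xi_0)=0$. Since $\lambda_j$ is real analytic, we have the expansion
\[
\lambda_j(\xi)=c_j(\xi-\xi_0)^{\beta_j}+O\!\bigl((\xi-\xi_0)^{\beta_j+1}\bigr)\qquad\text{as }\xi\to\xi_0,
\]
with $c_j\ne 0$ real. If $\beta_j$ were odd, then $\lambda_j$ would change sign at $\xi_0$, contradicting $\lambda_j\ge 0$. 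Hence $\beta_j$ must be even (and $c_j>0$). Combined with the identity of multisets from the previous paragraph, every $\alpha_j$ is even, which proves the corollary.

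The only place any real work happens is in translating between the algebraic invariants (elementary divisors) and the analytic invariants (orders of vanishing of eigenvalues), and this has already been done in Theorem~\ref{thm:partialMultiplicity}; the positivity argument on $\lambda_j$ is then an elementary one-variable fact about non-negative real analytic functions. So this corollary is essentially immediate from the machinery developed earlier in the section, and no further obstacle is expected.
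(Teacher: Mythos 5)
Your proof is correct and follows essentially the same route as the paper: both invoke Theorem~\ref{thm:partialMultiplicity} to identify the multiplicities $\alpha_j$ of the elementary divisors at $z_0$ with the vanishing orders $\beta_j$ of the analytic eigenvalues $\lambda_j(\xi)$, and then observe that positive semidefiniteness forbids any sign change of $\lambda_j$ at $\xi_0$, forcing each $\beta_j$ to be even.
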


\begin{proof}
	Since $ z_0\in \T $, we can find some $ \xi_0\in\R $ such that $ z_0 = e^{-i\xi_0} $.
	Suppose that $ \lambda_1(\xi),\ldots,\lambda_n(\xi) $ are the eigenvalues of $ \pA(e^{-i\xi}) $ which
	are also analytic functions of $ \xi\in\R $.
	Define the sequences $ \{\alpha_j\}_{j=1}^n $ and $ \{\beta_j\}_{j=1}^n $ as in Theorem~\ref{thm:partialMultiplicity}.
	By Theorem~\ref{thm:partialMultiplicity}, we must have $ \{\beta_j\}_{j=1}^n = \{\alpha_j\}_{j=1}^n $.
	
	Since $ \pA(e^{-i\xi}) $ is positive semidefinite for all $ \xi\in\R $, that is,
	$ \lambda_j(\xi) $ will not change sign across $ \xi_0 $ for all $ j=1,\ldots,n $, we conclude that
	\[ \beta_j = \mz(\lambda_j(\xi), \xi_0) \in 2\Z, \qquad \forall~ j = 1,\ldots, n. \]
	So $ \alpha_j \in 2\Z $ for all $ j = 1, \ldots, n $.
	From the definition of $ \alpha_j $, we know that $ \{\alpha_j\}_{j=1}^n $
	are just the degrees of elementary divisors $ (z-z_0)^\alpha $ in each invariant polynomial.
	So all such $ \alpha $ satisfy $ \alpha \in 2\Z $.
\end{proof}

\section{Proof of Theorem~\ref{thm:nonconst-sig} on Generalized Matrix Spectral Factorization}

In this section, we prove Theorem~\ref{thm:nonconst-sig}.
To prove the necessity part of Theorem~\ref{thm:nonconst-sig}, we need the following result.

\begin{lemma} \label{thm:SylvesterInertia}
Suppose that an $n \times n$ Hermite matrix $A$ can be decomposed in the following way
\begin{equation} \label{eq:SylvesterConst}
A = U
\begin{bmatrix}
\mathbf{I}_{m_+} &  \\
 & -\mathbf{I}_{m_-}
\end{bmatrix} U^\star,
\end{equation}
where $U$ is an $n \times m$ matrix and $\mathbf{I}_{m_+}$, $\mathbf{I}_{m_-}$ are the identity matrices of size $m_+$ and $m_-$, respectively, such that $m_++m_- = m$. Then
$$m_+ \geqslant \nu_+(A), \qquad m_- \geqslant \nu_-(A).$$
\end{lemma}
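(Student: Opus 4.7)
My plan is to prove this by a standard dimension-counting argument, very much in the spirit of Sylvester's law of inertia, exploiting the positive (respectively negative) eigenspaces of $A$.

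First I would pick a matrix $V \in \C^{n\times \nu_+(A)}$ whose columns form an orthonormal basis for the span of eigenvectors of $A$ associated with the positive eigenvalues. Then $V^\star A V$ is an $\nu_+(A) \times \nu_+(A)$ positive definite matrix (in fact it is diagonal with exactly the positive eigenvalues of $A$ on its diagonal). Substituting the given decomposition \eqref{eq:SylvesterConst}, I would write
\[
V^\star A V \;=\; V^\star U \begin{bmatrix} \mathbf{I}_{m_+} & \\ & -\mathbf{I}_{m_-}\end{bmatrix} U^\star V \;=\; W^\star \begin{bmatrix} \mathbf{I}_{m_+} & \\ & -\mathbf{I}_{m_-}\end{bmatrix} W,
\]
where $W:=U^\star V$ is an $m \times \nu_+(A)$ matrix. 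Partition $W = \begin{bmatrix} W_+ \\ W_-\end{bmatrix}$ with $W_+$ of size $m_+\times \nu_+(A)$ and $W_-$ of size $m_-\times \nu_+(A)$.

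Next, suppose for contradiction that $m_+ < \nu_+(A)$. Since $W_+$ has more columns than rows, its kernel is nontrivial, so there exists $x\in\C^{\nu_+(A)}\setminus\{0\}$ with $W_+ x = 0$. Then
\[
x^\star V^\star A V x \;=\; x^\star W_+^\star W_+ x - x^\star W_-^\star W_- x \;=\; -\|W_- x\|^2 \;\leqslant\; 0,
\]
which contradicts the positive definiteness of $V^\star A V$. Therefore $m_+\geqslant \nu_+(A)$.

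The inequality $m_-\geqslant \nu_-(A)$ follows by the symmetric argument: choose $\tilde V$ whose columns are orthonormal eigenvectors for the negative eigenvalues of $A$, so that $\tilde V^\star A \tilde V$ is negative definite; partitioning $\tilde W := U^\star \tilde V$ analogously, a nontrivial element $x$ of $\ker \tilde W_-$ would yield $x^\star \tilde V^\star A\tilde V x = \|\tilde W_+ x\|^2\geqslant 0$, again a contradiction. I do not anticipate any genuine obstacle here; the only point requiring mild care is bookkeeping the partition of $W$ and ensuring the sign comes out correctly in each case.
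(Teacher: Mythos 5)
Your proof is correct, and it takes a genuinely different route from the paper. The paper's argument proceeds by cases: it first handles the nonsingular $A$, extends $U$ to a square nonsingular matrix $\widetilde U$, forms $\widetilde A=\widetilde U\,\mathrm{diag}(\mathbf I_{m_+},-\mathbf I_{m_-})\,\widetilde U^\star$, block-diagonalizes this by a Schur-complement congruence to get $\mathrm{diag}(A,D)$, and invokes Sylvester's law of inertia twice; the singular case is then reduced to the nonsingular one by diagonalizing $A$. Your argument instead restricts the quadratic form to the positive (resp.\ negative) eigenspace of $A$: compressing the given decomposition by the isometry $V$ onto that eigenspace, the would-be inequality $m_+<\nu_+(A)$ forces a nontrivial kernel in $W_+$, and any such null vector makes the positive definite form $V^\star A V$ nonpositive. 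This is the classical dimension-counting proof of Sylvester's inertia theorem, adapted to a rectangular $U$; it is shorter, requires no case analysis on the singularity of $A$ or on whether $m=n$ or $m>n$, and avoids the Schur-complement bookkeeping entirely. The paper's approach is more in keeping with the matrix-factorization flavour of the surrounding sections, but your approach is the more economical one for this particular lemma.
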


\begin{proof}
First, we consider the case that $A$ is nonsingular. In this case, the decomposition \eqref{eq:SylvesterConst} forces that all the three matrices on the right hand side of \eqref{eq:SylvesterConst} must have rank at least $ n $.
So $m\geqslant n$ and $U$ must have full row rank.

If $m=n$, then $U$ is a nonsingular square matrix. By Sylvester's law of inertia,
$$m_+ = \nu_+(A),\qquad m_- = \nu_-(A).$$

If $m > n$, since $U$ has full row rank, we can add $ m-n $ more rows to $U$ to get $ \widetilde{U} $ such that
$\widetilde{U} := \begin{bmatrix}
U \\ V
\end{bmatrix}$
is an $m \times m$ nonsingular square matrix. Then the $m \times m$ matrix
$\widetilde{A} := \widetilde{U}
\begin{bmatrix}
\mathbf{I}_{m_+} &  \\
 & -\mathbf{I}_{m_-}
\end{bmatrix}
\widetilde{U}^\star$
has $A$ on the top left corner:
\begin{equation} \label{eq:Sylvestertilde}
\widetilde{A} := \widetilde{U}\begin{bmatrix}
\mathbf{I}_{m_+} &  \\
 & -\mathbf{I}_{m_-}
\end{bmatrix}
\widetilde{U}^\star
= \begin{bmatrix}
U \\ V
\end{bmatrix}
\begin{bmatrix}
\mathbf{I}_{m_+} &  \\
 & -\mathbf{I}_{m_-}
\end{bmatrix}
\begin{bmatrix}
U^\star & V^\star
\end{bmatrix}
=\begin{bmatrix}
A & B^\star \\
B & C
\end{bmatrix}
\end{equation}
for some $(m-n)\times n$ matrix $B$ and some $(m-n)\times (m-n)$ matrix $C$.
Define nonsingular $m\times m$ matrix
$W := \begin{bmatrix}
\mathbf{I}_n & \mathbf{0} \\
-BA^{-1} & \mathbf{I}_{m-n}
\end{bmatrix}$, and let $ \mathring{A} := W\widetilde{A}W^\star $.
Plugging \eqref{eq:Sylvestertilde} in $ \mathring{A} $, we can directly calculate that
\begin{equation} \label{eq:Sylvesterring}
\mathring{A} := W\widetilde{A}W^\star
= \begin{bmatrix}
\mathbf{I}_n & \mathbf{0} \\
-BA^{-1} & \mathbf{I}_{m-n}
\end{bmatrix}
\begin{bmatrix}
A & B^\star \\
B & C
\end{bmatrix}
\begin{bmatrix}
\mathbf{I}_n & -A^{-\star}B^\star \\
\mathbf{0} & \mathbf{I}_{m-n}
\end{bmatrix}
= \begin{bmatrix}
A & \mathbf{0} \\
\mathbf{0} & D
\end{bmatrix},
\end{equation}
where the $(m-n)\times (m-n)$ matrix $D := C-BA^{-1}B^\star$.
From \eqref{eq:Sylvesterring}, we see that the eigenvalues of $\mathring{A}$ are just the eigenvalues of $A$ combined with the eigenvalues of $D$.
So
\begin{equation} \label{eq:sigAring}
\nu_+(\mathring{A}) \geqslant \nu_+(A),\qquad
\nu_-(\mathring{A}) \geqslant \nu_-(A).
\end{equation}
Also, from the definition of $ \widetilde{A} $ and $ \mathring{A} $ in \eqref{eq:Sylvestertilde} and \eqref{eq:Sylvesterring},
we deduce that
\begin{align} \label{eq:Sylvester}
\mathring{A} =
W \widetilde{A} W^\star
= W\widetilde{U}
\begin{bmatrix}
\mathbf{I}_{m_+} &  \\
 & -\mathbf{I}_{m_-}
\end{bmatrix}
\widetilde{U}^\star W^\star
=
W\widetilde{U}
\begin{bmatrix}
\mathbf{I}_{m_+} &  \\
 & -\mathbf{I}_{m_-}
\end{bmatrix}
(W\widetilde{U})^\star.
\end{align}
Since $W\widetilde{U}$ is an $ m \times m $ nonsingular matrix, by Sylvester's law of inertia again,
\eqref{eq:Sylvester} implies that
\begin{equation} \label{eq:sigAring2}
\nu_+(\mathring{A}) = m_+ ,\qquad  \nu_-(\mathring{A}) = m_-.
\end{equation}
Combining \eqref{eq:sigAring} and \eqref{eq:sigAring2}, we get
$m_+ \geqslant \nu_+(A) $  and $ m_- \geqslant \nu_-(A)$.
This proves the lemma for the case that $A$ is nonsingular.

For the case that $A$ is singular, we can find its eigenvalue decomposition first:
$$PAP^\star =
\begin{bmatrix}
\Lambda & \\
 & \mathbf{0}
\end{bmatrix},$$
where $\Lambda$ is a $k\times k$ nonsingular diagonal matrix containing all the nonzero eigenvalues of $A$
and $P$ is an $n \times n$ unitary matrix. Plugging \eqref{eq:SylvesterConst} into the above decomposition:
$$ \begin{bmatrix}
\Lambda & \\
 & \mathbf{0}
\end{bmatrix}
=PAP^\star
=PU
\begin{bmatrix}
\mathbf{I}_{m_+} &  \\
 & -\mathbf{I}_{m_-}
\end{bmatrix} U^\star
P^\star
= Q\begin{bmatrix}
\mathbf{I}_{m_+} &  \\
 & -\mathbf{I}_{m_-}
\end{bmatrix} Q^\star,
$$
where $Q:=PU$. We define $\widetilde{Q}$ by removing the last $n-k$ rows of $Q$. Then the above equation implies:
$$ \Lambda = \widetilde{Q}
\begin{bmatrix}
\mathbf{I}_{m_+} &  \\
 & -\mathbf{I}_{m_-}
\end{bmatrix}
\widetilde{Q}^\star. $$
Since $\Lambda$ is nonsingular, we know from the previously proved case that
$$m_+ \geqslant \nu_+(\Lambda) = \nu_+(A),\qquad m_- \geqslant \nu_-(\Lambda) = \nu_-(A).$$
This proves the lemma for the case that $A$ is a singular matrix.
\end{proof}

\begin{proof}[Proof of Theorem~\ref{thm:nonconst-sig}]
Necessity. $ \pA^\star(z) = \pA(z) $ implies that for all $ z_0\in \T $, $ \pA(z_0) $ is a Hermite matrix and $ \pU^\star(z_0) = (\pU(z_0))^\star $ holds. Hence, we know from Lemma~\ref{thm:SylvesterInertia} that the decomposition $ \pA(z) = \pU(z)\diag(\mathbf{I}_{m_1}, -\mathbf{I}_{m_2})\pU^\star(z) $ yields $ m_1 \geqslant \nu_+(\pA(z_0)) $ and $ m_2 \geqslant \nu_-(\pA(z_0)) $. Considering all $ z_0\in \T $, we see that \eqref{eq:LargeSig} holds. This proves the necessity part of Theorem~\ref{thm:nonconst-sig}.

To prove the sufficiency part, we first consider the case that $ \det(\pA(z))\not\equiv 0 $, where $ \sigma(\pA) $ is a finite subset of $ \C\bs\{0\} $. The degenerate case is proved later using the Smith Normal Form of $ \pA(z) $.

Suppose that the claim holds for
$$ m_1 =  \max_{z\in\T}\nu_+(\pA(z)), \qquad
m_2 = \max_{z\in\T}\nu_-(\pA(z)). $$
Then $ \pA(z) = \widetilde{\pU}(z)\widetilde{\pD}\widetilde{\pU}^\star(z) $ is obviously true with
$ \widetilde{\pU}(z) :=
[\mathbf{0}_{n\times s_1}, \pU(z), \mathbf{0}_{n\times s_2}] $
and $ \widetilde{\pD}:=
\diag(\mathbf{I}_{s_1+m_1},-\mathbf{I}_{s_2+m_2})
$, for any integers $ s_1, s_2 \geqslant 0 $.
Therefore, we only need to prove the claim for $m_1$ and $m_2$ equal to the lower bounds in \eqref{eq:LargeSig}.
Define
\begin{equation} \label{eq:n+n-}
n_+:= \max_{z\in\T}\nu_+(\pA(z)),\qquad n_-:= \max_{z\in\T}\nu_-(\pA(z)).
\end{equation}
If the signature of $ \pA(z) $ is constant on $\T\bs\sigma(\pA)$, that is, $ \nu_+(\pA(z)) $ and $ \nu_-(\pA(z)) $ are both constant on $ z\in  \T\bs\sigma(\pA)$, then by Lemma~\ref{lem:maxminT}, for all $ z_0\in \T\bs\sigma(\pA) $,
\[ \nu_+(\pA(z_0)) = \max_{z\in \T \bs\sigma(\pA)}\nu_+(\pA(z)) = n_+, \qquad
\nu_-(\pA(z_0)) = \max_{z\in \T \bs\sigma(\pA)}\nu_-(\pA(z)) = n_-. \]
Hence, $ n_+ + n_- = n $ and the result is proved by Theorem \ref{thm:const-sig}.
If $\sig(\pA(z))$ is not constant on $\T\bs\sigma(\pA)$, we have $m_0:= n_+ + n_- > n$. In the following, we will construct $(m_0 - n)$ Laurent polynomials $\mu_1(z),\ldots, \mu_{m_0 - n}(z)$ such that the Hermite matrix
\begin{equation} \label{eq:extend}
\widetilde{\pA}(z):=
\diag(\pA(z),\mu_1(z),\ldots,\mu_{m_0 - n}(z))
\end{equation}
has constant signature on $\T\setminus\sigma(\widetilde{\pA})$.

Since $ \det(\pA(z)) $ is a Laurent polynomial that is not identically zero, $\{z_1,\ldots, z_K\}:=\sigma(\pA) \cap \T $ contains only finite number of points on $\T$. So $ \{z_1,\ldots, z_K\} $ cuts $\T$, which is the unit circle in the complex plane, into $K$ connected open segments: $\Gamma_1, \ldots, \Gamma_K $,
such that
\begin{enumerate}
\item $\bigcup_{j=1}^K \Gamma_j \bigcup \{z_l\}_{l=1}^K = \T $;
\item Pairwise disjoint: $\Gamma_j \cap \{z_l\}_{l=1}^K $ is empty, $\Gamma_j \cap \Gamma_k$ is empty for all $j,k=1,\ldots,K$, $j\neq k$;
\item Both endpoints of $\Gamma_j $ are contained inside $\{z_l\}_{l=1}^K$, denote them by $z_{j,1}$ and $z_{j,2}$, for all $j=1,2,\ldots,K$.
\end{enumerate}

We can choose all the eigenvalues $\lambda_1(\xi), \ldots, \lambda_n(\xi)$ of  $\pA(e^{-i\xi})$ to be  analytic functions of $\xi\in\R$. In each $ \Gamma_j $,
since $ \det(\pA(e^{-i\xi})) = \prod_{k=1}^{n}\lambda_k(\xi) \neq 0 $, none of the $ \lambda_k(\xi) $ will attain zero. As nonzero continuous functions on an open interval, all $ \lambda_k(\xi) $ will not change signs within each $ \Gamma_j $. Thus $\nu_+(\pA(z))$ and $\nu_-(\pA(z))$ remain constant on each $\Gamma_j$.

For each $\Gamma_j$, define a function
$$ \eta_j(z):= (z_{j,1}z_{j,2})^{-\frac{1}{2}}z^{-1}(z-z_{j,1})(z-z_{j,2}), \qquad
j = 1,\ldots, K. $$
The square root of $ z_{j,1}z_{j,2} $ is chosen in the complex plane, where the two possible solutions only differ by a $``-"$ sign.
For both solutions, we can directly verify that $ \eta_j^\star(z) = \eta_j(z) $. So $ \eta_j(z) $ is a real function for all $ z\in \T $.

Since the signature of $ \pA(z) $ is not constant for all $ z\in\T\bs \sigma(\pA) $, $ \T $ contains more than one open segments $ \Gamma_j $. So $ z_{j,1}\neq z_{j,2} $ and both $ z_{j,1} $ and $ z_{j,2} $ are single roots of $ \eta_j(z) $.
Hence $ \eta_j(z) $ will have different signs between two sides of $ z_{j,1} $ and $ z_{j,2} $ on $ \T $.
Therefore, in calculation of the square root of $ z_{j,1}z_{j,2} $,
we can just choose the solution such that $ \eta_j(z)>0 $ for all $ z\in \Gamma_j $
and $ \eta_j(z) < 0 $ for all $ z\in \T\setminus \Gamma_j\setminus \{z_{j,1}, z_{j,2}\} $.
In summary, $ \eta_j(z) $ satisfies
\begin{enumerate}
\item $\eta_j(z)$ is real for all $z\in \T$;

\item $\eta_j(z)>0 $ for all $z\in \Gamma_j$ and $\eta_j(z)<0 $ for all $z\in \Gamma_k $, $ k\neq j $.
\end{enumerate}

Let us construct functions $\mu_k(z)$ recursively for $k=1,\ldots, m_0-n $, such that \eqref{eq:extend} has constant signature on $ z\in\T\setminus\sigma(\widetilde{\pA}) $. Start with $\pA_0(z):=\pA(z)$ and $k=1$.
In order to have our following construction work, we only need to verify two conditions before the start of each new iteration:
\begin{enumerate}[label=(\roman*)]
\item $ \pA_{k-1}(z) $ is a Hermite matrix of Laurent polynomials satisfying
$ \max_{z\in\T}\nu_+(\pA_{k-1}(z)) = n_+$ and $\max_{z\in\T}\nu_-(\pA_{k-1}(z)) = n_- $,
where $n_+$ and $n_-$ are defined in \eqref{eq:n+n-}.
\item $ k \leqslant m_0 - n $.
\end{enumerate}
They are obviously true for $ k=1 $.

Define an index set $J:=\{j\setsp \nu_-(\pA_{k-1}(z))=n_- ~\mbox{for all}~ z\in \Gamma_j \}$.
Now, take
$$ \mu_k(z):= (-1)^{|J|+1}\prod_{j\in J}\eta_j(z), \qquad
\pA_k(z):=\begin{bmatrix}
\pA_{k-1}(z) &  \\
 & \mu_k(z)
\end{bmatrix}.  $$
Since all $ \eta_j(z) $ are real functions on $ z\in\T $, $\mu_k^\star(z)=\mu_k(z)$ is also real on $\T$. From $ \pA_{k-1}^\star(z) = \pA_{k-1}(z) $ in item (i), the matrix $\pA_k(z)$ is also a Hermite matrix of Laurent polynomials.
By the definition of $\mu_k(z)$, we can directly verify from the sign of $ \eta_j(z) $ that
$\mu_k(z)> 0$ for all $z\in \cup_{j\in J} \Gamma_j$,
and $\mu_k(z)<0$ for all $z\in \cup_{j\notin J}\Gamma_j$.
For $z\in\T$, the eigenvalues of $\pA_k(z)$ are just all the eigenvalues of $\pA_{k-1}(z)$, combined with $\mu_k(z)$.
Now, let us calculate $\nu_+(\pA_k(z))$ and $\nu_-(\pA_k(z))$ on each $\Gamma_j$.
\begin{itemize}
\item For $z\in \bigcup_{j\in J} \Gamma_j$, since $\mu_k(z)>0$, we have
$\nu_-(\pA_k(z)) = \nu_-(\pA_{k-1}(z)) = n_-$.
By item (ii), we know that $k\leqslant m_0-n = n_++n_--n$, and hence
\begin{align*}
\nu_+(\pA_{k}(z)) &= (n+k)-\nu_-(\pA_{k}(z)) = (n+k)-n_-
\leqslant n+ (n_++n_--n)-n_- = n_+ .
\end{align*}

\item For $z\in\bigcup_{j\notin J}\Gamma_j$, since $\mu_k(z)<0$ and $\nu_-(\pA_{k-1}(z))<n_- $,
we have
$ \nu_-(\pA_k(z)) = \nu_-(\pA_{k-1}(z))+1\leqslant n_-. $
Meanwhile, $ \nu_+(\pA_{k}(z)) = \nu_+(\pA_{k-1}(z))\leqslant n_+ $.
\end{itemize}
Combining the two cases, we showed that
$$ \max_{z\in\T\bs\sigma(\pA)}\nu_+(\pA_{k}(z)) \leqslant n_+ \qquad \mbox{and}\quad
\max_{z\in\T\bs\sigma(\pA)}\nu_-(\pA_{k}(z)) \leqslant n_-. $$
The inequalities of the other direction is obvious, since
$$ \max_{z\in\T\bs\sigma(\pA)}\nu_+(\pA_{k}(z)) \geqslant
\max_{z\in\T\bs\sigma(\pA)}\nu_+(\pA_{k-1}(z)) = n_+,
\qquad
\max_{z\in\T\bs\sigma(\pA)}\nu_-(\pA_{k}(z)) \geqslant
\max_{z\in\T\bs\sigma(\pA)}\nu_-(\pA_{k-1}(z)) = n_-. $$
So,
$ \max_{z\in\T\bs\sigma(\pA)}\nu_+(\pA_{k}(z)) = n_+$ and $\max_{z\in\T\bs\sigma(\pA)}\nu_-(\pA_{k}(z)) = n_- $.
According to Lemma~\ref{lem:maxminT}, we have
\begin{equation}  \label{eq:Sig+-}
\max_{z\in\T}\nu_+(\pA_{k}(z)) =  n_+,\qquad
\max_{z\in\T}\nu_-(\pA_{k}(z)) =  n_-.
\end{equation}

Now we can take $k:=k+1$ and repeat the above procedure recursively to construct all the Laurent polynomials
$\mu_1(z),\ldots,\mu_{m_0 - n}(z)$.
Equalities in \eqref{eq:Sig+-} guarantees that the item (i) will always hold in the new iteration.
We can repeat our constructions until the item (ii) is violated.
Take $\widetilde{\pA}(z):=\pA_{m_0-n}(z)$ to be the last matrix constructed.
It is an $m_0 \times m_0$ Hermite matrix of Laurent polynomials still satisfying
$$ \max_{z\in\T\setminus\sigma(\widetilde{\pA})}\nu_+(\widetilde{\pA}(z)) = n_+, \qquad
\max_{z\in\T\setminus\sigma(\widetilde{\pA})}\nu_-(\widetilde{\pA}(z)) = n_-. $$
Since $n_+ + n_- = m_0$, both $ \nu_+(\widetilde{\pA}(z)) $ and $\nu_-(\widetilde{\pA}(z))$ must be constant for all
$ z\in\T\setminus\sigma(\widetilde{\pA})$.
Hence, $\sig(\widetilde{\pA}(z))$ is constant on $\T\setminus\sigma(\widetilde{\pA})$.
By Theorem \ref{thm:const-sig}, there exists an $m_0 \times m_0$ matrix $\widetilde{\pU}(z)$ of Laurent polynomials such that
\begin{equation*}
\widetilde{\pA}(z) = \widetilde{\pU}(z)\pD\widetilde{\pU}^\star(z)
\end{equation*}
holds with $\pD = \diag(\mathbf{I}_{n_+}, -\mathbf{I}_{n_-})$ being the $m_0\times m_0$ constant diagonal matrix.

From the structure of $\widetilde{\pA}(z)$ in \eqref{eq:extend},
we conclude that $\pA(z)$ can be reconstructed by deleting the last $(m_0-n)$ rows and last $ (m_0 - n) $ columns of $\widetilde{\pA}(z)$. So, define $\pU(z)$ to be the $ n \times m_0$ matrix of Laurent polynomials constructed by deleting the last $(m_0 - n)$ rows of $\widetilde{\pU}(z)$, we get the desired factorization
$ \pA(z)=\pU(z)\pD\pU^\star(z)$.
This proves the sufficiency part of Theorem~\ref{thm:nonconst-sig} for the case $ \det(\pA(z))\not\equiv 0 $.

Now we consider the degenerate case that $  \det(\pA(z))\equiv 0 $.
For a matrix $ \pA(z) $ of Laurent polynomials, if its invariant polynomials are
$ \pd_1(z), \cdots, \pd_n(z) $, then we call the number of $ \pd_j(z) $ that are not identically zero
the \emph{general rank} of $ \pA(z) $.

Let us write $ \pA(z) $ into its Smith Normal Form:
\[ \pA(z) = \pE(z)
\diag(\pd_1(z),\ldots,\pd_r(z),\mathbf{0}_{(n-r)\times (n-r)})
\pF(z),
\]
where $ r $ is the general rank of $ \pA(z) $, $ \pd_1(z),\ldots,\pd_r(z) $ are the first $ r $ invariant polynomials of $ \pA(z) $ that are not identically zero
and $ \pE(z), \pF(z) $ are unimodular matrices of Laurent polynomials.
Define
\[ \mathring{\pA}(z):=\pE^{-1}(z)\pA(z)\pE^{-\star}(z) =
\diag(\pd_1(z),\ldots,\pd_r(z),\mathbf{0}_{(n-r)\times (n-r)})
\pF(z)\pE^{-\star}(z).
\]
Then $ \mathring{\pA}(z) $ is Hermite and its last $ (n-r) $ rows are zero. This implies that its last $ (n-r) $ columns must also be zero. Hence,
$ \mathring{\pA}(z) =
\diag(\widetilde{\pA}(z), \mathbf{0}) $,
where $\widetilde{\pA}(z) $ is an $ r \times r $ Hermite matrix of Laurent polynomials.
Since the invariant polynomials of $ \mathring{\pA}(z) $ are the same as that of $ \pA(z) $, which are
$ \pd_1(z), \ldots, \pd_r(z), 0, \ldots, 0 $, the invariant polynomials of $ \widetilde{\pA}(z) $ must be $ \pd_1(z), \ldots, \pd_r(z) $. So $ \det(\widetilde{\pA}(z)) $ is not identically zero.
Also, for all $ z\in\T $, since $ \pE^{-1}(z) $ is nonsingular, we get
\[ \nu_+(\widetilde{\pA}(z)) = \nu_+(\mathring{\pA}(z)) =  \nu_+(\pA(z)),  \qquad
\nu_-(\widetilde{\pA}(z)) = \nu_-(\mathring{\pA}(z)) =  \nu_-(\pA(z)).\]
Using the previously proved non-degenerate case, we know that for every
$$	m_1 \geqslant
\max_{z\in\T}\nu_+(\widetilde{\pA}(z))
= \max_{z\in\T}\nu_+(\pA(z)),\qquad
m_2 \geqslant \max_{z\in\T}\nu_-(\widetilde{\pA}(z))
= \max_{z\in\T}\nu_-(\pA(z)),$$
there exists an $r \times (m_1 + m_2)$ matrix of Laurent polynomials $\widetilde{\pU}(z)$ and a constant diagonal matrix $\pD=\diag(\mathbf{I}_{m_1}, -\mathbf{I}_{m_2})$,
such that
$$ \widetilde{\pA}(z)=\widetilde{\pU}(z)\pD\widetilde{\pU}^\star(z) .$$
Adding $ (n-r) $ more rows of zeros to $ \widetilde{\pU}(z) $ yields an $ n \times (m_1 + m_2) $ matrix
$ \pV(z) :=\begin{bmatrix}
\widetilde{\pU}(z) \\ \mathbf{0}_{(n-r)\times (m_1+m_2)}
\end{bmatrix} $. We can directly verify that
$\mathring{\pA}(z) = \pV(z)\pD\pV^\star(z) $. Define $ \pU(z) :=\pE(z)\pV(z) $, we know
$ \pA(z)=\pU(z)\pD\pU^\star(z)  $ holds. This proves the sufficiency part of Theorem~\ref{thm:nonconst-sig} for the case that $ \det(\pA(z))\equiv 0 $.
\end{proof}

\section{Quasi-tight Framelets with a General Dilation Factor}

Since the proof of Theorem~\ref{thm:qtf} on quasi-tight framelets is built on Theorems~\ref{thm:const-sig} and \ref{thm:nonconst-sig} for the generalized matrix spectral factorization of Hermite matrices of Laurent polynomials, we can easily generalize Theorem~\ref{thm:qtf} to quasi-tight framelets with an arbitrary dilation factor.

Let $ \dm $ be an integer such that $ \dm \geqslant 2 $. Suppose that $ \Theta, a, b_1, \ldots, b_s \in \lp{0} $ and $ \epsilon_1,\ldots, \epsilon_s \in \{-1, 1\} $. We say that $ \{a; b_1, \ldots, b_s\}_{\Theta, (\epsilon_1,\ldots,\epsilon_s)} $ is a \emph{quasi-tight $\dm$-framelet filter bank} if
\be \label{Mqtffb}
\left[\begin{matrix}
\pb_1(\omega^0 z) &\cdots &\pb_s(\omega^0 z)\\
\pb_1(\omega^1 z) &\cdots &\pb_s(\omega^1 z)\\
\vdots & \ddots & \vdots \\
\pb_1(\omega^{\dm-1} z) &\cdots &\pb_s(\omega^{\dm-1} z)
\end{matrix}\right]
\left[\begin{matrix} \eps_1 & &\\
&\ddots &\\ & &\eps_s\end{matrix}\right]
\left[\begin{matrix}
\pb_1(\omega^0 z) &\cdots &\pb_s(\omega^0 z)\\
\pb_1(\omega^1 z) &\cdots &\pb_s(\omega^1 z)\\
\vdots & \ddots & \vdots \\
\pb_1(\omega^{\dm-1} z) &\cdots &\pb_s(\omega^{\dm-1} z)
\end{matrix}\right]^\star
=\cM_{\pa,\pTh}(z),
\ee
where $ \omega := e^{-i 2\pi/\dm} $ and the $ \dm \times \dm $ matrix $\cM_{\pa,\pTh}$ is defined to be
\begin{equation} \label{McM}
\cM_{\pa,\pTh}(z):=
\left[\begin{matrix}
\pTh(\omega^0 z) & & \\
 & \ddots & \\
 &        & \pTh(\omega^{\dm-1} z)
\end{matrix}\right] - \pTh(z^{\dm})
\left[ \begin{matrix}
\pa(\omega^0 z) \\ \vdots \\ \pa(\omega^{\dm-1}z)
\end{matrix}
\right]
\left[ \begin{matrix}
\pa(\omega^0 z) \\ \vdots \\ \pa(\omega^{\dm-1}z)
\end{matrix}
\right]^\star.
\end{equation}
Assume that $\pa(1)=1$ and $\phi\in \Lp{2}$ with $\phi$ being defined by
\be \label{Mphi}
\wh{\phi}(\xi):=\prod_{j=1}^\infty \pa(e^{-i \dm^{-j}\xi}),\qquad \xi\in \R.
\ee
Write $\pTh(z)=\tilde{\pth}(z)\pth^\star(z)$ for some $\tilde{\theta}, \theta\in \lp{0}$. Define $\eta, \tilde{\eta}, \psi^1,\ldots,\psi^s$ by
\be \label{Meta:psi}
\eta:=\sum_{k\in \Z} \theta(k) \phi(\cdot-k), \;
\tilde{\eta}:=\sum_{k\in \Z} \tilde{\theta}(k) \phi(\cdot-k),\quad
\psi^\ell:=\dm\sum_{k\in \Z} b_\ell(k)\phi(\dm\cdot-k),\qquad \ell=1,\ldots,s.
\ee
If in addition $\pTh(z)\ge 0$ for all $z\in \T$, then by Fej\'er-Riesz lemma we can always choose $\tilde{\theta}=\theta$ so that $\tilde{\eta}=\eta$.
If $\pTh(1)=1$ and $\pb_1(1)=\cdots=\pb_s(1)=0$,
then
$\{\eta,\tilde{\eta};\psi^1,\ldots,\psi^s\}_{(\epsilon_1, \ldots, \epsilon_s)}$ is \emph{a quasi-tight $\dm$-framelet} in $\Lp{2}$, that is, for every $ f\in \Lp{2} $,
\be \label{Mqtf}
f=\sum_{k\in \Z} \la f, \tilde{\eta}(\cdot-k)\ra \eta(\cdot-k)+
\sum_{j=0}^\infty \sum_{\ell=1}^s \sum_{k\in \Z} \eps_\ell \la f, \psi^\ell_{\dm^j;k}\ra \psi^\ell_{\dm^j;k}
\ee
with the series converging unconditionally in $\Lp{2}$ and the underlying system being a Bessel sequence in $\Lp{2}$.
Moreover, if $\{a;b_1,\ldots,b_s\}_{\Theta,(\epsilon_1,\ldots, \epsilon_s)}$ is a quasi-tight $\dm$-framelet filter bank, then
\be \label{qtf:vm:srM}
\min(\vmo(b_1),\ldots,\vmo(b_s))\le \min(\sr(a, \dm), \tfrac{1}{2}\vmo(\pTh(z)-\pTh(z^\dm)\pa(z)\pa^\star(z))),
\ee
where $ \sr(a, \dm) $ is the largest integer $ n $ such that $ (1+z+\cdots+z^{\dm-1})^n \mid \pa(z) $.

\begin{theorem}\label{thm:Mqtf}
Let $a,\Theta\in \lp{0}\bs\{0\}$ be two finitely supported
not-identically-zero filters such that $\pTh^\star=\pTh$.
Let $n_b$ be any positive integer satisfying
\be \label{Mnb}
1\le n_b\le \min(\sr(a, \dm), \tfrac{1}{2} \vmo(\pTh(z)-\pTh(z^\dm) \pa(z)\pa^\star(z))).
\ee
Let $\cM_{\pa,\pTh}(z)$ be defined in \eqref{McM} and
the quantities $s_{a,\Theta}^+, s_{a,\Theta}^-, s_{a,\Theta}$ be defined in \eqref{saTheta}.
Then there exist $b_1,\ldots,b_s\in \lp{0}$ with $s=s_{a,\Theta}$
and $ \eps_1=\cdots= \eps_{s_{a,\Theta}^+} = 1 $,  $\eps_{s_{a,\Theta}^+ +1}=\cdots=\eps_s=-1$ such that $\{a;b_1,\ldots,b_s\}_{\Theta, (\epsilon_1,\ldots, \epsilon_s)}$ is a quasi-tight $\dm$-framelet filter bank with
$ \min\{\vmo(b_1),\ldots, \vmo(b_s)\}\geqslant n_b $.
Also, for $1\le s<s_{a,\Theta}$, there does not exist a quasi-tight $\dm$-framelet filter bank
$\{a;b_1,\ldots,b_s\}_{\Theta, (\epsilon_1, \ldots, \epsilon_s)}$ with $b_1,\ldots,b_s\in \lp{0}$ and $\eps_1,\ldots,\eps_s\in\{-1,1\}$.
Moreover, if $\pa(1)=\pTh(1)=1$ and $\phi\in \Lp{2}$ with $\phi$ being defined in \eqref{Mphi}, then
$\{\eta,\tilde{\eta};\psi^1,\ldots,\psi^s\}_{(\epsilon_1, \ldots, \epsilon_s)}$ is a quasi-tight $\dm$-framelet in $\Lp{2}$, where $\eta,\tilde{\eta},\psi^1,\ldots,\psi^s\in \Lp{2}$ are defined in \eqref{Meta:psi} with $\tilde{\pth}(z)\pth^\star(z)=\pTh(z)$.
\end{theorem}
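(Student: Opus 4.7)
The plan is to mirror the proof of Theorem~\ref{thm:qtf} step by step, replacing the $2\times 2$ modulation machinery by its $\dm\times \dm$ counterpart; Theorems~\ref{thm:const-sig} and~\ref{thm:nonconst-sig} are dilation-independent and will do the heavy lifting exactly as in the dyadic case. Since each $\pb_\ell$ must have $n_b$ vanishing moments, I write $\pb_\ell(z)=(1-z^{-1})^{n_b}\mathring{\pb}_\ell(z)$. The diagonal matrix $\mathsf{V}(z):=\diag\bigl((1-\omega^{-j}z^{-1})^{n_b}\bigr)_{j=0}^{\dm-1}$ accounts for this factor in \eqref{Mqtffb}, so \eqref{Mqtffb} becomes equivalent to
\[
\mathring{\pB}(z)\,\diag(\epsilon_1,\ldots,\epsilon_s)\,\mathring{\pB}^\star(z)=\cM_{\pa,\pTh|n_b}(z),\qquad \cM_{\pa,\pTh|n_b}(z):=\mathsf{V}^{-1}(z)\cM_{\pa,\pTh}(z)\mathsf{V}^{-\star}(z),
\]
where $\mathring{\pB}(z):=(\mathring{\pb}_\ell(\omega^j z))_{j=0,\ldots,\dm-1;\,\ell=1,\ldots,s}$. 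The first task is to confirm that $\cM_{\pa,\pTh|n_b}(z)$ is indeed a matrix of Laurent polynomials, which amounts to checking zero orders at the $\dm$-th roots of unity: for the $(j,j)$-entry the substitution $w=\omega^j z$ turns the numerator into $\pTh(w)-\pTh(w^\dm)\pa(w)\pa^\star(w)$, which by \eqref{Mnb} vanishes to order $2n_b$ at $w=1$, matching the $2n_b$-order zero of the denominator at $z=\omega^{-j}$; for $j\neq k$ the sum-rule condition $\sr(\pa,\dm)\geq n_b$ implies that $\pa(\omega^j z)$ (resp.\ $\pa^\star(\omega^k z)$) contributes an $n_b$-order zero at $z=\omega^{-k}$ (resp.\ $z=\omega^{-j}$).

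Next, I pass to the polyphase picture via $\mathring{\pb}_\ell(z)=\sum_{\gamma=0}^{\dm-1}z^\gamma\mathring{\pb}_\ell^{[\gamma]}(z^\dm)$. Setting $W(z):=(\omega^{j\gamma}z^\gamma)_{j,\gamma=0}^{\dm-1}$ and the polyphase matrix $\mathring{\pP}(z):=(\mathring{\pb}_\ell^{[\gamma]}(z))_{\gamma=0,\ldots,\dm-1;\,\ell=1,\ldots,s}$, one has $\mathring{\pB}(z)=W(z)\mathring{\pP}(z^\dm)$, and a direct calculation gives $W(z)W^\star(z)=\dm\,\mathbf{I}_\dm$. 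Consequently the factorization equation reduces to
\[
\mathring{\pP}(z^\dm)\diag(\epsilon_1,\ldots,\epsilon_s)\mathring{\pP}^\star(z^\dm)=\cN_{\pa,\pTh|n_b}(z^\dm),\qquad \cN_{\pa,\pTh|n_b}(z^\dm):=\tfrac{1}{\dm^2}W^\star(z)\cM_{\pa,\pTh|n_b}(z)W(z).
\]
To see that $\cN_{\pa,\pTh|n_b}$ is a well-defined Hermite matrix of Laurent polynomials in the new variable, I use the cyclic covariances $W(\omega z)=\pi W(z)$, $\mathsf{V}(\omega z)=\pi\mathsf{V}(z)\pi^T$ and $\cM_{\pa,\pTh}(\omega z)=\pi\cM_{\pa,\pTh}(z)\pi^T$, with $\pi$ the cyclic shift permutation; these together force the right-hand side above to be invariant under $z\mapsto \omega z$ and hence to be a Laurent polynomial matrix in $z^\dm$. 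The Hermite property of $\cN_{\pa,\pTh|n_b}$ is immediate from that of $\cM_{\pa,\pTh|n_b}$ together with $(W^\star)^\star=W$.

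With these structural facts in hand, the rest of the argument is essentially verbatim the one used for Theorem~\ref{thm:qtf}. Applying Theorem~\ref{thm:nonconst-sig} to $\cN_{\pa,\pTh|n_b}(z)$, the existence of the factorization is equivalent to $s_\pm\geq \max_{z\in\T}\nu_\pm(\cN_{\pa,\pTh|n_b}(z))$, where $s_+$ and $s_-$ count the $+1$'s and $-1$'s in the signature. Since $\mathsf{V}(z)$ is diagonal and nonsingular off $\{\omega^0,\ldots,\omega^{\dm-1}\}$ and $W(z)$ is nonsingular away from $z=0$, two successive applications of Sylvester's law of inertia yield $\nu_\pm(\cM_{\pa,\pTh}(z))=\nu_\pm(\cN_{\pa,\pTh|n_b}(z^\dm))$ for all but finitely many $z\in\T$; combining with Lemma~\ref{lem:maxminT} (whose proof is insensitive to the dilation factor) gives $\max_{z\in\T}\nu_\pm(\cN_{\pa,\pTh|n_b}(z))=s_{a,\Theta}^\pm$. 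This simultaneously establishes the non-existence statement for $s<s_{a,\Theta}$ and, by choosing $s=s_{a,\Theta}$ with the prescribed signature pattern, constructs the desired quasi-tight $\dm$-framelet filter bank. The $\Lp{2}$-framelet conclusion then follows from the standard dual-framelet machinery in \cite[Theorems~4.1.9 and 6.4.1]{hanbook}, which is formulated for arbitrary dilation. The main obstacle relative to the dyadic case is not conceptual but bookkeeping: tracking multiplicities at all $\dm$ roots of unity to certify the polynomiality of $\cM_{\pa,\pTh|n_b}$, and exploiting the DFT-type structure of $W(z)$ to verify the $\omega$-invariance of $\cN_{\pa,\pTh|n_b}$; once these technicalities are cleared, Theorems~\ref{thm:const-sig} and~\ref{thm:nonconst-sig} complete the proof.
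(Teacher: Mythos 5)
Your proposal is correct and follows essentially the same route as the paper's proof: factor out the $(1-z^{-1})^{n_b}$ vanishing-moment factor, conjugate by the diagonal matrix of $(1-\omega^{-j}z^{-1})^{n_b}$ factors to obtain $\cM_{\pa,\pTh|n_b}$, pass to the polyphase/DFT picture via the matrix you call $W(z)$ (the paper's $F(z)$, with $F(z)F^\star(z)=\dm\,\mathbf{I}_\dm$), reduce to a single generalized spectral factorization of the $\dm\times\dm$ Hermite matrix $\cN_{\pa,\pTh|n_b}$, and then invoke Theorems~\ref{thm:const-sig}, \ref{thm:nonconst-sig}, Sylvester's law of inertia, and Lemma~\ref{lem:maxminT}. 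The only stylistic divergence is your justification that $N(z):=\tfrac{1}{\dm^2}W^\star(z)\cM_{\pa,\pTh|n_b}(z)W(z)$ depends only on $z^\dm$: you argue this structurally from the cyclic covariance relations $W(\omega z)=\pi W(z)$, $\mathsf{V}(\omega z)=\pi\mathsf{V}(z)\pi^T$, $\cM_{\pa,\pTh}(\omega z)=\pi\cM_{\pa,\pTh}(z)\pi^T$, whereas the paper writes out the entries of $N(z)$ explicitly (its equation \eqref{eq:Njk}) and checks $N(\omega^r z)=N(z)$ by direct inspection. Your conjugation argument is slightly cleaner and avoids the entrywise bookkeeping, but it is the same computation in disguise, not a genuinely different method.
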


\begin{proof}
Since all high-pass filters must have at least $n_b$ vanishing moments, we can write
\begin{equation} \label{eq:MbCoset}
\pb_\ell(z) = (1-z^{-1})^{n_b}\mathring{\pb}_\ell(z), \qquad \ell = 1,\ldots, s
\end{equation}
for some Laurent polynomials $\mathring{\pb}_0,\ldots,\mathring{\pb}_s$.
Then $\{a;b_1,\ldots,b_s\}_{\Theta,(\eps_1,\ldots,\eps_s)}$ is a quasi-tight $\dm$-framelet filter bank
satisfying
\eqref{Mqtffb} and \eqref{Mnb}
if and only if
\begin{equation} \label{eq:Mqtfbnb}
\left[\begin{matrix}
\mathring{\pb_1}(\omega^0 z) &\cdots &\mathring{\pb_s}(\omega^0 z)\\
\mathring{\pb_1}(\omega^1 z) &\cdots &\mathring{\pb_s}(\omega^1 z)\\
\vdots & \ddots & \vdots \\
\mathring{\pb_1}(\omega^{\dm-1} z) &\cdots &\mathring{\pb_s}(\omega^{\dm-1} z)
\end{matrix}\right]
\left[\begin{matrix} \eps_1 & &\\
&\ddots &\\ & &\eps_s\end{matrix}\right]
\left[\begin{matrix}
\mathring{\pb_1}(\omega^0 z) &\cdots &\mathring{\pb_s}(\omega^0 z)\\
\mathring{\pb_1}(\omega^1 z) &\cdots &\mathring{\pb_s}(\omega^1 z)\\
\vdots & \ddots & \vdots \\
\mathring{\pb_1}(\omega^{\dm-1} z) &\cdots &\mathring{\pb_s}(\omega^{\dm-1} z)
\end{matrix}\right]^\star
= \cM_{\pa, \pTh|n_b}(z),
\end{equation}
where $ \cM_{\pa, \pTh|n_b}(z) $ is an $ \dm\times \dm $ matrix defined by
\begin{align*} 
[\cM_{\pa, \pTh|n_b}]_{j,j}(z):=&
\frac{\pTh(\omega^{j-1} z) -
\pTh(z^\dm)\pa(\omega^{j-1} z)\pa^\star(\omega^{j-1} z)}
{(1-\omega^{j-1} z)^{n_b}(1-(\omega^{j-1}z)^{-1})^{n_b}}, \qquad j = 1,\ldots, \dm, \\
[\cM_{\pa, \pTh|n_b}]_{j,k}(z):=&
\frac{-\pTh(z^\dm)\pa(\omega^{j-1} z)\pa^\star(\omega^{k-1} z)}
{(1 - \omega^{k-1}z)^{n_b}(1-(\omega^{j-1}z)^{-1})^{n_b}},
\qquad j,k = 1,\ldots, \dm, \quad j\neq k.
\end{align*}
Note that according to the upper bound of $ n_b $ in \eqref{Mnb},
$ \cM_{\pa, \pTh|n_b}(z) $ is a well-defined $ \dm\times\dm $ matrix of Laurent polynomials.
For $ \gamma \in \{0, \dots, \dm-1\} $, define the \emph{$ \gamma$-coset sequence} of a filter $ u\in \lp{0} $ as $ u^{[\gamma]} := \{u(\gamma + \dm k)\}_{k\in \Z} $.
Then we have
\[
\begin{bmatrix}
\pu(\omega^0 z) \\
\vdots \\
\pu(\omega^{\dm-1} z)
\end{bmatrix} =
\begin{bmatrix}
1 & 1 & \ldots & 1 \\
1 & \omega & \ldots & \omega^{\dm-1} \\
\vdots & \vdots & \ddots & \vdots \\
1 & \omega^{\dm-1} & \ldots & \omega^{(\dm-1)(\dm-1)}
\end{bmatrix}
\begin{bmatrix}
1 &   & & \\
  & z & & \\
  &   & \ddots & \\
  &   & & z^{\dm-1}
\end{bmatrix}
\begin{bmatrix}
\pu^{[0]}(z^{\dm}) \\
\vdots \\
\pu^{[\dm-1]}(z^{\dm})
\end{bmatrix}
= F(z)
\begin{bmatrix}
\pu^{[0]}(z^{\dm}) \\
\vdots \\
\pu^{[\dm-1]}(z^{\dm})
\end{bmatrix},
 \]
where $ F(z) $ is defined as $ F_{j,k}(z) = \omega^{(j-1)(k-1)}z^{k-1} $, $ j, k = 1,\dots \dm $.
Notice that $ F(z) $ satisfies $ F(z)F^\star(z) = \dm \mathbf{I_\dm} $, hence, $ F(z) $ is invertible: $ F^{-1}(z) = \frac{1}{\dm}F^\star(z)
= \frac{1}{\dm}
[\omega^{-(j-1)(k-1)}z^{1-j}]_{1\leqslant j \leqslant \dm, 1\leqslant k \leqslant \dm} $.
Define an $ \dm\times\dm $ matrix of Laurent polynomial $ N(z):= F^{-1}(z)\cM_{\pa, \pTh|n_b}(z)F^{-\star}(z) $.
That is, for $ j, k = 1,\dots, \dm $,
\begin{align} \label{eq:Njk}
N_{j,k}(z)
=& \sum_{q = 1}^{\dm}\sum_{p = 1}^{\dm}
[F^{-1}]_{j,p}(z) [\cM_{\pa, \pTh|n_b}]_{p,q}(z)
[F^{-\star}]_{q,k}(z) \notag \\
=& \frac{1}{\dm^2}z^{k-j} \left(
\sum_{p=0}^{\dm-1}
\sum_{\substack{q=0 \\ q\neq p}}^{\dm-1}
\omega^{(k-1)q-(j-1)p}
\frac{-\pTh(z^\dm)\pa(\omega^{p} z)\pa^\star(\omega^{q} z)}
{(1 - \omega^{q}z)^{n_b}(1-(\omega^{p}z)^{-1})^{n_b}}
\right. \notag \\
& \qquad \qquad\qquad \qquad \left.
+ \sum_{\ell=0}^{\dm-1} \omega^{(k-j)\ell}
\frac{\pTh(\omega^{\ell} z) -
\pTh(z^\dm)\pa(\omega^{\ell} z)\pa^\star(\omega^{\ell} z)}
{(1-\omega^{\ell} z)^{n_b}(1-(\omega^{\ell}z)^{-1})^{n_b}}
\right).
\end{align}
It is easy to verify that $ N(\omega^{r}z) = N(z) $ for all $ r = 0, \ldots, \dm-1 $.
Hence, $ N(z) $ only depends on $ z^\dm $, and we can write $ N(z) = \cN_{\pa, \pTh|n_b}(z^\dm) $, where $ \cN_{\pa, \pTh|n_b}(z) $ is an $ \dm \times \dm$ Hermite matrix of Laurent polynomials.

Multiplying $ F^{-1}(z) $ and $ F^{-\star}(z) $ on the left and right side of \eqref{eq:Mqtfbnb} respectively, we see that \eqref{eq:Mqtfbnb} is equivalent to
\begin{equation} \label{eq:MPRpolyphase}
\left[\begin{matrix}
\mathring{\pb_1}^{[0]}(z) &\cdots &\mathring{\pb_s}^{[0]}(z)\\
\vdots & \ddots & \vdots \\
\mathring{\pb_1}^{[\dm-1]}(z) &\cdots &\mathring{\pb_s}^{[\dm-1]}(z)
\end{matrix}\right]
\left[\begin{matrix} \eps_1 & &\\
&\ddots &\\ & &\eps_s\end{matrix}\right]
\left[\begin{matrix}
\mathring{\pb_1}^{[0]}(z) &\cdots &\mathring{\pb_s}^{[0]}(z)\\
\vdots & \ddots & \vdots \\
\mathring{\pb_1}^{[\dm-1]}(z) &\cdots &\mathring{\pb_s}^{[\dm-1]}(z)
\end{matrix}\right]^\star
=\cN_{\pa, \pTh|n_b}(z),
\end{equation}
Hence, the existence of a quasi-tight $\dm$-framelet filter bank
$ \{a; b_1, \ldots, b_s\}_{\Theta, (\epsilon_1, \ldots, \epsilon_s)} $ with $ n_b $ order of vanishing moments necessarily implies a generalized spectral factorization \eqref{eq:MPRpolyphase} for the Hermite matrix $ \cN_{\pa, \pTh|n_b}(z) $ of Laurent polynomials.

According to Theorem~\ref{thm:nonconst-sig},  the existence of the generalized spectral factorization \eqref{eq:MPRpolyphase} implies that the number $ s_+ $ of times that ``$ +1 $" appears in $ \{\epsilon_1,\ldots,\epsilon_s \} $ and the number $ s_- $ of times that ``$ -1 $" appears in $ \{\epsilon_1,\ldots,\epsilon_s \} $ satisfy
\begin{equation} \label{eq:Ms+s-}
s_+ \geqslant \max_{z\in \T}\nu_+(\cN_{\pa, \pTh|n_b}(z)) , \qquad \mbox{and} \quad
s_- \geqslant \max_{z\in \T}\nu_-(\cN_{\pa, \pTh|n_b}(z)).
\end{equation}
By \eqref{eq:MbCoset}, \eqref{eq:Mqtfbnb} and \eqref{eq:Njk}, we deduce that
\begin{align*}
\cM_{\pa, \pTh}(z)=
\pP(z)  \cN_{\pa, \pTh|n_b}(z^{\dm}) \pP^\star(z),
\end{align*}
where $ \pP(z) := \diag((1-(\omega^0 z)^{-1})^{n_b},
\ldots,(1-(\omega^{\dm - 1} z)^{-1})^{n_b})
$.
Hence,
$ \sigma(\pP) \subseteq \{1, \omega, \ldots, \omega^{\dm}\} $, which is a finite set.
Similar to the proof of Theorem~\ref{thm:qtf}, we conclude that
\begin{equation*}
s_{a, \Theta}^+
= \max_{z\in \T} \nu_+(\cN_{\pa, \pTh|n_b}(z)), \qquad
\mbox{and}\quad
s_{a, \Theta}^- = \max_{z\in \T} \nu_-(\cN_{\pa, \pTh|n_b}(z)).
\end{equation*}
Therefore, from \eqref{eq:Ms+s-} we see the generalized spectral factorization \eqref{eq:MPRpolyphase} implies
\begin{equation*}
s_+ \geqslant s_{a, \Theta}^+ , \qquad
s_- \geqslant s_{a, \Theta}^-, \qquad
\mbox{and} \quad
s = s_+ + s_-
\geqslant s_{a, \Theta}^+ + s_{a, \Theta}^- = s_{a, \Theta} .
\end{equation*}
Hence, for $1\le s<s_{a,\Theta}$, there does not exist a quasi-tight framelet filter bank
$\{a;b_1,\ldots,b_s\}_{\Theta, (\epsilon_1, \ldots, \epsilon_s)}$ with $b_1,\ldots,b_s\in \lp{0}$ and $\eps_1,\ldots,\eps_s\in\{-1,1\}$.

On the other hand, given filters $ a, \Theta \in \lp{0}\bs\{0\} $, $ \Theta^\star = \Theta $, and positive integer $ n_b $ satisfying \eqref{Mnb}, we can calculate the matrix $ \cN_{\pa, \pTh|n_b}(z) $ of Laurent polynomials from $ \cN_{\pa, \pTh|n_b}(z^\dm):= N(z) $, where $ N(z) $ is defined as \eqref{eq:Njk}.
By $ \Theta^\star(z) = \Theta(z) $, we see from \eqref{eq:Njk} that
$ N(z) $ and $ \cN_{\pa, \pTh|n_b}(z) $ are both Hermite matrices of Laurent polynomials.
Take $ s := s_{a, \Theta} = s_{a, \Theta}^+ + s_{a, \Theta}^- $.
According to Theorem~\ref{thm:nonconst-sig},
we can choose
$ \eps_1=\ldots= \eps_{s_{a,\Theta}^+} = 1 $, and $\eps_{s_{a,\Theta}^+ +1}=\ldots=\eps_{s}=-1$,
and find a generalized spectral factorization of $ \cN_{\pa, \pTh|n_b}(z) $ as
$ \cN_{\pa, \pTh|n_b}(z) = \pU(z)
\diag(\eps_1,\ldots,\eps_s)
\pU^\star(z)$,
where $ \pU(z) $ is an $ \dm\times s $ matrix of Laurent polynomials.
Then we can define Laurent polynomials $ \mathring{\pb}_1(z), \ldots, \mathring{\pb}_s(z) $ as
\[
\left[\begin{matrix}
\mathring{\pb_1}^{[0]}(z) &\cdots &\mathring{\pb_s}^{[0]}(z)\\
\vdots & \ddots & \vdots \\
\mathring{\pb_1}^{[\dm-1]}(z) &\cdots &\mathring{\pb_s}^{[\dm-1]}(z)
\end{matrix}\right] := \pU(z).
\]
Thus, \eqref{eq:MPRpolyphase} holds.
Multiplying
$ F(z) $ and
$ F^\star(z) $ on the left and right side of $ \cN_{\pa, \pTh|n_b}(z^\dm) $ respectively, we see that \eqref{eq:MPRpolyphase} is equivalent to \eqref{eq:Mqtfbnb}.
Define Laurent polynomials $ \pb_1(z)\ldots, \pb_s(z) $ in \eqref{eq:MbCoset}, we can conclude from \eqref{eq:Mqtfbnb} that $ \{a; b_1, \ldots, b_s\}_{\Theta, (\epsilon_1, \ldots, \epsilon_s)} $ is a quasi-tight framelet filter bank with
$ \min\{\vmo(b_1),\ldots, \vmo(b_s)\}\geqslant n_b $.
This proves the existence of the quasi-tight framelet filter bank with minimum number of high-pass filters and high vanishing moments.
\end{proof}

\begin{example} \label{ex:3Band}
{ \rm
Let $ \dm = 3 $ be a dilation factor.
Consider $ \pTh(z) = 1 $ and the low-pass filter
\[ \pa(z) = -\tfrac{1}{27}z^{-3}(1+z+z^2)^2 (2z^2 - 7z + 2). \]
By the definition of $ \cM_{\pa, 1}(z) $ in \eqref{McM}, the three eigenvalues of $ \cM_{\pa, 1}(z) $ are $ 1, 1 $ and $ \det(\cM_{\pa, 1}(z)) $.
We see from Figure~\ref{fig:3Band} that $ \det(\cM_{\pa, 1}(z)) \leqslant 0 $ on $ \T $.
Hence $ s_{a, \Theta}^+ = 2 $ and $ s_{a, \Theta}^- = 1 $.
Note that $ \sr(\pa, 3) = 2 $ and $ \vmo(1-\pa\pa^\star) = 4 $. Therefore, the maximum order of vanishing moments is two. Taking $ n_b = 2 $, we obtain a quasi-tight $3$-framelet filter bank $ \{a; b_1, b_2, b_3\}_{\Theta, (1, 1, -1)} $ as follows:
\begin{align*}
\pb_1(z) = \tfrac{\sqrt{6}}{6}(z-1)^2(z+1),\qquad
\pb_2(z) = \tfrac{\sqrt{6}}{18}(z-1)^3, \qquad
\pb_3(z) = \tfrac{1}{27}z^{-3}(z-1)^4(2z^2 + 5z + 2),
\end{align*}
with $ \vmo(b_1) = 2 $, $ \vmo(b_2) = 3 $ and $ \vmo(b_3) = 4 $.
Since $ \sm(a, 3)\approx 0.6599 $ (see \cite[(7.2.2)]{hanbook} for its definition), the refinable function $ \phi $ defined in \eqref{Mphi} belongs to $ \Lp{2} $. Therefore, $ \{\phi,\phi; \psi^1, \psi^2, \psi^3\}_{(1, 1, -1)} $ is a quasi-tight $3$-framelet in $ \Lp{2} $ and $ \{\psi^1, \psi^2, \psi^3\}_{(1, 1, -1)} $ is a homogeneous quasi-tight $3$-framelet in $ \Lp{2} $, where $ \psi^1, \psi^2 $ and $ \psi^3 $ are defined in \eqref{Meta:psi} and have at least two vanishing moments.
} \end{example}

\begin{figure}[h!]
	\centering	
	\begin{subfigure}[]{0.18\textwidth}			 \includegraphics[width=\textwidth, height=0.8\textwidth]{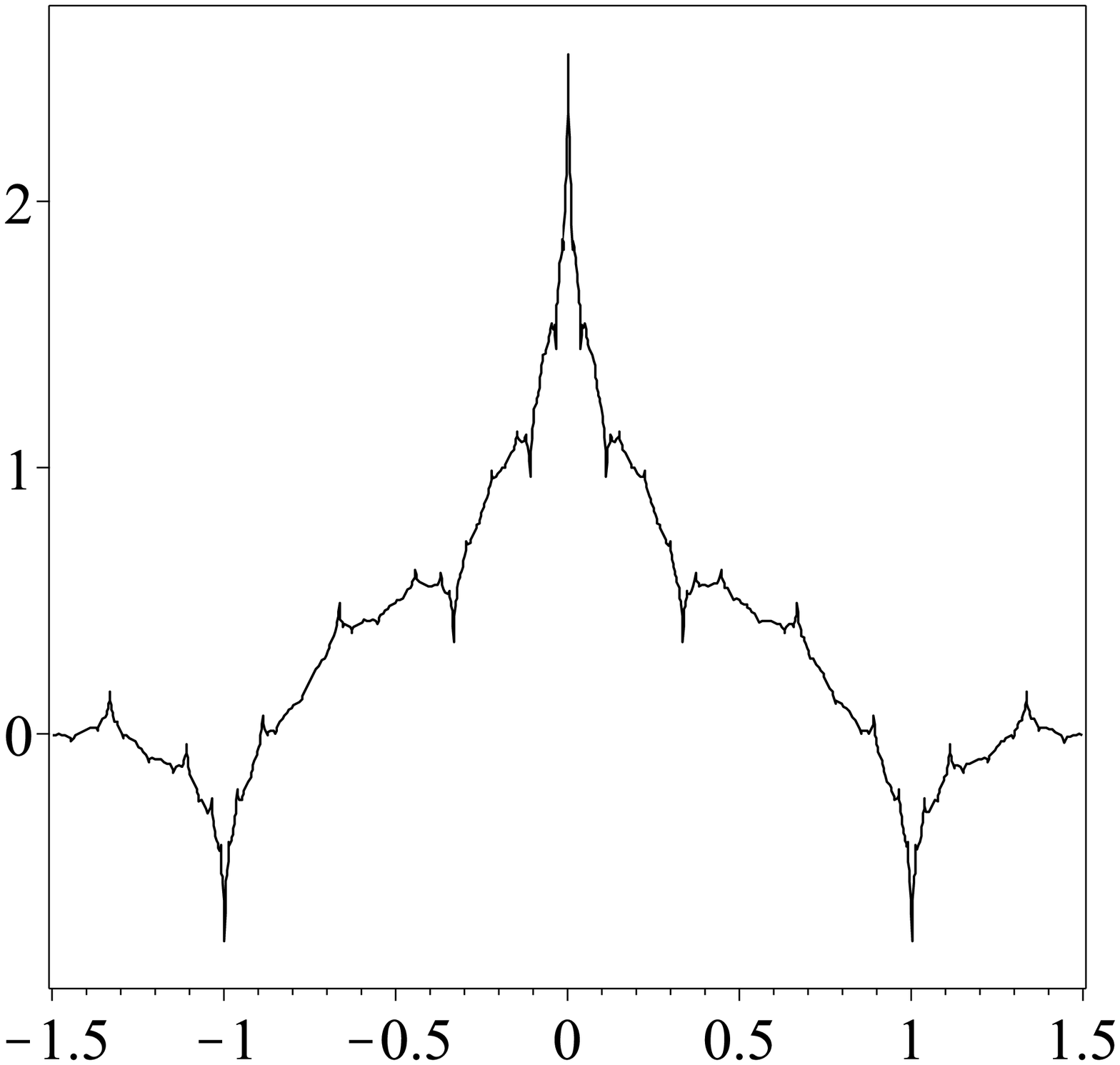}
		\caption{$\phi$}
	\end{subfigure}
	\begin{subfigure}[]{0.18\textwidth}		 \includegraphics[width=\textwidth, height=0.8\textwidth]{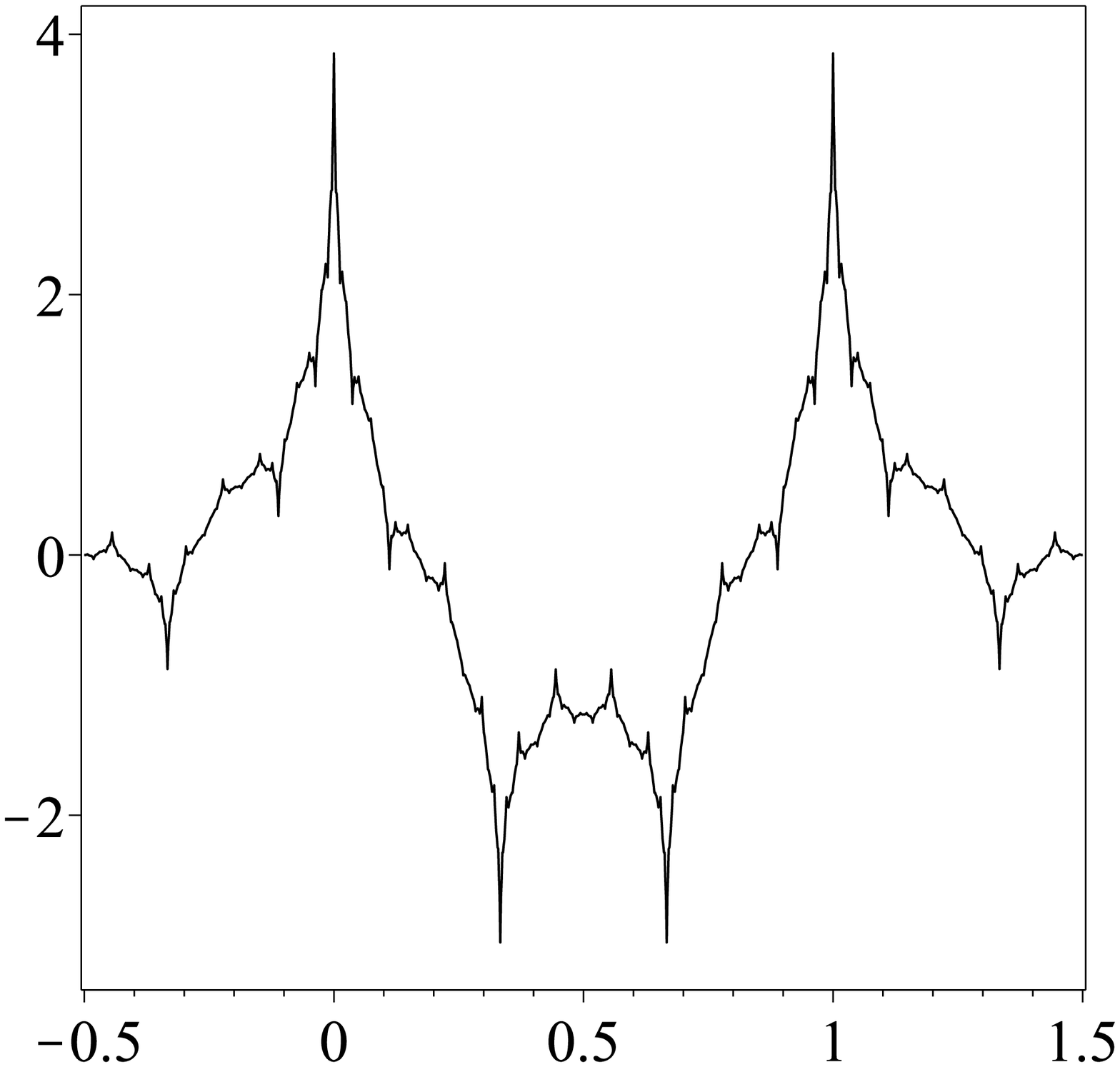}
		\caption{$\psi^1$}
	\end{subfigure}
	\begin{subfigure}[]{0.18\textwidth}		 \includegraphics[width=\textwidth, height=0.8\textwidth]{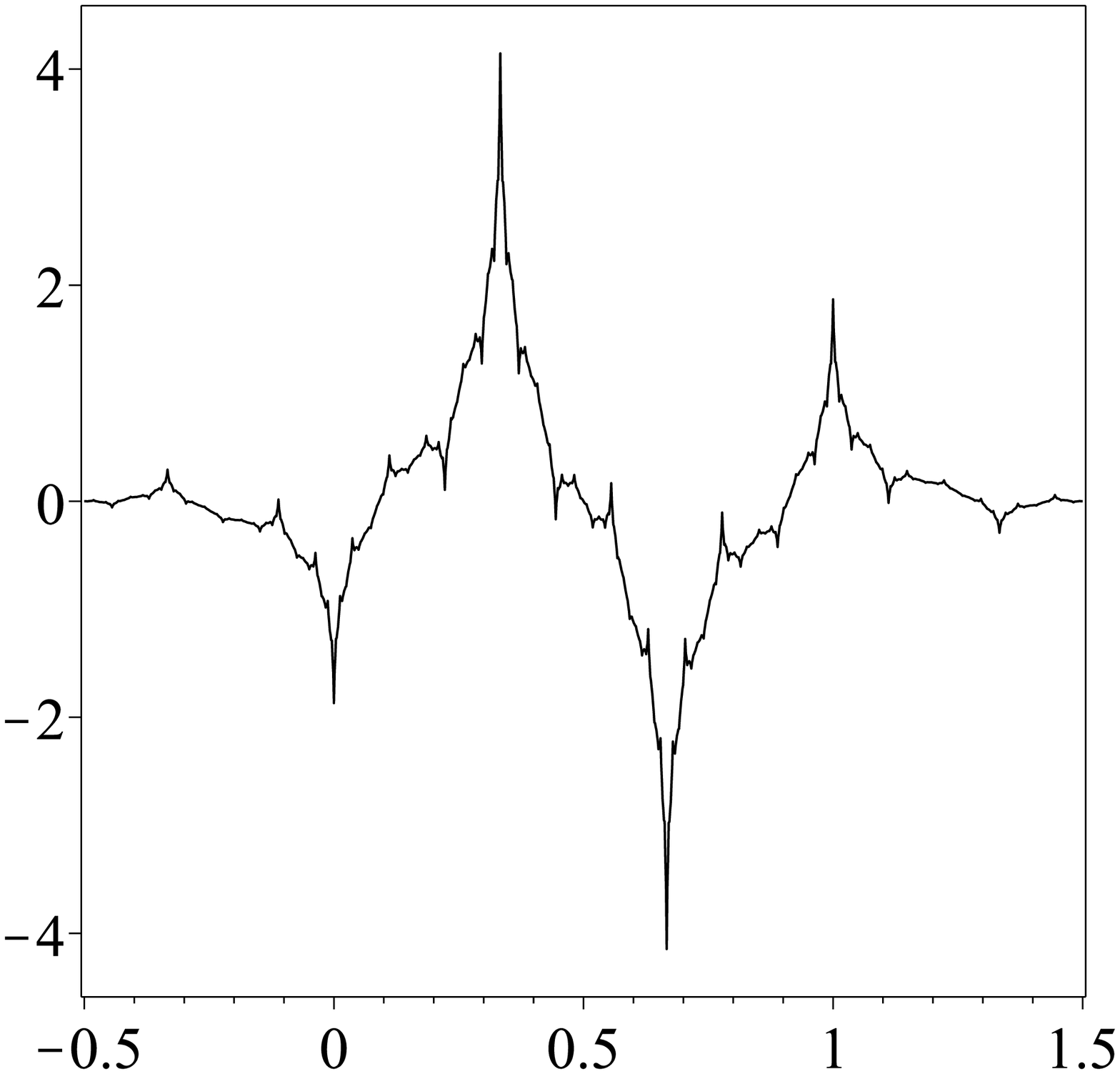}
		\caption{$\psi^2$}
	\end{subfigure}		 \begin{subfigure}[]{0.18\textwidth}		 
	\includegraphics[width=\textwidth, height=0.8\textwidth]{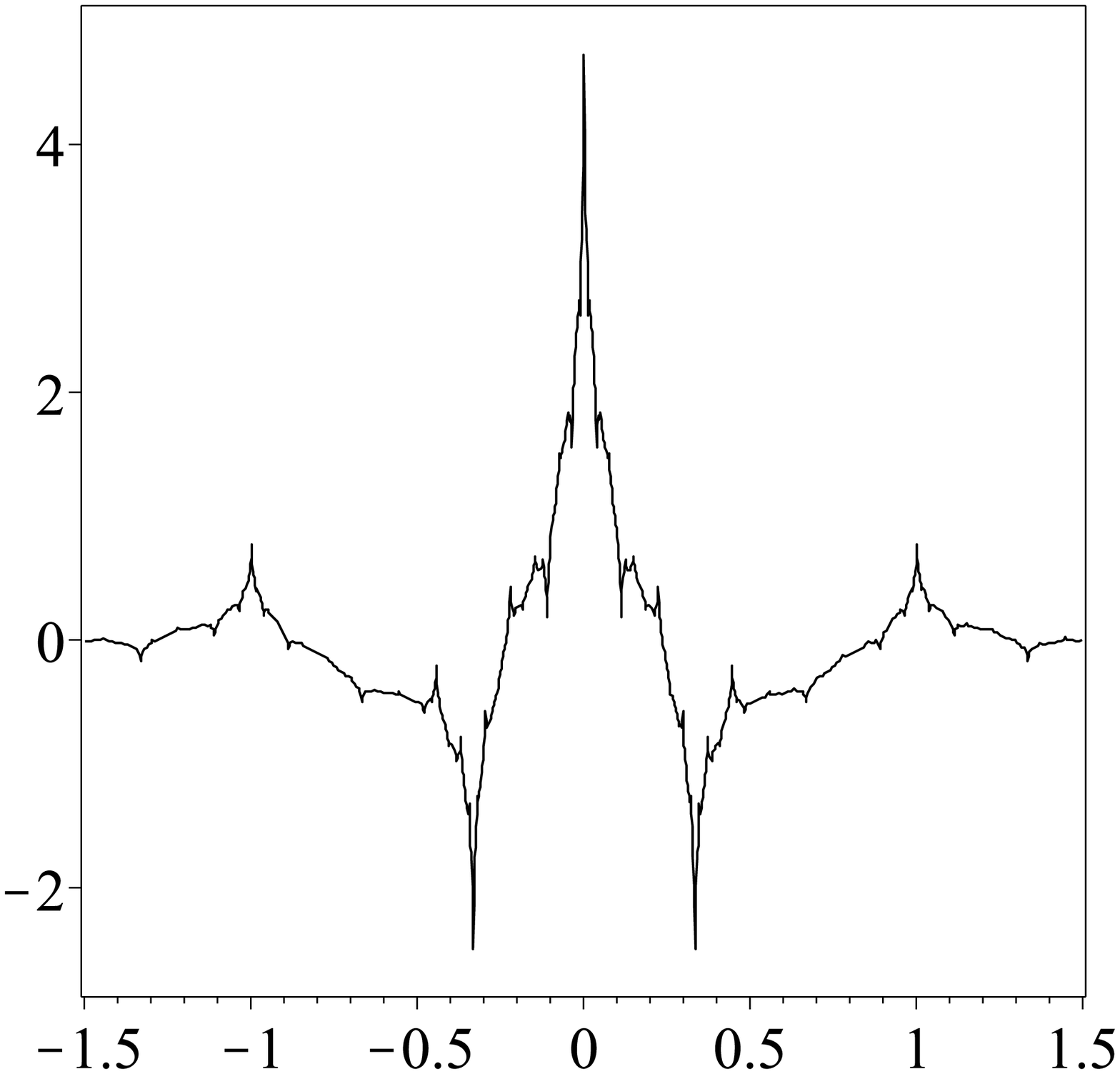}
		\caption{$\psi^3 $ }
	\end{subfigure}
	\begin{subfigure}[]{0.18\textwidth}
		\includegraphics[width=\textwidth, height=0.8\textwidth]{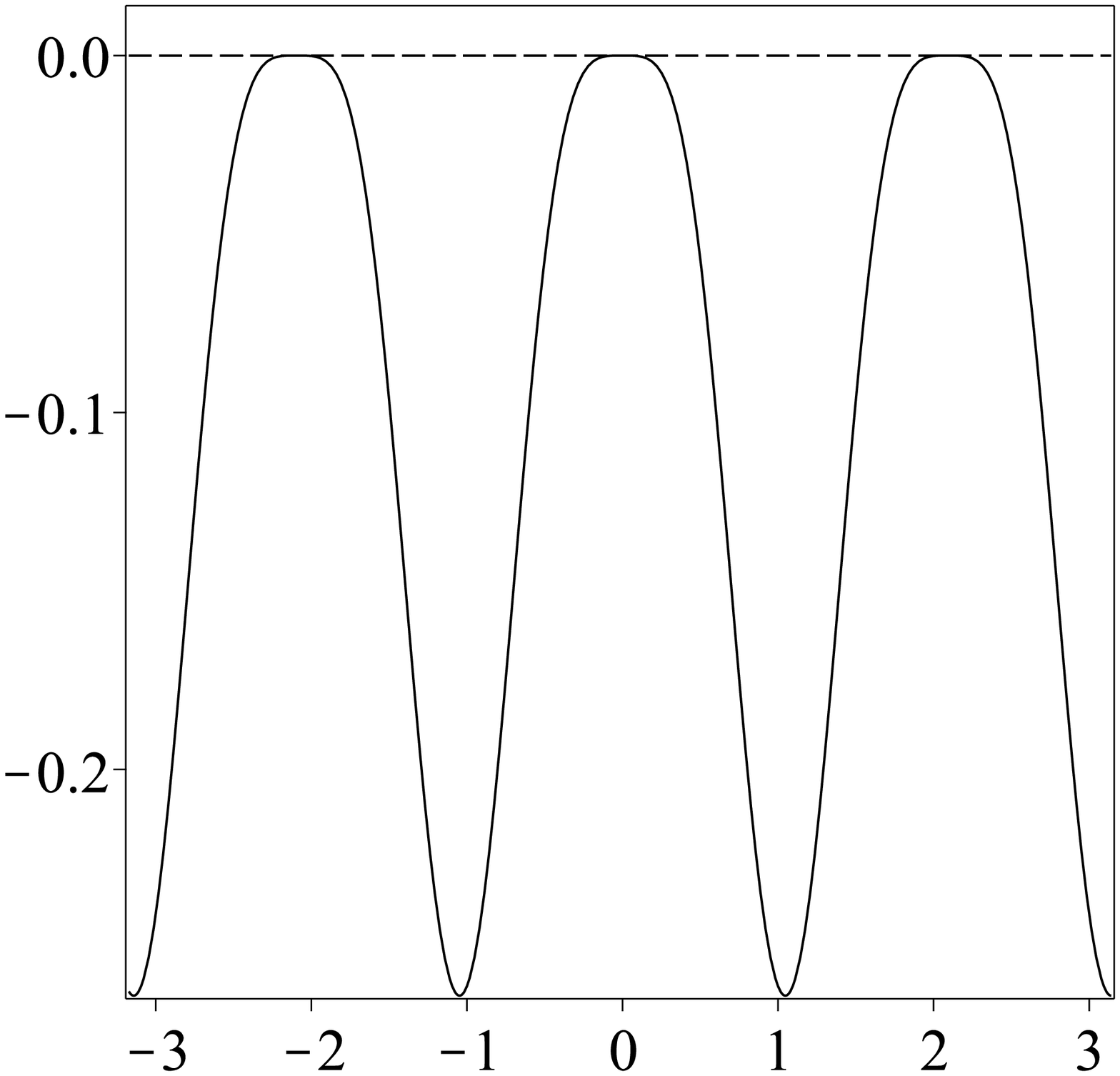}
		\caption{$ \det(\cM_{\pa, 1})$}
	\end{subfigure}
	\caption{
		The quasi-tight $3$-framelet $\{\phi,\phi; \psi^1,\psi^2,\psi^3\}_{(1,1,-1)}$ in $\Lp{2}$ and the homogeneous quasi-tight $3$-framelet $\{\psi^1,\psi^2,\psi^3\}_{(1,1,-1)}$ in $\Lp{2}$ obtained in Example~\ref{ex:3Band}.
		(A) is the refinable function $\phi\in \Lp{2}$. (B), (C) and (D) are the framelet functions $\psi^1$, $ \psi^2 $ and $ \psi^3$.
		(E) is $\det(\cM_{\pa, 1}(e^{-i\xi}))$ for $ \xi\in  [-\pi, \pi] $.
	} \label{fig:3Band}
\end{figure}

\appendix
\setcounter{section}{1}

\section*{Appendix. Proofs of Auxiliary Results for Proving Theorem~\ref{thm:const-sig}}

In this Appendix, we provide the proofs for several auxiliary results used in the proof of Theorem~\ref{thm:const-sig} in Section~4.
To prove Lemma~\ref{lemma:EmptySpect2}, we need the following result.

\begin{lemma} \label{lemma:EmptySpect1}
Let $ \pQ(z)\not \equiv 0 $ be a $ k\times k $ Hermite matrix of Laurent polynomials, which is diagonally dominant and the lengths of the diagonal entries are nondecreasing as in \eqref{eq:lemmaDiagIncreasing}.
Let $ \pb(z) $ be a column vector of Laurent polynomials with size $ k $ satisfying
\begin{equation} \label{eq:lemmaldeg}
 \ldeg(\pb_\ell(z)) \geqslant \ldeg(\pQ_{\ell,\ell}(z)), \qquad
\ell = 1,\ldots,k.
\end{equation}
Then there exists a column vector $ X(z) $ of Laurent polynomials with size $ k $ such that $ Y(z):= \pb(z) - \pQ(z)X(z) $ satisfies
\begin{equation} \label{eq:lemmaReq}
\fs(Y_\ell(z)) \subsetneq \fs(\pQ_{\ell,\ell}(z)), \qquad
\deg(Y_\ell(z)) < \deg(\pQ_{\ell,\ell}(z)),
\qquad \forall ~\ell = 1,\ldots,k.
\end{equation}
\end{lemma}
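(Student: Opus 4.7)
The plan is to prove Lemma~\ref{lemma:EmptySpect1} by constructing $X$ through an iterative Euclidean-style reduction of $Y = \pb - \pQ X$ modulo the columns of $\pQ$. Starting from $X^{(0)} := \mathbf{0}$ and $Y^{(0)} := \pb$, at each step I would identify some index $\ell$ with $\deg(Y^{(t)}_\ell) \geq \deg(\pQ_{\ell,\ell})$, let $m := \deg(Y^{(t)}_\ell) - \deg(\pQ_{\ell,\ell}) \geq 0$ and let $c$ be the leading coefficient of $Y^{(t)}_\ell$ divided by that of $\pQ_{\ell,\ell}$, and then update $X^{(t+1)}_\ell := X^{(t)}_\ell + c z^m$ together with $Y^{(t+1)} := Y^{(t)} - c z^m \, \pQ e_\ell$, where $e_\ell$ denotes the $\ell$-th standard basis column.

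First I would verify that the low-degree invariant $\ldeg(Y^{(t)}_j) \geq \ldeg(\pQ_{j,j})$ is preserved for every $j$ and $t$. It holds at $t=0$ by the hypothesis \eqref{eq:lemmaldeg}. During a reduction step, the $j$-th component of $Y$ is modified by $-c z^m \pQ_{j,\ell}$, whose low degree equals $m + \ldeg(\pQ_{j,\ell})$. Condition (i) of diagonal dominance applied at diagonal entry $s = j$ yields $\fs(\pQ_{j,\ell}) \subsetneq \fs(\pQ_{j,j})$, hence $\ldeg(\pQ_{j,\ell}) \geq \ldeg(\pQ_{j,j})$; combined with $m \geq 0$, this preserves the invariant. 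Once the algorithm terminates and no index $\ell$ has $\deg(Y_\ell) \geq \deg(\pQ_{\ell,\ell})$, the invariant together with this termination condition yields \eqref{eq:lemmaReq}.

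The hard part will be establishing termination, because a single reduction at index $\ell$ can in principle inject new high-degree terms into the other components $Y_j$ for $j \neq \ell$. The key tools for bounding these perturbations are condition (ii) of diagonal dominance, which gives the strict inequality $\deg(\pQ_{j,\ell}) < \deg(\pQ_{j,j})$ for $j < \ell$ (so the change's degree is at most $m + \deg(\pQ_{j,j}) - 1$), together with the nondecreasing-length assumption \eqref{eq:lemmaDiagIncreasing} and the Hermitian property of $\pQ$, which give $\deg(\pQ_{j,j}) \geq \deg(\pQ_{\ell,\ell})$ for $j > \ell$ (so the change's degree to such $Y_j$ is at most $\deg(Y^{(t)}_\ell)$). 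Selecting $\ell$ at each step as a maximizer of $\Psi_\ell := \deg(Y^{(t)}_\ell) - \deg(\pQ_{\ell,\ell})$ with an appropriate tiebreaker, these estimates make $\max_j \Psi_j$ non-increasing and let me set up a lexicographic monovariant (such as the pair consisting of $\max_j \Psi_j$ and the number of indices that achieve this maximum) which strictly decreases at each reduction step, forcing termination in finitely many iterations.
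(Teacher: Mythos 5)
Your iterative Euclidean-reduction strategy is a genuinely different route from the paper's proof. The paper multiplies by $\pD(z) := \diag\bigl([(z+1)(z^{-1}+1)]^{n_k-n_1},\ldots,1\bigr)$ so that every diagonal entry of $\widetilde{\pQ} := \pD\pQ$ has the common support $[-n_k,n_k]$, observes (via condition~(2) of diagonal dominance) that the top coefficient matrix of $\widetilde{\pQ}$ is lower triangular with nonzero diagonal and hence nonsingular, and then solves a block Toeplitz system for all coefficients of $X$ at once by backward substitution; that route delivers explicit support bounds and needs no termination discussion. Yours replaces the global linear solve by a one-column-at-a-time reduction, and the low-degree invariant you set up is established correctly.

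The termination argument as sketched, however, is not sound. The lexicographic pair consisting of $\max_j \Psi_j$ (with $\Psi_j := \deg(Y_j)-\deg(\pQ_{j,j})$) and the number of indices attaining that maximum does \emph{not} strictly decrease. For $j>\ell$, condition~(1) of diagonal dominance gives only $\deg(\pQ_{j,\ell}) \leqslant \deg(\pQ_{j,j})$ with no strict version available, since condition~(2) is one-sided and addresses only $\pQ_{s,i}$ with $i>s$; hence reducing at the unique maximizer $\ell$ can simultaneously lift several later components up to the old maximum. For instance, take $k=3$, all diagonal supports equal to $[-1,1]$, and $\pQ_{j,i}(z)=z$ for $j>i$: starting from excess degrees $(\Psi_1,\Psi_2,\Psi_3)=(5,4,4)$, a single reduction at $\ell=1$ can yield $(\Psi_1,\Psi_2,\Psi_3)=(\leqslant 4,\,5,\,5)$, so the maximum stays $5$ while the count jumps from $1$ to $2$. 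A working repair is to always reduce at the \emph{smallest} maximizer $\ell$ and track the pair $\bigl(\max_j\Psi_j,\ k-\ell\bigr)$ instead: your estimate for $j<\ell$ (condition~(2) with $s=j$, $i=\ell$ gives a strict degree drop) shows that after the reduction every index $j\leqslant \ell$ has $\Psi_j$ strictly below the old maximum, so within a fixed maximum value the smallest maximizer strictly increases and the maximum must drop after at most $k$ consecutive reductions. That corrected pair strictly decreases lexicographically and is bounded below, which yields termination.
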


\begin{proof}
If $ \pb(z) $ already satisfies $ \deg(\pb_\ell(z)) < \deg(\pQ_{\ell,\ell}(z))$ for all $\ell = 1,\ldots,k $, then we can simply take $ X(z) = 0 $, and the result is true. So we just need to consider the case that there exists some $ s \in \{ 1,\ldots,k\} $, such that
\begin{equation}\label{eq:lemmadeg}
\deg(\pb_s(z)) \geqslant \deg(\pQ_{s,s}(z)) .
\end{equation}

Since $ \pQ(z) $ is Hermite, the filter supports of its diagonal elements must be symmetric intervals:
\[ [-n_\ell, n_\ell]:=\fs(\pQ_{\ell,\ell}), \qquad \ell = 1,\ldots,k. \]
From \eqref{eq:lemmaDiagIncreasing}, we know that $ n_1 \leqslant \cdots \leqslant n_k $.
Define
\[ \pD(z):=\diag\Big(\big[(z+1)(z^{-1}+1)\big]^{n_k-n_1},
\big[(z+1)(z^{-1}+1)\big]^{n_k-n_2},\ldots,1\Big)
\]
$ \widetilde{\pQ}(z):=\pD(z)\pQ(z) $, and $ \widetilde{\pb}(z):=\pD(z)\pb(z) $. We see from \eqref{eq:lemmaldeg} that for all $ \ell = 1,\ldots,k $:
\begin{equation} \label{eq:lemmafsupp}
\fs(\widetilde{\pQ}_{\ell,\ell}(z)) = [-n_k, n_k], \qquad
\ldeg(\widetilde{\pb}_\ell(z))\geqslant
\ldeg(\widetilde{\pQ}_{\ell,\ell}(z))= -n_k.
\end{equation}
Since $ \pQ(z) $ is diagonally dominant, from \eqref{def:DiagDominant1} we see that $ \fs(\widetilde{\pQ}_{\ell,i}(z))\subsetneq \fs(\widetilde{\pQ}_{\ell,\ell}(z)) = [-n_k, n_k] $ for all $ \ell = 1,\ldots,k,~ i\neq \ell $.
Thus we can write $ \widetilde{\pQ}(z) $ as
\begin{equation} \label{eq:lemmaCoeffs}
 \widetilde{\pQ}(z) = \sum_{\ell=-n_k}^{n_k}\widetilde{Q}_\ell z^\ell.
\end{equation}
Also, from \eqref{def:DiagDominant2} we know that
$ \deg(\widetilde{\pQ}_{\ell, i}(z))<\deg(\widetilde{\pQ}_{\ell,\ell}(z))=n_k $ for all $ \ell = 1,\ldots,k, ~ i>\ell $. So the coefficient matrix $ \widetilde{Q}_{n_k} $ in \eqref{eq:lemmaCoeffs} is lower triangular, which by \eqref{eq:lemmafsupp} also has nonzero diagonal elements. Therefore $ \widetilde{Q}_{n_k} $ is nonsingular.

From \eqref{eq:lemmadeg}, we also know that there exists some $ s\in \{1,\ldots,k\} $, such that $ \deg(\widetilde{\pb}_s(z))\geqslant \deg(\widetilde{\pQ}_{s,s}(z))=n_k $. So by \eqref{eq:lemmafsupp}, we can write $ \widetilde{\pb}(z) $ as
$\widetilde{\pb}(z) = \sum_{\ell = -n_k}^{M}\widetilde{b}_\ell z^\ell$
with $ M\geqslant n_k $.
Let us parameterize the unknown
$ X(z)= \sum_{\ell=0}^{M-n_k}X_\ell z^\ell $, and take $ \widetilde{Y}(z):=\widetilde{\pb}(z) - \widetilde{\pQ}(z)X(z) $.
By this definition,
$ \fs(\widetilde{Y}(z))\subset [-n_k, M] $.
Write $ \widetilde{Y}(z) = \sum_{\ell=-n_k}^{M}\widetilde{Y}_\ell z^\ell $, we want to solve for $ X(z) $ such that the coefficients $ \widetilde{Y}_\ell = 0 $ for all $ \ell = n_k, n_k+1, \ldots,M $.
Notice that we have $ M-n_k +1 $ matrix equations to solve for $ M-n_k+1 $ unknowns.
The equations can be formulated as the following Toeplitz form
\[ \begin{bmatrix}
\widetilde{Q}_{n_k} & \widetilde{Q}_{n_k-1} & \cdots & \widetilde{Q}_{2n_k-M}\\
& \ddots & \ddots & \vdots \\
& & \widetilde{Q}_{n_k} & \widetilde{Q}_{n_k-1} \\
& & & \widetilde{Q}_{n_k}
\end{bmatrix}
\begin{bmatrix}
X_0 \\ \vdots \\ X_{M - n_k-1} \\ X_{M - n_k}
\end{bmatrix}
= \begin{bmatrix}
\widetilde{b}_{n_k} \\ \vdots \\ \widetilde{b}_{M-1} \\ \widetilde{b}_{M}
\end{bmatrix}, \]
where we use $ \widetilde{Q}_j = 0 $ if $ j<-n_k $.
Since $ \widetilde{Q}_{n_k} $ is nonsingular, we can solve the above system from the last equation and use backward substitution to find all $ X_0, \ldots,X_{M-n_k} $.

Now, we found a vector $ X(z) $ of Laurent polynomials such that
$$
\widetilde{Y}(z) = \widetilde{\pb}(z) - \widetilde{\pQ}(z)X(z) =  \pD(z)\Big(\pb(z) - \pQ(z)X(z)\Big)
$$
satisfies $ \fs(\widetilde{Y}_\ell(z))\subset [-n_k, ~n_k-1] $, for all $ \ell = 1,\ldots,k $. Take $ Y(z):= \pb(z) - \pQ(z)X(z) = \pD^{-1}(z)\widetilde{Y}(z)$, we will prove that it satisfies \eqref{eq:lemmaReq}.
Notice that $ \pD(z) $ is a diagonal matrix with diagonals $ \pD_\ell(z) = \big[(z+1)(z^{-1}+1)\big]^{n_k - n_\ell} $, for all $ \ell = 1,\ldots,k $. So the filter support of $ Y_\ell(z) = \widetilde{Y}_\ell(z)/\pD_\ell(z) $ satisfies $ \fs(Y_\ell(z))\subset [-n_\ell, n_\ell-1] $ for all $ \ell = 1,\ldots,k $. Therefore \eqref{eq:lemmaReq} holds.
\end{proof}

\begin{proof}[Proof of Lemma~\ref{lemma:EmptySpect2}]
Write $ \pQ(z) = \begin{bmatrix}
\pA(z) & \pB(z) \\ \pB^\star(z) & \pC(z)
\end{bmatrix} $,
where $ \pA(z) $ is an $ (s+1)\times (s+1) $ Hermite matrix of Laurent polynomials.
Since $ \pQ(z) $ is diagonally dominant at diagonals $ 1,\ldots,s $, from \eqref{def:DiagDominant1} and \eqref{eq:lemmaDiagIncreasing} we know that for all $ i < (s+1) $,
$ \fs(\pQ_{i,(s+1)}(z))\subsetneq \fs(\pQ_{i,i}(z))\subset \fs(\pQ_{(s+1),(s+1)}) $ and
$  \fs(\pQ_{(s+1),i}(z))\subsetneq \fs(\pQ_{i,i}(z))\subset \fs(\pQ_{(s+1),(s+1)}(z)) $. So $ \pA(z) $ is a diagonally dominant matrix.
Hence, for $ s = k-1 $, the lemma is true with $ \pU(z) = \mathbf{I}_{k} $.

For $ s < k-1 $, since $ \pQ_{1,1}(z)\not\equiv 0 $, we know that $ \pA(z)\not \equiv 0 $.
We can find integers $ \lambda_{s+2},\ldots,\lambda_{k} $ such that
$$ \widetilde{\pB}(z):= \pB(z)
\diag(z^{-\lambda_{s+2}},\ldots,z^{-\lambda_{k}})
$$

satisfies $ \ldeg(\widetilde{\pB}_{\ell,i}(z))\geqslant \ldeg(\pA_{\ell,\ell}(z)) $ for all $ \ell = 1,\ldots,(s+1) $, and $ i = 1,\ldots,k-(s+1) $.
Write $ \widetilde{\pB}(z) $ as column vectors
$ \widetilde{\pB}(z) = \begin{bmatrix}
\pb^{(s+2)}(z) & \ldots & \pb^{(k)}(z)
\end{bmatrix} $,
we have
\[ \ldeg(\pb^{(i)}_{\ell}(z)) \geqslant \ldeg(\pA_{\ell,\ell}(z)), \qquad
\mbox{for all}~ i = (s+2),\ldots,k, \quad \ell = 1,\ldots,(s+1). \]
Using Lemma~\ref{lemma:EmptySpect1}, for each $ i = (s+2),\ldots,k $, we can solve for a vector $ x^{(i)}(z) $ such that
$ y^{(i)}(z):= \pb^{(i)}(z) - \pA(z) x^{(i)}(z)$ satisfies
\begin{equation} \label{eq:lemma2Vector}
\fs(y^{(i)}_\ell(z)) \subsetneq \fs(\pA_{\ell,\ell}(z)), ~
\deg(y^{(i)}_\ell(z)) < \deg(\pA_{\ell,\ell}(z)),
\qquad \ell = 1,\ldots,k.
\end{equation}
Define $ \widetilde{X}(z):= \begin{bmatrix}
x^{(s+2)}(z) & \ldots & x^{(k)}(z)
\end{bmatrix}$,
$ Y(z):=\begin{bmatrix}
y^{(s+2)}(z) & \ldots & y^{(k)}(z)
\end{bmatrix} = \widetilde{\pB}(z) - \pA(z)\widetilde{X}(z) $, and
$ \Lambda(z):= \diag(z^{\lambda_{s+2}}, \ldots, z^{\lambda_{k}})$,
we know that
\begin{align} \label{eq:lemma2Final}
&\begin{bmatrix}
\mathbf{I}_{s+1} & \\
-\widetilde{X}^\star(z) & \mathbf{I}_{k-(s+1)}
\end{bmatrix}
\begin{bmatrix}
\mathbf{I}_{s+1} & \\
& \Lambda(z)
\end{bmatrix}
\pQ(z)
\begin{bmatrix}
\mathbf{I}_{s+1} & \\
& \Lambda^\star(z)
\end{bmatrix}
\begin{bmatrix}
\mathbf{I}_{s+1} & -\widetilde{X}(z)\\
 & \mathbf{I}_{k-(s+1)}
\end{bmatrix}  \notag \\
=&
\begin{bmatrix}
\mathbf{I}_{s+1} & \\
-\widetilde{X}^\star(z) & \mathbf{I}_{k-(s+1)}
\end{bmatrix}
\begin{bmatrix}
\pA(z) & \widetilde{\pB}(z) \\
\widetilde{\pB}^\star(z) & \Lambda(z)\pC(z)\Lambda^\star(z)
\end{bmatrix}
\begin{bmatrix}
\mathbf{I}_{s+1} & -\widetilde{X}(z)\\
& \mathbf{I}_{k-(s+1)}
\end{bmatrix}  \notag \\
=&
\begin{bmatrix}
\pA(z) & \widetilde{\pB}(z) - \pA(z)\widetilde{X}(z) \\
\widetilde{\pB}^\star(z) - \widetilde{X}^\star(z)\pA^\star(z) &  \pE(z)
\end{bmatrix}
=
\begin{bmatrix}
\pA(z) & Y(z) \\
Y^\star(z) &  \pE(z)
\end{bmatrix},
\end{align}
where $ \pE(z):= \Lambda(z)\pC(z)\Lambda^\star(z) - \widetilde{\pB}^\star(z)\widetilde{X}(z) - \widetilde{X}^\star(z)\widetilde{\pB}(z)$.
From \eqref{eq:lemma2Vector}, we deduce that the above matrix
$ \widetilde{\pQ}(z):=\begin{bmatrix}
\pA(z) & Y(z) \\
Y^\star(z) &  \pE(z)
\end{bmatrix} $
is diagonally dominant at the first $ (s+1) $ diagonal entries.
Taking
$$ \pU(z):= \begin{bmatrix}
\mathbf{I}_{s+1} & \\
-\widetilde{X}^\star(z) & \mathbf{I}_{k-(s+1)}
\end{bmatrix}
\begin{bmatrix}
\mathbf{I}_{s+1} & \\
& \Lambda(z)
\end{bmatrix} =
\begin{bmatrix}
\mathbf{I}_{s+1} & \\ -\widetilde{X}^\star(z) & \Lambda(z)
\end{bmatrix}, $$
the equality \eqref{eq:lemma2Final} implies that
$ \widetilde{\pQ}(z) = \pU(z)\pQ(z)\pU^\star(z) $.
Also, $ \det(\pU(z)) = z^{\lambda_{s+2}+\cdots+\lambda_{k}} $. Hence, $ \pU(z) $ is unimodular. Also, the top left $ (s+1)\times (s+1) $ submatrix of $ \widetilde{Q}(z) $, which is $ \pA(z) $, is the same as that of $ \pQ(z) $.
This completes the proof.
\end{proof}

\begin{proof}[Proof of Lemma~\ref{lemma:EmptySpect3}]
Since $ \pQ(z) $ is invertible, we can write $ \pQ(z) $ and $ \pQ^{-1}(z) $ as
$$\pQ(z) =
\begin{bmatrix}
0 & \pa^\star(z) \\
\pa(z) & \pE(z)
\end{bmatrix}
,\qquad
\pQ^{-1}(z) =
\begin{bmatrix}
\pb(z) & \pc^\star(z) \\
\pc(z) & \pF(z)
\end{bmatrix},$$
where $ \pa(z) $ and $ \pc(z) $
are both vectors of Laurent polynomials of size $ (k-1) $, $ \pE(z) $ and $ \pF(z) $ are matrices of Laurent polynomials of size $ (k-1)\times (k-1) $ and $ \pb(z) $ is a scalar Laurent polynomial satisfying $ \pb^\star(z) = \pb(z) $.
Set
$$y(z) := \tfrac{1}{2}\left(\pb(z) + 1 \right),\quad
x(z) := \tfrac{1}{2}\left(\pb(z)-1\right)\pa(z),\quad
\pV(z) :=
\begin{bmatrix}
y & \pc^\star \\
x & \mathbf{I}_{k-1}
\end{bmatrix}.$$
We have
\begin{align*}
\pV(z)\pQ(z)\pV^\star(z)
=&
\begin{bmatrix}
y & \pc^\star \\
x & \mathbf{I}_{k-1}
\end{bmatrix}
\begin{bmatrix}
0 & \pa^\star(z) \\
\pa(z) & \pE(z)
\end{bmatrix}
\begin{bmatrix}
y &x^\star \\
\pc & \mathbf{I}_{k-1}
\end{bmatrix}
=
\begin{bmatrix}
\pc^\star\pa y + y \pa^\star\pc + \pc^\star\pE\pc &
\pc^\star \pa x^\star + y \pa^\star + \pc^\star \pE \\
x \pa^\star\pc + y \pa + \pE \pc & \pa x^\star + x \pa^\star + \pE
\end{bmatrix}.
\end{align*}
By calculation,
\begin{align*}
&\pc^\star\pa y + y \pa^\star\pc + \pc^\star\pE\pc =
2 y +  \pc^\star\pE\pc =
\pb + 1 - \pb = 1. \\
&\pc^\star \pa x^\star + y \pa^\star + \pc^\star \pE
=  x^\star + y \pa^\star + \pc^\star \pE
= \frac{1}{2}(\pb-1)\pa^\star +  \frac{1}{2}(\pb+1) \pa^\star + \pc^\star \pE
= \pb\pa^\star + \pc^\star\pE
= (\pa\pb + \pE\pc)^\star
= 0. \\
&\pa x^\star + x \pa^\star + \pE
= \frac{1}{2}\left(\pb-1\right)\pa\pa^\star
+ \frac{1}{2}\left(\pb-1\right)\pa\pa^\star + \pE
= (\pb - 1) \pa\pa^\star + \pE.
\end{align*}
Thus, $ \pV(z)\pQ(z)\pV^\star(z) =
\diag(1,  (\pb(z) - 1) \pa(z)\pa(z)^\star + \pE(z) ) $.
Also, notice that
$ \det(\pV(z)) = y(z) - \pc^\star(z)x(z) =
\frac{1}{2}\left(\pb(z) + 1 \right)
- \frac{1}{2}\left(\pb(z) - 1 \right) = 1$, so $ \pV(z) $ is unimodular. We define $ \pU(z):=\pV^{-1}(z) $, and $ \widetilde{\pQ}(z):= (\pb(z) - 1) \pa(z)\pa(z)^\star + \pE(z)$.
It is straightforward to see that $ \pQ(z) = \pU(z)
\diag(1, \widetilde{\pQ}(z))
\pU^\star(z) $
satisfies all the requirements in the lemma.
\end{proof}

The following result is used in the proof of Theorem~\ref{thm:partialMultiplicity}.

\begin{lemma} \label{lemma:PartialMultiplicity}
Let $A(\xi)$ be an $n \times n$ matrix of analytic functions. Suppose that $A(\xi)$ can be factorized in some neighborhood of $\xi_0\in \C$ as follows:
$$ A(\xi)=E_{\xi_0}(\xi)
\diag((\xi-\xi_0)^{\alpha_1},\ldots,(\xi-\xi_0)^{\alpha_n})
F_{\xi_0}(\xi),$$
where
\begin{enumerate}
\item $E_{\xi_0}(\xi)$ and $F_{\xi_0}(\xi)$ are both $n \times n$ analytic matrices in some neighborhood of $ \xi_0 $;
\item $E_{\xi_0}(\xi_0)$ and $F_{\xi_0}(\xi_0)$ are both nonsingular;
\item the integer sequence $ \{\alpha_j\}_{j=1}^n $ is nondecreasing,
i.e., $0\leqslant \alpha_1 \leqslant \cdots \leqslant \alpha_n$.
\end{enumerate}
Then the sequence $\{\alpha_j\}_{j=1}^n$ is unique (independent of the factorization we use). We call it the \emph{partial multiplicities} of $A(\xi)$ at $\xi_0$.
\end{lemma}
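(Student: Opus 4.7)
\medskip

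\noindent\textbf{Proof proposal for Lemma~\ref{lemma:PartialMultiplicity}.}
The plan is to recognize the sequence $\{\alpha_j\}_{j=1}^n$ as the jump sequence of a chain of invariants of $A(\xi)$ alone, defined through minors. Concretely, for each $k=0,1,\ldots,n$, define
\[
d_k := \min\bigl\{\mz(\det A[I,J](\xi), \xi_0)\setsp I,J\subset\{1,\ldots,n\},\ |I|=|J|=k\bigr\},
\]
with the convention $d_0 := 0$. This quantity depends only on $A(\xi)$ and $\xi_0$, not on any factorization; equivalently, $d_k$ is the order of vanishing at $\xi_0$ of the ideal generated by all $k\times k$ minors of $A(\xi)$ in the local ring of analytic germs at $\xi_0$. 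I will prove that $d_k = \alpha_1+\cdots+\alpha_k$ for every $k$, from which $\alpha_k = d_k - d_{k-1}$ is intrinsic to $A$, establishing uniqueness.

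The upper bound $d_k \leqslant \alpha_1+\cdots+\alpha_k$ and the lower bound $d_k \geqslant \alpha_1+\cdots+\alpha_k$ both come from the Cauchy--Binet formula applied to $A(\xi) = E_{\xi_0}(\xi)\pD(\xi)F_{\xi_0}(\xi)$, where $\pD(\xi) := \diag((\xi-\xi_0)^{\alpha_1},\ldots,(\xi-\xi_0)^{\alpha_n})$. For the lower bound, each $k\times k$ minor expands as
\[
\det A[I,J](\xi) = \sum_{|K|=k}\det E_{\xi_0}[I,K](\xi)\cdot (\xi-\xi_0)^{\sum_{j\in K}\alpha_j}\cdot \det F_{\xi_0}[K,J](\xi),
\]
and since the $\alpha_j$ are nondecreasing, $\sum_{j\in K}\alpha_j \geqslant \alpha_1+\cdots+\alpha_k$ for every $k$-subset $K$; hence every term (and therefore every minor) has vanishing order at least $\alpha_1+\cdots+\alpha_k$.

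For the upper bound I first reduce to the case $E_{\xi_0}(\xi_0) = F_{\xi_0}(\xi_0) = \mathbf{I}_n$: replacing $A$ by $\widetilde A := E_{\xi_0}(\xi_0)^{-1}\, A\, F_{\xi_0}(\xi_0)^{-1}$ left-multiplies and right-multiplies the $k$-th exterior power $\wedge^k A$ by constant invertible matrices, so it preserves the ideal generated by $k\times k$ minors, hence preserves $d_k$. With this normalization, $\widetilde E_{\xi_0}[I,K](\xi_0) = [I=K]$ and $\widetilde F_{\xi_0}[K,J](\xi_0) = [K=J]$, so for the principal minor $I = J = \{1,\ldots,k\}$ the only term in the Cauchy--Binet sum with $\mz(\widetilde E_{\xi_0}[I,K]\cdot\widetilde F_{\xi_0}[K,I],\xi_0) = 0$ is $K = \{1,\ldots,k\}$, while all other terms contribute vanishing order at least $2 + \sum_{j\in K}\alpha_j \geqslant 2 + \alpha_1+\cdots+\alpha_k$. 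Thus the leading coefficient of $\det \widetilde A[\{1,\ldots,k\},\{1,\ldots,k\}](\xi)$ at order $\alpha_1+\cdots+\alpha_k$ equals $1$, so $d_k \leqslant \alpha_1+\cdots+\alpha_k$.

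The main subtlety --- which I expect to be the only real obstacle --- is the upper bound step, specifically the need to exhibit a single minor whose order of vanishing is \emph{exactly} $\alpha_1+\cdots+\alpha_k$ even when the multiplicities $\alpha_j$ have ties (so that many subsets $K$ achieve the minimum $\sum_{j\in K}\alpha_j$ and their contributions could in principle cancel). The normalization $\widetilde E_{\xi_0}(\xi_0) = \widetilde F_{\xi_0}(\xi_0) = \mathbf{I}_n$ bypasses this potential cancellation: after this reduction, the off-diagonal minors $\widetilde E_{\xi_0}[I,K]$ and $\widetilde F_{\xi_0}[K,I]$ for $K\neq I$ pick up an extra factor of $(\xi-\xi_0)$ each, which strictly lifts the order of all non-principal contributions above $\alpha_1+\cdots+\alpha_k$, leaving a nonzero leading coefficient from the unique term $K = I = \{1,\ldots,k\}$.
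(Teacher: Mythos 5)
Your proof is correct, but it takes a genuinely different route from the paper's. The paper compares two factorizations directly: writing $\diag((\xi-\xi_0)^{\widetilde\alpha_1},\ldots,(\xi-\xi_0)^{\widetilde\alpha_n}) = P(\xi)\diag((\xi-\xi_0)^{\alpha_1},\ldots,(\xi-\xi_0)^{\alpha_n})Q(\xi)$ with $P,Q$ analytic and invertible at $\xi_0$, restricting to the top-left $k\times k$ block, and applying Cauchy--Binet to its determinant. This yields $(\xi-\xi_0)^{\widetilde\alpha_1+\cdots+\widetilde\alpha_k}=\bo((\xi-\xi_0)^{\alpha_1+\cdots+\alpha_k})$, hence $\alpha_1+\cdots+\alpha_k\leqslant\widetilde\alpha_1+\cdots+\widetilde\alpha_k$, and the reverse inequality follows by swapping the roles of the two factorizations. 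The paper therefore only needs a \emph{lower} bound (the left side of the Cauchy--Binet identity is already a pure power of $(\xi-\xi_0)$), and never has to exhibit a minor achieving the extremal vanishing order. Your approach instead builds an \emph{intrinsic} invariant $d_k$ (the minimum vanishing order of $k\times k$ minors, i.e.\ the local $k$-th determinantal divisor) and proves $d_k=\alpha_1+\cdots+\alpha_k$; the lower bound is the same Cauchy--Binet estimate, but the upper bound requires the extra normalization $E_{\xi_0}(\xi_0)=F_{\xi_0}(\xi_0)=\mathbf{I}_n$ (which you correctly justify is harmless because left/right multiplication by constant invertible matrices acts invertibly on each exterior power, hence preserves $d_k$) so that the principal term $K=I=\{1,\ldots,k\}$ contributes with unit leading coefficient while every off-principal term picks up an extra $(\xi-\xi_0)^2$. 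What this buys you is a characterization of the partial multiplicities purely in terms of $A$ itself --- essentially the Smith-normal-form description via determinantal divisors over the local ring of germs at $\xi_0$ --- which is conceptually cleaner and would be reusable; what it costs is the additional upper-bound step, which the paper's symmetry argument makes unnecessary.
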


\begin{proof}
If $C(\xi)$ is an $n \times n$ matrix, which is analytic in some neighborhood of $\xi_0$, and $\det(C(\xi_0))\neq 0$, then
$C^{-1}(\xi) = \frac{1}{\det(C(\xi))}\operatorname{adj}(C(\xi))$ is also an analytic matrix in some neighborhood of $\xi_0$.

Suppose that we have the following two different factorizations of $ A(\xi) $, both satisfy the three conditions in the lemma:
\begin{equation*}
A(\xi)  = E_{\xi_0}(\xi)
\diag((\xi-\xi_0)^{\alpha_1},\ldots,(\xi-\xi_0)^{\alpha_n})
F_{\xi_0}(\xi)
= \widetilde{E}_{\xi_0}(\xi)
\diag((\xi-\xi_0)^{\widetilde{\alpha}_1},\ldots,(\xi-\xi_0)^{\widetilde{\alpha}_n})
\widetilde{F}_{\xi_0}(\xi).
\end{equation*}
Then we have
\begin{equation} \label{eq:UniquenessSeq}
\diag((\xi-\xi_0)^{\widetilde{\alpha}_1},\ldots,(\xi-\xi_0)^{\widetilde{\alpha}_n})
=  P(\xi)
\diag((\xi-\xi_0)^{\alpha_1},\ldots,(\xi-\xi_0)^{\alpha_n})
Q(\xi),
\end{equation}
where $P(\xi):= \widetilde{E}^{-1}_{\xi_0}(\xi) E_{\xi_0}(\xi)$ and $Q(\xi):= F_{\xi_0}(\xi)\widetilde{F}^{-1}_{\xi_0}(\xi)$ are both analytic matrices in some neighborhood of $\xi_0$.
For all $k \in \{1, \ldots, n\}$, check the top left $ k\times k $ submatrix of \eqref{eq:UniquenessSeq}:
\begin{equation} \label{eq:UniquessSub}
\diag((\xi-\xi_0)^{\widetilde{\alpha}_1},\ldots,(\xi-\xi_0)^{\widetilde{\alpha}_n})
= P_{r,1:k} (\xi)
\diag((\xi-\xi_0)^{\alpha_1},\ldots,(\xi-\xi_0)^{\alpha_n})
Q_{c, 1:k}(\xi)
= R_k(\xi)Q_{c, 1:k}(\xi),
\end{equation}
where $P_{r,1:k} (\xi) $ is the $ k\times n $ submatrix of $P(\xi)$, constructed by taking the first $k$ rows of $ P(\xi) $, and $ Q_{c, 1:k}(\xi) $ is the $ n \times k $ submatrix of $Q(\xi)$, constructed by taking the first $k$ columns of $ Q(\xi) $. $R_k(\xi) :=P_{r,1:k} (\xi) \diag((\xi-\xi_0)^{\alpha_1}, \ldots , (\xi-\xi_0)^{\alpha_n})$ is a $k \times n$ matrix. From the definition, we see that the $s$-th column of $R_k(\xi)$ is $\bo((\xi - \xi_0)^{\alpha_s})$ as $\xi\rightarrow\xi_0$, for all $s=1,\ldots,n$.

Taking the determinant of \eqref{eq:UniquessSub}, by the Cauchy-Binet Formula, we have
\begin{equation} \label{eq:UniquenessCB}
(\xi-\xi_0)^{\widetilde{\alpha}_1+\ldots+\widetilde{\alpha}_k}
= \det(\diag((\xi-\xi_0)^{\widetilde{\alpha}_1} ,\ldots,(\xi-\xi_0)^{\widetilde{\alpha}_k}))
= \sum_{J\subseteq \{1,2,\ldots, n\}, }^{|J| = k}
\det(\left[R_k\right]_{c, J}(\xi))
\det(\left[Q_{c, 1:k}\right]_{r, J}(\xi)),
\end{equation}
where $\left[R_k\right]_{c, J}(\xi) $ is the $ k\times k $ submatrix of $R_k(\xi)$,
constructed by taking the columns with indices belonging to $J$;
$\left[Q_{c, 1:k}\right]_{r, J}(\xi) $ is the $ k\times k $ submatrix of $Q_{c, 1:k}(\xi)$,
constructed by taking the rows with indices belonging to $J$.
The summation is taken over all indices sets $J$, whose size is equal to $k$. Since all the elements in the $s$-th column of $\left[R_k\right]_{c, J}(\xi) $ are $\bo((\xi - \xi_0)^{\alpha_s})$ as $\xi\rightarrow \xi_0$, for all $s = 1,\ldots, n$
and the sequence $\{\alpha_j\}_{j=1}^n$  is nondecreasing,
we have
$$ \det(\left[R_k\right]_{c, J}(\xi)) =\bo((\xi - \xi_0)^{\alpha_1+\ldots+\alpha_k}), \qquad
\mbox{for all}~~ J\subseteq \{1,\ldots,n\},~ |J|=k. $$
Hence, each term in the summation on the right hand side of \eqref{eq:UniquenessCB} is $ \bo((\xi - \xi_0)^{\alpha_1+\ldots+\alpha_k}) $, as $ \xi \rightarrow \xi_0 $.
So
$ (\xi-\xi_0)^{\widetilde{\alpha}_1+\ldots+\widetilde{\alpha}_k} = \bo((\xi - \xi_0)^{\alpha_1+\ldots+\alpha_k}) $, which implies
$$ \alpha_1+\cdots+\alpha_k \leqslant \widetilde{\alpha}_1+\cdots+\widetilde{\alpha}_k, \qquad
\mbox{for all}~~ k=1,\ldots, n. $$
Similarly, we can prove
$\widetilde{\alpha}_1+\cdots+\widetilde{\alpha}_k \leqslant \alpha_1+\cdots+\alpha_k$
also holds for all $k=1,\ldots, n$. The two inequalities give that
$$\widetilde{\alpha}_1+\cdots+\widetilde{\alpha}_k = \alpha_1+\cdots+\alpha_k, \qquad \forall~ k=1,\ldots, n.$$
So $\{\alpha_j\}_{j=1}^n$ and $\{\widetilde{\alpha}_j\}_{j=1}^n$ must be the same sequence.
\end{proof}

Finally, we prove Algorithm~\ref{algo:Const-Sig}.

\begin{proof}[Proof of Algorithm~\ref{algo:Const-Sig}] Steps  (S1)--(S3) simply follow the proof of Lemma~\ref{thm:extract1}, while step  (S5) follows the proof of Algorithm \ref{algo:unimodular}. We only need to prove that step (S4) is feasible.

Suppose $z_0 = e^{-i\xi_0}$ for some $\xi_0\in\R$. By \eqref{eq:diagfirstder} and \eqref{eq:EVDFirstOrder} from the proof of Theorem \ref{thm:extract2}, we see that there exists some constant unitary matrix $W_0 := W^{-1}(\xi_0)$ such that
$ \mathring{\pA}(z):=W_0\widetilde{\pA}(z)W_0^\star $ satisfies:
\begin{equation} \label{eq:algo1}
W_0\widetilde{\pA}(z_0)W_0^\star = \mathring{\pA}(z_0) =
\diag(\lambda_1,\ldots,\lambda_{n-K},\mathbf{0}_{K\times K})
\end{equation}
where $\lambda_1,\ldots,\lambda_{n-K}\neq 0$ are the nonzero eigenvalues of $\widetilde{\pA}(z_0)$.
Write the Taylor expansion of $ \widetilde{\pA}(e^{-i\xi}) $ and
$ \mathring{\pA}(e^{-i\xi}) $
at $ \xi_0 $ as
$$ \widetilde{\pA}(e^{-i\xi}) = C_0 + C_1 (\xi - \xi_0) + \bo((\xi - \xi_0)^2), \quad
\mathring{\pA}(e^{-i\xi}) = D_0 + D_1 (\xi - \xi_0) + \bo((\xi - \xi_0)^2).
$$
where $ C_0 = \widetilde{\pA}(z_0) $, $ C_1 = -iz_0 \widetilde{\pA}'(z_0) $, $ D_0 = \mathring{\pA}(z_0) $ and
$ D_1 = -iz_0 \mathring{\pA}'(z_0) $.
We know that \eqref{eq:algo1} implies
\begin{equation} \label{eq:algoC0D0}
D_0 = W_0 C_0 W_0^\star = \diag(\lambda_1,\ldots,\lambda_{n-K},\mathbf{0}_{K\times K}).
\end{equation}
Also, \eqref{eq:EVDSecondOrder} implies
\begin{equation} \label{eq:algoC1D1}
 D_1 = W_0 C_1 W_0^\star =
\left[
\begin{array}{c|ccccc}
* & \multicolumn{5}{c}{*} \\
\hline
\multirow{5}{*}{*} &
\gamma_1^2 &             &        &              &  \\
&	           & -\gamma_2^2 &        &              &  \\
& 	           &             & \ddots &              &  \\
& 	           &             &        & \gamma_{K-1} &  \\
& 	           &             &        &              & \gamma_K
\end{array}
\right],
\end{equation}
where the lower right $K \times K$ submatrix is diagonal with $K/2$ positive and $K/2$ negative diagonal entries.
From \eqref{eq:algoC0D0} we see that the eigenspace of $C_0$ corresponding to the eigenvalue $0$ has dimension $K$. It must also be the span of the last $K$ column vectors of $W_0^\star$:
$$E_0= \mbox{span}\{w_{n-K+1},\ldots, w_n \}.$$

Also, by the construction of $\widetilde{\pA}(z)$ in \eqref{eq:SNFchange}, we see that the last $K$ columns and last $K$ rows of $\widetilde{\pA}(z_0)$ must be zero:
$$ C_0 = \widetilde{\pA}(z_0)=
\begin{bmatrix}
\mathbf{*} & \textbf{0}_{(n-K)\times K} \\
\mathbf{0}_{K \times (n-K)} & \mathbf{0}_{K \times K}
\end{bmatrix}. $$
So $E_0$ is also the span of $K$ natural basis vectors $E_0=\mbox{span}\{e_{n-K+1},\ldots,e_n\}$. This implies that $ \mbox{span}\{w_{n-K+1},\ldots, w_n \} = \mbox{span}\{e_{n-K+1},\ldots,e_n\} $,
so the matrix  $W_0^\star  $ of the eigenvectors of $ C_0 $ has the form
$W_0^\star=\begin{bmatrix}
W_{1} & \mathbf{0}_{(n-K)\times K} \\
W_{2} & \widetilde{W}_0
\end{bmatrix}$
for some matrices $ W_{1}, W_{2} $ and $ \widetilde{W}_0 $,
while $\widetilde{W}_0$ is a $K\times K$ unitary matrix.

From \eqref{eq:algoC1D1}, we deduce that
\begin{align*}
W_0 C_1 W_0^\star
=\begin{bmatrix}
W^\star_{1} & W^\star_{2}   \\
\textbf{0}_{ K\times (n-K)} & \widetilde{W}^\star_0
\end{bmatrix}
\begin{bmatrix}
* & *   \\
* & \pA_K
\end{bmatrix}
\begin{bmatrix}
W_{1} & \textbf{0}_{(n-K)\times K} \\
W_{2} & \widetilde{W}_0
\end{bmatrix}
= \left[
\begin{array}{c|ccccc}
* & \multicolumn{5}{c}{*} \\
\hline
\multirow{5}{*}{*} &
\gamma_1^2 &             &        &              &  \\
 &	           & -\gamma_2^2 &        &              &  \\
 & 	           &             & \ddots &              &  \\
 & 	           &             &        & \gamma_{K-1} &  \\
 & 	           &             &        &              & \gamma_K
\end{array}
\right].
\end{align*}
Therefore, $ \widetilde{W_0}^\star \pA_K \widetilde{W_0} = \diag(\gamma_1^2, -\gamma_2^2,\ldots, \gamma_{K-1}, \gamma_K)$.
Hence, the lower right $K\times K$ submatrix of $ C_1 $, i.e., $\pA_K$, has $K/2$ positive and $K/2$ negative eigenvalues. This proves item ($a$) in step  (S4) is feasible.
From the construction, we see that the redefined $\widetilde{\pA}(z)$ after item ($a$) in step (S4) satisfies:
$$ \widetilde{\pA}(z_0) = \begin{bmatrix}
\mathbf{*} & \textbf{0}_{(n-K)\times K} \\
\mathbf{0}_{K \times (n-K)} & \mathbf{0}_{K \times K}
\end{bmatrix},\qquad
-iz_0\widetilde{\pA}'(z_0) =
\left[
\begin{array}{c|ccccc}
* & \multicolumn{5}{c}{*} \\
\hline
\multirow{5}{*}{*} &
\gamma_1^2 &             &        &              &  \\
 &	           & -\gamma_2^2 &        &              &  \\
 & 	           &             & \ddots &              &  \\
 & 	           &             &        & \gamma_{K-1} &  \\
 & 	           &             &        &              & \gamma_K
\end{array}
\right]. $$
The design of $\pU_2$ in item (b) of step (S4) is similar to the matrix $V$ in \eqref{eq:V}. We can verify that the redefined $\widetilde{\pA}(z)$ after item (b) satisfies:
$$ \widetilde{\pA}(z_0) = \begin{bmatrix}
\mathbf{*} & \textbf{0}_{(n-K)\times K} \\
\mathbf{0}_{K \times (n-K)} & \mathbf{0}_{K \times K}
\end{bmatrix},\qquad
-iz_0\widetilde{\pA}'(z_0) =
\left[
\begin{array}{c|ccccc}
* & \multicolumn{5}{c}{*} \\
\hline
\multirow{5}{*}{*} &
0 &   -\gamma_2          &        &              &  \\
 &	  -\gamma_2         & -\gamma_2^2 &        &              &  \\
 & 	           &             & \ddots &              &  \\
 & 	           &             &        & \gamma_{K-1} &  \\
 & 	           &             &        &              & \gamma_K
\end{array}
\right]. $$
The above equality shows that $(z-z_0)$ divides both the $(n-K+1)$-th row and the $(n-K+1)$-th column of $\widetilde{\pA}(z)$, meanwhile, $(z-z_0)^2$ divides the $(n-K+1)$-th diagonal element of $\widetilde{\pA}(z)$. Thus the item (c) in  (S4) is feasible.
\end{proof}

\end{document}